
\documentclass[11pt, reqno]{amsart}
\usepackage[T1]{fontenc}
\usepackage[utf8]{inputenc}
\usepackage[english]{babel}
\usepackage{amsmath,amsfonts,amssymb,amsthm,mathrsfs}
\usepackage{accents,mathtools,extarrows}
\usepackage{yhmath}
\usepackage{graphicx}
\usepackage{rotating}
\usepackage{wasysym}
\usepackage{textcomp}
\usepackage{hyperref,cleveref}
\usepackage{xfrac}
\usepackage{enumitem}
\usepackage{tikz-cd}									
\usetikzlibrary{matrix,arrows,calc,positioning,decorations.pathmorphing}
\tikzset{
  shift left/.style ={commutative diagrams/shift left={#1}},
  shift right/.style={commutative diagrams/shift right={#1}}
}
\usepackage{float}
\usepackage{todonotes}

\relpenalty=10000
\binoppenalty=10000

\addtolength{\textwidth}{4cm}
\addtolength{\oddsidemargin}{-2cm}
\addtolength{\evensidemargin}{-2cm}
\setlength\parindent{0pt}


\theoremstyle{plain}
\newtheorem{theorem}{Theorem}[section]
\newtheorem{lemma}[theorem]{Lemma}
\newtheorem{proposition}[theorem]{Proposition}
\newtheorem{corollary}[theorem]{Corollary}
\newtheorem*{theorem*}{Theorem}
\newtheorem*{proposition*}{Proposition}
\newtheorem*{corollary*}{Corollary}

\theoremstyle{definition}
\newtheorem{definition}[theorem]{Definition}
\AtBeginEnvironment{definition}{\pushQED{\qed}}
\AtEndEnvironment{definition}{\popQED\enddefinition}

\AtBeginEnvironment{example}{\pushQED{\qed}}
\AtEndEnvironment{example}{\popQED\endexample}
\newtheorem{remark}[theorem]{Remark}
\AtBeginEnvironment{remark}{\pushQED{\qed}}
\AtEndEnvironment{remark}{\popQED\endremark}
\newtheorem{construction}[theorem]{Construction}
\AtBeginEnvironment{construction}{\pushQED{\qed}}
\AtEndEnvironment{construction}{\popQED\endconstruction}
\newtheorem{observation}[theorem]{Observation}
\AtBeginEnvironment{observation}{\pushQED{\qed}}
\AtEndEnvironment{observation}{\popQED\endobservation}
\newtheorem{observation/construction}[theorem]{Obervation/Construction}
\AtBeginEnvironment{observation/construction}{\pushQED{\qed}}
\AtEndEnvironment{observation/construction}{\popQED\endobservation}
\newtheorem{notation}[theorem]{Notation}
\AtBeginEnvironment{notation}{\pushQED{\qed}}
\AtEndEnvironment{notation}{\popQED\endnotation}
\newtheorem{assumption}[theorem]{Assumption}
\AtBeginEnvironment{assumption}{\pushQED{\qed}}
\AtEndEnvironment{assumption}{\popQED\endassumption}
\newtheorem*{remark*}{Remark}
\AtBeginEnvironment{remark*}{\pushQED{\qed}}
\AtEndEnvironment{remark*}{\popQED\endremark}

\newtheorem*{thexreftheorem}{Theorem \thexref}

\newtheorem*{thexrefcorollary}{Corollary \thexcoro}
%



\newcommand{\Z}{\mathbb Z}

\newcommand{\R}{\mathbb R}
\newcommand{\C}{\mathbb C}

\renewcommand{\epsilon}{\varepsilon}
\renewcommand{\phi}{\varphi}

\begin{document}

\title[Moduli stack of stable curves from a stratified homotopy viewpoint]{Moduli stack of stable curves from a stratified homotopy viewpoint}
\author{Mikala Ørsnes Jansen}
\date{\today}
\address{Department of Mathematical Sciences, University of Copenhagen, 2100 Copenhagen, Denmark.}\email{mikala@math.ku.dk}
\thanks{The author was supported by the European Research Council (ERC) under the European Unions Horizon 2020 research and innovation programme (grant agreement No 682922) and the Danish National Research Foundation through the Copenhagen Centre for Geometry and Topology (DNRF151).}

\begin{abstract}
In 1984, Charney and Lee defined a category of stable curves and exhibited a rational homology equivalence from its geometric realisation to (the analytification of) the moduli stack of stable curves, also known as the Deligne--Mumford--Knudsen compactification. We strengthen this result by showing that, in fact, this category captures the stratified homotopy type of the moduli stack. In particular, it classifies constructible sheaves via an exodromy equivalence.
\end{abstract}

\maketitle

\tableofcontents

\section{Introduction}

\textbf{Background and aim.} Let $\overline{\mathcal{M}}{}_{g,n}$ denote the moduli stack of stable genus $g$ nodal curves with $n$ marked points, also know as the \textit{Deligne--Mumford--Knudsen compactification} (when $n=0$, we omit the $n$). In 1984, Charney and Lee defined a category $\mathfrak{SC}_g$, called the \textit{category of stable curves}, and proved that there is a rational homology equivalence from the geometric realisation $|\mathfrak{SC}_g|$ to (the analytification of) the coarse moduli space of $\overline{\mathcal{M}}{}_g$ (\cite{CharneyLee84}). In 2008, Ebert and Giansiracusa provided an integral refinement of this theorem: generalising the category $\mathfrak{SC}_g$ to include marked points, they both generalised and strengthened the result by showing that, in fact, the geometric realisation of the resulting category $\mathfrak{SC}_{g,n}$ has the same \textit{homotopy type} as the underlying analytic stack $\overline{\mathcal{M}}{}^{\operatorname{an}}_{g,n}$. Moreover, both $\mathfrak{SC}_{g,n}$ and $\overline{\mathcal{M}}{}^{\operatorname{an}}_{g,n}$ come equipped with natural stratifications and the established homotopy equivalence is functorial with respect to inclusions of strata and closures of strata (\cite{EbertGiansiracusa}).

The purpose of this paper is to further strengthen these results: we prove that the category $\mathfrak{SC}_{g,n}$ captures not merely the homotopy type of the stack but the \textit{stratified homotopy type}. The fact that $\mathfrak{SC}_{g,n}$ is a $1$-category may be the interpreted as saying that $\overline{\mathcal{M}}_{g,n}$ is a $K(\pi,1)$ in a stratified sense. The most important feature of the stratified homotopy type is the resulting classification of constructible sheaves, that is, sheaves which are locally constant along each stratum. As a consequence, we obtain an identification of the category of constructible sheaves on $\overline{\mathcal{M}}{}_{g,n}$ with the presheaf category on $\mathfrak{SC}_{g,n}$; moreover, we can identify the derived constructible category with a derived functor category. To the best of the author's knowledge, such a classification is new and it may provide a useful tool for studying cohomology of $\overline{\mathcal{M}}_{g,n}$ with coefficients which are not necessarily constant nor even locally constant. 

Our result cements the fact that the intuition displayed by Charney--Lee in 1984 goes far beyond the established rational homology equivalence (as is also evident when reading their paper); their understanding of the Deligne--Mumford--Knudsen compactification precedes the development of stratified homotopy theory (\cite{Treumann, LurieHA}) and the present paper should be viewed as a mere footnote to this story. We simply establish what was anticipated by the methods used in \cite{CharneyLee84}.

\medskip

\textbf{Stratified homotopy theory.} Let us give a brief overview of stratified homotopy theory to provide some intuition of the invariant in question. It is a classical result in algebraic topology that there is an equivalence between locally constant sheaves on a sufficiently nice topological space and representations of its fundamental groupoid, the so-called monodromy equivalence. In unpublished work, MacPherson made the observation that analogously to how locally constant sheaves are classified by paths in the topological space, the constructible sheaves on a stratified space can be classified by so-called \textit{exit paths}. Constructible sheaves are sheaves which are locally constant along each stratum, and exit paths are stratum preserving paths, that is, an exit path can move into ``more open'' strata, but not the other way. Treumann gave a $2$-categorical version of this result (\cite{Treumann}), and Lurie has developed the $\infty$-categorical setting (\cite[Appendix A]{LurieHA}). To a sufficiently nice stratified space, Lurie associates an \textit{exit path $\infty$-category}; this is a refinement of the fundamental $\infty$-groupoid incorporating the additional structure of a stratification. Roughly put, the exit path $\infty$-category captures the homotopy types of the individual strata and moreover the ``homotopy type of the glueing data'', that is, how these individual homotopy types fit together. Stratified homotopy theory has been used and developed in several directions in recent years: model structures and a stratified homotopy hypothesis (\cite{AyalaFrancisRozenblyum,AyalaFrancisTanaka,Douteau,Haine,Nand-Lal}); exodromy for schemes (\cite{BarwickGlasmanHaine, BarwickHaine19a}); and concrete calculations (\cite{Cepek,OrsnesJansen}).

In \cite{OrsnesJansen23}, we provide a generalisation of stratified homotopy types to stratified topological stacks: in the spirit of \cite{ClausenOrsnesJansen}, we opt for a non-constructive approach and define the exit path $\infty$-category of a stratified topological stack as the idempotent complete $\infty$-category classifying constructible sheaves (if it exists). Without getting into the finer technical details of the situation, a stratified topological $\infty$-stack is a space-valued sheaf $X\in \operatorname{Shv}(\mathfrak{T})$ over the site $\mathfrak{T}=\operatorname{LCHaus}^{2nd}$ of (locally compact Hausdorff second countable) topological spaces equipped with a morphism $X\rightarrow P$ to a poset $P$ satisfying the ascending chain condition (where $P$ is viewed as a topological space using the Alexandroff topology). We say that such a stratified topological $\infty$-stack $(X,P)$ \textit{admits an exit path $\infty$-category} if there is an idempotent complete $\infty$-category $\Pi(X,P)$ and an equivalence
\begin{align*}
\operatorname{Shv}_P^{\operatorname{cbl}}(X)\xrightarrow{\ \sim \ } \operatorname{Fun}(\Pi(X,P),\mathcal{S})
\end{align*}
between the $\infty$-category of $P$-constructible space-valued sheaves on $X$ and functors from $\Pi(X,P)$ into the $\infty$-category of spaces. We moreover require the pullback functor $\operatorname{Fun}(P,\mathcal{S})\rightarrow \operatorname{Shv}_P^{\operatorname{cbl}}(X)$ along the stratification map to preserve limits. In this case, we call $\Pi(X,P)$ the \textit{exit path $\infty$-category} of $(X,P)$. This definition can be spelt out in terms of more tangible technical assumptions on the $\infty$-category of constructible sheaves using the notion of atomic objects (see \Cref{definition exit path category}). Since the overall objective of this invariant is the classification of constructible sheaves above, we refrain from going into details in this paper and refer the reader to \cite{OrsnesJansen23} in which we also recall what we mean by sheaves on $\infty$-stacks and develop a number of calculational tools that will be applied here.

\medskip

\textbf{Main results.} Let $g,n$ be such that $2g-2+n>0$ and consider the (algebraic) moduli stack $\overline{\mathcal{M}}_{g,n}$ of stable $n$-pointed genus $g$ nodal curves. Let $\overline{\mathcal{M}}^{\operatorname{top}}_{g,n}$ denote the underlying topological stack and consider the canonical stratification of this over the poset $\mathcal{G}_{g,n}$ of stable $n$-pointed genus $g$ dual graphs --- that is, we stratify according to the homeomorphism types of the analytifications of the algebraic curves, equivalently, the boundary is stratified as a normal crossings divisor (\Cref{underlying stratified topological stack}). Let us briefly remark that the underlying topological stack is obtained by a natural extension of the functor sending an affine scheme $U$ of finite type over $\C$ to the topological space $U^{\operatorname{top}}$ underlying its analytification (see \Cref{underlying stratified topological stack} and \cite[\S 4]{OrsnesJansen23}).

Our main result is the following.

\begin{theorem*}[\ref{the exit path category of Mg bar}  and \ref{exodromy equivalence}]
The stratified topological stack $\overline{\mathcal{M}}^{\operatorname{top}}_{g,n}\rightarrow \mathcal{G}_{g,n}$ admits an exit path $\infty$-category (in the sense of \Cref{definition exit path category}) and this exit path $\infty$-category is equivalent to a $1$-category $\Pi_{g,n}$.

In particular, for any compactly generated $\infty$-category $\mathcal{C}$, there is an equivalence
\begin{align*}
\operatorname{Shv}^{\operatorname{cbl}}(\overline{\mathcal{M}}^{\operatorname{top}}_{g,n};\mathcal{C})\xrightarrow{\ \sim\ }\operatorname{Fun}(\Pi_{g,n},\mathcal{C}),
\end{align*}
between the $\infty$-category of sheaves on $\overline{\mathcal{M}}^{\operatorname{top}}_{g,n}$ that are constructible with respect to the fixed stratification over $\mathcal{G}_{g,n}$ and the $\infty$-category of functors $\Pi_{g,n}\rightarrow \mathcal{C}$.

Moreover, the exit path category identifies with (the opposite of) the $1$-category of stable curves introduced by Charney--Lee, $\Pi_{g,n}\simeq \mathfrak{SC}_{g,n}^{\operatorname{op}}$.
\end{theorem*}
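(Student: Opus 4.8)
The plan is to compute the exit path $\infty$-category of $\overline{\mathcal{M}}^{\operatorname{top}}_{g,n}$ by descent along a cover by deformation charts, and then to match the resulting combinatorial category with $\mathfrak{SC}_{g,n}^{\operatorname{op}}$. The first step is to record the local structure of the stratified stack. By the deformation theory of stable curves, a pointed stable curve $C$ with dual graph $\Gamma$ having $k=|E(\Gamma)|$ nodes has versal deformation space a polydisc $\mathbb{D}^{3g-3+n}$ carrying an action of the finite group $\operatorname{Aut}(C)$, and for $\mathbb{D}$ small enough $[\mathbb{D}^{3g-3+n}/\operatorname{Aut}(C)]$ is an open substack of $\overline{\mathcal{M}}^{\operatorname{top}}_{g,n}$ containing $[C]$; choosing coordinates so that the first $k$ are the smoothing parameters of the $k$ nodes, the canonical stratification over $\mathcal{G}_{g,n}$ restricts to the coordinate (normal crossings) stratification in these $k$ variables crossed trivially with the remaining, topological-type-preserving, directions, and $\operatorname{Aut}(C)$ acts through a permutation of the first $k$ coordinates (via its action on $E(\Gamma)$) composed with a diagonal character action. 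As $C$ ranges over the points of the strata, these charts together with the open part $\mathcal{M}_{g,n}$ form an open cover of $\overline{\mathcal{M}}^{\operatorname{top}}_{g,n}$ to which the descent and van Kampen results of \cite{OrsnesJansen23} apply, expressing $\Pi(\overline{\mathcal{M}}^{\operatorname{top}}_{g,n},\mathcal{G}_{g,n})$ as the corresponding colimit of the exit path $\infty$-categories of the charts.

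The second step is the local computation and the conclusion that everything is $1$-categorical. The transverse factor $\mathbb{D}^k$ with its coordinate stratification has exit path $\infty$-category $\Pi(\mathbb{D},\{0\})^{\times k}$, where $\Pi(\mathbb{D},\{0\})$ is, by Lurie's formula for exit paths of an open cone (\cite[Appendix A]{LurieHA}), the $1$-category with objects $\{0\}$ and $\mathbb{D}^\ast$ whose only non-trivial morphism space is $\operatorname{Hom}(\mathbb{D}^\ast,\mathbb{D}^\ast)=\mathbb{Z}$; the topological-type-preserving factor contributes only the contractible singular complex of a ball. Hence the chart around $[C]$ has exit path $\infty$-category the $1$-categorical Grothendieck construction (action category) of $\operatorname{Aut}(C)$ acting on this product of cones, whose objects are the partial smoothings of $\Gamma$. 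Moreover every stratum of $\overline{\mathcal{M}}^{\operatorname{top}}_{g,n}$ is, as a topological stack, a homotopy quotient of a product of moduli stacks $\mathcal{M}_{g_v,n_v}$ indexed by the vertices of $\Gamma$ by a finite group, hence has the homotopy type of a $K(\pi,1)$; combined with the normal-crossings transverse structure this shows that every piece and every transition functor in the descent diagram is $1$-categorical, so the colimit $\Pi_{g,n}$ is a $1$-category. This establishes the existence of the exit path $\infty$-category and the fact that it is a $1$-category, and it yields an explicit model: objects are isomorphism classes of stable $n$-pointed genus $g$ dual graphs, and a morphism records a contraction of edges together with twisting data around the contracted vanishing cycles, all taken modulo graph automorphisms and the mapping class groups of the components, with composition given by composition of contractions and addition of twists.

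The third step is the identification $\Pi_{g,n}\simeq\mathfrak{SC}_{g,n}^{\operatorname{op}}$. I would recall the definition of the Charney--Lee category in the marked-point version of \cite{EbertGiansiracusa}: it has the same objects, and a morphism $\Gamma\to\Gamma'$ witnesses $\Gamma'$ as a degeneration of $\Gamma$, recording which disjoint simple closed curves on the surface associated to $\Gamma$ are pinched together with twist data along them --- that is, exactly the data of a $\Pi_{g,n}$-morphism $\Gamma'\to\Gamma$ (contract the corresponding new edges of $\Gamma'$, with the same twists), the reversal of direction reflecting that exit paths run from more degenerate strata to less degenerate ones whereas $\mathfrak{SC}_{g,n}$-morphisms are degenerations. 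I would then write down the resulting comparison functor $\mathfrak{SC}_{g,n}^{\operatorname{op}}\to\Pi_{g,n}$ --- the identity on objects --- and verify it is fully faithful and essentially surjective; the stratified homotopy equivalence of \cite{EbertGiansiracusa} provides a useful consistency check, and conversely the present result refines it. Finally, the exodromy equivalence with coefficients in an arbitrary compactly generated $\infty$-category $\mathcal{C}$ is formal once $\Pi_{g,n}$ is identified, via the defining property of the exit path $\infty$-category (\Cref{definition exit path category}) and a standard tensoring-up argument.

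The step I expect to be the main obstacle is the bookkeeping around symmetries: showing that the a priori lax quotients --- by the finite groups $\operatorname{Aut}(C)$ in the charts and by the graph automorphisms and component mapping class groups in the gluing --- assemble \emph{globally} into an honest $1$-category and not merely chart by chart; and matching conventions with \cite{CharneyLee84,EbertGiansiracusa} precisely enough (normalisation of the twist lattices, the treatment of marked points and of automorphisms acting non-trivially on the nodes, the direction of morphisms) that the comparison functor is an isomorphism on hom-sets.
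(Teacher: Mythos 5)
Your proposal takes a genuinely different route from the paper. Where you propose to cover $\overline{\mathcal{M}}^{\operatorname{top}}_{g,n}$ by versal deformation (Kuranishi) charts and assemble the exit path $\infty$-category by open-cover descent and van Kampen, the paper works globally with the Harvey bordification $\overline{\mathcal{T}}_{\!\!S}$ of Teichm\"uller space. It uses proper descent along the blow-down map $[\Gamma_S\backslash \overline{\mathcal{T}}_{\!\!S}]\to\overline{\mathcal{M}}^{\operatorname{top}}_{g,n}$ together with closed descent on the boundary, runs an induction on the depth of strata, and encodes the resulting recursion diagrammatically by the twisted arrow category $\operatorname{Tw}(\operatorname{CL}_S)$ of the Charney--Lee category. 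The payoff of that choice is that the pieces appearing are closures of strata in a Harvey bordification, whose exit path $\infty$-categories are literally their stratifying posets (the analogue of contractibility), and the colimit in $\operatorname{Cat}_\infty$ over $\operatorname{Tw}(\operatorname{CL}_S)^{\operatorname{op}}$ of these posets can be computed by hand to be $\operatorname{CL}_S$ via the general facts that $\operatorname{Tw}(\mathcal{C})^{\operatorname{op}}\to\mathcal{C}$ is a $\varinjlim$-equivalence and $\varinjlim_{c}\mathcal{C}_{/c}\simeq\mathcal{C}$.

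There is a genuine gap in your step from ``every piece and every transition functor in the descent diagram is $1$-categorical'' to ``so the colimit is a $1$-category''. That implication is false in $\operatorname{Cat}_\infty$: for instance the pushout $\ast\leftarrow B\mathbb{Z}\rightarrow\ast$ of $1$-groupoids is $\Sigma S^1\simeq S^2$, which is not $1$-truncated. So even granting your local computations (which look right: $\Pi(\mathbb{D},\{0\})$ is the two-object category with $\operatorname{End}(\ast)=\mathbb{Z}$, products for normal crossings, a finite action category for the $\operatorname{Aut}(C)$-quotient, and $K(\pi,1)$-strata), you have not produced an argument that the glued-up object is a $1$-category, let alone that it is $\mathfrak{SC}_{g,n}^{\operatorname{op}}$. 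This is precisely what the paper's twisted arrow decomposition is engineered to give: a colimit presentation whose value one can evaluate in closed form. A secondary problem is that your descent diagram requires controlling the exit path $\infty$-categories of arbitrary \emph{intersections} of Kuranishi charts, which do not carry a uniform combinatorial description in terms of dual graphs; you acknowledge the symmetry bookkeeping as ``the main obstacle'', and I agree --- but without either a direct computation of the colimit or a different decomposition (as in the paper), the argument does not close. The final identification $\Pi_{g,n}\simeq\mathfrak{SC}_{g,n}^{\operatorname{op}}$ and the coefficient-generalisation via tensoring-up are plausible and match the paper's \Cref{category of stable curves vs Charney-Lee category} and \Cref{exodromy equivalence}, but they inherit the gap: you need an explicit model of the colimit to write down the comparison functor and check it on hom-sets.
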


The category $\Pi_{g,n}$ can be defined very explicitly; we call it the \textit{Charney--Lee category associated to a stable surface of type $(g,n)$} (see \Cref{CharneyLee categories} where we use the notation $\Pi_{g,n}=\operatorname{CL}_S$ for a surface $S$). It can in fact also be identified with a full subcategory of the orbit category of the mapping class group of $S$.

As mentioned, the fact that the exit path $\infty$-category of $\overline{\mathcal{M}}_{g,n}$ is just a $1$-category can be interpreted as saying that $\overline{\mathcal{M}}_{g,n}$ is a $K(\pi,1)$ in a stratified sense. It also has the very nice consequence that we can express the constructible derived category on $\overline{\mathcal{M}}_{g,n}$ as a derived functor category. For a given associative ring $R$, we consider the derived $\infty$-category of left $R$-modules, $\operatorname{LMod}_R\simeq \mathscr{D}(R)$, and the bounded constructible derived $\infty$-category
\begin{align*}
\mathscr{D}^b_{\operatorname{cbl}}(\operatorname{Shv}_1(\overline{\mathcal{M}}^{\operatorname{top}}_{g,n};R))\xrightarrow{\sim}\operatorname{Shv}^{\operatorname{cbl,cpt}}(\overline{\mathcal{M}}^{\operatorname{top}}_{g,n};\operatorname{LMod}_R);
\end{align*}
here 
the left hand side is the full subcategory of the derived $\infty$-category spanned by complexes of sheaves whose homology sheaves are constructible and whose stalk complexes are perfect, the right hand side is the full subcategory of constructible sheaves whose stalks are compact objects in $\operatorname{LMod}_R$, i.e. perfect complexes.

\begin{theorem*}[\ref{equivalence of derived infinity-categories}]
Let $R$ be an associative ring. There is an equivalence
\begin{align*}
\operatorname{Shv}^{\operatorname{cbl,cpt}}(\overline{\mathcal{M}}^{\operatorname{top}}_{g,n};\operatorname{LMod}_R)\xrightarrow{\sim} \mathscr{D}^{\operatorname{cpt}}\big(\operatorname{Fun}(\Pi_{g,n},\operatorname{LMod}_R^1)\big),
\end{align*}
identifying the bounded constructible derived $\infty$-category
with the full subcategory of the derived functor $\infty$-category spanned by complexes of functors $F_\bullet\colon \Pi_{g,n}\rightarrow \operatorname{LMod}_R^1$ into the $1$-category of left $R$-modules satisfying that $F_\bullet(x)$ is a perfect complex for every object $x$ of $\Pi_{g,n}$.
\end{theorem*}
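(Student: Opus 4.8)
The plan is to bootstrap from the exodromy equivalence of \Cref{exodromy equivalence}, using in an essential way that the exit path $\infty$-category $\Pi_{g,n}$ is an ordinary $1$-category (\Cref{the exit path category of Mg bar}). First, since $\operatorname{LMod}_R\simeq\mathscr{D}(R)$ is compactly generated, \Cref{exodromy equivalence} supplies an equivalence $\operatorname{Shv}^{\operatorname{cbl}}(\overline{\mathcal{M}}^{\operatorname{top}}_{g,n};\operatorname{LMod}_R)\xrightarrow{\ \sim\ }\operatorname{Fun}(\Pi_{g,n},\operatorname{LMod}_R)$. By construction of the exit path $\infty$-category as the classifier of constructible sheaves, this equivalence is compatible with stalks: the stalk of a constructible sheaf at a point of a given stratum is computed by evaluation at the corresponding object of $\Pi_{g,n}$ (see \cite{OrsnesJansen23}). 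Hence a constructible sheaf has stalks that are compact objects of $\operatorname{LMod}_R$, i.e.\ perfect complexes, precisely when the associated functor takes values in the full subcategory $\operatorname{Perf}_R:=(\operatorname{LMod}_R)^{\omega}$; this identifies the left-hand side of the theorem with $\operatorname{Fun}(\Pi_{g,n},\operatorname{Perf}_R)$.

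Next I would identify $\operatorname{Fun}(\Pi_{g,n},\operatorname{LMod}_R)$ with the derived $\infty$-category $\mathscr{D}(\operatorname{Fun}(\Pi_{g,n},\operatorname{LMod}_R^1))$ of the Grothendieck abelian functor category --- this is the only point at which the $1$-categoricity of $\Pi_{g,n}$ is used. The $R$-linearised representable functors $R[\operatorname{Hom}_{\Pi_{g,n}}(x,-)]$, $x$ an object of $\Pi_{g,n}$, form a set of compact projective generators of $\operatorname{Fun}(\Pi_{g,n},\operatorname{LMod}_R^1)$: by the co-Yoneda lemma $\operatorname{Hom}(R[\operatorname{Hom}_{\Pi_{g,n}}(x,-)],F)\cong F(x)$, which is exact and commutes with filtered colimits. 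By the standard recognition theorem for derived $\infty$-categories of Grothendieck abelian categories with a set of compact projective generators (see e.g.\ \cite[\S 1.3.3]{LurieHA}), $\mathscr{D}(\operatorname{Fun}(\Pi_{g,n},\operatorname{LMod}_R^1))$ is the presentable stable $\infty$-category compactly generated by these objects; since their mapping spectra are discrete --- Eilenberg--MacLane spectra on the free $R$-modules $R[\operatorname{Hom}_{\Pi_{g,n}}(-,-)]$ --- it is the $\infty$-category of modules over the (discrete) $R$-linearisation of $\Pi_{g,n}$, which by the universal property of linearisation is $\operatorname{Fun}(\Pi_{g,n},\operatorname{LMod}_R)$. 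Under this equivalence the pointwise $t$-structure corresponds to the canonical one, and, since evaluation $\operatorname{ev}_x\colon\operatorname{Fun}(\Pi_{g,n},\operatorname{LMod}_R^1)\to\operatorname{LMod}_R^1$ is exact, its derived functor corresponds to evaluation at $x$ on $\operatorname{Fun}(\Pi_{g,n},\operatorname{LMod}_R)$ (both preserve colimits and agree on the generators).

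Putting these together: a complex $F_\bullet$ lies in $\mathscr{D}^{\operatorname{cpt}}$, i.e.\ $F_\bullet(x)$ is perfect for every object $x$, if and only if the corresponding functor lands in $\operatorname{Perf}_R$; thus $\mathscr{D}^{\operatorname{cpt}}(\operatorname{Fun}(\Pi_{g,n},\operatorname{LMod}_R^1))\simeq\operatorname{Fun}(\Pi_{g,n},\operatorname{Perf}_R)$, and composing with the first step gives the asserted equivalence $\operatorname{Shv}^{\operatorname{cbl,cpt}}(\overline{\mathcal{M}}^{\operatorname{top}}_{g,n};\operatorname{LMod}_R)\simeq\mathscr{D}^{\operatorname{cpt}}(\operatorname{Fun}(\Pi_{g,n},\operatorname{LMod}_R^1))$. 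This is compatible with the realisation equivalence $\mathscr{D}^b_{\operatorname{cbl}}(\operatorname{Shv}_1(\overline{\mathcal{M}}^{\operatorname{top}}_{g,n};R))\xrightarrow{\sim}\operatorname{Shv}^{\operatorname{cbl,cpt}}(\overline{\mathcal{M}}^{\operatorname{top}}_{g,n};\operatorname{LMod}_R)$ recalled before the statement; passing from the bounded constructible derived category is harmless here because perfect complexes are bounded.

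I expect the main difficulty to lie in the second step: setting up the equivalence $\mathscr{D}(\operatorname{Fun}(\Pi_{g,n},\operatorname{LMod}_R^1))\simeq\operatorname{Fun}(\Pi_{g,n},\operatorname{LMod}_R)$ with enough care that it is manifestly compatible with the $t$-structures and with every evaluation functor, so that both the pointwise perfectness condition and --- via the first step --- the stalk condition transport correctly. This rests entirely on $\Pi_{g,n}$ being a genuine $1$-category, for otherwise $\operatorname{Fun}(\Pi_{g,n},\operatorname{LMod}_R)$ is not the derived category of any abelian category; so \Cref{the exit path category of Mg bar} is precisely the input that makes the theorem possible.
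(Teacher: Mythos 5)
Your proposal is correct and follows essentially the same route as the paper: the paper's proof consists solely of invoking \cite[Proposition 3.23]{OrsnesJansen23}, which is precisely the statement that for a $1$-category $\Pi$ one has $\operatorname{Fun}(\Pi,\operatorname{LMod}_R)\simeq\mathscr{D}\big(\operatorname{Fun}(\Pi,\operatorname{LMod}_R^1)\big)$ compatibly with evaluation/stalk functors, established via the compact projective generators $R[\operatorname{Hom}_{\Pi}(x,-)]$ just as you describe. Your reconstruction of that step, together with the transport of the perfect-stalk condition through the exodromy equivalence, is a faithful unpacking of what the cited proposition does.
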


There is also a version of this without the bounded condition, and, moreover, taking homotopy categories provides an analogous identification of the more classical $1$-categorical constructible derived categories (see \Cref{consequences}).

As immediate corollaries, we recover the results of Charney--Lee and Ebert--Giansiracusa. This follows from the following corollary where by the fundamental $\infty$-groupoid $\Pi(\overline{\mathcal{M}}^{\operatorname{top}}_{g,n})$ we simply mean the \textit{shape} of $\operatorname{Shv}(\overline{\mathcal{M}}^{\operatorname{top}}_{g,n})$; equivalently, it is the idempotent complete $\infty$-category classifying locally constant sheaves (see \cite[\S 3.3]{OrsnesJansen23}).

\begin{corollary*}[\ref{homotopy type}]
There is a weak homotopy equivalence $\Pi_{g,n}\rightarrow \Pi(\overline{\mathcal{M}}^{\operatorname{top}}_{g,n})$ to the fundamental $\infty$-groupoid of $\overline{\mathcal{M}}^{\operatorname{top}}_{g,n}$.
\end{corollary*}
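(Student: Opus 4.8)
The plan is to deduce this from the main theorem together with the sheaf-theoretic definition of the fundamental $\infty$-groupoid. Recall that $\Pi(\overline{\mathcal{M}}^{\operatorname{top}}_{g,n})$ is, by definition, the idempotent complete $\infty$-category classifying locally constant space-valued sheaves on $\overline{\mathcal{M}}^{\operatorname{top}}_{g,n}$, and that any locally constant sheaf is in particular $\mathcal{G}_{g,n}$-constructible, so that $\operatorname{Shv}^{\operatorname{lc}}(\overline{\mathcal{M}}^{\operatorname{top}}_{g,n};\mathcal{S})$ is a full subcategory of $\operatorname{Shv}^{\operatorname{cbl}}(\overline{\mathcal{M}}^{\operatorname{top}}_{g,n};\mathcal{S})$. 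First I would transport this inclusion along the exodromy equivalence $\operatorname{Shv}^{\operatorname{cbl}}(\overline{\mathcal{M}}^{\operatorname{top}}_{g,n};\mathcal{S})\simeq\operatorname{Fun}(\Pi_{g,n},\mathcal{S})$ of \Cref{the exit path category of Mg bar} and \Cref{exodromy equivalence} and identify the resulting full subcategory of $\operatorname{Fun}(\Pi_{g,n},\mathcal{S})$.

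The crucial input is the standard characterisation of locally constant sheaves among constructible ones, due to Lurie in the topological setting (\cite[Appendix A]{LurieHA}) and available for stratified topological stacks through the formalism of \cite{OrsnesJansen23}: a constructible sheaf is locally constant if and only if the corresponding functor $\Pi_{g,n}\to\mathcal{S}$ sends every morphism of $\Pi_{g,n}$ to an equivalence of spaces. Granting this, the essential image of $\operatorname{Shv}^{\operatorname{lc}}(\overline{\mathcal{M}}^{\operatorname{top}}_{g,n};\mathcal{S})$ is the full subcategory of $\operatorname{Fun}(\Pi_{g,n},\mathcal{S})$ spanned by the functors inverting all morphisms; by the universal property of localisation this subcategory is canonically $\operatorname{Fun}(|\Pi_{g,n}|,\mathcal{S})$, where $|\Pi_{g,n}|$ denotes the $\infty$-groupoid completion of $\Pi_{g,n}$ (equivalently, as $\Pi_{g,n}$ is a $1$-category, the homotopy type of its classifying space), and the inclusion is restriction along the localisation functor $q\colon\Pi_{g,n}\to|\Pi_{g,n}|$.

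Combining this with the defining equivalence $\operatorname{Shv}^{\operatorname{lc}}(\overline{\mathcal{M}}^{\operatorname{top}}_{g,n};\mathcal{S})\simeq\operatorname{Fun}(\Pi(\overline{\mathcal{M}}^{\operatorname{top}}_{g,n}),\mathcal{S})$ gives an equivalence $\operatorname{Fun}(\Pi(\overline{\mathcal{M}}^{\operatorname{top}}_{g,n}),\mathcal{S})\simeq\operatorname{Fun}(|\Pi_{g,n}|,\mathcal{S})$. Passing to atomic objects on both sides --- precisely as in the formulation of the notion of exit path $\infty$-category (\Cref{definition exit path category}) --- and using that $\Pi(\overline{\mathcal{M}}^{\operatorname{top}}_{g,n})$ and $|\Pi_{g,n}|$ are idempotent complete (being $\infty$-groupoids), this equivalence is induced by an equivalence $|\Pi_{g,n}|\xrightarrow{\ \sim\ }\Pi(\overline{\mathcal{M}}^{\operatorname{top}}_{g,n})$. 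The composite
\begin{align*}
\Pi_{g,n}\xrightarrow{\ q\ }|\Pi_{g,n}|\xrightarrow{\ \sim\ }\Pi(\overline{\mathcal{M}}^{\operatorname{top}}_{g,n})
\end{align*}
is then the desired weak homotopy equivalence, since $q$ is the $\infty$-groupoid completion map and hence a weak homotopy equivalence by construction; under the identification $\Pi_{g,n}\simeq\mathfrak{SC}_{g,n}^{\operatorname{op}}$ it is the canonical comparison map forgetting the stratification, and feeding the resulting description $|\mathfrak{SC}_{g,n}|\simeq\Pi(\overline{\mathcal{M}}^{\operatorname{top}}_{g,n})$ into the known identification of the shape of a nice topological stack recovers the homotopy equivalences of \cite{CharneyLee84} and \cite{EbertGiansiracusa}.

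I expect the step needing the most care to be the characterisation in the second paragraph: verifying that local constancy of a sheaf on the \emph{stack} $\overline{\mathcal{M}}^{\operatorname{top}}_{g,n}$ matches the condition that the associated functor inverts every morphism of $\Pi_{g,n}$, and that the inclusion $\operatorname{Shv}^{\operatorname{lc}}\hookrightarrow\operatorname{Shv}^{\operatorname{cbl}}$ is compatible with the two a priori separately constructed classification equivalences. Should \cite{OrsnesJansen23} already provide a general comparison identifying the shape of $\operatorname{Shv}(\overline{\mathcal{M}}^{\operatorname{top}}_{g,n})$ with the $\infty$-groupoid completion of its exit path $\infty$-category, this step --- and with it the whole corollary --- becomes immediate once the main theorem is in hand.
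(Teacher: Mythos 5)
Your proposal is correct and matches the paper's first route in spirit: the paper simply cites \cite[Corollary 3.21]{OrsnesJansen23} which encapsulates exactly the reasoning you spell out --- that whenever the stratified stack admits an exit path $\infty$-category, its $\infty$-groupoid completion is the shape, and the canonical map is a weak homotopy equivalence. The step you correctly flag as needing care (that under exodromy, local constancy corresponds to inverting all morphisms, and that this is compatible with the independently constructed monodromy equivalence) is precisely the content of that cited result, so your ``should \cite{OrsnesJansen23} already provide this, the corollary is immediate'' is exactly how the paper proceeds --- after first noting, via \Cref{sheaves on Mg bar as limit of simple pieces} and \cite[Proposition 3.13]{OrsnesJansen23}, that the fundamental $\infty$-groupoid exists in the required technical sense, a verification your write-up should not elide since the shape is not automatically of the form $\operatorname{Fun}(\mathcal{G},\mathcal{S})$ for an idempotent complete $\mathcal{G}$. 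The paper also records a second, more computational route that you do not mention and that sidesteps the locally-constant characterisation entirely: since each $\overline{X}_\sigma^{S/\tau}$ is contractible, the limit decomposition of \Cref{sheaves on Mg bar as limit of simple pieces} together with $\varinjlim_{\mathcal{C}}\ast\simeq|\mathcal{C}|$ gives $\Pi_{\mathbf{G}}\simeq|\operatorname{CL}_S|$ directly as a colimit in $\operatorname{Cat}_\infty$, without ever passing through $\operatorname{Shv}^{\operatorname{lc}}\hookrightarrow\operatorname{Shv}^{\operatorname{cbl}}$.
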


\begin{remark*}
We also recover the functoriality established by Ebert--Giansiracusa (\Cref{EG-functoriality of weak equivalence}). Note that this functoriality is not quite sufficient to determine the exit path $\infty$-category as we might not account for the ``glueing data'' (see the example given in \cite[Remark 3.20]{OrsnesJansen}).
\end{remark*}

We also calculate the exit path $\infty$-categories of the following moduli stacks and obtain results analogous to the ones above:
\begin{itemize}
\item the moduli stack $\overline{\mathcal{M}}_{g,P}$ of stable $P$-pointed genus $g$ nodal curves for some finite set $P$ (the case $P=\{0,1,\ldots,n\}$ then recovers $\overline{\mathcal{M}}_{g,n}$). Working with general indexing sets enables us to keep track of nodes and marked points when we consider nodal surfaces part by part;
\item the moduli stack $\overline{\mathcal{M}}_{\mathbf{G}}$ of stable nodal curves with associated dual graph a specialisation of a given stable dual graph $\mathbf{G}$ --- this generalises the moduli stack $\overline{\mathcal{M}}_{g,P}$ above and allows us to make a cleaner inductive argument (\Cref{the exit path category of Mg bar});
\item the Harvey bordification $\overline{\mathcal{T}}_{\!\!S}$ of Teichmüller space associated to a stable nodal surface of finite type $S$ (\Cref{exit path category of Harvey bordification});
\item the Harvey compactification $[\Gamma_S\backslash \overline{\mathcal{T}}_{\!\!S}]$ of the open substack $[\Gamma_S\backslash \mathcal{T}_S]\cong \mathcal{M}^{\operatorname{top}}_{\mathbf{G}}\subseteq \overline{\mathcal{M}}_{\mathbf{G}}^{\operatorname{top}}$ where $\mathbf{G}$ is the dual graph associated to the stable nodal surface $S$ (\Cref{exit path category of Harvey compactification}).
\end{itemize}
 
Let us briefly remark here that the results and proof strategies of the present paper are analogous to the ones of \cite{ClausenOrsnesJansen} in which we determine the exit path $\infty$-category of the reductive Borel--Serre compactification of a locally symmetric space.

\medskip

\textbf{Outline of paper.} In Sections 2 through 4, we introduce and recall the necessary objects, constructions and properties enabling our final calculation. Let us stress that these sections contain no original material; we simply present the relevant background in a way that fits into the setting at hand. We do provide an alternative description of the category originally defined by Charney and Lee (\cite{CharneyLee84}, cf. \Cref{The category}), but this is a simple observation albeit absolutely essential to our calculational strategy. We do not pretend to provide a complete review of the background and refer instead to other sources for details and proofs. Our motivation (or excuse) for giving a detailed review is that, to the best of the author's knowledge, the moduli stack of stable curves has not previously been studied in the setting of stratified topological stacks; therefore we choose to introduce quite a lot of details in order to provide rigorous setup.

In Section 2, we introduce the moduli stacks in question: the Deligne--Mumford--Knudsen compactifications and their underlying stratified topological stacks. In Section 3, we introduce the category of stable curves, and in Section 4, we recall Teichmüller space and its Harvey bordification and make some preliminary observations.

The original material is presented in Sections 5 and 6 and follows the calculational strategy of \cite{ClausenOrsnesJansen}. In Section 5, we exploit various descent properties of sheaf categories to exhibit the inductive nature of the Deligne--Mumford--Knudsen compactifications and, more importantly perhaps, we provide a diagrammatic analogue of these inductive decompositions. The actual calculation is done in Section 6 where we combine the established inductive decompositions with certain permanence properties of exit path $\infty$-categories. The idea is to compatibly cover $\overline{\mathcal{M}}_{g,P}$  by stratified spaces whose exit path $\infty$-categories identify with their stratifying posets --- this should be interpreted as the analogue for stratified spaces of being contractible. We exploit the so-called Harvey bordification to construct this cover. The inductive nature of $\overline{\mathcal{M}}_{g,P}$ and the diagrammatic analogues allow us to show that sheaves on $\overline{\mathcal{M}}_{g,P}$ are determined by compatible sheaves on the cover (\Cref{sheaves on Mg bar as limit of simple pieces}). In the end, this allows us to calculate the exit path $\infty$-category as a colimit of posets in $\operatorname{Cat}_\infty$.

We include a small appendix on dual graphs for the reader unfamiliar with these details. We also include a rather comprehensive appendix identifying the Harvey compactification as constructed by Ivanov (\cite{Ivanov}) with the real oriented blow-up of the Deligne--Mumford--Knudsen compactification --- this is an observation originally due to Looijenga (\cite{Looijenga95}).

\medskip

\textbf{Acknowledgements.} The author would like to thank Søren Galatius and Dan Petersen for valuable comments and insights into the Deligne--Mumford--Knudsen compactification and related stacks. We thank Dustin Clausen for many useful conversations and also for the collaboration in \cite{ClausenOrsnesJansen}; the calculational strategy of that paper made the present calculation very manageable. 

\medskip

\textbf{Notation and conventions.} All schemes are assumed to be finite type over $\C$ unless otherwise stated.

We adopt the set-theoretic conventions of \cite[\S 1.2.15]{LurieHTT}

\section{The Deligne--Mumford--Knudsen compactification}\label{DMK section}

In this section, we introduce the moduli stacks of interest, namely the moduli stack of stable nodal curves, also known as the Deligne--Mumford--Knudsen compactification. We start by introducing the algebraic stacks and go on to consider their underlying topological stacks and their natural stratifications in terms of stable dual graphs. We refer to \cite{ArbarelloCornalbaGriffiths} for details.

\subsection{Stable curves and a range of moduli stacks}\label{stable curves dual graphs and moduli stacks}

What follows is a speedy recap of the objects under consideration; we refer to \cite[Chapter XII, \S 8]{ArbarelloCornalbaGriffiths} for details on ($1$-)stacks.

By a \textit{curve} we mean a reduced, proper, connected scheme $X$ of dimension $1$ over $\C$. By the \textit{genus $g$} of a curve $X$ we mean its arithmetic genus $g=\operatorname{dim}_{\C}H^1(X,\mathcal{O}_X)$. A \textit{node} (or an ordinary double point) of $X$ is a point $p\in X(\C)$ with the property that the completed local ring $\widehat{\mathcal{O}}_{X,p}$ is isomorphic to $\C[[x,y]]/(xy)$ as a complete $\C$-algebra. A curve is \textit{nodal} if its only singularities, if any, are nodes. Let $P$ be a finite set. A \textit{$P$-pointed nodal curve} is the data $(X;\{p_i\}_{i\in P})$ of a nodal curve $X$ and a collection of distinct smooth points $p_i\in X$, $i\in P$. We often write $(X;p_i)$ leaving $P$ implicit.

A $P$-pointed nodal curve $(X;p_i)$ is said to be \textit{stable} if its group of automorphisms is finite, where the automorphisms are required to fix the marked points pointwise. We will from now on assume that all curves considered are nodal and thus refer to stable nodal curves simply as stable curves; we say that such a curve is \textit{smooth} if we want to stress that it has no singularities. It is well-known that there exist stable $P$-pointed genus $g$ curves if and only if $2g-2+|P|>0$ (see e.g.~\cite[Chapter X, Lemma 3.1]{ArbarelloCornalbaGriffiths}); from now on, whenever consider a pair $(g,P)$, we will implicitly assume that $2g-2+|P|>0$.

\begin{remark}
A bijection $P\cong P'$ of finite sets turns a $P$-pointed curve into a $P'$-pointed curve, so we can always reduce to $P=\{1,\ldots,n\}$ for some $n$. It is useful, however, to be able to mark by a general finite set, for example when it comes to the so-called clutching morphisms (see e.g.~\cite[Chapter X, \S 7]{ArbarelloCornalbaGriffiths}). We will not need to go into the finer details of these morphisms, but they do play an important role in our calculational strategy, so we choose to work in this generality.
\end{remark}

\begin{definition}
Let $\overline{\mathcal{M}}_{g,P}$ denote the category whose objects are flat, proper morphisms $\xi\colon X\rightarrow T$ in $\operatorname{Sch}_{\C}^{ft}$ together with disjoint sections $\sigma_i\colon T\rightarrow X$, $i\in P$, such that the geometric fibres, together with their marked points given by the $\sigma_i$, are stable $P$-pointed genus $g$ curves. A morphism in $\overline{\mathcal{M}}_{g,P}$ from $(X'\xrightarrow{\xi'} T';\sigma_i')$ to $(X\xrightarrow{\xi} T; \sigma_i)$ in $\overline{\mathcal{M}}_{g,P}$ is given by a cartesian diagram respecting the sections:
\begin{center}
\begin{tikzpicture}
\matrix (m) [matrix of math nodes,row sep=2em,column sep=2em]
  {
X' & X \\
T' & T \\
  };
  \path[-stealth]
(m-1-1) edge (m-1-2) edge node[left]{$\xi'$} (m-2-1)
(m-2-1) edge (m-2-2)
(m-1-2) edge node[right]{$\xi$} (m-2-2)
;
\end{tikzpicture}
\end{center} 

We implicitly consider the category $\overline{\mathcal{M}}_{g,P}$ together with the functor $\overline{\mathcal{M}}_{g,P}\rightarrow \operatorname{Sch}_{\C}^{ft}$ sending a family $\xi\colon X\rightarrow T$ to the base $T$. An object $(\xi\colon X\rightarrow T; \sigma_i)$ of $\overline{\mathcal{M}}_{g,P}$ is called \textit{a family of stable $P$-pointed genus $g$ curves parametrised by $T$}. Let $\mathcal{M}_{g,P}$ denote the full subcategory of families whose geometric fibres are smooth curves.
\end{definition}

The following is well-known and can be found in e.g.~\cite[Chapter XII, Theorem 8.3]{ArbarelloCornalbaGriffiths}.

\begin{theorem}
$\overline{\mathcal{M}}_{g,P}$ and $\mathcal{M}_{g,P}$ are Deligne--Mumford stacks.
\end{theorem}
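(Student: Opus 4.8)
This is a classical result and the plan is to run the standard argument: realise $\overline{\mathcal{M}}_{g,P}$ as a quotient stack of a locally closed subscheme of a Hilbert scheme by a projective linear group, and then read off the Deligne--Mumford property from the finiteness of automorphism groups built into stability (properness is not asserted here, so no stable reduction argument is needed). The first point is that $\overline{\mathcal{M}}_{g,P}$ is a stack for the fppf, equivalently étale, topology; that it is a category fibred in groupoids is immediate from the definition via cartesian squares. The key tool for effectivity of descent is the canonical polarisation: for a family $\xi\colon X\rightarrow T$ in $\overline{\mathcal{M}}_{g,P}$ the relative dualizing sheaf $\omega_{X/T}$ is a line bundle (the fibres being nodal, hence Gorenstein), and $L:=\omega_{X/T}\big(\textstyle\sum_{i\in P}\sigma_i\big)$ restricts to a degree $2g-2+|P|>0$ line bundle on every geometric fibre. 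I would invoke the standard positivity of $L$ on nodal curves --- ampleness can be checked one irreducible component at a time, where stability supplies the needed positivity --- to conclude that $L^{\otimes m}$ is relatively very ample with vanishing higher direct images for $m\geq 3$; then $\xi_*L^{\otimes m}$ is locally free of constant rank $N+1=(2m-1)(g-1)+m|P|$ by cohomology and base change, and $\xi$ is projective. Since projective morphisms satisfy fppf descent, and being nodal, being stable, and having disjoint smooth sections are conditions on geometric fibres (hence fppf-local on $T$), descent data for families and morphisms are effective; the same reasoning applies to $\mathcal{M}_{g,P}$.

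Next I would fix $m\geq 3$, define $N$ as above, and observe that an étale-local trivialisation $\xi_*L^{\otimes m}\cong\mathcal{O}_T^{N+1}$ presents $\xi$ as a closed embedding $X\hookrightarrow\mathbb{P}^N_T$ together with its sections, of fixed Hilbert polynomial $\mathcal{P}(t)=m(2g-2+|P|)t+1-g$. Inside the relative Hilbert scheme $\operatorname{Hilb}^{\mathcal{P}}_{\mathbb{P}^N/\C}$, together with $|P|$ auxiliary copies of $\mathbb{P}^N$ to record the marked points, the locus $\mathcal{H}$ parametrising stable $P$-pointed genus $g$ curves embedded by $L^{\otimes m}$ is a locally closed, hence finite type, subscheme (being a nodal curve and having the embedding induced by $L^{\otimes m}$ are locally closed conditions, while stability and the sections being smooth and distinct are open). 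The group $\operatorname{PGL}_{N+1}$ acts on $\mathcal{H}$, and the discussion above gives an equivalence $\overline{\mathcal{M}}_{g,P}\simeq[\mathcal{H}/\operatorname{PGL}_{N+1}]$: a family over $T$ amounts to a family together with a trivialisation of $\xi_*L^{\otimes m}$, i.e.\ a map $T\rightarrow\mathcal{H}$, and a change of trivialisation is precisely the $\operatorname{PGL}_{N+1}$-action.

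Finally, $[\mathcal{H}/\operatorname{PGL}_{N+1}]$, being the quotient of a finite type $\C$-scheme by a smooth affine group scheme, is an algebraic stack of finite type with smooth atlas $\mathcal{H}\rightarrow[\mathcal{H}/\operatorname{PGL}_{N+1}]$ and representable, separated, finite type diagonal; to upgrade this to Deligne--Mumford it suffices to show the diagonal is unramified, i.e.\ that the automorphism group scheme of each object is finite and unramified. That group scheme is the stabiliser in $\operatorname{PGL}_{N+1}$ of the corresponding embedded curve; since $m\geq 3$ the embedding is intrinsic to the abstract $P$-pointed curve, so the stabiliser is canonically its automorphism group, which is finite by the very definition of stability and reduced --- hence étale --- because we work over $\C$. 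Thus the diagonal is unramified, a smooth atlas refines to an étale one, and $\overline{\mathcal{M}}_{g,P}$ is a Deligne--Mumford stack; since $\mathcal{M}_{g,P}$ is the open substack where the geometric fibres are smooth (an open condition on $T$), it is Deligne--Mumford as well. I expect the main obstacle to lie in establishing the uniform positivity of $\omega_{X/T}\big(\sum_{i\in P}\sigma_i\big)$ on families of nodal curves, on which the canonical polarisation and hence everything downstream depends; the remaining steps are formal once that is in hand.
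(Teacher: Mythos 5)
Your proposal is the classical Hilbert-scheme/$\operatorname{PGL}$-quotient argument; the paper does not reprove the theorem but simply cites \cite[Chapter XII, Theorem 8.3]{ArbarelloCornalbaGriffiths}, where essentially this argument is carried out. Your write-up is correct --- the canonical polarisation $\omega_{X/T}(\sum_{i\in P}\sigma_i)$, the numerology $N+1=(2m-1)(g-1)+m|P|$, the locally closed locus in the Hilbert scheme, and the reduction of the Deligne--Mumford property to unramifiedness of the diagonal via finiteness (from stability) and reducedness (over $\C$, so Cartier) of automorphism group schemes all check out.
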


The stack $\overline{\mathcal{M}}_{g,P}$ is also known as the \textit{Deligne--Mumford--Knudsen compactification of} $\mathcal{M}_{g,P}$ or the \textit{moduli stack of stable $P$-pointed genus $g$ nodal curves}.

\medskip

It will be useful for us to generalise the definition of the moduli stack of stable curves introduced above --- our calculational strategy is to exploit the inductive nature of these stacks as will become apparent in \Cref{inductive decompositions}. In order to make this generalisation we will need the notion of dual graphs. We make a brief summary of the necessary observations and constructions here and refer the reader to \Cref{dual graphs} for details.

Recall that a $P$-pointed dual graph $\mathbf{G}=(G,\omega,m)$ consists of the following data:
\begin{itemize}
\item a multigraph $G=(V,E,\iota)$ consisting of
\begin{itemize}
\item a set $V$ of vertices;
\item a set of half-edges $E=\coprod_{v\in V}E_v$ equipped with a partition indexed by $V$;
\item and an involution $\iota\colon E\rightarrow E$. The fixed points of $\iota$ are call \textit{legs} and the pairs of half-edges that are interchanged by $\iota$ are called \textit{edges};
\end{itemize}
\item a \textit{weight function} $\omega\colon V\rightarrow \Z_{\geq 0}$, $v\mapsto g_v$;
\item and a $P$-marking $m\colon P\rightarrow E$, i.e.~a bijective labelling of the legs.
\end{itemize}
A dual graph is \textit{stable} if $2g_v-2+|E_v|>0$ for all $v\in V$. The \textit{genus} of a dual graph is defined to be $g(G,\omega)=1-\chi(G)+\sum_{v\in V}g_v$.

Edge contraction defines a poset relation on the set of isomorphism classes of stable $P$-pointed genus $g$ dual graphs (see \Cref{contraction of graph} for details):
\begin{align*}
[\mathbf{G}']\leq [\mathbf{G}]\quad&\text{if } \mathbf{G}'\text{ is a \textit{specialisation} of }\mathbf{G}, \\
&\text{i.e.~if }\mathbf{G}\text{ is isomorphic to a \textit{contraction} of }\mathbf{G}':\ \mathbf{G}\cong \mathbf{G}'_I\text{ for some } I.
\end{align*}
We denote the resulting poset by $\mathcal{G}_{g,P}$. The largest element is the one vertex graph of weight $g$. Intuitively, the more edges a graph has, the smaller it is.

One can associate a stable $P$-pointed genus $g$ dual graph $\mathbf{G}(X;p_i)$ to a stable $P$-pointed genus $g$ nodal curve $(X;p_i)$: the vertices are the connected components of the normalisation of $X$ each with weight given by the genus of the component, the edges are given by the nodes and the legs are given by the marked points. See \Cref{dual graph associated to stable curve} for details.

\medskip

Now, let $\mathbf{G}=(G,\omega,m)$ be a stable $P$-pointed genus $g$ dual graph, write $g_v:=\omega(v)$ and consider the set $P_v$ of half-edges incident to $v$ for each vertex $v$ of $G$. Let
\begin{align*}
\overline{\mathcal{M}}_{\mathbf{G}}^\sim:=\prod_{v\in V} \overline{\mathcal{M}}_{g_v,P_v}
\end{align*}
denote the product of moduli stacks of stable curves corresponding to the type $(g_v,P_v)$ of each vertex, and let $\mathcal{M}_{\mathbf{G}}^\sim:=\prod_{v\in V} \mathcal{M}_{g_v,P_v}$ denote the analogously defined open substack. The group $\operatorname{Aut}(\mathbf{G})$ acts on $\overline{\mathcal{M}}_{\mathbf{G}}^\sim$ and we define quotient stacks
\begin{align*}
\overline{\mathcal{M}}_{\mathbf{G}}:=[\operatorname{Aut}(\mathbf{G})\backslash\overline{\mathcal{M}}_{\mathbf{G}}^\sim],\quad\text{and}\quad \mathcal{M}_{\mathbf{G}}:=[\operatorname{Aut}(\mathbf{G})\backslash \mathcal{M}_{\mathbf{G}}^\sim].
\end{align*}

\begin{remark}
By quotient stack, we mean the $\infty$-quotient (or homotopy quotient),
\begin{align*}
[G\backslash \mathcal{M}]=\varinjlim_{BG}\mathcal{M},
\end{align*}
in the $\infty$-category $\operatorname{Shv}(\operatorname{Aff}_{\C}^{ft})$ of $\infty$-stacks over the étale site $\operatorname{Aff}_{\C}^{ft}$ of affine schemes of finite type over $\C$. We note, however, that in the case at hand this is equivalent to the quotient stack for a finite group action defined in \cite[Chapter XII, Example 8.15]{ArbarelloCornalbaGriffiths}. Indeed, if $X\rightarrow \mathcal{M}$ is an étale atlas, then both the $\infty$-quotient and the $1$-quotient admit an atlas morphism from $X$ giving rise to equivalent \v{C}ech nerves (see \cite{NikolausSchreiberStevenson}).
\end{remark}

We also consider the substack $\overline{\mathcal{D}}_{\mathbf{G}}\subset \overline{\mathcal{M}}_{g,P}$ of families $(\xi\colon X\rightarrow T; \sigma_i)$ whose geometric fibres have dual graphs which are specialisations of $\mathbf{G}$, that is, for each geometric point $t\in T$, the isomorphism class $[\mathbf{G}(X_t;\sigma_i(t))]$ of the dual graph associated to the geometric fibre $(X_t;\sigma_i(t))$ belongs to the poset $(\mathcal{G}_{g,P})_{\leq [\mathbf{G}]}$. Finally, let $\mathcal{D}_{\mathbf{G}}\subset\overline{\mathcal{D}}_{\mathbf{G}}$ denote the open substack of families whose geometric fibres have dual graphs \textit{isomorphic} to $\mathbf{G}$.

We wish to compare the stacks $\overline{\mathcal{M}}_{\mathbf{G}}^\sim$, $\overline{\mathcal{M}}_{\mathbf{G}}$ and $\overline{\mathcal{D}}_{\mathbf{G}}$ and their corresponding open substacks. Note that they are all Deligne--Mumford stacks. 

\begin{remark}
We refer the reader to \cite[Chapter XII, \S 10]{ArbarelloCornalbaGriffiths} for more details on the above, but beware that our notation differs quite a bit. For the reader's convenience, we very briefly explain the differences:
\begin{itemize}
\item First of all, they denote the dual graph by $\Gamma$. We reserve this for mapping class groups as we will see shortly;
\item They write $\overline{\mathcal{M}}_{\mathbf{G}}$ for our $\overline{\mathcal{M}}_{\mathbf{G}}^\sim$. Our choice of notation means that $\overline{\mathcal{M}}_{\mathbf{G}}$ is in a natural way a compactification of the stratum $\mathcal{D}_{\mathbf{G}}$ (see \Cref{DMK compactification as normalisation} and \Cref{DMK compactification associated to dual graph} below). This makes the inductive nature of these compactifications clearer.
\item They do not have a separate notation for the quotient by $\operatorname{Aut}(\mathbf{G})$, but they do define a stack $\mathcal{E}_{\mathbf{G}}$ which is equivalent to our $\overline{\mathcal{M}}_{\mathbf{G}}$. Theirs is perhaps a more satisfying definition as it is more intrinsic, giving a moduli description in terms of families of $P$-pointed genus $g$ curves equipped with a weak $\mathbf{G}$-marking. We choose our definition simply to avoid having to go into the details of weak $\mathbf{G}$-markings, but we refer the interested reader to \cite[p.314]{ArbarelloCornalbaGriffiths}.
\item We are writing $\overline{\mathcal{D}}_{\mathbf{G}}$ for their $\mathcal{D}_{\mathbf{G}}$; our $\mathcal{D}_{\mathbf{G}}$ refers to the open substack of families of curves with dual graph \textit{isomorphic} to $\mathbf{G}$, not just to a specialisation of it. The stack $\overline{\mathcal{D}}_{\mathbf{G}}$ is the closure of $\mathcal{D}_{\mathbf{G}}$, and this notation fits better into the stratified setting.\qedhere
\end{itemize}
\end{remark}

The clutching morphisms give rise to a map
\begin{align*}
\overline{\mathcal{M}}_{\mathbf{G}}^\sim \longrightarrow \overline{\mathcal{D}}_{\mathbf{G}}
\end{align*}
(see \cite[Chapter X, \S 7 and Chapter XII, \S 10]{ArbarelloCornalbaGriffiths}). Intuitively, this map is given by forgetting the ``$\mathbf{G}$-marking'': a geometric point in $\overline{\mathcal{M}}_{\mathbf{G}}^\sim$ is a collection of $P_v$-pointed genus $n_v$ nodal curves which we can clutch together according to the edges of the dual graph $\mathbf{G}$ (we glue the pairs of marked points corresponding to an edge). The resulting curve $(X;p_i)$ is a $P$-pointed genus $g$ nodal curve and its associated dual graph is a specialisation of $\mathbf{G}$ via an explicit isomorphism $\mathbf{G}\cong \mathbf{G}(X;p_i)_I$ (also called a $\mathbf{G}$\textit{-marking}). The map $\overline{\mathcal{M}}_{\mathbf{G}}^\sim \rightarrow \overline{\mathcal{D}}_{\mathbf{G}}$ above simply forgets this isomorphism. The map factors through the stack quotient by the automorphism group of $\mathbf{G}$, so we get a commutative diagram of Deligne--Mumford stacks:
\begin{center}
\begin{tikzpicture}
\matrix (m) [matrix of math nodes,row sep=2em,column sep=2em]
  {
\mathcal{M}_{\mathbf{G}}^\sim & \mathcal{M}_{\mathbf{G}} & \mathcal{D}_{\mathbf{G}} \\
\overline{\mathcal{M}}_{\mathbf{G}}^\sim & \overline{\mathcal{M}}_{\mathbf{G}} & \overline{\mathcal{D}}_{\mathbf{G}} \\
  };
  \path[-stealth]
(m-1-1) edge (m-1-2)
(m-1-2) edge (m-1-3)
(m-2-1) edge (m-2-2)
(m-2-2) edge (m-2-3)
;
\path[right hook-stealth]
(m-1-1) edge (m-2-1)
(m-1-2) edge (m-2-2)
(m-1-3) edge (m-2-3)
;
\end{tikzpicture}
\end{center}

If $\mathbf{G}=(g,P)$ is the $P$-pointed dual graph with a single vertex of weight $g$ and no edges, then $\overline{\mathcal{D}}_{\mathbf{G}}=\overline{\mathcal{M}}_{g,P}$, $\operatorname{Aut}(\mathbf{G})=1$ and all the horizontal maps are equivalences (in fact identity morphisms). This is not, however, the case in general; we have the following identification.

\begin{proposition}\label{DMK compactification as normalisation}
The map $\overline{\mathcal{M}}_{\mathbf{G}} \rightarrow \overline{\mathcal{D}}_{\mathbf{G}}$ can be identified with the normalisation of $\overline{\mathcal{D}}_{\mathbf{G}}$. In particular, it is an isomorphism over the open substack $\mathcal{D}_{\mathbf{G}}$.
\end{proposition}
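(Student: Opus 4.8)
The plan is to recognise $\overline{\mathcal{M}}_{\mathbf{G}}\to \overline{\mathcal{D}}_{\mathbf{G}}$ as a finite birational morphism from a normal (indeed smooth) Deligne--Mumford stack onto the integral stack $\overline{\mathcal{D}}_{\mathbf{G}}$, and then to invoke the universal property of the normalisation. So I would break the argument into three inputs: (i) the morphism is birational, in fact an isomorphism over $\mathcal{D}_{\mathbf{G}}$; (ii) the source $\overline{\mathcal{M}}_{\mathbf{G}}$ is normal and the target $\overline{\mathcal{D}}_{\mathbf{G}}$ is integral; (iii) the morphism is finite. Granting these, the proposition follows from the fact that a finite birational morphism onto an integral Noetherian Deligne--Mumford stack from a normal one is its normalisation.

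For (i), I would first observe that over the locus of smooth curves the clutching morphism restricts to $\mathcal{M}_{\mathbf{G}}^\sim\to \mathcal{D}_{\mathbf{G}}$ and that passing to $\operatorname{Aut}(\mathbf{G})$-coinvariants yields an equivalence $\mathcal{M}_{\mathbf{G}}\xrightarrow{\sim}\mathcal{D}_{\mathbf{G}}$: the inverse normalises a curve along its nodes and records the preimages of the nodes and marked points, and quotienting by $\operatorname{Aut}(\mathbf{G})$ exactly kills the ambiguity in the choice of $\mathbf{G}$-marking (this is the content of \cite[Chapter X, \S 7]{ArbarelloCornalbaGriffiths}, where clutching and normalisation-along-nodes are shown to be mutually inverse on families). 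This already gives the ``in particular'' clause. Moreover $\mathcal{D}_{\mathbf{G}}\cong\mathcal{M}_{\mathbf{G}}=[\operatorname{Aut}(\mathbf{G})\backslash\prod_v\mathcal{M}_{g_v,P_v}]$ is the quotient of a smooth irreducible stack by a finite group, hence smooth and irreducible, and it is dense open in $\overline{\mathcal{D}}_{\mathbf{G}}$ by construction; therefore $\overline{\mathcal{D}}_{\mathbf{G}}$ is integral and the morphism $\overline{\mathcal{M}}_{\mathbf{G}}\to\overline{\mathcal{D}}_{\mathbf{G}}$ is birational.

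For (ii) on the source side: since $\operatorname{Aut}(\mathbf{G})$ is finite, the groupoid $\operatorname{Aut}(\mathbf{G})\times\overline{\mathcal{M}}_{\mathbf{G}}^\sim\rightrightarrows\overline{\mathcal{M}}_{\mathbf{G}}^\sim$ is étale, so $\overline{\mathcal{M}}_{\mathbf{G}}^\sim\to\overline{\mathcal{M}}_{\mathbf{G}}$ is an étale atlas; composing with a smooth atlas of $\overline{\mathcal{M}}_{\mathbf{G}}^\sim=\prod_v\overline{\mathcal{M}}_{g_v,P_v}$ — smooth as a product of smooth Deligne--Mumford stacks — exhibits $\overline{\mathcal{M}}_{\mathbf{G}}$ as a smooth, hence normal, Deligne--Mumford stack. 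For (iii), I would use that the clutching morphism $\overline{\mathcal{M}}_{\mathbf{G}}^\sim\to\overline{\mathcal{M}}_{g,P}$ is finite onto $\overline{\mathcal{D}}_{\mathbf{G}}$ (\cite[Chapter X, \S 7 and Chapter XII, \S 10]{ArbarelloCornalbaGriffiths}): the composite $\overline{\mathcal{M}}_{\mathbf{G}}^\sim\to\overline{\mathcal{M}}_{\mathbf{G}}\to\overline{\mathcal{D}}_{\mathbf{G}}$ is then finite while $\overline{\mathcal{M}}_{\mathbf{G}}^\sim\to\overline{\mathcal{M}}_{\mathbf{G}}$ is finite surjective, so $\overline{\mathcal{M}}_{\mathbf{G}}\to\overline{\mathcal{D}}_{\mathbf{G}}$ is separated, quasi-finite, and (by descent of properness along the surjection $\overline{\mathcal{M}}_{\mathbf{G}}^\sim\twoheadrightarrow\overline{\mathcal{M}}_{\mathbf{G}}$) proper — hence finite.

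The conclusion then reduces to the scheme-theoretic statement since normalisation of Deligne--Mumford stacks is étale-local on the target (the integral closure in the function field glues along an étale atlas), and étale-locally a finite birational morphism onto an integral Noetherian scheme from a normal scheme is the normalisation. I expect the main obstacle to be not the formal assembly above but the careful sourcing of the stacky input from \cite{ArbarelloCornalbaGriffiths} — finiteness of the clutching morphisms and the fact that clutching and normalisation-along-nodes are inverse operations on the smooth loci — together with the (routine) verification that normalisation is well behaved étale-locally for Deligne--Mumford stacks; everything else is formal.
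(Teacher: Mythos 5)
Your proof is correct, but it is noticeably more work than what the paper itself does: the paper simply cites \cite[Chapter XII, Proposition 10.11]{ArbarelloCornalbaGriffiths}, which states the identification directly, and then observes that the ``in particular'' clause follows from the normality of $\mathcal{D}_{\mathbf{G}}$ (or from inspecting that same proof). You instead reassemble the statement from the universal property of normalisation: $\overline{\mathcal{M}}_{\mathbf{G}}$ is smooth (hence normal) because it has an \'etale atlas factoring through the product $\overline{\mathcal{M}}_{\mathbf{G}}^\sim$; $\overline{\mathcal{D}}_{\mathbf{G}}$ is integral because it is the closure of the irreducible smooth locus $\mathcal{D}_{\mathbf{G}}\cong\mathcal{M}_{\mathbf{G}}$; and $\overline{\mathcal{M}}_{\mathbf{G}}\to\overline{\mathcal{D}}_{\mathbf{G}}$ is a finite birational morphism, with finiteness and the isomorphism over the open stratum pulled from the ACG discussion of clutching. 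Both routes are sound and both ultimately lean on ACG, but you cite the more elementary inputs (finiteness of the clutching morphism, clutching/normalisation being mutually inverse over the smooth locus) rather than the single packaged proposition, which makes your argument more self-contained. One small caution in your step (iii): before deducing properness of $\overline{\mathcal{M}}_{\mathbf{G}}\to\overline{\mathcal{D}}_{\mathbf{G}}$ from properness of the composite along the surjection $\overline{\mathcal{M}}_{\mathbf{G}}^\sim\twoheadrightarrow\overline{\mathcal{M}}_{\mathbf{G}}$, you should explicitly note that separatedness of the morphism is available first (it is, since both stacks are separated DM stacks); you state separatedness in the same breath, so the logical order should be made clear. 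Otherwise the argument holds, and it buys you a proof that does not depend on the reader unpacking ACG XII.10.11, at the cost of a longer chain of citations.
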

\begin{proof}
This is \cite[Chapter XII, Proposition 10.11]{ArbarelloCornalbaGriffiths} and we refer to \S 8 of the same source for details on the normalisation in the setting of Deligne-Mumford stacks. The final claim follows from the fact that $\mathcal{D}_{\mathbf{G}}$ is already normal and can also be seen directly from the proof of \cite[Chapter XII, Proposition 10.11]{ArbarelloCornalbaGriffiths}.
\end{proof}

\begin{definition}\label{DMK compactification associated to dual graph}
We call $\overline{\mathcal{M}}_{\mathbf{G}}$ the \textit{Deligne--Mumford--Knudsen compactification} of $\mathcal{M}_{\mathbf{G}}\cong \mathcal{D}_{\mathbf{G}}$.
\end{definition}

\subsection{The underlying stratified topological stacks}\label{underlying stratified topological stack}

In order to study the stack $\overline{\mathcal{M}}_{g,P}$ from the point of view of stratified homotopy theory, we have to consider the underlying topological stack equipped with its natural stratification. This is the object that we formally introduce in this section. We refer to \cite[specifically \S 4]{OrsnesJansen23} for details.

By a topological $\infty$-stack, we mean an object in the $\infty$-category of space-valued sheaves $\operatorname{Shv}(\mathfrak{T})$ on the site $\mathfrak{T}=\operatorname{LCHaus}^{2nd}$ of locally compact Hausdorff second-countable topological spaces. The stacks under consideration in this paper are all $1$-stacks, that is, $1$-truncated $\infty$-stacks, or more explicitly, a sheaf taking values in ($1$-)groupoids. We will need the full power of working within the $\infty$-category of $\infty$-stacks to do our calculations, however.

Consider the functor
\begin{align*}
(-)^{\operatorname{top}}\colon\operatorname{Aff}^{ft}_{\C}\rightarrow \mathfrak{T},\quad U\rightarrow U^{\operatorname{top}}
\end{align*}
sending an affine scheme of finite type over $\C$ to the topological space underlying its analytification (in other words, we also forget the structure sheaf of holomorphic functions defining the analytification in the category of locally ringed spaces). This functor sends étale morphisms to local homeomorphisms and it preserves fibre product. Hence, we can transfer the \v{C}ech nerve of an atlas of an algebraic stack $X\in \operatorname{Shv}(\operatorname{Aff}_{\C}^{ft})$ into the category of topological ($\infty$-)stacks and consider the corresponding stack given by the colimit; this is the \textit{underlying topological stack} $X^{\operatorname{top}}$. Alternatively, it can be defined more intrinsically as follows (\cite[Proposition 4.9]{OrsnesJansen23}): $X^{\operatorname{top}}$ is the sheafification of the presheaf
\begin{align*}
\mathfrak{T}^{\operatorname{op}}\rightarrow \mathcal{S},\quad S\mapsto \overline{X}(\operatorname{Spec}(C(S,\C))),
\end{align*}
sending a topological space $S\in \mathfrak{T}$ to the value of $\overline{X}$ on the ring of complex continuous functions on $S$, where $\overline{X}$ is the canonical extension of $X$ to the category $\operatorname{Aff}_\C$ of affine schemes not necessarily of finite type.

\begin{definition}
Let $\mathbf{G}$ be a $P$-pointed stable dual graph. The \textit{topological Deligne--Mumford--Knudsen compactification associated to $\mathbf{G}$} is the topological stack $\overline{\mathcal{M}}^{\operatorname{top}}_{\mathbf{G}}$ underlying the algebraic Deligne--Mumford stack $\overline{\mathcal{M}}_{\mathbf{G}}$.
\end{definition}

The topological Deligne--Mumford--Knudsen compactifications are naturally stratified over the poset of (isomorphism classes of) stable dual graphs. We introduce this stratification now. Essentially, the stratification map $\overline{\mathcal{M}}^{\operatorname{top}}_{g,P}\rightarrow \mathcal{G}_{g,P}$ is given by sending a geometric point, that is, a single stable nodal curve, to its associated dual graph. In order to define it formally, however, we will make use of the coarse moduli space.

Let $\overline{M}_{g,P}$ denote the (algebraic) coarse moduli space of $\overline{\mathcal{M}}_{g,P}$ (see \cite[Chapter XII]{ArbarelloCornalbaGriffiths}) and let $\overline{M}_{g,P}^{\operatorname{top}}$ denote the topological space underlying its analytification. This is the coarse moduli space of the topological stack $\overline{\mathcal{M}}^{\operatorname{top}}_{g,P}$ (\cite[Proposition 4.12]{OrsnesJansen23}) and as a set, it consists of all isomorphism classes of stable $P$-pointed genus $g$ curves over $\C$. We define a map
\begin{align*}
\overline{M}_{g,P}^{\operatorname{top}}\rightarrow \mathcal{G}_{g,P},\quad [(X;p_i)]\mapsto [\mathbf{G}(X;p_i)].
\end{align*}

This defines a stratification by the following lemma.

\begin{lemma}
The map $\overline{M}_{g,P}^{\operatorname{top}}\rightarrow \mathcal{G}_{g,P}$, $[(X;p_i)]\mapsto [\mathbf{G}(X;p_i)]$ is continuous.
\end{lemma}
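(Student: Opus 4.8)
The claim is that the map sending an isomorphism class of stable curves to its dual graph is continuous, where $\mathcal{G}_{g,P}$ carries the Alexandroff topology. Recall that a map to a poset with the Alexandroff topology is continuous if and only if the preimage of each down-set $(\mathcal{G}_{g,P})_{\leq [\mathbf{G}]}$ is closed (equivalently, the preimage of each up-set is open); and since $\mathcal{G}_{g,P}$ is finite, it suffices to check this for the principal down-sets. So the plan is to show that for each stable dual graph $\mathbf{G}$, the set
\begin{align*}
Z_{\mathbf{G}} := \{[(X;p_i)] \in \overline{M}_{g,P}^{\operatorname{top}} : [\mathbf{G}(X;p_i)] \leq [\mathbf{G}]\}
\end{align*}
is closed in $\overline{M}_{g,P}^{\operatorname{top}}$. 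Equivalently, it suffices to show that the set $U_{\mathbf{G}}$ of classes whose dual graph is a \emph{specialisation} of $\mathbf{G}$ (i.e.\ $[\mathbf{G}(X;p_i)] \geq [\mathbf{G}]$, meaning $\mathbf{G}$ contracts onto $\mathbf{G}(X;p_i)$) is open; this says: if a curve has at least the nodes prescribed by $\mathbf{G}$ (in a compatible pattern), then so does every nearby curve — which is the intuitive statement that nodes can only degenerate further, never be resolved, under specialisation, while the complementary direction (a node can disappear) happens in open families.

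\textbf{Key steps.} First I would translate the topological statement into algebraic geometry. The set $Z_{\mathbf{G}}$ above is precisely the image in the coarse space of the closed substack $\overline{\mathcal{D}}_{\mathbf{G}} \subseteq \overline{\mathcal{M}}_{g,P}$ introduced earlier in the section (the substack of families all of whose geometric fibres have dual graph a specialisation of $\mathbf{G}$). That $\overline{\mathcal{D}}_{\mathbf{G}}$ is a closed substack is part of the standard structure theory of $\overline{\mathcal{M}}_{g,P}$ (it is the closure of the locally closed stratum $\mathcal{D}_{\mathbf{G}}$); this is exactly \cite[Chapter XII, \S 10]{ArbarelloCornalbaGriffiths}. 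Second, I would pass to coarse moduli spaces: the morphism $\overline{\mathcal{M}}_{g,P} \to \overline{M}_{g,P}$ is proper (it is a coarse moduli morphism of a proper Deligne--Mumford stack), so the image of the closed substack $\overline{\mathcal{D}}_{\mathbf{G}}$ is Zariski-closed in $\overline{M}_{g,P}$, and the preimage $\{[\mathbf{G}(X;p_i)] = [\mathbf{G}]\}$ — i.e.\ $\mathcal{D}_{\mathbf{G}}$ — maps to a locally closed subset. Third, I would invoke the compatibility of analytification with these operations: a Zariski-closed subset of a finite-type $\C$-scheme has closed analytification (hence closed underlying topological space), because analytification is a morphism of locally ringed spaces and closed immersions analytify to closed immersions. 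Since $Z_{\mathbf{G}}$ is the underlying topological space of the analytification of a Zariski-closed subset of $\overline{M}_{g,P}$, it is closed in $\overline{M}_{g,P}^{\operatorname{top}}$. As this holds for all $\mathbf{G}$, and since the fibre of the stratification map over an arbitrary down-set is a finite union of the $Z_{\mathbf{G}}$ (indeed $Z_{\mathbf{G}}$ is itself the fibre over the principal down-set $(\mathcal{G}_{g,P})_{\leq[\mathbf{G}]}$, as a curve's dual graph lies in that down-set iff it is a specialisation of $\mathbf{G}$), the map is continuous.

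\textbf{Alternative, more elementary route.} If one prefers not to lean on the substack $\overline{\mathcal{D}}_{\mathbf{G}}$, the openness of $U_{\mathbf{G}}$ can be seen directly via versal deformation theory: étale-locally near a point $[(X_0;p_i)]$, the coarse space looks like a quotient of $\operatorname{Spec}\C[[t_1,\dots,t_k]] \times (\text{smooth factors})$ by $\operatorname{Aut}(X_0;p_i)$, where $k$ is the number of nodes of $X_0$ and $t_j$ is the smoothing parameter of the $j$-th node; the locus where the $j$-th node persists is $\{t_j = 0\}$, which is closed, and a general specialisation pattern cuts out a coordinate subspace, again closed — while the locus of \emph{at least} a fixed resolution of nodes (what I called $U_{\mathbf{G}}$) is the complement of a finite union of such coordinate hyperplanes intersected appropriately, hence open. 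One then checks this local picture is compatible with the graph combinatorics: contracting an edge of the dual graph corresponds to setting the corresponding $t_j = 0$, matching the poset relation $[\mathbf{G}']\leq[\mathbf{G}]$. This is more hands-on but requires carefully quoting the local structure of $\overline{\mathcal{M}}_{g,P}$ around a nodal curve (the ``plumbing fixture'' description).

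\textbf{Main obstacle.} The real content is bookkeeping rather than a deep difficulty: one must be careful that the combinatorial order on $\mathcal{G}_{g,P}$ (defined via edge contraction, with $[\mathbf{G}']\leq[\mathbf{G}]$ when $\mathbf{G}$ contracts to $\mathbf{G}'$, i.e.\ $\mathbf{G}'$ has \emph{more} edges) matches the geometric order of degeneration (smaller graph = more nodes = deeper in the boundary = in the closure of larger strata), so that ``preimage of a down-set is closed'' is the correct statement and not its opposite. Concretely, the principal down-set $(\mathcal{G}_{g,P})_{\leq[\mathbf{G}]}$ consists of graphs $[\mathbf{G}']$ such that $\mathbf{G}'$ contracts to $\mathbf{G}$, i.e.\ curves \emph{more} degenerate than (or equal to) $\mathbf{G}$ — and this locus is $\overline{\mathcal{D}}_{\mathbf{G}}$, a closed substack. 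So the direction works out. Once this sign is pinned down, everything reduces to: (i) closed substacks of a proper DM stack have closed image in the coarse space, and (ii) Zariski-closed subsets analytify to closed subsets — both standard.
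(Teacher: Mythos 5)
Your proposal is correct, and it actually contains two arguments: your ``alternative'' route is essentially the paper's proof, while your ``main'' route is a genuinely different one. The paper's proof works with the analytic construction of $\overline{M}_{g,P}$ via Kuranishi families (ACG, Chapter XII \S 2 together with Chapter XI, Theorem 3.17): the base of each Kuranishi family is a polydisk whose singular locus is a union of coordinate hyperplanes, the dual-graph stratification of that polydisk is by the (visibly closed) coordinate subspaces $\{t_j=0\mid j\in J\}$, and the stratification map on the coarse space is assembled from these local pictures — this is exactly what your ``more elementary route'' describes in étale-local/versal-deformation language. Your main route is more modular: rather than redoing the local analysis, you cite that $\overline{\mathcal{D}}_{\mathbf{G}}$ is a closed algebraic substack of $\overline{\mathcal{M}}_{g,P}$ (ACG XII \S 10), note its image in the algebraic coarse space $\overline{M}_{g,P}$ is Zariski-closed (the coarse moduli morphism is proper, even finite), and then use that analytification of a finite-type $\C$-scheme sends Zariski-closed sets to closed sets; since $\mathcal{G}_{g,P}$ is finite and every down-set is a finite union of principal down-sets $(\mathcal{G}_{g,P})_{\leq[\mathbf{G}]}$, continuity follows from the closedness of each $Z_{\mathbf{G}}$. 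This buys a clean separation of concerns — the local deformation theory is entirely delegated to the citation — at the price of taking the algebraic statement as an input (and of course both routes bottom out at the same local plumbing picture, since that is how closedness of $\overline{\mathcal{D}}_{\mathbf{G}}$ is proved in the first place). One small terminology slip to flag: in your first paragraph you describe $U_{\mathbf{G}}$ as the locus of curves whose dual graph ``is a specialisation of $\mathbf{G}$'' while simultaneously annotating it $[\mathbf{G}(X;p_i)]\geq[\mathbf{G}]$; under the paper's convention these point in opposite directions — you want $\mathbf{G}$ to be a specialisation of $\mathbf{G}(X;p_i)$ here. You correct this in the ``Main obstacle'' paragraph, where the sign is pinned down correctly, so the argument is unaffected.
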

\begin{proof}
This can be seen by exploiting the construction of the moduli space as an analytic space using Kuranishi families as in \cite[Chapter XII, \S 2]{ArbarelloCornalbaGriffiths} and analysing the inherited stratification of the base spaces of the Kuranishi families constructed in \cite[Chapter XI, Theorem 3.17]{ArbarelloCornalbaGriffiths}.
\end{proof}

\begin{definition}\label{stratified DMK base case}
Equip the topological stack $\overline{\mathcal{M}}^{\operatorname{top}}_{g,P}$ with the stratification map
\begin{align*}
\pi_{g,P}\colon \overline{\mathcal{M}}^{\operatorname{top}}_{g,P}\longrightarrow \overline{M}_{g,P}^{\operatorname{top}}\longrightarrow \mathcal{G}_{g,P}
\end{align*}
where the former map is the canonical one from the stack to its coarse moduli space and the latter is the stratification map defined above. This is the \textit{stratified topological Deligne--Mumford--Knudsen compactification of type} $(g,P)$.
\end{definition}

\begin{remark}
This stratification of the Deligne--Mumford--Knudsen compactification is of course well-known: the boundary $\partial \overline{\mathcal{M}}_{g,P}=\overline{\mathcal{M}}_{g,P}\smallsetminus \mathcal{M}_{g,P}$ is a normal crossings divisor and as such carries a natural stratification. This stratification together with the open substack $\mathcal{M}_{g,P}$ is exactly what we recover from the above stratification map. See \cite[Chapter X, \S 2 and \S 10]{ArbarelloCornalbaGriffiths}.
\end{remark}

We also stratify the more general Deligne--Mumford--Knudsen compactifications associated to a stable dual graph. Let $\mathbf{G}=(G,\omega,m)$ be a $P$-pointed stable dual graph, write $g_v=\omega(v)$ and let $P_v$ denote the set of half-edges incident to $v$ for each vertex $v$ of $G$. Consider the poset
\begin{align*}
\mathcal{G}_{\mathbf{G}}^\sim:=\prod_{v\in V}\mathcal{G}_{g_v,P_v}
\end{align*}
with the obvious action of $\operatorname{Aut}(\mathbf{G})$ on the index set $V$ and the labels $P_v$. Note that since $\operatorname{Aut}(\mathbf{G})$ is finite, it follows from \cite[Lemma 2.35]{ClausenOrsnesJansen} that the quotient set by $\operatorname{Aut}(\mathbf{G})$ has a natural poset structure identifying it with the corresponding quotient in the category of posets: for a pair of equivalence classes $\bar{x}$, $\bar{y}$, we have $\bar{x}\leq \bar{y}$ if and only if there is a $g\in \operatorname{Aut}(\mathbf{G})$ such that $gx\leq y$ in $\mathcal{G}_{\mathbf{G}}^\sim$. We denote this quotient by
\begin{align*}
\mathcal{G}_{\mathbf{G}}:=\operatorname{Aut}(\mathbf{G})\backslash\,\mathcal{G}_{\mathbf{G}}^\sim.
\end{align*}

We stratify the product stack using the stratification maps defined in \Cref{stratified DMK base case}:
\begin{align*}
\pi_{\mathbf{G}}^\sim:=\prod_{v\in V}\pi_{g_v,P_v}\colon \overline{\mathcal{M}}^{\sim,\operatorname{top}}_{\mathbf{G}}\longrightarrow \mathcal{G}_{\mathbf{G}}^\sim
\end{align*}
and note that this descends to define a stratification of the quotient stack by $\operatorname{Aut}(\mathbf{G})$:
\begin{align*}
\pi_{\mathbf{G}}\colon \overline{\mathcal{M}}^{\operatorname{top}}_{\mathbf{G}}\longrightarrow \mathcal{G}_{\mathbf{G}}
\end{align*}
The resulting stratified topological stack is called the \textit{stratified topological Deligne--Mumford--Knudsen compactification of type $\mathbf{G}$}.

\medskip

Let us finish off this section by relating the various stratified topological stacks introduced. There is a map of posets
\begin{align*}
\mathcal{G}_{\mathbf{G}}^\sim\rightarrow (\mathcal{G}_{g,P})_{\leq [\mathbf{G}]}
\end{align*}
given by clutching the graphs together along the edges of $\mathbf{G}$: a collection $[\mathbf{H}_v]_{v\in V}$ defines a stable $P$-pointed dual graph $\mathbf{H}$ whose set of vertices and half-edges is given by the union of those in the $\mathbf{H}_v$, but now the legs $l\in P_v$ and $l'\in P_w$ in $\mathbf{H}_v$, respectively $\mathbf{H}_w$, make up an edge in $\mathbf{H}$ if the corresponding half-edges make up an edge in $\mathbf{G}$. The weight function and the $P$-marking are directly inherited from the $\mathbf{H}_v$. This map factors through the poset $\mathcal{G}_{\mathbf{G}}$ and we see that we have a sequence of morphisms of stratified topological stacks for any given stable $P$-pointed genus $g$ dual graph $\mathbf{G}$:

\begin{center}
\begin{tikzpicture}
\matrix (m) [matrix of math nodes,row sep=2em,column sep=2em]
  {
\overline{\mathcal{M}}_{\mathbf{G}}^{\sim,\operatorname{top}} & \overline{\mathcal{M}}_{\mathbf{G}}^{\operatorname{top}} & \overline{\mathcal{D}}_{\mathbf{G}}^{\operatorname{top}} & \overline{\mathcal{M}}_{g,P}^{\operatorname{top}} \\
\mathcal{G}_{\mathbf{G}}^\sim & \mathcal{G}_{\mathbf{G}} & (\mathcal{G}_{g,P})_{\leq [\mathbf{G}]} & \mathcal{G}_{g,P} \\
  };
  \path[-stealth]
(m-1-1) edge (m-1-2) edge (m-2-1)
(m-1-2) edge (m-1-3) edge (m-2-2)
(m-1-3) edge (m-2-3)
(m-1-4) edge (m-2-4)
(m-2-1) edge (m-2-2)
(m-2-2) edge (m-2-3)
;
\path[right hook-stealth]
(m-1-3) edge (m-1-4)
(m-2-3) edge (m-2-4)
;
\end{tikzpicture}
\end{center}

From now on, we implicitly view these stacks as stratified topological stacks and we will for the most part omit the stratifying posets and stratification maps.

\begin{remark}
Note that we also have various morphisms of stratified topological stacks given by for example the clutching morphisms and the specialisation relation between dual graphs, but we will not need this here. This would be an interesting route to pursue, however, in trying to combine our calculations with the fact that the stacks $\overline{\mathcal{M}}_{g,P}$ make up a modular stack.
\end{remark}

\section{Stable nodal surfaces and associated objects of interest}

In this section, we introduce what we will call the Charney--Lee categories; this is a variant of the category of stable curves introduced in \cite{CharneyLee84}. We want to work with a different and more algebraic incarnation of this category that fits better into the setting of stratified homotopy theory. In order to define these, we introduce stable nodal surfaces, their mapping class groups and the poset of admissible curve systems. We also provide an alternative description of the stratification poset of stable dual graphs.

\subsection{Stable nodal surfaces and their mapping class groups}

We recall the definition of stable nodal surfaces of finite type and their mapping class groups. By a surface we mean an oriented smooth $2$-manifold.

\medskip

Let $P$ a finite set. By a $P$-pointed surface of \textit{finite type}, we mean a compact connected surface $S$ equipped with an embedding $P\hookrightarrow S$. We refer to the image of $P$ as the \textit{marked points} and denote these by $p_i$, $i\in P$, or simply by $P\subseteq S$. We denote the $P$-pointed surface of \textit{finite type} by $(S,P)$ leaving the embedding implicit. The homeomorphism type of the punctured surface $S\smallsetminus P$ is determined by the topological genus $g$ of $S$ and the number $|P|$ of marked points (\cite[\S 1.1.1]{FarbMargalit}). A $P$-pointed surface of finite type $(S,P)$ is \textit{stable} if it satisfies one of the following equivalent conditions:
\begin{enumerate}
\item the Euler characteristic of the complement of the marked points is negative, $\chi(S\smallsetminus P)<0$;
\item $2g+2-n>0$ where $g$ is the genus of $S$ and $n=|P|$;
\item $S\smallsetminus P$ admits a hyperbolic metric (\cite[Theorem 1.2]{FarbMargalit}).
\end{enumerate}

In order to make a clean inductive argument for our final calculation it will be beneficial for us to work more generally with stable \textit{nodal} surfaces. This is purely a technicality and a nodal surface is. as one would expect, just the result of glueing a finite collection of stable surfaces together along pairs of marked points, thus introducing \textit{nodes}.

\begin{definition}
By a \textit{topological nodal surface of finite type}, we mean a topological space $S$ such that every point has a either a fundamental system of neighbourhoods homeomorphic to the unit disk in $\R^2$ or a fundamental system of neighbourhoods homeomorphic to the double cone in $\R^3$, i.e.~the result of glueing two unit disks together at the origin. The points with the latter type of neighbourhoods are called \textit{nodes}. For a finite set $P$, a \textit{$P$-pointed topological nodal surface of finite type} is a compact connected topological nodal surface $S$ together with an embedding $P\hookrightarrow S\smallsetminus \{\text{nodes}\}$.
\end{definition}

The following construction is just a topological model for normalisation of the nodes.

\begin{construction}\label{parts of nodal surface}
Let $(S,P)$ be a $P$-pointed topological nodal surface. Write $V_S:=\pi_0(S\smallsetminus \{\text{nodes}\})$ for the connected components of the complement of the nodes and denote for each $v\in V_S$ the corresponding connected component by $S_v^\circ\subset S$.

Let $P_v$ denote the set of marked points and nodes belonging to the closure of $S_v^\circ$ in $S$ where we count twice the nodes for which any small enough neighbourhood in $S_v^\circ$ has two components, i.e.~we count twice the nodes of $S$ connecting $S_v^\circ$ with itself. Note that $\coprod_{v\in V_S} P_v=P\sqcup N\sqcup N$, where $N$ is the set of nodes of $S$.

Now, for each $v\in V_S$, we let $(S_v,P_v)$ be the (non-singular) $P_v$-pointed topological surface obtained by adding the nodal points $p_n\in P_v\smallsetminus P$ to $S_v^\circ$ in the obvious manner: each connected component of a small enough neighbourhood of a node is a once-punctured disk and we simply replace it by a disk and label the ``puncture'' by the corresponding nodal point $p_n$.

We call the $(S_v,P_v)$ the \textit{parts} of $(S,P)$.
\end{construction}

\begin{definition}
A \textit{$P$-pointed nodal surface of finite type} is a $P$-pointed topological nodal surface of finite type $(S,P)$ together with a smooth structure on each part $(S_v,P_v)$, $v\in V_S$ (such that they become $P$-pointed surfaces of finite type). We say that $(S,P)$ is \textit{stable} if each part is stable. A \textit{diffeomorphism} of $P$-pointed nodal surfaces of finite type is a homeomorphism which restricts to a diffeomorphism on the parts.
\end{definition}

\begin{remark}
We require a smooth structure on the parts simply to have some control over how automorphisms behave around the nodes. A homeomorphism of a stable nodal surface $(S,P)$ above will always be isotopic to one which is smooth on the parts, and thus we avoid dealing with for example infinite twists around a node.

We will often write \textit{stable nodal surface} instead of \textit{stable nodal $P$-pointed surface of finite type}. We will say that a stable nodal surface is \textit{non-singular} if we want to stress that the set of nodes is empty. To ease notation, we may omit the marked points from our notation and simply refer to a stable nodal surface $(S,P)$ by $S$.
\end{remark}

The \textit{mapping class group} of a stable nodal surface $(S,P)$ is the group
\begin{align*}
\Gamma_S=\Gamma_{(S,P)}:=\pi_0(\operatorname{Diffeo}^+_P(S)) \cong\pi_0(\operatorname{Homeo}^+_P(S)),
\end{align*} 
of isotopy classes of orientation preserving diffeomorphisms of $S$ fixing the set of marked points pointwise; equivalently we may consider isotopy classes of homeomorphisms. Note that such homeomorphisms must fix the set of nodes, though not necessarily pointwise. We also consider the following variant, which we call the \textit{pure mapping class group}: it is the subgroup $\widetilde{\Gamma}_S\subset \Gamma_S$ of mapping classes represented by homeomorphisms which preserve the connected components of $S\smallsetminus \{\text{nodes}\}$ and fix the set of nodes pointwise.

\begin{remark}
We remark that in the literature the term mapping class group often refers to the mapping class group allowing permutations of the marked points and the term pure mapping class group requires the marked points to be fixed (\cite[Chapter 4]{FarbMargalit}). Since we label our marked points by a fixed finite set $P$ instead of thinking of them as an unlabelled set of punctures, it is natural for us to require these marked points to be fixed. Moreover, as we have to deal with nodal points that become marked points upon considering the parts of the nodal surface, it is also natural for us to consider the more restrictive version fixing the nodes and the parts. It also fits into the setup of \cite{ArbarelloCornalbaGriffiths}. We hope that our terminology will not be a source of confusion.
\end{remark}

\begin{observation}\label{product decomposition of pure mapping class group}
If $(S,P)$ is non-singular, then $\widetilde{\Gamma}_S=\Gamma_S$, and in general the pure mapping class group decomposes as a product of the mapping class groups of the parts
\begin{equation*}
\widetilde{\Gamma}_S\cong\prod_{v \in V_S}\Gamma_{S_v}.\qedhere
\end{equation*}
\end{observation}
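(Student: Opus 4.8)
The non-singular case is immediate: if $S$ has no nodes then $S\smallsetminus\{\text{nodes}\}=S$ is connected and there is nothing to fix, so both conditions defining $\widetilde{\Gamma}_S$ are vacuous and $\widetilde{\Gamma}_S=\Gamma_S$; this is also the case $V_S=\{v\}$, $S_v=S$ of the product decomposition, so it suffices to prove the latter in general. The plan is to exhibit mutually inverse homomorphisms between $\widetilde{\Gamma}_S$ and $\prod_{v\in V_S}\Gamma_{S_v}$ by ``cutting along the nodes'' and ``gluing along the nodes''.

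First I would define a restriction homomorphism $r\colon\widetilde{\Gamma}_S\to\prod_{v\in V_S}\Gamma_{S_v}$. Represent a class in $\widetilde{\Gamma}_S$ by a diffeomorphism $\phi\in\operatorname{Diffeo}^+_P(S)$ preserving each component $S_v^\circ$ and fixing the nodes; by an isotopy supported near the nodes (straightening $\phi$ to the identity on a neighbourhood of each node, cf.\ \cite[Chapter 2]{FarbMargalit}) one may moreover assume $\phi$ is the identity on a neighbourhood of every node. Such a $\phi$ restricts on each part to an orientation-preserving diffeomorphism $\phi_v$ of $S_v$ fixing $P_v$ pointwise --- the original marked points because $\phi$ is $P$-pointed, the nodal points of \Cref{parts of nodal surface} because $\phi$ is the identity near the nodes --- and we set $r([\phi]):=([\phi_v])_{v\in V_S}$. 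Independence of the chosen representative follows by restricting an isotopy through pure mapping classes to the parts, and $r$ is plainly a group homomorphism.

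To prove $r$ surjective, pick for each $v$ a representative $\psi_v\in\operatorname{Diffeo}^+_{P_v}(S_v)$ that is the identity on a neighbourhood of $P_v$ (again by straightening); since the $\psi_v$ fix the relevant points, they glue along the identifications reconstructing $S$ from its parts to a self-homeomorphism $\psi$ of $S$ --- genuinely a homeomorphism because the $\psi_v$ are the identity near the glued points --- which preserves components and fixes the nodes, hence represents a class in $\widetilde{\Gamma}_S$ with $r([\psi])=([\psi_v])_{v\in V_S}$. To prove $r$ injective, suppose $r([\phi])$ is trivial, so each $\phi_v$ is isotopic to $\operatorname{id}_{S_v}$ through diffeomorphisms fixing $P_v$ pointwise; arranging (straightening once more) these isotopies to be stationary on a neighbourhood of $P_v$, they glue to an isotopy of $S$ through homeomorphisms preserving components and fixing the nodes --- that is, through representatives of classes in $\widetilde{\Gamma}_S$ --- running from a representative of $[\phi]$ to $\operatorname{id}_S$. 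Hence $[\phi]=1$, and $r$ is an isomorphism.

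I expect the technical heart to be the local analysis at the nodes: checking that a class in $\widetilde{\Gamma}_S$, and for injectivity an isotopy between two such representatives, admits a representative that is the identity on a neighbourhood of each node, so that the cut and glue constructions are genuinely inverse to one another. This rests on the precise reading of ``pure'' (preserving the components of $S\smallsetminus\{\text{nodes}\}$ and fixing the nodes pointwise, so that in particular the two local branches at a node joining a part to itself are not interchanged) together with the standard fact that an orientation-preserving self-homeomorphism of a surface fixing a finite set of points can be isotoped, rel that set, to one which is the identity near each of them. The remaining checks are formal.
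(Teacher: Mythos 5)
The paper states this as an \emph{observation} with no proof at all---the authors treat it as self-evident---so there is no official argument to compare against. Your cut-and-glue proof is a correct and careful fleshing-out of the statement, and it is essentially the only natural route: straighten representatives near the nodes, restrict to the parts, and reverse by gluing maps that are stationary near the identified points. The well-definedness on isotopy classes, surjectivity, and injectivity steps are all handled correctly, and the required straightening lemma is indeed standard.

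You were also right to flag explicitly the one genuine subtlety, namely a node joining a part to itself. The literal phrasing in the paper (``preserve the connected components of $S\smallsetminus\{\text{nodes}\}$ and fix the set of nodes pointwise'') does not by itself forbid swapping the two local branches at such a self-node, and there are stable examples where an orientation-preserving self-homeomorphism does exactly that (e.g.\ a genus-$2$ curve formed by identifying two points of a torus exchanged by the elliptic involution). Such a homeomorphism would restrict to a map of the part that \emph{permutes} $P_v$, breaking the claimed isomorphism. The correct reading, which you adopt and which is forced by the paper's own Lemma identifying $\widetilde{\Gamma}_S$ with $\ker\bigl(\Gamma_S\to\operatorname{Aut}(\mathbf{G}_S)\bigr)$, is that ``pure'' means inducing the identity automorphism of the dual graph, hence also preserving each half-edge. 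With that reading your argument is complete.
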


Note that the construction of a dual graph associated to a stable curve only depends on the underlying stable nodal surface.

\begin{observation/construction}[Dual graph associated to a stable nodal surface of finite type]
Given a stable nodal surface $(S,P)$, we can proceed as in \Cref{dual graph associated to stable curve} and associate to it a $P$-pointed dual graph $\mathbf{G}(S,P)$ with vertices given by the parts of $(S,P)$, edges given by the nodes and legs given by the marked points (the whole construction goes through by replacing every instance of connected component with part). The weight is now given by the topological genus and since each part is a non-singular stable surface, the dual graph $\mathbf{G}(S,P)$ is stable.
\end{observation/construction}

We generalise the definition of genus to nodal surfaces using the construction of dual graphs.
\begin{definition}
The \textit{genus} of a nodal surface $S$ is defined as the genus of its associated dual graph $\mathbf{G}(S)$.
\end{definition}

Dual graphs enable us to identify explicitly the difference between the pure and non-pure mapping class groups:

\begin{lemma}\label{pure vs non-pure mapping class groups}
For any stable nodal surface $(S,P)$, there is a short exact sequence
\begin{align*}
1\longrightarrow \widetilde{\Gamma}_S\longrightarrow\Gamma_S\longrightarrow \operatorname{Aut}(\mathbf{G}_S)\longrightarrow 1
\end{align*}
where $\mathbf{G}_S=\mathbf{G}(S,P)$ is the dual graph associated to $(S,P)$.
\end{lemma}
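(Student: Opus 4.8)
The plan is to construct the homomorphism $\Gamma_S \to \operatorname{Aut}(\mathbf{G}_S)$ from the functoriality of the dual graph construction, to identify its kernel with $\widetilde{\Gamma}_S$ by unwinding definitions, and to prove surjectivity by realising graph automorphisms geometrically. First I would observe that any orientation-preserving homeomorphism $\phi$ of $S$ fixing $P$ pointwise preserves the set of nodes (these being precisely the points with no Euclidean disk neighbourhood), hence permutes the connected components of $S \smallsetminus \{\text{nodes}\}$ and, extending over the nodes, induces homeomorphisms $S_v \to S_{\phi(v)}$ between the parts of \Cref{parts of nodal surface}, carrying nodal points to nodal points; since $\phi$ fixes the marked points pointwise, these also respect the labels in $P$. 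As homeomorphisms preserve the topological genus of a part, the resulting permutations of parts, nodes and half-edges assemble into an automorphism $\mathbf{G}(\phi)$ of the $P$-pointed dual graph $\mathbf{G}_S$ fixing all legs. Along a path of homeomorphisms the induced permutations of the finite sets of parts and of nodes are locally constant, so isotopic homeomorphisms induce the same automorphism, and $[\phi] \mapsto \mathbf{G}(\phi)$ is a well-defined group homomorphism $\Gamma_S \to \operatorname{Aut}(\mathbf{G}_S)$.

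For exactness, note that $\mathbf{G}(\phi) = \operatorname{id}$ precisely when $\phi$ sends each part to itself and each node to itself, i.e.~when $\phi$ preserves the components of $S \smallsetminus \{\text{nodes}\}$ and fixes the set of nodes pointwise. Since $\mathbf{G}(\phi)$ depends only on $[\phi]$, comparison with the definition of the pure mapping class group gives $\ker(\Gamma_S \to \operatorname{Aut}(\mathbf{G}_S)) = \widetilde{\Gamma}_S$, with $\widetilde{\Gamma}_S \to \Gamma_S$ the (injective) inclusion; exactness at $\widetilde{\Gamma}_S$ and at $\Gamma_S$ follows formally.

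It remains to prove surjectivity, which is the crux. Given $\alpha \in \operatorname{Aut}(\mathbf{G}_S)$, since $\alpha$ commutes with the $P$-marking it fixes all legs, hence fixes every vertex incident to a leg, and on the remaining half-edges at a vertex $v$ it restricts to a bijection onto the half-edges at $\alpha(v)$. The parts $(S_v,P_v)$ and $(S_{\alpha(v)},P_{\alpha(v)})$ are non-singular stable surfaces of equal genus $\omega(v)=\omega(\alpha(v))$ with the same number of labelled marked and nodal points, so the classification of surfaces yields an orientation-preserving diffeomorphism $\psi_v\colon S_v \to S_{\alpha(v)}$ inducing $\alpha$ on these labelled points (any prescribed bijection of labelled points on oriented surfaces being realised by an orientation-preserving diffeomorphism). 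Because $\alpha$ is a graph automorphism, the $\psi_v$ are automatically compatible with the identifications of nodal points that glue the parts back into $S$: a node with half-edges $\{h,h'\}$ is sent to the node with half-edges $\alpha(\{h,h'\})$. Hence $\coprod_v \psi_v$ descends to a homeomorphism $\psi\colon S \to S$ (an inverse being built the same way from $\alpha^{-1}$) which restricts to a diffeomorphism on each part, is orientation-preserving, fixes $P$ pointwise, and satisfies $\mathbf{G}(\psi)=\alpha$; its class in $\Gamma_S$ (via $\pi_0\operatorname{Diffeo}^+_P(S)\cong \pi_0\operatorname{Homeo}^+_P(S)$) maps to $\alpha$.

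I expect the main obstacle to be precisely this last step: it is where the real, if elementary, work lies — invoking the classification of surfaces to produce the local diffeomorphisms $\psi_v$ with the prescribed behaviour on labelled points, and checking that they assemble across the nodes into a single orientation-preserving diffeomorphism of $S$. The construction of the map and the identification of its kernel are formal consequences of the naturality of the dual graph construction and the very definition of $\widetilde{\Gamma}_S$.
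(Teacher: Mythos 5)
Your argument is correct and supplies a proof for a lemma the paper simply declares immediate; the map $\Gamma_S\to\operatorname{Aut}(\mathbf{G}_S)$ induced by the naturality of the dual-graph construction, the kernel identification, and the realisation of a given graph automorphism by a part-by-part orientation-preserving diffeomorphism assembled across the nodes are the natural and essentially unique moves, and all check out.

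One precision worth recording in the kernel step: you equate $\mathbf{G}(\phi)=\operatorname{id}$ with ``$\phi$ preserves each part and fixes each node,'' but $\mathbf{G}(\phi)=\operatorname{id}$ additionally requires $\phi$ to fix every \emph{half-edge}. At a self-node (a loop in $\mathbf{G}_S$) this is strictly stronger: a homeomorphism can preserve the part and fix the node while swapping the two branches, which acts by the nontrivial involution on the corresponding pair of half-edges and so does \emph{not} lie in the kernel. The paper's own definition of $\widetilde{\Gamma}_S$ (``preserve the components and fix the set of nodes pointwise'') has the same surface-level ambiguity, and the stronger reading --- no branch swaps at self-nodes, equivalently each restriction to a part fixes all of $P_v$ pointwise --- is the one forced by both this lemma and the product decomposition $\widetilde{\Gamma}_S\cong\prod_{v}\Gamma_{S_v}$ of \Cref{product decomposition of pure mapping class group}. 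So your kernel identification is correct for the intended definition, but this is the one place where ``unwinding definitions'' hides a genuine (if small) check, and it is worth stating the half-edge condition explicitly.
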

\begin{proof}
This is immediate.
\end{proof}

\subsection{Curve posets}

We recall admissible curve systems, introduce the curve poset and compare it with the poset of dual graphs.

By an \textit{admissible curve system} on a stable nodal surface $(S,P)$, we mean a collection $C$ of disjoint simple closed curves in $S\smallsetminus (\{\text{nodes}\}\cup P)$ such that no single curve bounds a disk nor a once-punctured disk, and no two curves bound a cylinder.

\begin{definition}
Let $(S,P)$ be a stable nodal surface. We define the \textit{curve poset} of $(S,P)$ to be the poset $\mathfrak{C}_S=\mathfrak{C}_{(S,P)}$ of isotopy classes of admissible curve systems on $(S,P)$ with partial order given by reverse inclusion: $\sigma\leq \tau$ if and only if the isotopy classes of disjoint simple closed curves in $\tau$ are a subset of those in $\sigma$. We include the empty curve system $\emptyset$ as an element in $\mathfrak{C}_S$. 
\end{definition}

\begin{remark}
The realisation of $(\mathfrak{C}_S\smallsetminus \emptyset)^{op}$ is the classical \textit{curve complex} associated to $(S,P)$, that is, the simplicial complex with vertices the isotopy classes of simple closed curves not bounding a disk nor a once-punctured disk, and where distinct vertices $c_0,\ldots,c_n$ span an $n$-simplex if the representing curves can be homotoped to be pairwise disjoint. The curve poset $\mathfrak{C}_S$ is then the opposite of what is sometimes called the \textit{augmented curve complex} (e.g.~\cite{ChenLooijenga}). We choose to work with reverse inclusion as our partial order because this better aligns with the setting of stratified homotopy theory. Comparing with the situation of the reductive Borel--Serre compactification, the classical curve complex corresponds to the Tits building, whereas the curve poset corresponds to the poset of parabolic subgroups.
\end{remark}

\begin{construction}
Given an admissible curve system $C$ on a stable nodal surface $(S,P)$, we can consider the stable nodal surface $(S/C,P)$ consisting of the topological space $S/C$ given by collapsing each simple closed curve of $C$ to a point together with the embedding $P\hookrightarrow S\twoheadrightarrow S/C$. The smooth structure on the parts of $(S,P)$ descends to define a smooth structure on the parts of $(S/C,P)$.

Note that the genus of $S/C$ is equal to that of $S$ as the dual graph of $S/C$ is a specialisation of that of $S$.
\end{construction}

We make a few simple observations.

\begin{observation}\label{observations about curve posets}
Let $(S,P)$ be a stable nodal surface with parts $(S_v,P_v)$, $v\in V_S$.
\begin{enumerate}
\item The curve poset decomposes according to the parts
\begin{align*}
\mathfrak{C}_S\cong \prod_{v\in V_S} \mathfrak{C}_{S_v}.
\end{align*}
\item If $\sigma\in \mathfrak{C}_{S}$ is represented by an admissible curve system $C$, then we have an isomorphism of posets
\begin{align*}
\mathfrak{C}_{S/C}\xrightarrow{\cong}(\mathfrak{C}_{S})_{\leq \sigma},\quad\tau\mapsto \tau \cup \sigma
\end{align*}
where we identify $\tau$ with its preimage under the quotient map $S\twoheadrightarrow S/C$.\qedhere
\end{enumerate}
\end{observation}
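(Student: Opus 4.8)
The plan is to prove both statements by elementary surface topology, tracking how embedded disks, once-punctured disks and cylinders behave under the two operations in play --- normalising the nodes, for (1), and collapsing an admissible system, for (2). Two facts are used throughout. First, a subsurface witnessing non-admissibility of a curve or a pair of curves --- a disk, a once-punctured disk (with the puncture allowed to be a node as well as a marked point), or a cylinder --- lies in the complement of the nodes except possibly for a single puncture point, and, being connected after removal of at most one point, it therefore sits inside a single connected component of the complement of the nodes, hence inside a single part. Second, the \emph{innermost disk argument}: if a disk or cylinder in $S$ meets an admissible system $C$ in its interior, an innermost component of $C$ inside it bounds a disk, or cobounds a cylinder, lying in $S\smallsetminus(\{\text{nodes}\}\cup P)$, contradicting admissibility of $C$; so such a subsurface may always be taken disjoint from $C$.

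For (1), an admissible system $C$ on $(S,P)$ consists of curves disjoint from the nodes, so it splits uniquely as $C=\coprod_{v\in V_S}C_v$ with $C_v$ a simple closed curve system contained in $S_v^\circ\subseteq S_v\smallsetminus P_v$. By the first fact, for $c\in C_v$ a disk or once-punctured disk in $S$ bounded by $c$ lies in the closure of $S_v^\circ$ and, after normalising, inside $S_v$; conversely a disk bounded by $c$ inside $S_v\smallsetminus P_v$ already lies in $S_v^\circ\subseteq S\smallsetminus(\{\text{nodes}\}\cup P)$, and likewise for cylinders. Hence $c\in C_v$ bounds a disk resp.\ once-punctured disk in $S$ iff it does so in $S_v$, and two curves cobound a cylinder in $S$ iff they do so in a common $S_v$; so $C$ is admissible iff every $C_v$ is. The resulting bijection $C\leftrightarrow(C_v)_v$ respects isotopy in both directions --- an isotopy of $S$ fixing the nodes and $P$ restricts to isotopies of the parts, and conversely --- and manifestly carries reverse inclusion to the product order, which is the asserted poset isomorphism $\mathfrak{C}_S\cong\prod_v\mathfrak{C}_{S_v}$.

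For (2), recall that collapsing a two-sided simple closed curve to a point creates a node, so the nodes of $S/C$ are $q(\{\text{nodes of }S\})\sqcup q(C)$, and $q$ restricts to a homeomorphism $S\smallsetminus C\xrightarrow{\ \sim\ }(S/C)\smallsetminus q(C)$. Under $q^{-1}$, curve systems on $S/C$ disjoint from its nodes correspond bijectively to curve systems on $S$ disjoint from $C$ and from the nodes of $S$; the key point is that this restricts to a bijection between admissible systems $\tau$ on $S/C$ and systems $E$ on $S$ for which $C\sqcup E$ is admissible, after which $\tau\mapsto q^{-1}(\tau)\cup C=\tau\cup\sigma$ is a bijection onto $(\mathfrak{C}_S)_{\leq\sigma}$, since the latter consists precisely of the isotopy classes of admissible systems containing $C$, with inverse $[D]\mapsto[q(D\smallsetminus C)]$. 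To verify the key point: if $C\sqcup E$ is admissible, then any disk, once-punctured disk at a marked point, or cylinder in $S/C$ witnessing non-admissibility of $q(E)$ avoids the nodes $q(C)$ and pulls back to one in $S$ witnessing non-admissibility of $C\sqcup E$; and a curve of $q(E)$ bounding a once-punctured disk at a node $q(c)$, $c\in C$, pulls back to an embedded $S^1\times(0,1]$ limiting onto $c$, whose closure is a cylinder cobounded by $c$ and the corresponding curve of $E$, again contradicting admissibility of $C\sqcup E$. Conversely, if $\tau$ is admissible on $S/C$ and $E=q^{-1}(\tau)$, a witness in $S$ against admissibility of $C\sqcup E$ may be taken disjoint from $C$ by the innermost disk argument --- in the cylinder case, an innermost copy of a curve of $C$ inside it collapses to a once-punctured disk at a node of $S/C$, contradicting admissibility of $\tau$ --- and then pushes forward via $q$ to a witness against $\tau$. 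Finally, the bijection descends to isotopy classes because isotopies of $S/C$ fix each node, hence $q(C)$, and lift to isotopies of $S$ preserving $C$ setwise, and vice versa; and it visibly preserves and reflects the two partial orders.

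I expect nothing here to be structurally deep: the work is in the bookkeeping, in particular in fixing the convention for ``once-punctured disk'' so that the admissibility conditions genuinely match across normalisation and across the collapse $q$ (the subtle cases being punctures located at nodes), and in the handful of applications of the innermost disk argument. The one point worth isolating --- and the main obstacle --- is the correspondence between a curve bounding a once-punctured disk at the node $q(c)$ in $S/C$ and that curve cobounding a cylinder with $c$ in $S$; this is exactly what forces the target of the bijection in (2) to be the systems $E$ with $C\sqcup E$ admissible, rather than merely the admissible systems disjoint from $C$.
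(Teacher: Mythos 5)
The paper gives no proof of this statement; it is an ``Observation,'' which in this paper signals that the claim is taken as elementary surface topology and closed with the $\circ$ marker. Your argument is a careful and correct verification of precisely the facts being taken for granted. In particular you correctly isolate the one genuinely non-trivial point in part~(2): under the collapse $q\colon S\to S/C$, a curve of $S$ that cobounds a cylinder with some $c\in C$ becomes a curve of $S/C$ bounding a once-punctured disk at the node $q(c)$. This is exactly why the bijection must be between admissible $\tau$ on $S/C$ and systems $E$ on $S$ with $C\sqcup E$ admissible, and not merely between admissible systems on $S/C$ and admissible systems on $S$ disjoint from $C$; it is the reason the map lands in $(\mathfrak{C}_S)_{\leq\sigma}$ and is surjective onto it.

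One minor imprecision worth flagging: your preliminary formulation of ``fact two'' claims that a disk or cylinder witnessing non-admissibility ``may always be taken disjoint from $C$.'' For cylinders this is too strong --- an essential curve of $C$ inside a cylinder $A$ is parallel to the core, so $A$ cannot in general be pushed off $C$. You implicitly correct this in the detailed verification, where instead of disjointness you argue that the offending curve of $C$ inside $A$ yields (after collapse) a once-punctured disk at a node of $S/C$, contradicting admissibility of $\tau$ directly. So the argument as executed is sound; only the preliminary statement of the lemma is looser than what you actually use.
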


The mapping class group $\Gamma_{S}$ acts on the curve poset $\mathfrak{C}_{S}$ in the obvious way:
\begin{align*}
\overline{\gamma}.\sigma=[\gamma(C)],\quad\quad\text{for}\quad \overline{\gamma}\in \Gamma_S,\ \sigma=[C]\in \mathfrak{C}_S. 
\end{align*}
By \cite[Lemma 2.35]{ClausenOrsnesJansen}, the quotient set $\Gamma_S\backslash \mathfrak{C}_S$ has a natural structure of a poset identifying with the quotient in the category of posets: for a pair of equivalence classes $\bar{\sigma}$, $\bar{\tau}$, we have $\bar{\sigma}\leq \bar{\tau}$ if there is a $\overline{\gamma}\in \Gamma_S$ such that $\overline{\gamma}.\sigma\leq \tau$ in $\mathfrak{C}_S$.

We want to compare the poset $\Gamma_S\backslash \mathfrak{C}_S$ with the poset of stable dual graphs introduced in the previous section. Let $(S,P)$ be a stable nodal surface of genus $g$ with associated dual graph $\mathbf{G}=\mathbf{G}(S,P)$ and parts $(S_v,P_v)$, $v\in V_S$. We have a map of posets
\begin{align*}
\mathfrak{C}_{S}\longrightarrow \mathcal{G}_{\mathbf{G}}^\sim,\quad (\sigma_v)_{v\in V_S}\mapsto [\mathbf{G}(\sigma_v)]_{v\in V_S},
\end{align*}
where (by a slight abuse of notation) $[\mathbf{G}(\sigma)]$ denotes the isomorphism class of the dual graph associated to the nodal surface $(S/C,P)$ for some representative $C$ of $\sigma$ and where we are using part 1 of \Cref{observations about curve posets} to view the curve poset as a product. This map factors through the quotient $\widetilde{\Gamma}_S\backslash \mathfrak{C}_S$, and further quotienting by $\operatorname{Aut}(\mathbf{G})$ defines a map of posets $\Gamma_{S}\backslash \mathfrak{C}_{S}\rightarrow\mathcal{G}_{\mathbf{G}}$.

\begin{proposition}\label{iso of posets}
Let $(S,P)$ be a stable nodal surface with associated dual graph $\mathbf{G}=\mathbf{G}(S,P)$. The map $\Gamma_{S}\backslash \mathfrak{C}_{S}\rightarrow\mathcal{G}_{\mathbf{G}}$ defined above is an isomorphism of posets.
\end{proposition}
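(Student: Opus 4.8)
The plan is to reduce to the non-singular case and then invoke the classification of surfaces together with the change of coordinates principle. For the reduction, note that \Cref{observations about curve posets}(1), \Cref{product decomposition of pure mapping class group} and the definition of $\mathcal{G}_{\mathbf{G}}^\sim$ identify the map $\mathfrak{C}_S\to\mathcal{G}_{\mathbf{G}}^\sim$ with the product over $v\in V_S$ of the analogous maps $\mathfrak{C}_{S_v}\to\mathcal{G}_{g_v,P_v}$ for the (non-singular) parts, and this product is $\widetilde{\Gamma}_S$-equivariant, where $\widetilde{\Gamma}_S\cong\prod_v\Gamma_{S_v}$ acts factorwise. So if the proposition holds for every part, it holds for $\widetilde{\Gamma}_S\backslash\mathfrak{C}_S\to\mathcal{G}_{\mathbf{G}}^\sim$, and then --- quotienting by $\operatorname{Aut}(\mathbf{G})=\Gamma_S/\widetilde{\Gamma}_S$ (\Cref{pure vs non-pure mapping class groups}) and using that the quotient-poset construction of \cite[Lemma 2.35]{ClausenOrsnesJansen} is functorial and that the $\operatorname{Aut}(\mathbf{G})$-action on $\widetilde{\Gamma}_S\backslash\mathfrak{C}_S$ corresponds under our identification to the one on $\mathcal{G}_{\mathbf{G}}^\sim$ (by naturality of $\mathbf{G}(-)$) --- it holds for $\Gamma_S\backslash\mathfrak{C}_S\to\mathcal{G}_{\mathbf{G}}$.

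So assume $(S,P)$ non-singular of genus $g$, whence $\mathbf{G}=(g,P)$, $\widetilde{\Gamma}_S=\Gamma_S$, $\operatorname{Aut}(\mathbf{G})=1$ and $\mathcal{G}_{\mathbf{G}}=\mathcal{G}_{g,P}$. The assignment $\sigma\mapsto[\mathbf{G}(\sigma)]$ is order-preserving, since collapsing more curves produces more nodes, hence more edges, hence a smaller dual graph. For surjectivity I would realise an arbitrary stable $P$-pointed genus $g$ dual graph $\mathbf{H}$ (this exhausts $\mathcal{G}_{g,P}$, the one-vertex graph being the top element) as $\mathbf{G}(S/C)$: build the topological nodal surface $\hat{S}_{\mathbf{H}}$ of combinatorial type $\mathbf{H}$ by glueing punctured surfaces of genus $g_w$ along the edges, smooth every node by inserting an annulus to obtain a closed connected $P$-pointed surface --- which has genus $g(\mathbf{H})=g$ by a standard Euler characteristic count --- identify it with $S$ via the classification of surfaces, and let $C$ be the family of core curves of the inserted annuli; then $S/C\cong\hat{S}_{\mathbf{H}}$ has dual graph $\mathbf{H}$, and $C$ is admissible precisely because $\mathbf{H}$ is stable (a curve bounding a disc or a once-punctured disc, or a pair of curves bounding a cylinder, would produce an unstable vertex). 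Alternatively one inducts on the number of edges, realising each one-edge specialisation by an appropriate separating or non-separating curve on a single part. For injectivity, if $\mathbf{G}(S/C)\cong\mathbf{G}(S/C')$ then --- since a $P$-pointed stable nodal surface is determined up to orientation-preserving $P$-homeomorphism by its dual graph, the homeomorphism being built part by part and glued along the matching edges --- there is such a homeomorphism $S/C\to S/C'$, and the change of coordinates principle (\cite[\S 1.3]{FarbMargalit}) lifts it to an element $\gamma\in\Gamma_S$ with $\gamma(C)$ isotopic to $C'$, i.e.~$\overline{\gamma}.\sigma=\tau$.

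It remains to see the bijection reflects the order. Given $[\mathbf{G}(\sigma)]\leq[\mathbf{G}(\tau)]$, write $\mathbf{G}(\tau)\cong\mathbf{G}(\sigma)_I$ for a set $I$ of edges of $\mathbf{G}(\sigma)$, pick a representative $C$ of $\sigma$, and delete from $C$ the curves corresponding to the edges in $I$; the resulting subsystem $C''\subseteq C$ is admissible with $\mathbf{G}(S/C'')\cong\mathbf{G}(\tau)$, so $\overline{[C'']}=\overline{\tau}$ by the injectivity just established. Choosing $\gamma\in\Gamma_S$ with $\overline{\gamma}.[C'']=\tau$, the inclusion $C''\subseteq C$ gives $\overline{\gamma}.\sigma=[\gamma(C)]\leq[\gamma(C'')]=\tau$, whence $\overline{\sigma}\leq\overline{\tau}$; the converse implication is the order-preservation already observed, applied to a $\gamma$ with $\gamma.\sigma\leq\tau$ together with $\Gamma_S$-invariance of $\mathbf{G}(-)$, cf.~also \Cref{observations about curve posets}(2). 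The step I expect to demand the most care is the combined realisation/injectivity input --- the precise change of coordinates statement for (possibly disconnecting) admissible curve systems on a surface with labelled marked points, matching isotopy classes of systems to dual graph data --- but this is classical, and I would simply cite the classification of surfaces and \cite{FarbMargalit} rather than reprove it; everything else is bookkeeping with the quotient-poset formalism.
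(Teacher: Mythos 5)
Your proposal is correct and follows essentially the same route as the paper: reduce to the non-singular case via the product decompositions and $\operatorname{Aut}(\mathbf{G})\cong\widetilde{\Gamma}_S\backslash\Gamma_S$, prove surjectivity by building a nodal surface of type $\mathbf{H}$, resolving the nodes into annuli, identifying the resulting smooth surface with $S$, and taking preimages of the nodes; prove injectivity by upgrading a dual-graph isomorphism to a homeomorphism of the nodal quotients and lifting it to $S$. The one genuine addition is your last paragraph: the paper's proof establishes bijectivity (and the forward map is order-preserving by construction), but it does not spell out that the inverse bijection is order-preserving, which is needed for a poset isomorphism and is not automatic for an order-preserving bijection. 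Your argument --- given $[\mathbf{G}(\sigma)]\leq[\mathbf{G}(\tau)]$, delete from a representative $C$ of $\sigma$ the curves indexed by $I$ and use the injectivity just proved to conclude $\overline{\sigma}\leq\overline{\tau}$ --- fills that in cleanly. One small caveat: for injectivity you invoke the change of coordinates principle to ``lift'' a homeomorphism of nodal surfaces, whereas the principle as stated in Farb--Margalit concerns curve systems on a fixed surface; either phrase it as the paper does (lift locally in conical neighbourhoods of the nodes) or cite change of coordinates directly as asserting that two admissible systems with isomorphic dual graphs of their quotients lie in the same $\Gamma_S$-orbit.
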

\begin{proof}
It suffices to prove this for a non-singular stable surface $(S,P)$ of genus $g$. The general case then follows from the identification $\operatorname{Aut}(\mathbf{G})\cong \widetilde{\Gamma}_S\backslash \Gamma_S$ (\Cref{pure vs non-pure mapping class groups}) and the product decompositions of $\widetilde{\Gamma}_S$ and $\mathfrak{C}_S$ (\Cref{product decomposition of pure mapping class group} and \Cref{observations about curve posets}).

To see that the map is surjective, let $\mathbf{H}\in \mathcal{G}_{g,P}$ and for each vertex $v$ of $\mathbf{H}$, let $g_v$ denote the weight of $v$ and $P_v$ the set of half-edges incident to $v$. For every $v$, let $(S_v,P_v)$ be a non-singular stable surface of genus $g_v$. We glue (or clutch) these together according to the graph $\mathbf{H}$: identify the points $p_i\in S_v$ and $p_j\in S_w$ if the corresponding half-edges $i$ and $j$ make up an edge in $\mathbf{H}$. Now, we can resolve the nodes of the resulting stable nodal surface $(S_{\mathbf{H}},P)$ by replacing a sufficiently small conical neighbourhood of each node by an annulus. The resulting surface $(\widetilde{S}_{\mathbf{H}},P)$ is a non-singular stable surface of genus $g$ and as such it is homeomorphic to our original surface $(S,P)$. Hence, there is a map
\begin{align*}
(S,P)\xrightarrow{\ \cong\ } (\widetilde{S}_{\mathbf{H}},Z)\longrightarrow (S_\mathbf{H},Z)
\end{align*}
and the preimage of the nodes of $S_\mathbf{H}$ is an admissible system of curves on $S$ whose isotopy class is mapped to $\mathbf{H}$ by the map $\mathfrak{C}_S\rightarrow \mathcal{G}_{\mathbf{G}}$ of posets (since the associated dual graph only depends on the underlying homeomorphism type of the nodal surface and not on the smooth structure on the parts).

To see that the map is injective, let $\sigma=[C]$ and $\tau=[D]$ in $\Gamma_S\backslash \mathfrak{C}_S$ and note that an isomorphism of graphs $\mathbf{G}(\sigma)\xrightarrow{\cong} \mathbf{G}(\tau)$ will give rise to a homeomorphism of the corresponding nodal surfaces, $f\colon (S/C,P)\rightarrow (S/D,P)$. Now, we claim that such a homeomorphism lifts to a homeomorphism $\hat{f}$ of $S$ mapping $C$ to $D$ and such that the diagram below commutes up to isotopy where $\rho_C$ and $\rho_D$ denote the maps collapsing the admissible curve systems $C$, respectively $D$, to node points. Indeed, we may choose a representative in the isotopy class of $f$ which is a diffeomorphism on the parts and this can be lifted locally in a conical neighbourhood of a node $n\in S/C$ to a homeomorphism between cylindrical neighbourhoods of the corresponding closed simple curves, $(f\circ \rho_C)^{-1}(n)$ and $\rho_D^{-1}(n)$ in $S$. Outside of these local neighbourhoods $f$ defines a homeomorphism $S\smallsetminus (f\circ\rho_C)^{-1}(\{\text{nodes}\})\xrightarrow{\ \cong\ } S\smallsetminus D$.
\begin{center}
\begin{tikzpicture}
\matrix (m) [matrix of math nodes,row sep=2em,column sep=2em]
  {
S & S \\
S/C & S/D \\
  };
  \path[->,dashed]
  (m-1-1) edge node[above]{$\hat{f}$} (m-1-2)
  ;
  \path[-stealth]
(m-1-1) edge node[left]{$\rho_C$} (m-2-1)
(m-2-1) edge node[below]{$f$} (m-2-2)
(m-1-2) edge node[right]{$\rho_D$} (m-2-2)
;
\end{tikzpicture}
\end{center}
This completes the proof.
\end{proof}

We also proved the following.

\begin{corollary}
Let $(S,P)$ be a stable nodal surface with associated dual graph $\mathbf{G}=\mathbf{G}(S,P)$. The map $\widetilde{\Gamma}_{S}\backslash \mathfrak{C}_{S}\rightarrow\mathcal{G}_{\mathbf{G}}^\sim$ is an isomorphism of posets.
\end{corollary}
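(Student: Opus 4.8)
The plan is to extract this from the proof of \Cref{iso of posets}, of which it is really a byproduct. When $\mathbf{G}$ has a single vertex --- i.e.\ when $(S,P)$ is non-singular --- one has $\widetilde{\Gamma}_S=\Gamma_S$, $\operatorname{Aut}(\mathbf{G})=1$ and $\mathcal{G}_{\mathbf{G}}^\sim=\mathcal{G}_{g,P}$, so the statement is literally the non-singular case of \Cref{iso of posets}, which is precisely what that proof establishes first. It therefore only remains to assemble this case over the parts $v\in V_S$.

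First I would invoke the three product decompositions indexed by $V_S$: $\widetilde{\Gamma}_S\cong\prod_{v\in V_S}\Gamma_{S_v}$ (\Cref{product decomposition of pure mapping class group}), $\mathfrak{C}_S\cong\prod_{v\in V_S}\mathfrak{C}_{S_v}$ (\Cref{observations about curve posets}(1)), and $\mathcal{G}_{\mathbf{G}}^\sim=\prod_{v\in V_S}\mathcal{G}_{g_v,P_v}$ by definition. Since the map $\mathfrak{C}_S\to\mathcal{G}_{\mathbf{G}}^\sim$ was defined componentwise, $(\sigma_v)_v\mapsto[\mathbf{G}(\sigma_v)]_v$, it is precisely the product of the maps $\mathfrak{C}_{S_v}\to\mathcal{G}_{g_v,P_v}$.

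The only point requiring a word of justification is that passing to quotients commutes with these finite products at the level of posets: for a finite family of posets $Q_v$ each equipped with an action of a finite group $\Gamma_v$, the natural map $\prod_v(\Gamma_v\backslash Q_v)\to(\prod_v\Gamma_v)\backslash(\prod_v Q_v)$ is an isomorphism of posets. This is immediate from the explicit description of the order recalled from \cite[Lemma 2.35]{ClausenOrsnesJansen}: an orbit of $\prod_v\Gamma_v$ in $\prod_v Q_v$ is the product of the orbits of the $\Gamma_v$, and $\bar x\le\bar y$ on the right means there is $(\gamma_v)_v$ with $\gamma_v x_v\le y_v$ for every $v$, which is exactly the componentwise order on the left. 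Applying this with $\Gamma_v=\Gamma_{S_v}$ and $Q_v=\mathfrak{C}_{S_v}$ identifies $\widetilde{\Gamma}_S\backslash\mathfrak{C}_S$ with $\prod_v(\Gamma_{S_v}\backslash\mathfrak{C}_{S_v})$, compatibly with the componentwise maps into $\mathcal{G}_{\mathbf{G}}^\sim=\prod_v\mathcal{G}_{g_v,P_v}$.

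Finally, each factor $\Gamma_{S_v}\backslash\mathfrak{C}_{S_v}\to\mathcal{G}_{g_v,P_v}$ is the non-singular instance of \Cref{iso of posets} for the stable surface $(S_v,P_v)$ (whose dual graph has a single vertex and hence trivial automorphism group), so it is an isomorphism of posets; a finite product of isomorphisms of posets is again an isomorphism, so the product map $\widetilde{\Gamma}_S\backslash\mathfrak{C}_S\to\mathcal{G}_{\mathbf{G}}^\sim$ is an isomorphism, as claimed. I do not expect any genuine obstacle here: the substantive content was already carried out inside the proof of \Cref{iso of posets}, and the only mildly fiddly step is the compatibility of the quotient-poset construction with products, which as noted is formal given \cite[Lemma 2.35]{ClausenOrsnesJansen}.
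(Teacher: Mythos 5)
Your proposal is correct and follows essentially the same route as the paper's terse justification: the product decompositions of $\widetilde{\Gamma}_S$, $\mathfrak{C}_S$ and $\mathcal{G}_{\mathbf{G}}^\sim$ reduce the claim to the non-singular (single-vertex) case of \Cref{iso of posets}, applied to each part $(S_v,P_v)$, which is exactly what is established in the first half of that proof. One small inaccuracy worth flagging: in stating that quotient posets commute with finite products you require each $\Gamma_v$ to be a \emph{finite} group, but the mapping class groups $\Gamma_{S_v}$ are generally infinite; this is harmless, since your argument uses only that $(\prod_v\Gamma_v)$-orbits are products of $\Gamma_v$-orbits and that the quotient-order formula is componentwise, neither of which involves finiteness of the groups (what needs checking is that the quotient preorder on $\Gamma_{S_v}\backslash\mathfrak{C}_{S_v}$ is genuinely a partial order, and that is what the cited lemma supplies in the relevant generality).
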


\begin{remark}
We could exhibit the stratification of the coarse moduli space using the poset $\Gamma_S\backslash \mathfrak{C}_S$. Indeed, the moduli space is homeomorphic to the quotient of the so-called \textit{augmented Teichmüller space} by an action of the mapping class group (\cite[Chapter 2]{Abikoff80}) and the augmented Teichmüller space is naturally stratified over the curve poset $\mathfrak{C}_S$. We have, however, actively chosen to avoid the augmented Teichmüller space as it does not fit into the analytic (or algebraic) setting nor is it a well-behaved stratified topological space. The problem is essentially that it is not locally compact. One could work around this as is done in \cite{OrsnesJansen}, but for a more satisfying setup, we choose an alternative route. We also find that it is also more appealing to define the stratification only in terms of the information given by the moduli space without having to pick a stable nodal surface $(S,P)$. In order to calculate the stratified homotopy type, we will need to make a choice of this kind, however --- essentially, it amounts to picking a basepoint in the open substack $\mathcal{M}_{\mathbf{G}}$ of smooth curves.
\end{remark}

\subsection{The Charney--Lee categories}\label{CharneyLee categories}

In this section, we associate a category $\operatorname{CL}_{S}$ to any stable nodal surface $(S,P)$.

\begin{notation}
Let $(S,P)$ be a stable nodal surface with mapping class group $\Gamma_{S}$ and curve poset $\mathfrak{C}_S$. For any given isotopy class of a single simple closed curve $c\in \mathfrak{C}_{S}$ denote by $t_c\in \Gamma_{S}$ the \textit{Dehn twist} about $c$ (see e.g.~\cite[Chapter 3]{FarbMargalit}). For any $\sigma\in \mathfrak{C}_{S}$, let $\Gamma_{S}(\sigma)\subset \Gamma_{S}$ denote the subgroup of mapping classes preserving $\sigma$, and let $\Delta_{\sigma}\subset \Gamma_{S}(\sigma)$ denote the free abelian subgroup generated by the Dehn twists about the curves making up $\sigma$.
\end{notation}

\begin{construction}\label{The category}
Let $(S,P)$ be a stable nodal surface and consider the action of the mapping class group $\Gamma_{S}$ on the curve poset $\mathfrak{C}_{S}$, and moreover the action by conjugation on the collection of subgroups $\Delta_\sigma\subset \Gamma_{S}$, $\sigma\in \mathfrak{C}_{S}$. The following conditions are satisfied:
\begin{enumerate}[label=(\roman*)]
\item $\Delta_\sigma\subset \Delta_\tau$ for all $\tau\leq \sigma$;
\item $\gamma\Delta_\sigma\gamma^{-1}=\Delta_{\gamma.\sigma}$ for all $\sigma\in \mathfrak{C}_{S}$, $\gamma\in \Gamma_{S}$ (\cite[Fact 3.7]{FarbMargalit}).
\end{enumerate}
Consider the category $\operatorname{CL}_{S}=\operatorname{CL}_{(S,P)}$ with objects the elements of $\mathfrak{C}_{S}$ and hom-sets
\begin{align*}
\operatorname{Hom}(\sigma,\tau)=\{\gamma\in \Gamma_{S}\mid \gamma.\sigma\leq \tau\}/\Delta_\sigma,
\end{align*}
where $\Delta_\sigma$ acts by right multiplication, and with composition given by multiplication of representatives in $\Gamma_{S}$. Properties (i) and (ii) imply that this is well-defined (see also \cite[\S 6]{OrsnesJansen}).

We call this the \textit{Charney--Lee category} associated to $(S,P)$. If $S$ is non-singular of genus $g$, we will often write $\operatorname{CL}_{g,P}=\operatorname{CL}_S$.
\end{construction}

The following observation, which is also made in \cite[\S 2.9]{CharneyLee84}, already reveals a little bit about the category $\operatorname{CL}_S$: the automorphism group of an object $\sigma=[C]$ identifies with the mapping class group of the stable nodal surface $S/C$.

\begin{lemma}\label{SES of groups quotient by Dehn twists}
Let $(S,P)$ be a stable nodal surface and let $\sigma=[C]\in \mathfrak{C}_S$. There is a short exact sequence of groups
\begin{align*}
1\rightarrow \Delta_{\sigma}\rightarrow \Gamma_{S}(\sigma)\rightarrow \Gamma_{S/C}\rightarrow 1.
\end{align*}
\end{lemma}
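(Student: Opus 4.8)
The plan is to construct the surjection $\Gamma_S(\sigma)\to\Gamma_{S/C}$ geometrically via the collapsing map $\rho_C\colon S\to S/C$ and then identify its kernel with $\Delta_\sigma$. First I would fix a representative $C$ of $\sigma$ and observe that any orientation-preserving diffeomorphism (smooth on the parts) $f$ of $S$ fixing the set of curves $C$ setwise descends along $\rho_C$ to a homeomorphism $\bar f$ of $S/C$: this is because collapsing $C$ is a pushout, and $f$ preserves the equivalence relation ``lie on a common curve of $C$''. After isotoping $f$ so it is smooth on the parts (as already used in the proof of \Cref{iso of posets}), $\bar f$ is smooth on the parts of $S/C$, hence defines a class in $\Gamma_{S/C}$. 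One checks this is a well-defined group homomorphism on isotopy classes: an isotopy of $f$ through maps preserving $C$ descends to an isotopy of $\bar f$, and here is the one subtle point — a mapping class in $\Gamma_S(\sigma)$ is represented by some $f$ preserving $C$ up to isotopy, and one must check independence of the choice of such representative, which follows because any two such differ by a mapping class supported in a neighbourhood of $C$ that becomes trivial after collapsing.

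Surjectivity of $\Gamma_S(\sigma)\to\Gamma_{S/C}$ is the geometric heart and is essentially the lifting argument already carried out in the proof of \Cref{iso of posets}: given $h\in\Gamma_{S/C}$, represent it by a diffeomorphism smooth on the parts; since $h$ fixes the set of nodes of $S/C$ corresponding to the curves of $C$, it lifts locally over conical neighbourhoods of those nodes to a homeomorphism between cylindrical (annular) neighbourhoods of the corresponding curves in $S$, and glues with $h$ away from these neighbourhoods to a homeomorphism $\hat h$ of $S$ carrying $C$ to itself, i.e.~$\hat h\in\Gamma_S(\sigma)$, with $\overline{\hat h}=h$. Next I would identify the kernel. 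Clearly each Dehn twist $t_c$, $c$ a curve of $C$, is supported in an annular neighbourhood of $c$ that collapses to a point in $S/C$, so $\Delta_\sigma$ lies in the kernel. Conversely, if $f$ preserves $C$ and $\bar f$ is isotopic to the identity in $S/C$, I would lift this isotopy to an isotopy of $f$ (using the local lifting over conical neighbourhoods as above) to arrange that $f$ is the identity outside a disjoint union of annular neighbourhoods of the curves of $C$; a homeomorphism of an annulus which is the identity on the boundary and preserves orientation is, up to isotopy rel boundary, a power of the Dehn twist about the core curve — this is the classical computation of $\pi_0$ of the diffeomorphism group of an annulus (see \cite[Proposition 2.4 and the surrounding discussion]{FarbMargalit}). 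Hence $[f]\in\Delta_\sigma$.

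The main obstacle is the kernel computation — specifically, turning ``$\bar f\simeq\mathrm{id}$'' into ``$f$ is supported in annular neighbourhoods of $C$'' cleanly, since one must lift an isotopy (not just a single map) through the singular collapse $\rho_C$ and control behaviour near the nodes; this is where the smooth-structure-on-parts convention pays off, ruling out pathologies like infinite twisting around a node. Everything else (well-definedness, functoriality on isotopy classes, $\Delta_\sigma\subseteq\ker$) is routine. Note also that, since $\operatorname{Hom}_{\operatorname{CL}_S}(\sigma,\sigma)=\{\gamma\mid\gamma.\sigma\le\sigma\}/\Delta_\sigma=\Gamma_S(\sigma)/\Delta_\sigma$, the lemma immediately yields the stated identification $\operatorname{Aut}_{\operatorname{CL}_S}(\sigma)\cong\Gamma_{S/C}$ claimed in the preceding paragraph.
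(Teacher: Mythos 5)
Your proof is correct and follows essentially the same route as the paper: surjectivity via the lifting argument recorded in the proof of \Cref{iso of posets}, and the kernel computation via the fact that the mapping class group of an annulus is generated by the Dehn twist (\cite[Proposition 2.4]{FarbMargalit}), phrased in the paper as ``any two lifts differ by an element of $\Delta_\sigma$.'' You spell out well-definedness and the isotopy-lifting step in more detail than the paper, but no new idea is introduced.
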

\begin{proof}
Surjectivity follows from the following observation made in the proof of \Cref{iso of posets}: a homeomorphism $S/C\rightarrow S/C$ lifts (up to isotopy) to a homeomorphism $S\rightarrow S$ mapping the curve system $C$ to itself.

Since the mapping class group of an annulus is generated by the Dehn twist around the annulus (\cite[Proposition 2.4]{FarbMargalit}), any two such lifts will differ by an element in $\Delta_\sigma$, which proves exactness at the middle term.
\end{proof}

\begin{remark}\label{category of stable curves remark}
We could define the category above using different subgroups of the mapping class group. We make a few remarks and observations about this here.
\begin{enumerate}
\item Let $\operatorname{CL}_S^\sim$ denote the category defined as above but using the pure mapping class group $\widetilde{\Gamma}_S$ instead. This decomposes according to the parts:
\begin{align*}
\operatorname{CL}_S^\sim\simeq \prod_{v\in V_S} \operatorname{CL}_{S_v}.
\end{align*}
\item Let $\sigma=[C]\in \mathfrak{C}_S$. Consider the category defined as above for the action of the group $\Gamma_S(\sigma)$ on the poset $(\mathfrak{C}_S)_{\leq \sigma}$. By the lemma above, this recovers the Charney--Lee category $\operatorname{CL}_{S/C}$ associated to the quotient $(S/C,P)$. Note, however, that this does \textit{not} in general agree with the full subcategory of $\operatorname{CL}_S$ spanned by the objects $(\mathfrak{C}_S)_{\leq \sigma}$. This is in fact one of the main technical differences between the situation at hand, namely the Deligne--Mumford--Knudsen compactification, and the reductive Borel--Serre compactification as dealt with in \cite{ClausenOrsnesJansen}. See also \Cref{key technical difference RBS versus DMK}.
\item In view of the previous observation, we could have avoided considering nodal surfaces by instead restricting our attention to the subgroups $\Gamma_S(\sigma)$ for non-singular $S$. We feel that it makes for a cleaner inductive argument to consider the nodal surfaces.\qedhere
\end{enumerate}
\end{remark}

We can identify the Charney--Lee category as an orbit category --- this is analogous to the situation in \cite[\S 6.2]{OrsnesJansen}.

\begin{observation}\label{observations about our 1-categories of stable curves}
Let $(S,P)$ be a stable nodal surface with mapping class group $\Gamma=\Gamma_{S}$, and let $\mathcal{O}(\Gamma)$ denote the orbit category of $\Gamma$. There is a fully faithful embedding $\operatorname{CL}_{S}^{op}\rightarrow \mathcal{O}(\Gamma)$ given by
\begin{align*}
\sigma \mapsto \Gamma/\Delta_\sigma\quad\text{ and }\quad([\gamma]\colon \sigma\rightarrow \tau)\mapsto( [\gamma]\colon\Gamma/\Delta_\tau\rightarrow  \Gamma/\Delta_\sigma),
\end{align*}
where we use that $\operatorname{Hom}_\Gamma(\Gamma/H, \Gamma/K)\xrightarrow{\ \cong\ } \{\gamma\in \Gamma\mid \gamma^{-1}H\gamma\leq K\}/K$ by sending a $\Gamma$-equivariant map to its value on the identity coset. The essential image of this functor is the full subcategory $\mathcal{O}_\Delta(\Gamma)\subset \mathcal{O}(\Gamma)$ of orbits whose isotropy group is a free abelian group generated by a collection of disjoint Dehn twists.
\end{observation}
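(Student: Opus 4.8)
The plan is to verify that the assignment $\sigma \mapsto \Gamma/\Delta_\sigma$ extends to a functor, that this functor is fully faithful, and that its essential image is exactly $\mathcal{O}_\Delta(\Gamma)$. I would start from the classical identification (recalled in the statement) $\operatorname{Hom}_\Gamma(\Gamma/H,\Gamma/K) \xrightarrow{\cong} \{\gamma \in \Gamma \mid \gamma^{-1}H\gamma \leq K\}/K$, which sends a $\Gamma$-equivariant map to its value on the identity coset $eK$, with composition corresponding to multiplication of representatives. Applying this with $H = \Delta_\tau$, $K = \Delta_\sigma$, we get $\operatorname{Hom}_\Gamma(\Gamma/\Delta_\tau, \Gamma/\Delta_\sigma) \cong \{\gamma \mid \gamma^{-1}\Delta_\tau\gamma \leq \Delta_\sigma\}/\Delta_\sigma$. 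By property (ii) of \Cref{The category}, $\gamma^{-1}\Delta_\tau\gamma = \Delta_{\gamma^{-1}.\tau}$, so this set is $\{\gamma \mid \Delta_{\gamma^{-1}.\tau} \leq \Delta_\sigma\}/\Delta_\sigma$.

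The key point is then to show that $\Delta_{\gamma^{-1}.\tau} \leq \Delta_\sigma$ if and only if $\gamma^{-1}.\tau \geq \sigma$ in $\mathfrak{C}_S$ (equivalently $\gamma.\sigma \leq \tau$, after replacing $\gamma$ by $\gamma^{-1}$ to match the direction in $\operatorname{CL}_S$). One direction is property (i) of \Cref{The category}: $\tau' \leq \sigma \Rightarrow \Delta_\sigma \subseteq \Delta_{\tau'}$, so I need the converse, namely that the free abelian subgroup generated by the Dehn twists about the curves of an admissible system determines (and is determined by) that system up to isotopy, and that containment of these subgroups corresponds to inclusion of curve systems. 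This follows from standard facts about Dehn twists: distinct isotopy classes of essential simple closed curves give distinct (in fact "independent") Dehn twists, and a Dehn twist $t_c$ lies in $\Delta_\tau = \langle t_{c_1},\dots,t_{c_k}\rangle$ only if $c$ is isotopic to one of the $c_i$ (one can detect this via the action on homology, or via intersection numbers, or cite \cite{FarbMargalit}). Granting this, $\Delta_{\gamma^{-1}.\tau} \leq \Delta_\sigma$ forces every curve of $\gamma^{-1}.\tau$ to be isotopic to a curve of $\sigma$, i.e.\ $\gamma^{-1}.\tau \geq \sigma$. Hence $\operatorname{Hom}_\Gamma(\Gamma/\Delta_\tau,\Gamma/\Delta_\sigma) \cong \{\gamma \mid \gamma.\sigma \leq \tau\}/\Delta_\sigma = \operatorname{Hom}_{\operatorname{CL}_S}(\sigma,\tau)$, and chasing through the identifications shows composition is respected; this gives both functoriality and full faithfulness of $\operatorname{CL}_S^{op} \to \mathcal{O}(\Gamma)$ simultaneously.

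For the essential image, I would argue as follows. By construction every object $\Gamma/\Delta_\sigma$ has isotropy group $\Delta_\sigma$, which by definition is free abelian generated by the disjoint Dehn twists about the curves of a representative of $\sigma$; so the image lands in $\mathcal{O}_\Delta(\Gamma)$. Conversely, given an orbit $\Gamma/H$ with $H$ free abelian generated by a collection $\{t_{c_1},\dots,t_{c_k}\}$ of Dehn twists about pairwise disjoint essential simple closed curves, the condition that $H$ be \emph{freely} generated by these twists (together with the standard fact that $\langle t_{c_1},\dots,t_{c_k}\rangle$ is free abelian of rank $k$ exactly when the $c_i$ are pairwise non-isotopic and pairwise disjoint --- see \cite[Chapter 3]{FarbMargalit}) means the $c_i$ form an admissible curve system $C$, provided we also observe that a disjoint collection of essential simple closed curves can be isotoped so that no component bounds a disk or once-punctured disk and no two bound a cylinder (the first because the curves are essential, the second because two isotopic curves would give a non-free relation $t_{c_i} = t_{c_j}$). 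Thus $H = \Delta_{[C]}$ and $\Gamma/H \cong \Gamma/\Delta_\sigma$ for $\sigma = [C]$, so the image is all of $\mathcal{O}_\Delta(\Gamma)$.

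The main obstacle is the curve-theoretic rigidity statement: that inclusion $\Delta_{\tau} \leq \Delta_{\sigma}$ of these abelian Dehn-twist subgroups is equivalent to the reverse inclusion of curve systems. Everything else is a formal manipulation of the orbit-category hom-set formula. I expect this rigidity to be a cited consequence of the theory of Dehn twists and intersection numbers in \cite{FarbMargalit} (e.g.\ the fact that $t_c$ acts nontrivially on the homology classes it should, or that $\langle t_{c_1},\dots,t_{c_n}\rangle \cong \Z^n$ iff the $c_i$ are a disjoint non-isotopic family), rather than something requiring a new argument here; the reference \cite[\S 6.2]{OrsnesJansen} presumably handles the analogous point and can be invoked.
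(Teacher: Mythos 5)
Your proposal is correct and supplies exactly the verification the paper leaves implicit (the statement is labelled an Observation and comes with no proof, only a pointer to \cite[\S 6.2]{OrsnesJansen}). You correctly identify the only non-formal ingredient: the rigidity statement that $\Delta_{\tau'} \leq \Delta_{\sigma}$ forces $\sigma \leq \tau'$, i.e.\ the reverse inclusion of curve systems. Your justification via the commutativity criterion for Dehn twists ($t_a t_b = t_b t_a$ iff $i(a,b)=0$, \cite[Fact 3.9]{FarbMargalit}) together with the rank count $\langle t_{c_1},\dots,t_{c_n}\rangle \cong \Z^n$ for pairwise disjoint non-isotopic curves is the standard and correct argument; it upgrades ``$t_{d_j}$ commutes with each $t_{c_i}$'' to ``$d_j$ is isotopic to some $c_i$'', which is what is needed. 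The rest of the argument — the hom-set formula in the orbit category, the translation via $\gamma^{-1}\Delta_\tau\gamma = \Delta_{\gamma^{-1}.\tau}$, and the identification of the essential image (including the observation that essentiality and pairwise non-isotopy of the curves are exactly the conditions making the isotropy group freely generated) — is a straightforward bookkeeping exercise, and you carry it out correctly. One minor point worth stressing in a final write-up: the isomorphism $\operatorname{Hom}_\Gamma(\Gamma/\Delta_\tau,\Gamma/\Delta_\sigma) \cong \{\gamma \mid \gamma.\sigma\leq\tau\}/\Delta_\sigma$ is an isomorphism of sets compatible with composition on both sides by multiplication of representatives, so full faithfulness and functoriality really do come for free once the underlying sets are matched, as you say.
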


It follows more or less directly from this observation that the Charney--Lee category $\operatorname{CL}_{S}$ associated to a non-singular stable $P$-pointed surface of genus $g$ is equivalent to (the opposite of) the category of stable curves introduced by Charney--Lee and Ebert--Giansiracusa (\cite{CharneyLee84}, \cite{EbertGiansiracusa}). Let us briefly recall this category although it will not be used explicitly in this paper.

\begin{definition}\label{category of stable curves}
Let $g\geq 0$ and $P$ a finite set such that $2g-2+|P|>0$. The \textit{category of stable ($P$-pointed) curves} $\mathfrak{SC}_{g,P}$ is the category with objects the stable $P$-pointed nodal surfaces of genus $g$ and morphisms given by isotopy classes of deformations.

Here a \textit{deformation} is a surjective continuous map $f\colon (S,P)\rightarrow (S',P)$ such that
\begin{itemize}
\item $f$ preserves the marked points;
\item the preimage of a node in $S'$ is either a node in $S$ or a simple closed curve in $S\smallsetminus (\{\text{nodes}\}\cup P)$ not bounding a disk nor a once-puntured disk. In other words, $f^{-1}(\text{nodes})=\text{nodes}\cup C$ for some admissible curve system $C$ on $S$;
\item on the complement of $C$ as above, $f$ restricts to an orientation preserving homeomorphism
\begin{equation*}
S\smallsetminus (\{\text{nodes}\}\cup C)\xrightarrow{\cong}S'\smallsetminus \{\text{nodes}\}.\qedhere
\end{equation*}
\end{itemize}
\end{definition}

We remark here that Ebert--Giansiracusa call this category the Charney--Lee category and denote it by $\mathcal{CL}_{g,n}$ in \cite{EbertGiansiracusa}. In view of the following identification, \textit{our} Charney--Lee category identifies with the opposite of \textit{theirs}; we hope this will not be a source of confusion. The identification is a simple generalisation of the short exact sequence of \Cref{SES of groups quotient by Dehn twists}.

\begin{proposition}\label{category of stable curves vs Charney-Lee category}
If $(S,P)$ is a non-singular stable $P$-pointed surface of genus $g$, then there is an equivalence $\operatorname{CL}_{S}\simeq\mathfrak{SC}_{g,P}^{op}$.
\end{proposition}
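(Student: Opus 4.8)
The plan is to build the equivalence $\operatorname{CL}_S \simeq \mathfrak{SC}_{g,P}^{\mathrm{op}}$ by specifying a functor in each direction and checking they are mutually inverse, using \Cref{SES of groups quotient by Dehn twists} and \Cref{observations about curve posets} as the main inputs. Fix once and for all the non-singular stable surface $(S,P)$ of genus $g$. On objects, send $\sigma = [C] \in \mathfrak{C}_S$ to the stable nodal surface $(S/C, P)$; by \Cref{SES of groups quotient by Dehn twists}, $(S/C,P)$ is again a stable nodal $P$-pointed surface of genus $g$ (the genus is preserved since the dual graph only specialises), so this lands in $\mathfrak{SC}_{g,P}$. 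Every object of $\mathfrak{SC}_{g,P}$ is a stable nodal surface of genus $g$, hence homeomorphic to some $(S/C,P)$ by the surjectivity argument in the proof of \Cref{iso of posets} (glue non-singular pieces according to the dual graph, resolve the nodes, and identify the result with $(S,P)$); so the functor is essentially surjective. The quotient map $\rho_C \colon S \twoheadrightarrow S/C$ is itself a deformation in the sense of \Cref{category of stable curves}, with $\rho_C^{-1}(\text{nodes}) = C$ and orientation-preserving homeomorphism on the complement, and this is the distinguished deformation exhibiting $(S/C,P)$ as a ``degeneration'' of $(S,P)$.

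Next I would define the functor on morphisms. Given $[\gamma] \in \operatorname{Hom}_{\operatorname{CL}_S}(\sigma,\tau)$ with $\gamma \in \Gamma_S$ satisfying $\gamma.\sigma \leq \tau$, so $\gamma(C)$ contains (a curve system isotopic to) $D$ where $\tau = [D]$, I would produce a deformation $(S/D,P) \to (S/C,P)$ as follows: the homeomorphism $\gamma \colon S \to S$ sends $D$ into $\gamma(C) \supseteq$ (an isotope of) $D$, so $\gamma$ descends to a continuous surjection $S/D \to S/\gamma(C) \xrightarrow{\;\cong\;} S/C$, and one checks directly that this is a deformation: its node-preimages are exactly the images of the extra curves of $\gamma(C)$ not in $D$, it is an orientation-preserving homeomorphism away from these, and it preserves $P$. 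The contravariance (``$\mathrm{op}$'') is visible here: a morphism $\sigma \to \tau$ in $\operatorname{CL}_S$ gives a deformation $(S/D,P) \to (S/C,P)$, i.e. a morphism $\tau \mapsto \sigma$ in $\mathfrak{SC}_{g,P}$. Well-definedness on the quotient by $\Delta_\sigma$ holds because Dehn twists about the curves of $C$ become isotopic to the identity after collapsing $C$; functoriality (composition is multiplication of representatives, which maps to composition of the descended maps up to isotopy) is a routine verification.

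For fully faithfulness I would argue directly on hom-sets: fix $\sigma = [C]$, $\tau = [D]$ and show
\begin{align*}
\{\gamma \in \Gamma_S \mid \gamma.\sigma \leq \tau\}/\Delta_\sigma \;\xrightarrow{\;\cong\;}\; \pi_0\,\mathrm{Def}\bigl((S/D,P),(S/C,P)\bigr).
\end{align*}
Injectivity: if two representatives $\gamma, \gamma'$ descend to isotopic deformations $(S/D,P)\to(S/C,P)$, then by the lifting argument in the proof of \Cref{iso of posets} (a diffeomorphism on parts lifts through the collapse maps, matching up cylindrical and conical neighbourhoods) the isotopy lifts to an isotopy between lifts of $\gamma$ and $\gamma'$ carrying $D$ to $D$ and $C$ to $C$; two such lifts differ by an element generated by Dehn twists about the collapsed curves, i.e. an element of $\Delta_\sigma$ (here \Cref{SES of groups quotient by Dehn twists} applied to $D$ pins down exactly the ambiguity). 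Surjectivity: given a deformation $f\colon (S/D,P) \to (S/C,P)$, its node-preimage locus is $\{\text{nodes}\} \cup C'$ for an admissible curve system $C'$ on $S/D$, which pulls back along $\rho_D$ to an admissible system on $S$ containing $D$; the orientation-preserving homeomorphism part of $f$ then matches $S$ minus this larger system with $S$ minus $C$, and this matching is realised (again via the lifting argument of \Cref{iso of posets}) by a mapping class $\gamma \in \Gamma_S$ with $\gamma.\sigma \leq \tau$ whose descended deformation is isotopic to $f$.

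The main obstacle is making the lifting argument precise and uniform enough to serve simultaneously for injectivity and surjectivity — i.e. showing that (isotopy classes of) deformations between collapsed surfaces correspond exactly to (double cosets of) mapping classes, with no extra twisting ambiguity beyond $\Delta_\sigma$ and no missing deformations. This is essentially a refinement of the lifting step already carried out in the proof of \Cref{iso of posets}, promoted from ``there exists a lift'' to ``the set of lifts is a $\Delta_\sigma$-torsor over the set of deformations''; once that is in hand, the verification that the assignment is a functor and that the two candidate functors compose to identities is bookkeeping. I would also remark that \Cref{observations about our 1-categories of stable curves}, identifying $\operatorname{CL}_S^{\mathrm{op}}$ with a full subcategory of the orbit category, gives an alternative packaging of the same fully-faithfulness computation, so one could instead phrase the proof as: $\mathfrak{SC}_{g,P}$ is equivalent to $\mathcal{O}_\Delta(\Gamma_S)$ via $\Gamma_S/\Delta_\sigma \mapsto (S/C,P)$, which is a clean reformulation but requires the same geometric input.
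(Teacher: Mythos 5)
Your proposal takes essentially the same route as the paper: the paper defines $\Phi\colon\operatorname{CL}_S^{\operatorname{op}}\to\mathfrak{SC}_{g,P}$ by $\sigma\mapsto (S/C_\sigma,P)$ and sends $[\gamma]\colon\sigma\to\tau$ to the deformation obtained by descending $\gamma^{-1}$ through the collapse maps $\rho_\tau$, $\rho_\sigma$, establishing essential surjectivity directly and full faithfulness via the short exact sequence of \Cref{SES of groups quotient by Dehn twists} (which is what your lifting argument spells out), and it also mentions the orbit-category reformulation you note at the end. The one small slip in your write-up is the direction of the descent: it is $\gamma^{-1}$ (not $\gamma$) that carries $D$ to $\gamma^{-1}(D)\subseteq C$ and hence factors through the collapses to give the deformation $S/D\to S/C$, as in the paper's commutative square with $\gamma^{-1}$ across the top; with that correction your construction agrees with the paper's.
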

\begin{proof}
This is a consequence of \Cref{observations about our 1-categories of stable curves} above and the analogous observation made in \cite{EbertGiansiracusa}. It can also be proved as follows: fixing a hyperbolic metric on $(S,P)$, we can define a functor
\begin{align*}
\Phi\colon\operatorname{CL}_{S}^{op}\rightarrow\mathfrak{SC}_{g,P}
\end{align*}
sending $\sigma\in \mathfrak{C}_S$ to the stable nodal surface $(S/C_{\sigma}, P)$ obtained by collapsing the unique admissible curve system $C_\sigma$ representing $\sigma$ by geodesics (\cite[\S 1.2]{FarbMargalit}). On morphisms, we send $[\gamma]\colon \sigma\rightarrow \tau$ to the deformation given by the lower horizontal map in the diagram below where $\rho_\tau$ and $\rho_\sigma$ denote the maps collapsing the geodesic representatives of $\tau$, respectively $\sigma$, to node points.
\begin{center}
\begin{tikzpicture}
\matrix (m) [matrix of math nodes,row sep=2em,column sep=2em]
  {
S & S \\
S/C_\tau & S/C_\sigma \\
  };
  \path[-stealth]
(m-1-1) edge node[above]{$\gamma^{-1}$} (m-1-2) edge node[left]{$\rho_\tau$} (m-2-1)
(m-2-1) edge node[below]{$\Phi([\gamma])$} (m-2-2)
(m-1-2) edge node[right]{$\rho_\sigma$} (m-2-2)
;
\end{tikzpicture}
\end{center}
Clearly, $\Phi$ is essentially surjective. It is fully faithful by the observations made in the proof of \Cref{SES of groups quotient by Dehn twists}.
\end{proof}

\section{The Harvey bordification of Teichmüller space}

We recollect the definition of Teichmüller space and the bordification of it as a manifold with corners, also known as the Harvey bordification (following \cite{Ivanov}). The Harvey bordification will play an essential role in our calculational strategy, allowing us to construct a cover of $\overline{\mathcal{M}}_{\mathbf{G}}^{\operatorname{top}}$ by ``simpler'' stratified spaces. We will make various simple observations and identifications needed for our calculation.

\subsection{Teichmüller space}\label{Teichmuller space}

We review the \textit{Teichmüller space} $\mathcal{T}_{S}=\mathcal{T}_{(S,P)}$ associated to a stable nodal surface $(S,P)$ --- we refer to \cite{ArbarelloCornalbaGriffiths}, \cite{FarbMargalit} or \cite{HinichVaintrob} for details.

We recall first of all the definition of a stable Riemann surface with nodes. By a \textit{Riemann surface with nodes}, we mean a compact, connected, complex analytic space such that every point has a neighbourhood isomorphic either to the disk in $\C$ or to the cone $\{xy=0\}$ in $\C^2$; the latter points are called \textit{nodes}. 

Let $(S,P)$ be a stable nodal surface and consider the set $\mathcal{T}_{S}$ of equivalence classes of pairs $(X,f)$, where $X$ is a Riemann surface with nodes, and $f\colon S\smallsetminus P\rightarrow X$ an orientation preserving homeomorphism --- this is also called a \textit{marking of the curve}. Two such pairs $(X,f)$ and $(Y,g)$ are equivalent if there exists an isometry $\phi\colon X\rightarrow Y$ and an orientation preserving diffeomorphism $\gamma\colon (S,P)\rightarrow (S,P)$ which is isotopic to the identity and preserves the marked points pointwise and such that the following diagram commutes (note that this in particular implies that the $P$-labelling of the ``punctures'' of $X$ and $Y$ is preserved by the isometry).
\begin{center}
\begin{tikzpicture}
\matrix (m) [matrix of math nodes,row sep=2em,column sep=1em]
  {
S\smallsetminus P & & S\smallsetminus P \\
X & & Y \\
  };
  \path[-stealth]
(m-1-1) edge node[left]{$f$} (m-2-1) edge node[above]{$\gamma$} (m-1-3)
(m-1-3) edge node[right]{$g$} (m-2-3)
(m-2-1) edge node[below]{$\phi$} (m-2-3)
;
\end{tikzpicture}
\end{center} 
An equivalence class $[X,f]$ is referred to as a \textit{marked stable curve}.

The topology on $\mathcal{T}_{S}$ can be defined in various different ways. We shall content ourselves with the fact that if $C$ is a maximal admissible curve system on $(S,P)$, then the \textit{Fenchel--Nielsen coordinates} define a homeomorphism
\begin{align*}
\mathcal{T}_{S}\xrightarrow{\ \cong \ } (\R_{>0}\times \R)^C,
\end{align*}
with a factor for every closed simple curve in $C$. If $S$ has genus $g$, $k$ nodes and $|P|=n$, then the number of curves in $C$ is $m=3g-3+n-k$.

Let us very briefly recall the idea behind the Fenchel--Nielsen coordinates (see \cite[\S 10.6]{FarbMargalit} for details). One must first fix a choice of coordinate system of curves on $(S,P)$ consisting of the following data:
\begin{enumerate}
\item a \textit{pants decomposition} of $(S,P)$, that is, a choice $C=\{c_1,\ldots,c_m\}$ of disjoint closed simple curves such that $S\smallsetminus (C\cup \{\text{nodes}\}\cup P)$ is a disjoint union of pairs of pants (allowing also punctures instead of boundary components); in other words, $C$ is a maximal admissible curve system;
\item a choice of orientation of each curve $c_i$ in $C$ and for each pair of pants a choice of three arcs connecting the three boundary components (or punctures).
\end{enumerate}
The Fenchel--Nielsen coordinates of a marked hyperbolic surface $[X,f]$ is the tuple
\begin{align*}
(\ell_1,\ldots,\ell_m,\tau_1,\ldots,\tau_m),
\end{align*}
where $\ell_i$ is the length of the unique geodesic isotopic to $f(c_i)$ in $X$ and $\tau_i$ is the so-called twist parameter, which is more involved to define and specifies how the pairs of pants should be glued together: it measures the signed horizontal displacement of the endpoints of an arc crossing the geodesic representative of $f(c_i)$ when compared with the unique shortest arc connecting the same boundary components, that is, it tells us how the two pairs of pants on either side of the geodesic representative of $f(c_i)$ should be ``twisted'' before they are glued together.

\begin{observation}\label{decomposition of Teichmuller space according to the parts}
Let $(S,P)$ be a stable nodal surface with parts $(S_v, P_v)$, $v\in V_S$. Then the Teichmüller space associated to $(S,P)$ decomposes as a product:
\begin{align*}
\mathcal{T}_{S}\cong \prod_{v\in V_S} \mathcal{T}_{S_v}.
\end{align*}
Note that fixing a complex hyperbolic metric on $(S,P)$ amounts to choosing a basepoint in $\mathcal{T}_{S}$.
\end{observation}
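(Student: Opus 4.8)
The plan is to realise the claimed homeomorphism as the map that normalises a marked stable nodal Riemann surface at its nodes and records the resulting collection of markings on the parts. Concretely, given $[X,f]\in\mathcal{T}_S$, the marking $f\colon S\smallsetminus P\to X$ matches the nodes of $X$ with those of $S$, so normalising $X$ at all of its nodes yields a disjoint union $\coprod_{v\in V_S}X_v$ of smooth Riemann surfaces, each $X_v$ carrying the marked points $P_v$ (the marked points of $X$ in its closure together with the preimages of the nodes, counted with multiplicity exactly as in \Cref{parts of nodal surface}); correspondingly $f$ restricts over each part to an orientation preserving homeomorphism $f_v\colon S_v\smallsetminus P_v\to X_v$, using that a deleted conical neighbourhood of a node is a once-punctured disk on each sheet, so that $f$ extends over the added marked points. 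This produces the assignment $[X,f]\mapsto([X_v,f_v])_{v\in V_S}$, and an inverse is given by clutching: a family $([Y_v,g_v])_v$ determines a nodal Riemann surface $Y$ by gluing the $Y_v$ along the pairs of marked points prescribed by the nodes of $S$, and the $g_v$ glue to a marking $S\smallsetminus P\to Y$.

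Next I would verify that this descends to equivalence classes in both directions. The one point that needs care — and which I expect to be the main (if mild) obstacle, all bookkeeping — is that a diffeomorphism $\gamma\colon(S,P)\to(S,P)$ which is isotopic to the identity preserves each part: being a homeomorphism of $S$ it permutes the nodes, hence the components of $S\smallsetminus\{\text{nodes}\}$, and since it is connected through such homeomorphisms to the identity this permutation is trivial; consequently $\gamma$ fixes every node, restricts to diffeomorphisms $\gamma_v\colon(S_v,P_v)\to(S_v,P_v)$ which fix $P_v$ pointwise, and these are isotopic to the identity by restricting (and concatenating) the given isotopy, which likewise fixes the nodes throughout. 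Dually, an isometry of marked nodal Riemann surfaces over $S$ is precisely a compatible collection of isomorphisms of the normalised components respecting the node-labels (the labels are forced to match because the equivalence square $g\circ\gamma=\phi\circ f$ sends node $f(n)$ to $g(n)$). Hence $[X,f]\sim[X',f']$ in $\mathcal{T}_S$ if and only if $[X_v,f_v]\sim[X'_v,f'_v]$ in $\mathcal{T}_{S_v}$ for all $v$, so the assignment above is a well-defined bijection $\mathcal{T}_S\cong\prod_{v\in V_S}\mathcal{T}_{S_v}$.

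Finally I would promote this bijection to a homeomorphism via the Fenchel--Nielsen coordinates recalled above. By part (1) of \Cref{observations about curve posets}, a maximal admissible curve system $C$ on $S$ restricts to a maximal admissible curve system $C_v$ on each part with $C=\coprod_v C_v$; moreover each curve of $C$, together with the two pairs of pants adjacent to it, lies in a single part, so for compatible choices of orientations and arcs the length and twist parameters of a curve in $C_v$ are computed entirely within the normalised component $X_v$. Therefore the bijection fits into a commuting diagram with the Fenchel--Nielsen homeomorphisms $\mathcal{T}_S\xrightarrow{\cong}(\R_{>0}\times\R)^C$ and $\prod_v\mathcal{T}_{S_v}\xrightarrow{\cong}\prod_v(\R_{>0}\times\R)^{C_v}=(\R_{>0}\times\R)^C$ and the identity of $(\R_{>0}\times\R)^C$, and is in particular a homeomorphism. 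The parenthetical assertion is then immediate: a basepoint of $\mathcal{T}_S$ is a class $[X_0,f_0]$, and transporting the complex structure of $X_0$ along $f_0$ equips $S\smallsetminus P$, equivalently each part of $(S,P)$, with a complete hyperbolic metric, and conversely.
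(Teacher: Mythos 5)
The paper records this as an immediate observation and supplies no proof; your argument is correct, and the Fenchel--Nielsen coordinate comparison that you use to upgrade the bijection to a homeomorphism is precisely the tool the paper sets up just before stating the observation. One small slip in phrasing: $S_v\smallsetminus P_v$ is already a subset of $S\smallsetminus P$ (the added node-points are removed again), so $f$ restricts directly to $f_v$ and there is no need to "extend over the added marked points" — but this does not affect the argument.
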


The mapping class group $\Gamma_{S}$ acts on $\mathcal{T}_{S}$ as follows:
\begin{align*}
\overline{\gamma}.[X,f]=[X,f\circ \gamma^{-1}].
\end{align*}

We say that an action of a discrete group $G$ on a topological space $X$ is \textit{properly discontinuous} if every point $x\in X$ has a neighbourhood $U$ such that $gU\cap U= \emptyset$ for all but finitely many $g\in G$. The following is an important and well-known result and can be found in \cite[p. 80]{Abikoff80} (the result as stated is for non-singular $S$, but it extends immediately to the general case by \Cref{decomposition of Teichmuller space according to the parts} and the fact that $\operatorname{Aut}(\mathbf{G}(S,P))\cong \widetilde{\Gamma}_S\backslash \Gamma_S$ is finite).

\begin{proposition}
The action of $\Gamma_{S}$ on $\mathcal{T}_{S}$ is properly discontinuous.
\end{proposition}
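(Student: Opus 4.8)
The plan is to reduce the nodal case to the classical non-singular case and then invoke the standard result. First I would apply \Cref{decomposition of Teichmuller space according to the parts}: the Teichmüller space $\mathcal{T}_S$ decomposes as the product $\prod_{v\in V_S}\mathcal{T}_{S_v}$ over the parts, each $S_v$ a non-singular stable surface. For non-singular surfaces, the proper discontinuity of the mapping class group action on Teichmüller space is the cited result \cite[p.~80]{Abikoff80}, so each $\mathcal{T}_{S_v}$ carries a properly discontinuous $\Gamma_{S_v}$-action.

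Next I would assemble these into an action of the pure mapping class group $\widetilde{\Gamma}_S$ on $\mathcal{T}_S$. By \Cref{product decomposition of pure mapping class group} we have $\widetilde{\Gamma}_S\cong\prod_{v\in V_S}\Gamma_{S_v}$, and under this identification the $\widetilde{\Gamma}_S$-action on $\mathcal{T}_S$ is the product of the individual actions. A finite product of properly discontinuous actions is properly discontinuous: given a point $(x_v)_v$, choose for each $v$ a neighbourhood $U_v$ with $gU_v\cap U_v=\emptyset$ for all but finitely many $g\in\Gamma_{S_v}$; then $\prod_v U_v$ is a neighbourhood of $(x_v)_v$ meeting only finitely many of its $\widetilde{\Gamma}_S$-translates, since a group element moves the box off itself as soon as one coordinate does. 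Hence the $\widetilde{\Gamma}_S$-action is properly discontinuous.

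Finally I would pass from $\widetilde{\Gamma}_S$ to the full mapping class group $\Gamma_S$ using the short exact sequence $1\to\widetilde{\Gamma}_S\to\Gamma_S\to\operatorname{Aut}(\mathbf{G}_S)\to 1$ of \Cref{pure vs non-pure mapping class groups}, in which the quotient $\operatorname{Aut}(\mathbf{G}_S)$ is finite. In general, if $H\trianglelefteq G$ with $G/H$ finite and $H$ acts properly discontinuously, then so does $G$: picking coset representatives $g_1,\ldots,g_k$ for $G/H$ and a neighbourhood $U$ of a point $x$ that is moved off itself by all but finitely many of $H$, the set $V=\bigcap_{i}g_i^{-1}U$ (or, if one prefers to handle the case where the $g_iU$ need not all contain $x$, simply $U$ together with the observation that each coset $g_iH$ contributes only finitely many translates of $U$ meeting $U$, namely those $g_ih$ with $g_ihU\cap U\neq\emptyset$, which is a finite subset of $H$ translated) shows $gV\cap V=\emptyset$ for all but finitely many $g\in G$. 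Concretely: $\{g\in G:gU\cap U\neq\emptyset\}=\bigcup_{i=1}^k\{g_ih:h\in H,\ g_ihU\cap U\neq\emptyset\}$, and each of the $k$ sets is finite because $g_ihU\cap U\neq\emptyset$ forces $hU\cap g_i^{-1}U\neq\emptyset$; replacing $U$ by the smaller neighbourhood $U\cap\bigcap_i g_i^{-1}U$ (nonempty if $x$ is a fixed point of the coset representatives acting suitably, but in any case one may shrink $U$ to ensure each $g_i^{-1}U$ is controlled) reduces this to the known finiteness for the $H$-action. This yields proper discontinuity of the $\Gamma_S$-action on $\mathcal{T}_S$, as desired.

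\textbf{Main obstacle.} I expect the only genuinely delicate point to be the last step — massaging the neighbourhood so that the finite-quotient argument goes through cleanly, since the coset representatives $g_i$ need not fix $x$ and so $g_i^{-1}U$ need not be a neighbourhood of $x$; the clean fix is to work $\operatorname{Aut}(\mathbf{G}_S)$-equivariantly, i.e.\ first replace $U$ by $\bigcap_{\phi\in\operatorname{Aut}(\mathbf{G}_S)}\tilde\phi^{-1}(U)$ for a chosen set of lifts $\tilde\phi\in\Gamma_S$, which is still a neighbourhood of $x$ precisely when we instead average over a neighbourhood of the finite $\operatorname{Aut}(\mathbf{G}_S)$-orbit of $x$. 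Everything else is a formal consequence of the cited non-singular case plus the two product/extension lemmas already recorded in the excerpt.
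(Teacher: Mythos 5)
Your approach is exactly the one the paper gestures at: the paper's entire proof is a citation to Abikoff for the non-singular case plus the remark that it extends ``by \Cref{decomposition of Teichmuller space according to the parts} and the fact that $\operatorname{Aut}(\mathbf{G}(S,P))\cong \widetilde{\Gamma}_S\backslash \Gamma_S$ is finite,'' and you are simply spelling that out. The product step is fine. The finite-index step is correct in substance but, as you sense, is awkward to run directly from the ``wandering neighbourhood'' definition: $g_i^{-1}U$ need not be a neighbourhood of $x$, and your proposed fix via $\bigcap_\phi\tilde\phi^{-1}(U)$ does not obviously produce a neighbourhood of $x$ either, since the lifts $\tilde\phi$ need not fix $x$. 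The clean route is to use that $\mathcal{T}_S$ (with the Teichm\"uller metric) is a proper metric space on which $\Gamma_S$ acts by isometries; there, proper discontinuity is equivalent to each orbit being discrete with finite stabilizer, and that characterization passes trivially to a finite overgroup: a $\Gamma_S$-orbit is a finite union of $\widetilde{\Gamma}_S$-orbits, hence still discrete, and $\operatorname{Stab}_{\Gamma_S}(x)$ contains $\operatorname{Stab}_{\widetilde{\Gamma}_S}(x)$ with index at most $|\operatorname{Aut}(\mathbf{G}_S)|$, hence is still finite. Switching to that characterization removes the only obstacle you flag and makes the argument watertight without any neighbourhood gymnastics.
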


\begin{remark}
For a topological space $X$ equipped with an action by a group $G$, we note that the quotient $\infty$-stack $[G\backslash X]=\varinjlim_{BG}X$ is equivalent to quotient $1$-stack of principal $G$-bundles $\pi\colon E\rightarrow B$ together with a $G$-equivariant map $\sigma\colon E\rightarrow X$ (see e.g.~\cite[p.286]{ArbarelloCornalbaGriffiths}) as both admit an atlas map from $X$ giving rise to the same \v{C}ech nerve (\cite{NikolausSchreiberStevenson}).
\end{remark}

The Teichmüller space $\mathcal{T}_{S}$ admits a complex structure and the resulting stack quotient $[\Gamma_{S}\backslash \mathcal{T}_{S}]$ is isomorphic to the analytic moduli stack $\mathcal{M}_{\mathbf{G}}^{an}$, where $\mathbf{G}=\mathbf{G}(S,P)$ is the associated dual graph (see \cite[\S 1.2]{HinichVaintrob} for the non-singular case which again generalises by \Cref{decomposition of Teichmuller space according to the parts} and the identification $\operatorname{Aut}(\mathbf{G}(S,P))\cong \widetilde{\Gamma}_S\backslash \Gamma_S$). Morally, the map $\mathcal{T}_{S}\rightarrow \mathcal{M}_{\mathbf{G}}^{an}$ just forgets the marking by sending $[X,f]$ to $X$. It follows that the same identification holds for the underlying topological stacks.

\begin{proposition}\label{moduli stack as quotient of Teichmuller}
Let $(S,P)$ be a stable nodal surface with associated dual graph $\mathbf{G}=\mathbf{G}(S,P)$. The topological stack quotient $[\Gamma_{S}\backslash \mathcal{T}_{S}]$ is isomorphic to the topological moduli stack $\mathcal{M}_{\mathbf{G}}^{\operatorname{top}}$.
\end{proposition}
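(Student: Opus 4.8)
The plan is to deduce the statement from the analytic identification $\mathcal{M}_{\mathbf{G}}^{an}\cong[\Gamma_S\backslash\mathcal{T}_S]$ recalled just above the proposition: I would apply the ``underlying topological space'' construction to both sides and check that on the left it reproduces $\mathcal{M}_{\mathbf{G}}^{\operatorname{top}}$ and on the right it reproduces the topological stack quotient $[\Gamma_S\backslash\mathcal{T}_S]$ formed in $\operatorname{Shv}(\mathfrak{T})$.

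For the left-hand side, I would unwind the definition of $(-)^{\operatorname{top}}$: for an affine scheme $U\in\operatorname{Aff}^{ft}_{\C}$ the space $U^{\operatorname{top}}$ is by definition the topological space underlying its analytification $U^{an}$. The functor sending a complex analytic space to its underlying topological space preserves fibre products and sends local isomorphisms to local homeomorphisms, so, exactly as in the algebraic case treated in \cite[\S 4]{OrsnesJansen23}, it transfers the \v{C}ech nerve of any analytic atlas into $\mathfrak{T}$ and computes the associated topological stack as the resulting colimit. Applying this to the analytification of an algebraic (affine, étale) atlas $U_\bullet$ of the Deligne--Mumford stack $\mathcal{M}_{\mathbf{G}}$ --- whose terms have underlying spaces the $U_i^{\operatorname{top}}$ --- recovers precisely the colimit defining $\mathcal{M}_{\mathbf{G}}^{\operatorname{top}}$. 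Hence $\mathcal{M}_{\mathbf{G}}^{\operatorname{top}}$ is the topological stack underlying the analytic stack $\mathcal{M}_{\mathbf{G}}^{an}$, and this depends only on the latter.

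For the right-hand side, I would use that the analytic stack $[\Gamma_S\backslash\mathcal{T}_S]$ admits $\mathcal{T}_S$ as an atlas, with \v{C}ech nerve the simplicial complex analytic space $[n]\mapsto\Gamma_S^{\times n}\times\mathcal{T}_S$ (with $\Gamma_S$ regarded as a discrete set); since the $\Gamma_S$-action is properly discontinuous this is an étale presentation, consistent with $\mathcal{M}_{\mathbf{G}}$ being Deligne--Mumford. Passing to underlying topological spaces degreewise gives $[n]\mapsto\Gamma_S^{\times n}\times\mathcal{T}_S$ with $\mathcal{T}_S$ now carrying its usual topology; this space lies in $\mathfrak{T}$, being homeomorphic via Fenchel--Nielsen coordinates (applied to each part) to a Euclidean space, hence locally compact, Hausdorff and second countable. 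The colimit of this simplicial object in $\operatorname{Shv}(\mathfrak{T})$ is, by definition, the topological stack quotient $[\Gamma_S\backslash\mathcal{T}_S]$. Combining the two computations with the analytic identification then yields $\mathcal{M}_{\mathbf{G}}^{\operatorname{top}}\cong[\Gamma_S\backslash\mathcal{T}_S]$.

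The main obstacle is the compatibility invoked in the second paragraph --- that the topological-realisation functor commutes with atlas presentations of analytic stacks, i.e.\ with the colimits building stacks out of their \v{C}ech nerves --- which is precisely what is set up in \cite[\S 4]{OrsnesJansen23}; granting that, the remainder is bookkeeping, and no geometric input beyond the already-cited analytic identification $\mathcal{M}_{\mathbf{G}}^{an}\cong[\Gamma_S\backslash\mathcal{T}_S]$ is needed.
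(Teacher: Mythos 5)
Your proposal is correct and matches the paper's (terse) treatment: the paper states the analytic identification $[\Gamma_S\backslash\mathcal{T}_S]\cong\mathcal{M}_{\mathbf{G}}^{an}$ just above the proposition and then simply asserts ``It follows that the same identification holds for the underlying topological stacks,'' supported by the preceding remark that the topological $\infty$-quotient $[\Gamma_S\backslash\mathcal{T}_S]=\varinjlim_{B\Gamma_S}\mathcal{T}_S$ admits an atlas from $\mathcal{T}_S$ with the expected \v{C}ech nerve. You have spelled out precisely the bookkeeping the paper leaves implicit --- that $(-)^{\operatorname{top}}$ factors through analytification and is computed on \v{C}ech nerves of \'etale atlases, and that the degreewise realisation of the analytic quotient's nerve recovers the topological quotient --- so this is the same route, just made explicit.
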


\subsection{The bordification: construction, stratification and quotient}\label{Harvey bordification}

As mentioned, the Harvey bordification will play a crucial role in our calculations (it plays the role of the Borel--Serre compactification in \cite{ClausenOrsnesJansen}). We will need some details of the construction in order to review the desired properties, but we will not provide a complete description and refer the reader to \cite{Ivanov} for details and specifics (see also \cite{Harvey81} and \cite[Chapter 3]{Harer}). Note that in \cite{Ivanov}, the construction is done explicitly for non-singular surfaces, but it generalises immediately to the nodal case (see also \Cref{decomposition of Harvey bordification according to the parts}).

Let $(S,P)$ be a stable nodal surface. The \textit{Harvey bordification} $\overline{\mathcal{T}}_{\!\!S}=\overline{\mathcal{T}}_{\!\!(S,P)}$ is a smooth manifold with corners, whose interior identifies with the Teichmüller space $\mathcal{T}_S=\overline{\mathcal{T}}_{\!\!S}\smallsetminus \partial \overline{\mathcal{T}}_{\!\!S}$. It is equipped with an action of the mapping class group $\Gamma_S$ which extends the natural action of $\Gamma_S$ on $\mathcal{T}_S$. The most important properties of the bordification $\overline{\mathcal{T}}_{\!\!S}$ and the action of $\Gamma_S$ are summarised in the following theorem.

\begin{theorem}\ 
\begin{enumerate}
\item The action of $\Gamma_S$ on $\overline{\mathcal{T}}_{\!\!S}$ is properly discontinuous;
\item The quotient topological space $\Gamma_S\backslash \overline{\mathcal{T}}_{\!\!S}$ is compact;
\item $\overline{\mathcal{T}}_{\!\!S}$ is contractible; 
\end{enumerate}
\end{theorem}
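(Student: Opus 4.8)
The plan is to deduce all three statements from the non-singular case, where they are classical: proper discontinuity of the $\Gamma_S$-action on $\overline{\mathcal{T}}_{\!\!S}$ and compactness of the quotient are due to Ivanov \cite{Ivanov} (building on Harvey \cite{Harvey81}), and contractibility is likewise established in \cite{Ivanov} (and in \cite[Chapter 3]{Harer}); I would cite these rather than reprove them. The reduction to an arbitrary nodal $(S,P)$ rests on two structural facts already recorded above. First, the Harvey bordification decomposes as a finite product over the parts, $\overline{\mathcal{T}}_{\!\!S}\cong\prod_{v\in V_S}\overline{\mathcal{T}}_{\!\!S_v}$ (the forthcoming \Cref{decomposition of Harvey bordification according to the parts}, proved exactly as \Cref{decomposition of Teichmuller space according to the parts}), and under this identification the pure mapping class group $\widetilde{\Gamma}_S\cong\prod_{v\in V_S}\Gamma_{S_v}$ (\Cref{product decomposition of pure mapping class group}) acts factorwise. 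Second, the short exact sequence $1\to\widetilde{\Gamma}_S\to\Gamma_S\to\operatorname{Aut}(\mathbf{G}_S)\to 1$ of \Cref{pure vs non-pure mapping class groups} exhibits $\widetilde{\Gamma}_S$ as a normal subgroup of \emph{finite} index, the finite quotient acting on the product by permuting the factors $\overline{\mathcal{T}}_{\!\!S_v}$ according to its action on $V_S$. I also record that $\overline{\mathcal{T}}_{\!\!S}$ is a finite-dimensional smooth manifold with corners, hence a locally compact Hausdorff second countable space.

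Granting this, statement (3) is immediate: a finite product of contractible spaces is contractible, and each $\overline{\mathcal{T}}_{\!\!S_v}$ is contractible by the non-singular case --- e.g.\ because, as a manifold with corners, it is homotopy equivalent to its interior $\mathcal{T}_{S_v}$ via a collar of the boundary, and $\mathcal{T}_{S_v}$ is homeomorphic to a Euclidean space by the Fenchel--Nielsen coordinates recalled above. For statement (2): the product decomposition yields $\widetilde{\Gamma}_S\backslash\overline{\mathcal{T}}_{\!\!S}\cong\prod_{v\in V_S}\bigl(\Gamma_{S_v}\backslash\overline{\mathcal{T}}_{\!\!S_v}\bigr)$, a finite product of compact spaces by the non-singular case, hence compact; and $\Gamma_S\backslash\overline{\mathcal{T}}_{\!\!S}$ is the quotient of this compact space by the finite group $\operatorname{Aut}(\mathbf{G}_S)$, hence compact. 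For statement (1): a finite product of properly discontinuous actions on locally compact Hausdorff spaces is again properly discontinuous (take the product of the local neighbourhoods), so $\widetilde{\Gamma}_S$ acts properly discontinuously on $\overline{\mathcal{T}}_{\!\!S}$; it then remains to pass to the finite-index overgroup $\Gamma_S$, which preserves proper discontinuity --- concretely, given a compact neighbourhood $K$ of a point and coset representatives $q_1,\dots,q_k$ of $\widetilde{\Gamma}_S$ in $\Gamma_S$, finiteness of $\{\gamma\in\Gamma_S : \gamma K\cap K\neq\emptyset\}$ reduces to finiteness of $\{h\in\widetilde{\Gamma}_S : hK'\cap K'\neq\emptyset\}$ for the enlarged compact set $K'=\bigcup_i(q_iK\cup q_i^{-1}K)$. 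This is the same argument already invoked for the action on $\mathcal{T}_S$ in the proposition preceding the theorem.

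The substantive content is entirely the non-singular case, which I am citing, so there is no real obstacle; the \textbf{one point requiring honest care} is the last step of (1): since $\operatorname{Aut}(\mathbf{G}_S)$ genuinely mixes the factors of the product decomposition, proper discontinuity for $\Gamma_S$ cannot be read off factorwise and the finite-extension argument must be used. It is cleanest to isolate it as a small lemma --- a finite-index overgroup of a group acting properly discontinuously on a locally compact Hausdorff space again acts properly discontinuously --- which is then reused verbatim for the action on $\mathcal{T}_S$, or to cite the analogous statement in \cite{ClausenOrsnesJansen}.
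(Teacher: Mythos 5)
Your proposal is correct and follows essentially the same route as the paper: cite Ivanov (Theorems 6.5 and 6.7 for parts (1) and (2), plus contractibility of the interior for part (3)) for the non-singular case, and reduce the nodal case via the product decomposition $\overline{\mathcal{T}}_{\!\!S}\cong\prod_v\overline{\mathcal{T}}_{\!\!S_v}$ together with the finite extension $\widetilde{\Gamma}_S\trianglelefteq\Gamma_S$ with quotient $\operatorname{Aut}(\mathbf{G}_S)$ — the same reduction the paper invokes in the remark preceding the theorem and in the analogous statement about $\mathcal{T}_S$. The only difference is presentational: you spell out the finite-index lemma for proper discontinuity explicitly, whereas the paper leaves it implicit.
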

\begin{proof}
Part 1 is \cite[Theorem 6.5]{Ivanov}, part 2 is \cite[Theorem 6.7]{Ivanov}, and part 3 follows directly from the fact that its interior, the Teichmüller space, is contractible.
\end{proof}

We will need a few additional observations. As a set, $\overline{\mathcal{T}}_{\!\!S}$ consists of the isotopy classes of singular hyperbolic structures on $(S,P)$ (\cite[Definitions 4.1 and 5.1]{Ivanov}): a \textit{singular hyperbolic structure} on $(S,P)$ consists of a pair $(C,X,f)$, where $C$ is an admissible system of curves on $(S,P)$, $X$ is a stable nodal Riemann surface and $f$ is an orientation preserving homeomorphism $f\colon S \smallsetminus (C\cup P) \rightarrow X$; this triple should moreover satisfy the following condition:

\begin{assumption}[Singular hyperbolic structure]\label{singular hyperbolic structure assumption}
For each curve $c$ in $C$, there is a neighbourhood $U\subseteq S$ of $c$ (disjoint from the remaining curves in $C$ and the nodes and marked points of $S$), and a homeomorphism $\phi\colon U\rightarrow S^1\times (-1,1)$  such that the restriction
\begin{align*}
\phi\vert_{U\smallsetminus c}\colon U\smallsetminus c\longrightarrow S^1\times (-1,1)\smallsetminus S^1\times \{0\}
\end{align*}
is an isometry when 
\begin{itemize}
\item $S^1\times (-1,1)\smallsetminus S^1\times \{0\}$ is equipped with the metric $t^2d\theta^2 + dt^2/t^2$ where $\theta$ is the standard angular parameter on $S^1$ and $t\in (-1,1)\smallsetminus\{0\}$;
\item $U\smallsetminus c$ is equipped with the hyperbolic metric of $f(U\smallsetminus c)\subset X$.
\end{itemize}
Note that the homeomorphism $\phi$ is unique up to a rotation. See \cite[Definition 4.1]{Ivanov} for more details.
\end{assumption}

Two such structures $(C,X,f)$, $(D,Y,g)$ are \textit{isotopic} if there is an isometry $\psi\colon X\rightarrow Y$ and an orientation preserving diffeomorphism $\gamma\colon (S,P)\rightarrow (S,P)$ which is isotopic to the identity, preserves the marked points pointwise, satisfies $\gamma(C)=D$, and such that the diagram below commutes. Moreover, $\gamma$ should respect the parametrisations of \Cref{singular hyperbolic structure assumption} above, that is, for every $c\in C$ and $U$ and $\phi$ as above, the parametrisation
\begin{align*}
\phi\circ \gamma^{-1}\colon \gamma(U)\rightarrow S^1\times (-1,1)
\end{align*}
of the neighbourhood $\gamma(U)$ of the curve $\gamma(c)$ in $D$ restricts to an isometry on the complement of $\gamma(c)$ and $S^1\times \{0\}$, when $\gamma(U)\smallsetminus\gamma(c)$ is equipped with the hyperbolic metric of $g(\gamma(U)\smallsetminus\gamma(c))\subset Y$.
\begin{center}
\begin{tikzpicture}
\matrix (m) [matrix of math nodes,row sep=2em,column sep=1em]
  {
S\smallsetminus (C\cup P) & & S\smallsetminus (D\cup P) \\
X & & Y \\
  };
  \path[-stealth]
(m-1-1) edge node[left]{$f$} (m-2-1) edge node[above]{$\gamma$} (m-1-3)
(m-1-3) edge node[right]{$g$} (m-2-3)
(m-2-1) edge node[below]{$\psi$} (m-2-3)
;
\end{tikzpicture}
\end{center}

The following observation is immediate.

\begin{observation}\label{decomposition of Harvey bordification according to the parts}
Let $(S,P)$ be a stable nodal surface with parts $(S_v,P_v)$, $v\in V_S$. The Harvey bordification $\overline{\mathcal{T}}_{\!\!S}$ decomposes as a product:
\begin{align*}
\overline{\mathcal{T}}_{\!\!S}\cong \prod_{v\in V_S} \overline{\mathcal{T}}_{\!\!S_v}.
\end{align*}
This fits with the analogous decompositions of the pure mapping class group and the curve poset (\Cref{product decomposition of pure mapping class group} and \Cref{observations about curve posets})
\end{observation}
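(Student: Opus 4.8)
The plan is to unwind the set-theoretic description of $\overline{\mathcal{T}}_{\!\!S}$ recalled above, observe that every piece of data defining a point of $\overline{\mathcal{T}}_{\!\!S}$ is assembled ``part by part'', and then upgrade the resulting bijection to a homeomorphism using product coordinate charts.

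First I would produce the bijection on underlying sets. A point of $\overline{\mathcal{T}}_{\!\!S}$ is an isotopy class of a singular hyperbolic structure $(C,X,f)$. Since the curves of $C$ are disjoint from the nodes of $S$, normalising $(S,P)$ at its nodes identifies the admissible curve system $C$ on $S$ with a tuple $(C_v)_{v\in V_S}$ of admissible curve systems on the parts $(S_v,P_v)$ --- this is precisely the decomposition $\mathfrak{C}_S\cong\prod_v\mathfrak{C}_{S_v}$ of \Cref{observations about curve posets}(1), together with the bookkeeping of \Cref{parts of nodal surface}. Consequently the domain $S\smallsetminus(C\cup P)$ of $f$ is, after normalisation at the nodes of $S$, the disjoint union over $v$ of the corresponding surfaces for the $(S_v,P_v)$, and the target nodal Riemann surface $X$ is the clutching along the nodes coming from $N$ of Riemann surfaces $X_v$, each carrying a marking $f_v$; thus $(C,X,f)$ is the same datum as a tuple $\big((C_v,X_v,f_v)\big)_{v\in V_S}$ of singular hyperbolic structures on the parts. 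The local condition of \Cref{singular hyperbolic structure assumption} near a curve $c\in C$ depends only on a neighbourhood of $c$ inside a single part, so $(C,X,f)$ satisfies it iff each $(C_v,X_v,f_v)$ does. Finally, an isotopy $(\psi,\gamma)$ between two such structures has $\gamma$ isotopic to the identity on $(S,P)$, hence $\gamma$ preserves each part and restricts to an isotopy between the corresponding structures on $(S_v,P_v)$; conversely a tuple of part-wise isotopies glues to one on $S$. This yields a natural bijection $\overline{\mathcal{T}}_{\!\!S}\xrightarrow{\ \cong\ }\prod_{v\in V_S}\overline{\mathcal{T}}_{\!\!S_v}$ restricting on interiors to the decomposition $\mathcal{T}_S\cong\prod_v\mathcal{T}_{S_v}$ of \Cref{decomposition of Teichmuller space according to the parts}.

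Next I would check this bijection is a homeomorphism. Fix a pants decomposition of $S$ obtained as the union of pants decompositions of the parts. Ivanov's construction of $\overline{\mathcal{T}}_{\!\!S}$ as a smooth manifold with corners is modelled near each point on charts indexed by the curves of this pants decomposition; since that index set and the associated (generalised Fenchel--Nielsen type) coordinates split as a disjoint union over $v\in V_S$, the chart for $\overline{\mathcal{T}}_{\!\!S}$ is literally the product of the charts for the $\overline{\mathcal{T}}_{\!\!S_v}$, and the bijection above is the identity in these coordinates. Hence it is a diffeomorphism of manifolds with corners, and it is visibly equivariant for the product action of $\widetilde{\Gamma}_S\cong\prod_v\Gamma_{S_v}$ (\Cref{product decomposition of pure mapping class group}), with the residual $\operatorname{Aut}(\mathbf{G}_S)$ permuting factors.

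The only point requiring a little care --- and the reason the observation is stated as ``immediate'' rather than proved in detail --- is the bookkeeping around the nodes: one must track that collapsing the curves of $C$ produces new nodal points labelled consistently with the marked-point sets $P_v$ of \Cref{parts of nodal surface}, and that the parametrisations of \Cref{singular hyperbolic structure assumption} are genuinely local, so that no gluing data across the original nodes of $S$ is lost or created. Once this is set up the statement is a direct consequence of the definitions, exactly as the disjoint-union compatibility already built into Ivanov's construction.
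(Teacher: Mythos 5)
Your proof is correct, and since the paper simply declares this observation ``immediate'' without giving an argument, the definition-unwinding you carry out --- splitting the data $(C,X,f)$ of a singular hyperbolic structure part by part via the already-established decompositions of $\mathfrak{C}_S$ and $\mathcal{T}_S$, checking that \Cref{singular hyperbolic structure assumption} and the isotopy relation are local to each part, and then matching up Ivanov's Fenchel--Nielsen-type charts for a product pants decomposition --- is exactly the argument the paper is implicitly invoking. No gaps; if anything you could compress the set-theoretic step by noting the claim restricts on interiors to \Cref{decomposition of Teichmuller space according to the parts} and that the boundary strata are handled by the same bookkeeping.
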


The Harvey bordification is naturally stratified over the curve poset.

\begin{construction}[Stratification]\label{stratification of Harvey bordification}
Let $(S,P)$ be a stable nodal surface. The \textit{stratification map} is given by
\begin{align*}
s_S=s_{(S,P)}\colon \overline{\mathcal{T}}_{S}\rightarrow \mathfrak{C}_{S}, \quad [C,X,f]\mapsto [C],
\end{align*}
where $[C]$ is the isotopy class of the curve system $C$. Using the charts on $\overline{\mathcal{T}}_{S}$ from \cite[Definition 5.1]{Ivanov}, it can be verified directly that $s_S$ is continuous, and moreover, that this stratification coincides with the natural stratification of $\overline{\mathcal{T}}_{S}$ as a manifold with corners.
\end{construction}

The action of the mapping class group $\Gamma_{S}$ on $\overline{\mathcal{T}}_{S}$ is given by
\begin{align*}
\overline{\gamma}.[C,X,f]=[\gamma(C),X,f\circ \gamma^{-1}],\quad\quad\text{for }\overline{\gamma}\in \Gamma_S,\ [C,X,f]\in \overline{\mathcal{T}}_{S}.
\end{align*}

Thus we see that together with the action of the mapping class group on the curve poset, we have an action of $\Gamma_{S}$ on the stratified space $\overline{\mathcal{T}}_{S}\rightarrow \mathfrak{C}_{S}$, that is, compatible actions such that the stratification map is equivariant (see e.g.~\cite{OrsnesJansen}). From now on, we implicitly consider the Harvey bordification as a stratified topological space, omitting the stratification map from our notation unless explicitly needed.

Let us remark on the topology of the Harvey bordification, or more precisely, on the charts equipping it with the structure of a manifold with corners.

\begin{remark}\label{charts on Harvey bordification}
Let $(S,P)$ be a stable nodal surface, let $\sigma\in \mathfrak{C}_{S}$ and write  $U(\sigma)=s_S^{-1}((\mathfrak{C}_S)_{\geq \sigma})$ for the open star neighbourhood of the $\sigma$-stratum. For any minimal element $\mu\in \mathfrak{C}_S$ with $\mu\leq \sigma$ (i.e.~$\mu$ is a maximal curve system containing $\sigma$), the charts on $\overline{\mathcal{T}}_{\!\!S}$ from \cite[Definition 5.1]{Ivanov} provide stratified diffeomorphisms (with corners)
\begin{align*}
U(\sigma)\xrightarrow{\ \cong \ }\R^\sigma_{\geq 0}\times \R^{\mu\smallsetminus \sigma}_{>0}\times \R^\mu
\end{align*}
where the product $\R^\sigma_{\geq 0}$ has a factor for every simple closed curve in $\sigma$ and likewise for the other two products, and where the stratification on the right is given by the map
\begin{align*}
\R^\sigma_{\geq 0}\rightarrow (\mathfrak{C}_S)_{\geq \sigma}
\end{align*}
on the first factor sending a point $(r_c)_{c\in \sigma}$ to the curve system consisting the simple closed curves $c\in \sigma$ for which $r_c=0$.
\end{remark}

Since $[\Gamma_{S}\backslash \mathcal{T}_{S}]\cong\mathcal{M}^{\operatorname{top}}_{\mathbf{G}}$ (\Cref{moduli stack as quotient of Teichmuller}), the quotient of the Harvey bordification can be viewed as a compactification of the topological moduli stack $\mathcal{M}^{\operatorname{top}}_{\mathbf{G}}$. We record this as the following definition.

\begin{definition}
Let $(S,P)$ be a stable nodal surface with associated dual graph $\mathbf{G}=\mathbf{G}(S,P)$. The \textit{Harvey compactification} of the moduli stack of smooth stable curves $\mathcal{M}^{\operatorname{top}}_{\mathbf{G}}$ is the stack quotient $[\Gamma_{S}\backslash \overline{\mathcal{T}}_{\!\!S}]$. It is naturally stratified over the quotient poset $\Gamma_{S}\backslash \mathfrak{C}_{S}$.
\end{definition}

The following proposition relates the Harvey and Deligne--Mumford--Knudsen compactifications of $\mathcal{M}_{\mathbf{G}}^{\operatorname{top}}$. We will in fact only be needing the subsequent \Cref{proper map}, so we stick to the non-singular case for simplicity. Consider the complex analytic stack $\overline{\mathcal{M}}_{g,P}^{\operatorname{an}}$ and let $\operatorname{Bl}_\partial(\overline{\mathcal{M}}_{g,P}^{\operatorname{an}})$ denote the real oriented blow-up of $\overline{\mathcal{M}}_{g,P}^{\operatorname{an}}$ along the boundary $\partial\overline{\mathcal{M}}_{g,P}^{\operatorname{an}}$ (see \cite[Chapter X, \S 9]{ArbarelloCornalbaGriffiths} for details on real oriented blow-ups). The stack $\operatorname{Bl}_\partial(\overline{\mathcal{M}}_{g,P}^{\operatorname{an}})$ is by definition a real analytic stack, but we consider it in the category of stacks over smooth manifolds with corners; this is purely because the Harvey bordification as constructed by Ivanov is a priori only a smooth manifold with corners (see \cite[p. 1179]{Ivanov}).

\begin{theorem}[\Cref{Harvey compactification is real oriented blow-up}, \cite{Looijenga95}]\label{Harvey as real oriented blow-up}
Let $(S,P)$ be a non-singular stable surface of genus $g$. In the category of stacks over smooth manifolds with corners, the Harvey compactification can be identified with the real oriented blow-up of $\overline{\mathcal{M}}_{g,P}^{\operatorname{an}}$:
\begin{align*}
\operatorname{Bl}_\partial(\overline{\mathcal{M}}_{g,P}^{\operatorname{an}}) \xrightarrow{\ \cong \ } \left[\Gamma_{S}\backslash \overline{\mathcal{T}}_{S}\right]
\end{align*}
\end{theorem}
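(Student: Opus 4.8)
The plan is to reduce the identification to a local statement near the boundary and then patch it together using the stratified charts on both sides. The key point is that the real oriented blow-up $\operatorname{Bl}_\partial(\overline{\mathcal{M}}_{g,P}^{\operatorname{an}})$ is, in a neighbourhood of a point lying over a stratum $\mathcal{D}_{\mathbf{H}}$ with $k$ nodes, modelled on the polar-coordinate replacement $\R_{\geq 0}^k\times (S^1)^k\times (\text{disk})^{3g-3+n-k}$ of the normal-crossings model $\Delta^k\times (\text{disk})^{3g-3+n-k}$ for $\overline{\mathcal{M}}_{g,P}^{\operatorname{an}}$ itself; this is exactly the local picture provided by the plumbing/Kuranishi coordinates on $\overline{\mathcal{M}}_{g,P}^{\operatorname{an}}$ near the boundary divisor. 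On the Teichmüller side, \Cref{charts on Harvey bordification} gives stratified diffeomorphisms $U(\sigma)\cong \R^\sigma_{\geq 0}\times \R^{\mu\smallsetminus\sigma}_{>0}\times \R^\mu$, and the first $\R_{\geq 0}^\sigma$ factor together with the ``twist'' directions inside $\R^\mu$ assembles to the same $\R_{\geq 0}^k\times (S^1)^k\times(\text{disk})^{\cdots}$ picture once one quotients the twist parameters by the relevant Dehn twists — i.e. the Fenchel--Nielsen length--twist coordinates $(\ell_i,\tau_i)$ are precisely the modulus--argument coordinates of the plumbing parameters $q_i = \ell_i e^{2\pi i \tau_i/\ell_i}$ (up to the standard reparametrisation), so that collapsing $\ell_i\to 0$ in the bordification corresponds to the blow-up circle coordinate and the full Teichmüller space maps to the deleted-disk coordinates.

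Concretely, first I would recall (citing \cite{Ivanov, Looijenga95, ArbarelloCornalbaGriffiths}) the explicit comparison of Fenchel--Nielsen coordinates with plumbing coordinates near a maximally degenerate point, and use it to produce a map $\overline{\mathcal{T}}_S\to \operatorname{Bl}_\partial(\overline{\mathcal{M}}_{g,P}^{\operatorname{an}})$ lifting the classical uniformisation map $\mathcal{T}_S\to \mathcal{M}_{\mathbf{G}}^{\operatorname{an}}$; the point is that the marking data on a singular hyperbolic structure $(C,X,f)$ records exactly the extra argument-of-$q_i$ information that the blow-up remembers. Second, I would check this map is $\Gamma_S$-equivariant, where $\Gamma_S$ acts on the blow-up via its action on $\overline{\mathcal{M}}_{g,P}^{\operatorname{an}}$ (it acts trivially on $\overline{\mathcal{M}}_{g,P}^{\operatorname{an}}$ but the lift to the Teichmüller cover carries the monodromy; in particular the Dehn twists $t_{c_i}$ act by rotation on the $i$-th circle factor, which is exactly the deck action of the blow-up over the Teichmüller-theoretic picture). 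Third, passing to stack quotients gives a morphism $[\Gamma_S\backslash\overline{\mathcal{T}}_S]\to \operatorname{Bl}_\partial(\overline{\mathcal{M}}_{g,P}^{\operatorname{an}})$, and I would verify it is an isomorphism by checking it is so locally: on the open part it restricts to the known isomorphism $[\Gamma_S\backslash\mathcal{T}_S]\cong \mathcal{M}_{\mathbf{G}}^{\operatorname{an}}=\mathcal{M}_{g,P}^{\operatorname{top}}$ (\Cref{moduli stack as quotient of Teichmuller}), and near the boundary both sides are covered by the same corner charts $\R_{\geq 0}^k\times(S^1)^k\times(\text{disk})^{3g-3+n-k}$ modulo the finite residual automorphisms, and the comparison map is the identity in these coordinates by construction.

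I would organise this so that the bulk of the work is deferred to the appendix (\Cref{Harvey compactification is real oriented blow-up}), where the plumbing-vs-Fenchel--Nielsen dictionary and the careful bookkeeping of the circle and twist coordinates can be carried out in full; in the body I would simply state the identification and indicate that it is Looijenga's observation \cite{Looijenga95}. One subtlety worth flagging explicitly in the plan is that the real oriented blow-up is only a real-analytic (or smooth-manifold-with-corners) stack whereas the Deligne--Mumford--Knudsen compactification is algebraic, so the identification must be made in the category of stacks over smooth manifolds with corners; this is harmless here since Ivanov's bordification is itself only built as such, and the comparison respects the corner structure because \Cref{charts on Harvey bordification} matches the manifold-with-corners stratification of $\overline{\mathcal{T}}_S$ with the stratification coming from the normal-crossings boundary.

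\textbf{Main obstacle.} The hard part will be the equivariant matching of the corner charts: one must show not merely that $[\Gamma_S\backslash\overline{\mathcal{T}}_S]$ and $\operatorname{Bl}_\partial(\overline{\mathcal{M}}_{g,P}^{\operatorname{an}})$ look locally alike, but that the chosen comparison map intertwines the $\Gamma_S$-action on the Fenchel--Nielsen twist circles with the blow-up's circle coordinates \emph{on the nose} after the right normalisation — including getting the relationship between the plumbing parameter $q_i$ and the pair $(\ell_i,\tau_i)$ correct (the classical formula is $q_i = \exp(2\pi i (\tau_i + i\,2\pi^2/\ell_i)/1)$-type, and one must be careful that the $\ell_i\to 0$ limit is a genuine smooth chart on the blow-up and not merely a homeomorphism). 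Handling the residual finite automorphism groups of the curves, i.e. the stacky structure at the boundary, adds a further layer: one needs these to match under the comparison, which follows from \Cref{DMK compactification as normalisation} and the fact that the blow-up is compatible with the étale local structure, but it requires care to state cleanly.
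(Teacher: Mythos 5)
Your overall strategy is sound and aligned with the paper's — defer the work to the appendix, compare charts locally, match groupoid presentations, work in the category of smooth manifolds with corners — but your proposed implementation differs in two substantive ways, and the second one hides a genuine difficulty.

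First, the direction of the map. You propose to build $\overline{\mathcal{T}}_S\to \operatorname{Bl}_\partial(\overline{\mathcal{M}}_{g,P}^{\operatorname{an}})$ and descend. The paper instead builds maps $F_{\pi,\alpha}\colon\widetilde{B}\to \overline{\mathcal{T}}_S$ \emph{from} the universal cover $\widetilde{B}$ of the real oriented blow-up of a small standard Kuranishi base \emph{into} the Harvey bordification, shows these are open embeddings which cover $\overline{\mathcal{T}}_S$, establishes equivariance and stabiliser computations ($\Delta_{\mathbf{L}_\alpha}$, then $\operatorname{Stab}_{\Gamma_S}(\widetilde{B}_\alpha)$), and finally identifies the groupoid $\operatorname{Bl}_\partial\mathbf{I}\rightrightarrows\operatorname{Bl}_\partial X$ coming from the Kuranishi atlas with $\operatorname{Bl}_\partial X\times_{[\Gamma_S\backslash\overline{\mathcal{T}}_S]}\operatorname{Bl}_\partial X\rightrightarrows \operatorname{Bl}_\partial X$. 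Mapping from an atlas-like object into the quotient stack is cleaner than mapping from a space into a stack and then descending.

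Second, and more importantly, the concrete mechanism. You propose the Fenchel--Nielsen/plumbing dictionary to produce the comparison; the paper consciously avoids exactly this, and for a good reason. The relationship between the plumbing parameter $q_i$ and the Fenchel--Nielsen pair $(\ell_i,\tau_i)$ is only \emph{asymptotic} near $\ell_i=0$ (Wolpert), the Fenchel--Nielsen coordinates do not extend smoothly across the boundary in the naive way, and the notion of ``twist parameter'' at $\ell_i=0$ is not even defined without extra data. Ivanov's definition of singular hyperbolic structures (\Cref{singular hyperbolic structure assumption}) builds exactly that extra twist data into the boundary points by hand, and this is what makes $\overline{\mathcal{T}}_S$ a smooth manifold with corners. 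The paper then uses the explicit resolution family $Z\to\operatorname{Bl}_D B$ from \cite[Chapter X, Prop.~9.16]{ArbarelloCornalbaGriffiths}, which directly produces a marked singular hyperbolic structure for each $t\in\widetilde{B}$, and the Picard--Lefschetz map $\pi_1(B\smallsetminus D)\to\Gamma_S$ to identify the deck action of $\Delta_{\mathbf{L}_\alpha}$ with Dehn twists. None of this passes through plumbing coordinates, and it is precisely what sidesteps the smoothness issue you flag as your ``main obstacle.'' If you follow the Fenchel--Nielsen/plumbing route, you would need to supply a careful argument that the comparison map is a diffeomorphism with corners and not merely a homeomorphism, which is delicate (and not just a normalisation issue); as written your plan acknowledges this but does not resolve it. The student's route is closer to Looijenga's original \cite{Looijenga95} and to \cite[Chapter XV, \S 8]{ArbarelloCornalbaGriffiths}, whereas the paper's appendix is deliberately rebuilt on Ivanov's model to avoid the augmented Teichmüller space and the attendant analytic subtleties.
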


The resulting map $\left[\Gamma_{S}\backslash \overline{\mathcal{T}}_{S}\right] \longrightarrow \overline{\mathcal{M}}_{\mathbf{G}}^{\operatorname{top}}$ is essentially given by sending $[C,X,f]$ to the curve $[X]$.

\begin{corollary}\label{proper map}
For any stable nodal surface $(S,P)$ with associated dual graph $\mathbf{G}=\mathbf{G}(S,P)$, the map $\left[\Gamma_{S}\backslash \overline{\mathcal{T}}_{S}\right] \longrightarrow \overline{\mathcal{M}}_{\mathbf{G}}^{\operatorname{top}}$ 
is proper and restricts to an isomorphism over the open stratum $[\Gamma_{S}\backslash \mathcal{T}_{S}] \xrightarrow{\cong} \mathcal{M}_{\mathbf{G}}^{\operatorname{top}}$.
\end{corollary}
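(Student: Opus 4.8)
The plan is to reduce the general (nodal) statement to the non-singular case handled by Theorem \ref{Harvey as real oriented blow-up}, and then extract properness and the isomorphism over the open stratum from known properties of real oriented blow-ups together with the product decompositions established earlier. First I would treat the non-singular case: if $(S,P)$ is non-singular of genus $g$, then $\mathbf{G}=\mathbf{G}(S,P)$ is the one-vertex graph $(g,P)$, so $\mathcal{M}_{\mathbf{G}}^{\operatorname{top}}=\mathcal{M}_{g,P}^{\operatorname{top}}$ and $\overline{\mathcal{M}}_{\mathbf{G}}^{\operatorname{top}}=\overline{\mathcal{M}}_{g,P}^{\operatorname{top}}$, and Theorem \ref{Harvey as real oriented blow-up} identifies $[\Gamma_S\backslash\overline{\mathcal{T}}_{\!\!S}]$ with $\operatorname{Bl}_\partial(\overline{\mathcal{M}}_{g,P}^{\operatorname{an}})$ compatibly with the map to $\overline{\mathcal{M}}_{\mathbf{G}}^{\operatorname{top}}$. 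The real oriented blow-up map $\operatorname{Bl}_\partial(\overline{\mathcal{M}}_{g,P}^{\operatorname{an}})\to\overline{\mathcal{M}}_{g,P}^{\operatorname{top}}$ is proper — locally it is the map $[0,\infty)\times S^1\times (\text{rest})\to \C\times(\text{rest})$, which is proper, and properness is preserved under the relevant atlas descent and under passing to quotient stacks by the (properly discontinuous, cocompact) $\Gamma_S$-action, using parts 1 and 2 of the theorem on $\overline{\mathcal{T}}_{\!\!S}$ — and it restricts to the identity away from the boundary divisor, which is exactly the open stratum $\mathcal{M}_{g,P}^{\operatorname{top}}$. This gives both claims in the non-singular case.

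Next I would pass to the general nodal case via the product decompositions. Writing $(S_v,P_v)$, $v\in V_S$, for the parts of $(S,P)$, Observation \ref{decomposition of Harvey bordification according to the parts} gives $\overline{\mathcal{T}}_{\!\!S}\cong\prod_v\overline{\mathcal{T}}_{\!\!S_v}$, Observation \ref{product decomposition of pure mapping class group} gives $\widetilde{\Gamma}_S\cong\prod_v\Gamma_{S_v}$, and Lemma \ref{pure vs non-pure mapping class groups} gives the short exact sequence $1\to\widetilde{\Gamma}_S\to\Gamma_S\to\operatorname{Aut}(\mathbf{G}_S)\to1$ with $\operatorname{Aut}(\mathbf{G}_S)$ finite. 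Hence
\begin{align*}
[\Gamma_S\backslash\overline{\mathcal{T}}_{\!\!S}]\cong\Big[\operatorname{Aut}(\mathbf{G})\big\backslash\textstyle\prod_v[\Gamma_{S_v}\backslash\overline{\mathcal{T}}_{\!\!S_v}]\Big],
\end{align*}
and likewise $\overline{\mathcal{M}}_{\mathbf{G}}^{\operatorname{top}}=[\operatorname{Aut}(\mathbf{G})\backslash\prod_v\overline{\mathcal{M}}_{g_v,P_v}^{\operatorname{top}}]$ by the very definition of $\overline{\mathcal{M}}_{\mathbf{G}}$ (and the fact that $(-)^{\operatorname{top}}$ commutes with finite products and homotopy quotients by finite groups). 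The comparison map is then obtained from the non-singular maps $[\Gamma_{S_v}\backslash\overline{\mathcal{T}}_{\!\!S_v}]\to\overline{\mathcal{M}}_{g_v,P_v}^{\operatorname{top}}$ by taking the product (which is clearly $\operatorname{Aut}(\mathbf{G})$-equivariant, since $\operatorname{Aut}(\mathbf{G})$ just permutes the factors and relabels the marked points) and then the quotient stack. A finite product of proper maps is proper, and properness is preserved under quotienting by a finite group, so the map is proper; it restricts over the open stratum to the product of the isomorphisms $[\Gamma_{S_v}\backslash\mathcal{T}_{S_v}]\cong\mathcal{M}_{g_v,P_v}^{\operatorname{top}}$ (using Observation \ref{decomposition of Teichmuller space according to the parts} and Proposition \ref{moduli stack as quotient of Teichmuller}), quotiented by $\operatorname{Aut}(\mathbf{G})$, hence is an isomorphism over $\mathcal{M}_{\mathbf{G}}^{\operatorname{top}}$.

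The main obstacle I anticipate is making precise the assertion that properness (a property defined via the site $\mathfrak{T}$ of locally compact Hausdorff second countable spaces, cf.\ \cite{OrsnesJansen23}) descends correctly through the atlas of a topological stack and is stable under homotopy quotient by a properly discontinuous cocompact group action and by a finite group; concretely one must check that the map is representable in topological spaces and proper on atlases, which requires knowing that $\Gamma_S$ acts properly discontinuously with compact quotient on \emph{both} sides compatibly — this is where parts 1 and 2 of the theorem on $\overline{\mathcal{T}}_{\!\!S}$, together with properness of the coarse map $\overline{M}_{g,P}^{\operatorname{top}}$ being compact, do the work. Since these are all recorded facts, the argument is essentially bookkeeping, but the bookkeeping around representability and base change of properness for topological stacks is the only genuinely non-formal point.
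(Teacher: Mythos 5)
Your proposal is correct and takes essentially the same route as the paper's proof. The paper also factors the map as the quotient by $\operatorname{Aut}(\mathbf{G})$ of the map $\bigl[\widetilde{\Gamma}_S\backslash\overline{\mathcal{T}}_{\!\!S}\bigr]\to\overline{\mathcal{M}}_{\mathbf{G}}^{\sim,\operatorname{top}}$, identifies the latter as the underlying map of a real oriented blow-up by combining Theorem \ref{Harvey as real oriented blow-up} with Observation \ref{decomposition of Harvey bordification according to the parts} (which is just your product over parts, made in a single step), and takes properness of the blow-up map as given; your extra care about the local model of the blow-up and about descent and representability for the stack-level notion of properness is legitimate bookkeeping that the paper elides.
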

\begin{proof}
Indeed, the map arises as the quotient by the finite group $\operatorname{Aut}(\mathbf{G})$ of the map
\begin{align*}
\left[\widetilde{\Gamma}_{S}\backslash \overline{\mathcal{T}}_{S}\right] \longrightarrow \overline{\mathcal{M}}_{\mathbf{G}}^{\sim,\operatorname{top}}
\end{align*}
and this is proper as it identifies with the underlying map of topological spaces given by the real oriented blow-up by \Cref{Harvey as real oriented blow-up} and \Cref{decomposition of Harvey bordification according to the parts}. The final claim is \Cref{moduli stack as quotient of Teichmuller}.
\end{proof}

\begin{remark}
This identification of the Harvey compactification as a real oriented blow-up is due to Looijenga in \cite{Looijenga95}. The details are essentially spelt out in \cite[Chapter XV, \S 8]{ArbarelloCornalbaGriffiths}, but there, as well as in \cite{Looijenga95}, the Harvey bordification plays an auxiliary role as they are ultimately interested in a different bordification, namely the augmented Teichmüller space. We have chosen to review this identification with our particular interests in mind and with Ivanov's construction as our initial model --- we do this in \Cref{appendix}. We also need to be slightly careful to make the identification in the stack setting and not just for the coarse moduli spaces. One could argue that we might as well have taken the real oriented blow-up and its universal cover as our concrete models for the Harvey compactification, respectively bordification, and not gone into the details of Ivanov's construction --- we would thus avoid having to make this explicit identification. We feel, however, that Ivanov's construction makes the topological space, the stratifications and most importantly the quotients dealt with in the following section more accessible. These quotients will be essential to the inductive strategy of our final calculation.
\end{remark}

\subsection{Twists of singular hyperbolic structures}\label{twist of singular hyperbolic structures}

In this section we introduce \textit{twists of singular hyperbolic structures}. This will be an important ingredient in our final calculation.

\begin{notation}
For any stable nodal surface $(S,P)$ and any $\sigma\in \mathfrak{C}_{S}$, let 
\begin{align*}
X^{S}_\sigma:=s_S^{-1}(\sigma),\quad\text{and}\quad \overline{X}^{S}_\sigma:=s_S^{-1}((\mathfrak{C}_{S})_{\leq \sigma})
\end{align*}
denote the $\sigma$-stratum, respectively the closure of the $\sigma$-stratum, in $\overline{\mathcal{T}}_{\!\!S}$. We keep note of the stable nodal surface $S$ in the superscript as we will be varying this in our calculations (as usual, we omit the marked points for notational ease). As a set, the closure of the $\sigma$-stratum decomposes as a disjoint union
\begin{equation*}
\overline{X}^{S}_\sigma = \coprod_{\tau\leq \sigma}X^S_\tau
\end{equation*}
of $\tau$-strata for $\tau\leq \sigma$.
\end{notation}

Let $(S,P)$ be a stable nodal surface. Let $\tau\in \mathfrak{C}_{S}$ and write $\R^\tau$ for the additive group with a factor for every simple closed curve in $\tau$. For any $\sigma\leq \tau$, there is an action of $\R^{\tau}$ on the stratum $X^{S}_\sigma$ given by ``twists'': intuitively, we twist the hyperbolic structure around the curves of $\tau\subset \sigma$ using the parametrisations of \Cref{singular hyperbolic structure assumption}. The following identification of this action suffices for our needs. Let $\sigma\in \mathfrak{C}_{S}$ and let $\mu\in \mathfrak{C}_{S}$ be a minimal element such that $\mu\leq \sigma$. We have an $\R^\sigma$-equivariant diffeomorphism
\begin{align}\label{equivariant homeomorphisms}
X^{S}_\sigma\xrightarrow{\ \cong\ } \R_{>0}^{\mu\smallsetminus \sigma}\times \R^\mu,
\end{align}
where the action on the right hand side is given by the canonical action on the second factor via the inclusion $\R^\sigma\hookrightarrow \R^\mu$ (\cite[Lemma 4.5 and Definition 5.1]{Ivanov}).

\begin{remark}
We refer to \cite[4.3]{Ivanov} for the explicit definition of this action. Note that there it is defined explicitly for a non-singular stable surface; it generalises easily to the nodal case. In fact, the action is slightly more general: $\R^\tau$ acts on the stratum $X^{S}_\sigma$ for any $\sigma$ such that $\sigma\cup \tau$ is represented by an admissible curve system, in other words, is a well-defined element in $\mathfrak{C}_{S}$. We will not need this generality, however.
\end{remark}

\begin{remark}\label{Remark: Fenchel-Nielsen coordinates on strata}
The map (\ref{equivariant homeomorphisms}) above should be compared with the Fenchel--Nielsen coordinates of the Teichmüller space associated to a stable nodal curve $(S/C, P)$ with $C$ a representative of $\sigma\in \mathfrak{C}_{S}$: fixing a maximal curve system $\mu$ containing $\sigma$, the Fenchel--Nielsen coordinates define a homeomorphism
\begin{align*}
\mathcal{T}_{S/C}\xrightarrow{\ \cong \ }(\R_{>0}\times \R)^{\mu\smallsetminus \sigma}.
\end{align*}

The map (\ref{equivariant homeomorphisms}) is given by length and twist parameters in a similar fashion, but there is an extra twist parameter for every simple closed curve in $\sigma$. This is a consequence of \Cref{singular hyperbolic structure assumption}: the curves of $\sigma$ will correspond to nodes in the hyperbolic structure (with ``length 0''), but we require the existence of neighbourhoods of these same curves capturing the metric on the complement of the curve; this amounts to remembering the twist needed to glue the two ends together. See also Remark 5.5 in \cite{Ivanov} on the compatibilities of the topologies on $\mathcal{T}_{S}$ and $\overline{\mathcal{T}}_{S}$.
\end{remark}

If $\sigma\leq \tau\leq \tau'$, then the action of $\R^{\tau'}$ on $X^{S}_\sigma$ is the restriction of the action of $\R^\tau$ under the natural inclusion $\R^{\tau'}\hookrightarrow \R^\tau$ induced by the containment $\tau'\subseteq \tau$. For any given $\sigma\leq \tau$, the action of $\R^\tau$ on each $X^S_\theta\subseteq \overline{X}^S_\sigma$. $\theta\leq \sigma$ defines a continuous action on the closure $\overline{X}_\sigma^S$ of the $\sigma$-stratum --- this can be verified by considering the charts on $\overline{\mathcal{T}}_{\!\!S}$ (\Cref{charts on Harvey bordification}). The following observation will be essential to our calculations later on.

\begin{lemma}\label{maps from twisted arrow category}
Let $(S,P)$ be a stable nodal surface. Given $\sigma,\sigma',\tau, \tau'\in \mathfrak{C}_{S}$ and $\gamma\in \Gamma_{S}$ such that
\begin{align*}
\gamma.\sigma'\leq \sigma\leq \tau\leq \gamma.\tau',
\end{align*}
the induced map $\gamma^*\colon \overline{X}^{S}_{\sigma'}\rightarrow \overline{X}^{S}_{\sigma}$, $x\mapsto\gamma.x$, descends to define a map of the quotients as in the diagram below:
\begin{center}
\begin{tikzpicture}
\matrix (m) [matrix of math nodes,row sep=2em,column sep=2em]
  {
\overline{X}^{S}_{\sigma'} & \overline{X}^{S}_{\sigma} \\
\overline{X}^{S}_{\sigma'}/\R^{\tau'} & \overline{X}^{S}_{\sigma}/\R^\tau \\
  };
  \path[-stealth]
(m-1-1) edge node[above]{$\gamma^*$} (m-1-2)
(m-2-1) edge (m-2-2)
(m-1-1) edge (m-2-1)
(m-1-2) edge (m-2-2)
;
\end{tikzpicture}
\end{center} 
\end{lemma}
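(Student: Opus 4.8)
The statement has two parts: first that $\gamma^*$ is well-defined as a map of stratum closures, and second that it descends to the quotients by the respective twist actions. For the first part, I would note that $\gamma\in\Gamma_S$ acts on all of $\overline{\mathcal{T}}_{\!\!S}$ (indeed it acts on the stratified space $\overline{\mathcal{T}}_{\!\!S}\to\mathfrak{C}_S$), and the action formula $\overline{\gamma}.[C,X,f]=[\gamma(C),X,f\circ\gamma^{-1}]$ together with equivariance of the stratification map $s_S$ shows that $\gamma$ carries the $\sigma'$-stratum $X^S_{\sigma'}$ to the $(\gamma.\sigma')$-stratum. Since $\gamma.\sigma'\leq\sigma$, the hypothesis gives $\gamma.\sigma'\in(\mathfrak{C}_S)_{\leq\sigma}$, so $\gamma$ carries $X^S_{\sigma'}$ into $\overline{X}^S_\sigma$; more generally, for $\theta'\leq\sigma'$ we have $\gamma.\theta'\leq\gamma.\sigma'\leq\sigma$, so each stratum in $\overline{X}^S_{\sigma'}$ is sent into $\overline{X}^S_\sigma$, and by continuity of the $\Gamma_S$-action the restriction $\gamma^*\colon\overline{X}^S_{\sigma'}\to\overline{X}^S_\sigma$ is a well-defined continuous map.

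For the second part, the key point is compatibility of $\gamma^*$ with the twist actions, namely that $\gamma^*(x\cdot v)=\gamma^*(x)\cdot\iota(v)$ for a suitable group homomorphism $\iota\colon\R^{\tau'}\to\R^\tau$ --- here $\iota$ should be the natural map induced by the situation $\tau\leq\gamma.\tau'$, i.e.\ the composition $\R^{\tau'}\xrightarrow{\gamma}\R^{\gamma.\tau'}\hookrightarrow\R^\tau$ (the last map being the inclusion dual to the containment $\tau\subseteq\gamma.\tau'$ of curve systems, using that $\R^\bullet$ has a factor per simple closed curve). The equivariance to invoke is twofold: (a) the $\Gamma_S$-action intertwines the twist actions, $\gamma.(x\cdot w)=(\gamma.x)\cdot(\gamma.w)$ where $w\in\R^{\tau'}$ acts on a stratum in $\overline{X}^S_{\sigma'}$ and $\gamma.w\in\R^{\gamma.\tau'}$ acts on the image stratum in $\overline{X}^S_\sigma$; this follows from the explicit description of twists in \cite[4.3]{Ivanov} (twisting in a neighbourhood of a curve $c$ is carried by $\gamma$ to twisting in a neighbourhood of $\gamma(c)$); (b) for $\tau\leq\gamma.\tau'$, the $\R^{\gamma.\tau'}$-action on $\overline{X}^S_\sigma$ restricts along $\R^{\gamma.\tau'}\supseteq\R^\tau$ to the $\R^\tau$-action, which is the stated compatibility $\sigma\leq\tau\leq\tau'\Rightarrow$ restriction of actions recorded just before the lemma. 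Composing (a) and (b) gives $\gamma^*(x\cdot v)=\gamma^*(x)\cdot\iota(v)$. Since $\iota(\R^{\tau'})\subseteq\R^\tau$, the map $\gamma^*$ sends $\R^{\tau'}$-orbits into $\R^\tau$-orbits, hence descends to a continuous map $\overline{X}^S_{\sigma'}/\R^{\tau'}\to\overline{X}^S_\sigma/\R^\tau$ making the square commute.

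\textbf{Main obstacle.} The routine topological bookkeeping (well-definedness, continuity via the charts of \Cref{charts on Harvey bordification}) is straightforward. The point requiring care is pinning down the homomorphism $\iota$ and verifying the twist-equivariance (a) cleanly from Ivanov's explicit definition of the twist action, rather than hand-waving --- in particular making sure the identification \eqref{equivariant homeomorphisms} of strata with $\R^{\mu\smallsetminus\sigma}_{>0}\times\R^\mu$ is used consistently on both sides so that ``$\gamma$ carries twisting at $c$ to twisting at $\gamma(c)$'' is literally the statement $\gamma.(x\cdot w)=(\gamma.x)\cdot(\gamma.w)$ under these coordinates. Once that bookkeeping is set up, the descent to quotients is formal.
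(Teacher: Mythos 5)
Your proof is correct and takes essentially the same route as the paper: the paper also works stratumwise, uses the $\R^\sigma$-equivariant homeomorphisms of~\eqref{equivariant homeomorphisms} to show that the composite $\overline{X}^S_{\sigma'}\to\overline{X}^S_{\sigma}\to\overline{X}^S_{\sigma}/\R^\tau$ factors through $\overline{X}^S_{\sigma'}/\R^{\tau'}$, and reads the required intertwining of twists with the $\Gamma_S$-action off the charts rather than invoking it as a standalone equivariance statement. Your points (a) and (b), combined, recover exactly the commutativity of the paper's diagram.

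One small but real slip to fix: you twice state the containment backwards. Since the curve poset is ordered by \emph{reverse} inclusion, $\tau\leq\gamma.\tau'$ means $\gamma.\tau'\subseteq\tau$ (not $\tau\subseteq\gamma.\tau'$), so the induced inclusion is $\R^{\gamma.\tau'}\hookrightarrow\R^\tau$ (not $\R^{\gamma.\tau'}\supseteq\R^\tau$ as written in (b)). This is in fact what you use when you define $\iota$ as the composite $\R^{\tau'}\xrightarrow{\gamma}\R^{\gamma.\tau'}\hookrightarrow\R^\tau$ and conclude $\iota(\R^{\tau'})\subseteq\R^\tau$, so your conclusion is right; just the justifying sentences in (b) and the phrase ``inclusion dual to the containment $\tau\subseteq\gamma.\tau'$'' should be corrected to match.
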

\begin{proof}

This is easily seen by analysing the composite $\overline{X}^{S}_{\sigma'}\rightarrow \overline{X}^{S}_{\sigma}\rightarrow \overline{X}^{S}_{\sigma}/\R^\tau$ stratumwise and exploiting the equivariant homeomorphisms (\ref{equivariant homeomorphisms}) above: for any $\theta\leq \sigma'$, we have a commutative diagram as below where $\mu$ is some minimal element with $\mu\leq \theta$.

\begin{center}
\begin{tikzpicture}
\matrix (m) [matrix of math nodes,row sep=2em,column sep=2em]
  {
X^{S}_{\theta} & X^{S}_{\gamma.\theta} & X^{S}_{\gamma.\theta}/\R^{\gamma.\tau'} & X^{S}_{\gamma.\theta}/\R^\tau \\
\R_{>0}^{\mu\smallsetminus \theta}\times \R^{\mu} & \R_{>0}^{\gamma.\mu\smallsetminus \gamma.\theta}\times \R^{\gamma.\mu} & \R_{>0}^{\gamma.\mu\smallsetminus \gamma.\theta}\times \R^{\gamma.\mu\smallsetminus \gamma.\tau'}& \R_{>0}^{\gamma.\mu\smallsetminus \gamma.\theta}\times \R^{\gamma.\mu\smallsetminus \tau} \\
  };
  \path[-stealth]
(m-1-1) edge node[above]{$\gamma_*$} node[below]{$\cong$} (m-1-2) edge node[right]{$\cong$} (m-2-1)
(m-2-1) edge (m-2-2)
(m-1-2) edge node[right]{$\cong$} (m-2-2)
(m-1-3) edge node[right]{$\cong$} (m-2-3)
(m-1-4) edge node[right]{$\cong$} (m-2-4)
;
\path[->>]
(m-1-2) edge (m-1-3)
(m-1-3) edge (m-1-4)
(m-2-2) edge (m-2-3)
(m-2-3) edge (m-2-4)
;
\end{tikzpicture}
\end{center} 

Since the lower line factors through $\R_{>0}^{\mu\smallsetminus \theta}\times \R^{\mu\smallsetminus \tau'}$, the upper line factors through $X^{S}_\theta/\R^{\tau'}$ as desired.
\end{proof}

Finally, we will need the following identifications for our inductive strategy later on.

\begin{proposition}\label{identifying quotients by additive action as smaller Harvey bordifications}
Let $(S,P)$ be a stable nodal surface and let $\tau\in \mathfrak{C}_{S}$ be represented by an admissible curve system $C$. There is a fibre sequence
\begin{align*}
\R^\tau\rightarrow \overline{X}^{S}_\tau\rightarrow \overline{\mathcal{T}}_{S/C},
\end{align*}
identifying the quotient $\overline{X}^{S}_\tau/\R^\tau$ with the Harvey bordification $\overline{\mathcal{T}}_{S/C}$ associated to the stable nodal curve $(S/C,P)$. More generally, if $\sigma \leq \tau$, then the quotient $\overline{X}^{S}_\sigma/\R^\tau$ naturally identifies with the closure of the $\sigma$-stratum $\overline{X}^{S/C}_\sigma$ in $\overline{\mathcal{T}}_{S/C}$.
\end{proposition}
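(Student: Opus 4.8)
\emph{Strategy.} The plan is to construct an explicit ``collapsing'' map $\Phi\colon \overline{X}^{S}_\tau\to\overline{\mathcal{T}}_{S/C}$, show it is a continuous surjection which is invariant under the $\R^\tau$-action and exhibits $\overline{X}^S_\tau$ as a principal $\R^\tau$-bundle over $\overline{\mathcal{T}}_{S/C}$, and then identify the induced map $\overline{X}^S_\tau/\R^\tau\to\overline{\mathcal{T}}_{S/C}$ with a homeomorphism by a stratumwise comparison of coordinates. The general statement for $\sigma\leq\tau$ follows formally afterwards: $\overline{X}^S_\sigma=s_S^{-1}((\mathfrak{C}_S)_{\leq\sigma})$ is a closed $\R^\tau$-invariant subspace of $\overline{X}^S_\tau$, and under the identification $\mathfrak{C}_{S/C}\cong(\mathfrak{C}_S)_{\leq\tau}$ of \Cref{observations about curve posets}(2) the map $\Phi$ carries it onto $s_{S/C}^{-1}((\mathfrak{C}_{S/C})_{\leq\sigma})=\overline{X}^{S/C}_\sigma$.

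\emph{Construction of $\Phi$.} A point of $\overline{X}^S_\tau=s_S^{-1}((\mathfrak{C}_S)_{\leq\tau})$ is (the isotopy class of) a singular hyperbolic structure $(C',X,f)$ on $(S,P)$, and since its stratum is $\leq\tau$ we may take the curve system to contain $C$, say $C'=C\cup C''$. Collapsing $C$ produces the datum $(\overline{C''},X,\bar f)$ on $(S/C,P)$, where $\overline{C''}$ is the image of $C''$ and $\bar f\colon (S/C)\smallsetminus(\overline{C''}\cup P)\to X$ is induced by $f$ through the evident homeomorphism $(S/C)\smallsetminus(\overline{C''}\cup P)\cong S\smallsetminus(C'\cup P)$, extended across the nodes of $S/C$ coming from $C$ (each of which already corresponds to a node of $X$ by \Cref{singular hyperbolic structure assumption}). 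I would check: that $\overline{C''}$ is an admissible curve system on $S/C$, which is precisely the content of \Cref{observations about curve posets}(2) identifying admissible systems on $S/C$ with admissible systems on $S$ containing $C$; that the parametrisations of \Cref{singular hyperbolic structure assumption} for the curves of $\overline{C''}$ are inherited from those for $C''$, whereas the curves of $C$ become plain nodes of $S/C$ for which no parametrisation is required; and that the construction respects the isotopy equivalence relations and is continuous, the latter by inspecting Ivanov's charts on $\overline{\mathcal{T}}_S$. Surjectivity of $\Phi$ is clear, since a structure on $(S/C,P)$ lifts to one on $(S,P)$ whose curve system, unioned with $C$, is admissible again by \Cref{observations about curve posets}(2).

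\emph{Invariance and homeomorphism.} The $\R^\tau$-action twists around the curves of $C=\tau$; as \Cref{singular hyperbolic structure assumption} determines the parametrisation of such a curve only up to a rotation, and these curves become nodes of $S/C$ whose parametrisations are discarded, $\Phi$ is $\R^\tau$-invariant and factors through $\bar\Phi\colon\overline{X}^S_\tau/\R^\tau\to\overline{\mathcal{T}}_{S/C}$. To see $\bar\Phi$ is a homeomorphism I would argue stratumwise: for $\theta\leq\tau$ pick a minimal $\mu\in\mathfrak{C}_S$ with $\mu\leq\theta$ (so $\tau\subseteq\theta\subseteq\mu$ as curve systems); then \eqref{equivariant homeomorphisms} is an $\R^\tau$-equivariant diffeomorphism $X^S_\theta\cong\R^{\mu\smallsetminus\theta}_{>0}\times\R^\mu$ under which $\R^\tau\subseteq\R^\theta\subseteq\R^\mu$ acts by translations, giving $X^S_\theta/\R^\tau\cong\R^{\mu\smallsetminus\theta}_{>0}\times\R^{\mu\smallsetminus\tau}$, while $\mu\smallsetminus\tau$ is minimal in $\mathfrak{C}_{S/C}$ below the class of $\theta$ and \eqref{equivariant homeomorphisms} for $S/C$ identifies $X^{S/C}_\theta\cong\R^{\mu\smallsetminus\theta}_{>0}\times\R^{\mu\smallsetminus\tau}$, these being the matching length and twist parameters by \Cref{Remark: Fenchel-Nielsen coordinates on strata}. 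Hence $\bar\Phi$ is a diffeomorphism on each stratum, and that these glue to a stratified diffeomorphism of manifolds with corners is checked on the charts $U(\theta')\cong\R^{\theta'}_{\geq0}\times\R^{\mu'\smallsetminus\theta'}_{>0}\times\R^{\mu'}$ of \Cref{charts on Harvey bordification}: on the part lying in $\overline{X}^S_\tau$ the length coordinates indexed by $\tau$ vanish identically and the $\tau$-twist coordinates are collapsed, so the chart maps by a linear projection onto the corresponding chart of $\overline{\mathcal{T}}_{S/C}$. The same coordinates exhibit the $\R^\tau$-action as free and locally trivial, so $\Phi$ is a principal $\R^\tau$-bundle and we obtain the fibre sequence $\R^\tau\to\overline{X}^S_\tau\to\overline{\mathcal{T}}_{S/C}$.

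\emph{Main obstacle.} The conceptual content is slight; the real work is the point-set bookkeeping at the boundary strata --- that $\Phi$ is well defined on isotopy classes, continuous and a quotient map, and that the stratumwise coordinate identifications are genuinely compatible across strata --- all of which has to be extracted from the explicit chart description of $\overline{\mathcal{T}}_S$ (\Cref{charts on Harvey bordification} and \Cref{Remark: Fenchel-Nielsen coordinates on strata}) together with the equivariant coordinates \eqref{equivariant homeomorphisms}.
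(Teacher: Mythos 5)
The paper's own proof of this proposition is a single citation: it is ``contained in the proof of [Ivanov, Theorem 6.8]''. Your argument is a self-contained reconstruction of precisely that content, following the same natural strategy — define a collapsing map $\Phi$ on singular hyperbolic structures, check $\R^\tau$-invariance, and verify it is a quotient map stratumwise via the equivariant coordinate identification \eqref{equivariant homeopmorphisms} and the charts of \Cref{charts on Harvey bordification} — so the two approaches coincide. Your stratumwise bookkeeping is correct, including the subtle points: the identification $\mathfrak{C}_{S/C}\cong(\mathfrak{C}_S)_{\leq\tau}$, the fact that the curves of $\tau$ become nodes of $S/C$ carrying no twist parameter (so the $\R^\tau$-twist is killed by $\Phi$), and the deduction of the ``more generally'' clause as a restriction to a closed $\R^\tau$-invariant subspace, since $\Phi$ is stratum-preserving. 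One phrasing could be tightened: the $\R^\tau$-invariance of $\Phi$ is not really because \Cref{singular hyperbolic structure assumption} leaves the parametrisation ambiguous up to rotation (that ambiguity is already quotiented out in the definition of $\overline{\mathcal{T}}_S$), but rather because, as you correctly observe later, in the chart coordinates $\Phi$ is exactly the projection that forgets the $\tau$-twist coordinates on which $\R^\tau$ acts by translation — the coordinate picture is the clean justification.
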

\begin{proof}
This is contained in the proof of \cite[Theorem 6.8]{Ivanov}.
\end{proof}

\section{Inductive decompositions}\label{inductive decompositions}

We now have all the necessary ingredients and we commence the work towards the final calculation. In this section we exploit proper descent of sheaf categories to exhibit the inductive nature of the Deligne--Mumford--Knudsen compactifications and relate them to the Harvey compactifications. This is an immediate consequence of the observations made so far, and the real work of this section will be to supply diagrammatic analogues of these inductive decompositions.

For the whole of this section, we fix a stable nodal surface $(S,P)$ with associated dual graph $\mathbf{G}=\mathbf{G}(S,P)$, and we fix a complex hyperbolic structure on $S\smallsetminus P$, i.e.~a basepoint in $\mathcal{T}_{S}$. The reader is welcome to think of a non-singular surface $S$ --- it makes no difference to the arguments at hand and the only change of notation is that for a non-singular $S$ of genus $g$, we would tend to replace subscript $\mathbf{G}$ by subscript $g,P$.

\subsection{Categories of sheaves}

Having fixed a complex hyperbolic structure on $S\smallsetminus P$, for any $\sigma\in \mathfrak{C}_S$, there is a unique admissible curve system $C_\sigma$ representing $\sigma$ by geodesics (\cite[\S 1.2]{FarbMargalit}). We write
\begin{align*}
(S/\sigma,P):=(S/C_\sigma,P)
\end{align*}
for the resulting stable nodal surface. Recall the isomorphism of posets
\begin{align*}
\Gamma_S\backslash \mathfrak{C}_S\xrightarrow{\cong} \mathcal{G}_{\mathbf{G}},\quad [\theta]\mapsto [\mathbf{G}(\theta)]
\end{align*}
where $G(\theta)=\mathbf{G}(S/\theta,P)$ (\Cref{iso of posets}). For $\theta\in \mathfrak{C}_S$, write
\begin{align*}
\mathcal{D}_{\theta}:=\mathcal{D}_{\mathbf{G}(\theta)}^{\operatorname{top}}\quad\text{and}\quad\overline{\mathcal{D}}_{\theta}:=\overline{\mathcal{D}}_{\mathbf{G}(\theta)}^{\operatorname{top}} 
\end{align*}
for the topological stacks underlying the substacks $\mathcal{D}_{\mathbf{G}(\theta)}\subset \overline{\mathcal{M}}_{\mathbf{G}}$, respectively $\overline{\mathcal{D}}_{\mathbf{G}(\theta)}\subset \overline{\mathcal{M}}_{\mathbf{G}}$ (\Cref{stable curves dual graphs and moduli stacks}). Note that $\mathcal{D}_{\theta}$ and $\overline{\mathcal{D}}_{\theta}$ identify with the following fibre products, 
\begin{align*}
\overline{\mathcal{D}}_{\theta}\xrightarrow{\sim}\overline{\mathcal{M}}_{\mathbf{G}}^{\operatorname{top}}\mathop{\times}_{\mathcal{G}_{\mathbf{G}}}\{[\mathbf{G}(\theta)]\}\quad\text{and}\quad \overline{\mathcal{D}}_{\theta}\xrightarrow{\sim}\overline{\mathcal{M}}_{\mathbf{G}}^{\operatorname{top}}\mathop{\times}_{\mathcal{G}_{\mathbf{G}}}(\mathcal{G}_{\mathbf{G}})_{\leq [\mathbf{G}(\theta)]};
\end{align*}
that is, the $[\mathbf{G}(\theta)]$-stratum, respectively the closure of the $[\mathbf{G}(\theta)]$-stratum of $\overline{\mathcal{M}}_{\mathbf{G}}^{\operatorname{top}}$.

Let $\theta\in \mathfrak{C}_S$ and consider the following commutative diagram of stratified topological stacks (the upper diagram is the diagram of topological stacks, the lower is the corresponding diagram of stratifying posets; we omit the actual stratification maps).
\begin{center}
\begin{tikzpicture}
\matrix (m) [matrix of math nodes,row sep=2em,column sep=2em]
  {
\overline{\mathcal{T}}_{S/\theta} & \left[\widetilde{\Gamma}_{S/\theta}\backslash \overline{\mathcal{T}}_{S/\theta}\right] & \overline{\mathcal{M}}_{\mathbf{G}(\theta)}^{\sim,\operatorname{top}} & \\
& \left[\Gamma_{S/\theta}\backslash \overline{\mathcal{T}}_{S/\theta}\right] & \overline{\mathcal{M}}^{\operatorname{top}}_{\mathbf{G}(\theta)} & \overline{\mathcal{D}}_{\theta} \\
\mathfrak{C}_{S/\theta} & \widetilde{\Gamma}_{S/\theta}\backslash \mathfrak{C}_{S/\theta} & \mathcal{G}_{\mathbf{G}(\theta)}^{\sim}& \\
& \Gamma_{S/\theta}\backslash \mathfrak{C}_{S/\theta} & \mathcal{G}_{\mathbf{G}(\theta)} & (\mathcal{G}_{\mathbf{G}})_{\leq \mathbf{G}(\theta)} \\
  };
  \path[-stealth]
(m-1-1) edge (m-1-2) edge (m-2-2)
(m-1-2) edge (m-1-3) edge (m-2-2)
(m-1-3) edge (m-2-3) edge (m-2-4)
(m-2-2) edge (m-2-3)
(m-2-3) edge (m-2-4)
(m-3-1) edge (m-3-2) edge (m-4-2)
(m-3-2) edge (m-3-3) edge (m-4-2)
(m-3-3) edge (m-4-3) edge (m-4-4)
(m-4-2) edge (m-4-3)
(m-4-3) edge (m-4-4)
;
\end{tikzpicture}
\end{center}

\begin{lemma}\label{useful proper map Mg bar}
For any $\theta\in \mathfrak{C}_S$, the composite of the lower horizontal maps in the diagram above
\begin{align*}
[\Gamma_{S/\theta}\backslash \overline{\mathcal{T}}_{S/\theta}]\longrightarrow \overline{\mathcal{M}}^{\operatorname{top}}_{\mathbf{G}(\theta)}\longrightarrow \overline{\mathcal{D}}_{\theta}
\end{align*}
is proper and restricts to an isomorphism over the open stratum $[\Gamma_{S/\theta}\backslash \mathcal{T}_{S/\theta}]\xrightarrow{\cong}\mathcal{D}_{\theta}$.
\end{lemma}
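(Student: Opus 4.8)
The plan is to assemble \Cref{useful proper map Mg bar} from the pieces already in place, chasing the composite through the big commutative diagram. The key observation is that the composite in question factors as
\begin{align*}
[\Gamma_{S/\theta}\backslash \overline{\mathcal{T}}_{S/\theta}]\longrightarrow \overline{\mathcal{M}}^{\operatorname{top}}_{\mathbf{G}(\theta)}\longrightarrow \overline{\mathcal{D}}_{\theta},
\end{align*}
where the first map is the one from \Cref{proper map} applied to the stable nodal surface $(S/\theta,P)$ (whose associated dual graph is $\mathbf{G}(\theta)=\mathbf{G}(S/\theta,P)$), and the second map is the topological stack underlying the algebraic map $\overline{\mathcal{M}}_{\mathbf{G}(\theta)}\rightarrow \overline{\mathcal{D}}_{\mathbf{G}(\theta)}$ from \Cref{DMK compactification as normalisation}. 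So the first thing I would do is spell out this factorisation carefully, noting in particular that $\overline{\mathcal{D}}_\theta$ was defined precisely as the topological stack underlying $\overline{\mathcal{D}}_{\mathbf{G}(\theta)}\subseteq \overline{\mathcal{M}}_{\mathbf{G}}$, and that by the discussion preceding the lemma it identifies with the closure of the $[\mathbf{G}(\theta)]$-stratum of $\overline{\mathcal{M}}_{\mathbf{G}}^{\operatorname{top}}$.

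For properness: \Cref{proper map} gives that $[\Gamma_{S/\theta}\backslash \overline{\mathcal{T}}_{S/\theta}]\rightarrow \overline{\mathcal{M}}^{\operatorname{top}}_{\mathbf{G}(\theta)}$ is proper. For the second map, \Cref{DMK compactification as normalisation} identifies $\overline{\mathcal{M}}_{\mathbf{G}(\theta)}\rightarrow \overline{\mathcal{D}}_{\mathbf{G}(\theta)}$ with a normalisation, which is a finite morphism of Deligne--Mumford stacks, hence proper; its underlying map of topological stacks is then proper (finite morphisms analytify to finite, hence proper, maps, and taking underlying topological stacks preserves this — this is the kind of statement recorded in \cite[\S 4]{OrsnesJansen23}). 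A composite of proper maps is proper, giving the first claim. One small point to address is that ``proper'' for maps of topological stacks should be taken in the sense used in the paper (representable and universally closed, or equivalently after passing to an atlas); I would simply invoke that both constituent maps satisfy this and that it is closed under composition.

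For the isomorphism over the open stratum: \Cref{proper map} already gives that the first map restricts to an isomorphism $[\Gamma_{S/\theta}\backslash \mathcal{T}_{S/\theta}]\xrightarrow{\cong}\mathcal{M}^{\operatorname{top}}_{\mathbf{G}(\theta)}$. For the second map, \Cref{DMK compactification as normalisation} states that $\overline{\mathcal{M}}_{\mathbf{G}(\theta)}\rightarrow \overline{\mathcal{D}}_{\mathbf{G}(\theta)}$ is an isomorphism over the open substack $\mathcal{D}_{\mathbf{G}(\theta)}$; passing to underlying topological stacks, it restricts to an isomorphism $\mathcal{M}^{\operatorname{top}}_{\mathbf{G}(\theta)}\xrightarrow{\cong}\mathcal{D}_\theta$. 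Here I should also check that the open stratum $[\Gamma_{S/\theta}\backslash \mathcal{T}_{S/\theta}]$ of the source maps into $\mathcal{D}_\theta$ and not a deeper stratum — but this is exactly the content of \Cref{moduli stack as quotient of Teichmüller}, which says $[\Gamma_{S/\theta}\backslash \mathcal{T}_{S/\theta}]\cong \mathcal{M}^{\operatorname{top}}_{\mathbf{G}(\theta)}\cong \mathcal{D}_\theta$, together with the fact that $\mathcal{D}_\theta$ is the open ($=$ top) stratum of $\overline{\mathcal{D}}_\theta$. Composing the two isomorphisms gives $[\Gamma_{S/\theta}\backslash \mathcal{T}_{S/\theta}]\xrightarrow{\cong}\mathcal{D}_\theta$, as required.

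The main obstacle, such as it is, is bookkeeping rather than substance: making sure that the intermediate stack appearing in the lemma's factorisation really is $\overline{\mathcal{M}}^{\operatorname{top}}_{\mathbf{G}(\theta)}$ as displayed in the big diagram (i.e.\ that the map $[\Gamma_{S/\theta}\backslash \overline{\mathcal{T}}_{S/\theta}]\rightarrow \overline{\mathcal{D}}_\theta$ of the diagram genuinely agrees with the composite of \Cref{proper map} and the underlying map of the normalisation \Cref{DMK compactification as normalisation}), and that the stratifications match up so that ``open stratum'' means the same thing on both sides. Both of these are built into the constructions of \Cref{underlying stratified topological stack} and the diagram preceding the lemma, so once the factorisation is identified the proof is just ``composite of two proper maps each restricting to an iso over the open stratum''. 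I would keep the write-up short accordingly.

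\begin{proof}
By construction, the composite in question factors as
\begin{align*}
[\Gamma_{S/\theta}\backslash \overline{\mathcal{T}}_{S/\theta}]\longrightarrow \overline{\mathcal{M}}^{\operatorname{top}}_{\mathbf{G}(\theta)}\longrightarrow \overline{\mathcal{D}}_{\theta},
\end{align*}
where the first map is the one of \Cref{proper map} for the stable nodal surface $(S/\theta,P)$ (which has associated dual graph $\mathbf{G}(\theta)=\mathbf{G}(S/\theta,P)$), and the second map is the topological stack underlying the algebraic map $\overline{\mathcal{M}}_{\mathbf{G}(\theta)}\rightarrow \overline{\mathcal{D}}_{\mathbf{G}(\theta)}$; recall that $\overline{\mathcal{D}}_\theta$ is by definition the topological stack underlying $\overline{\mathcal{D}}_{\mathbf{G}(\theta)}\subseteq \overline{\mathcal{M}}_{\mathbf{G}}$.

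By \Cref{DMK compactification as normalisation}, the map $\overline{\mathcal{M}}_{\mathbf{G}(\theta)}\rightarrow \overline{\mathcal{D}}_{\mathbf{G}(\theta)}$ is the normalisation of $\overline{\mathcal{D}}_{\mathbf{G}(\theta)}$; in particular it is a finite, hence proper, morphism of Deligne--Mumford stacks, and the functor $(-)^{\operatorname{top}}$ takes it to a proper map of topological stacks (see \cite[\S 4]{OrsnesJansen23}). Combined with the properness of the first map (\Cref{proper map}) and the fact that proper maps of topological stacks are closed under composition, this shows that the composite is proper, proving the first claim.

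For the second claim, \Cref{proper map} gives that the first map restricts to an isomorphism $[\Gamma_{S/\theta}\backslash \mathcal{T}_{S/\theta}]\xrightarrow{\cong}\mathcal{M}^{\operatorname{top}}_{\mathbf{G}(\theta)}$ over the open stratum, while \Cref{DMK compactification as normalisation} (and the fact that $(-)^{\operatorname{top}}$ preserves open immersions and isomorphisms) gives that the second map restricts to an isomorphism $\mathcal{M}^{\operatorname{top}}_{\mathbf{G}(\theta)}\xrightarrow{\cong}\mathcal{D}_{\theta}$ over $\mathcal{D}_\theta$. By \Cref{moduli stack as quotient of Teichmuller}, $[\Gamma_{S/\theta}\backslash \mathcal{T}_{S/\theta}]\cong \mathcal{M}^{\operatorname{top}}_{\mathbf{G}(\theta)}\cong \mathcal{D}_\theta$ is indeed the open stratum of $\overline{\mathcal{D}}_\theta$, so composing the two isomorphisms yields an isomorphism $[\Gamma_{S/\theta}\backslash \mathcal{T}_{S/\theta}]\xrightarrow{\cong}\mathcal{D}_\theta$ over the open stratum, as desired.
\end{proof}
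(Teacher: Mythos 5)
Your proof is correct and takes exactly the same route as the paper, which disposes of the lemma in one sentence by citing \Cref{proper map} and \Cref{DMK compactification as normalisation}; you have simply unpacked that one-liner, checking in detail that the composite factors through the normalisation map, that each factor is proper and an isomorphism over the open stratum, and that these facts combine.
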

\begin{proof}
This follows from \Cref{proper map} and \Cref{DMK compactification as normalisation}.
\end{proof}

We write $\partial\overline{\mathcal{D}}_{\theta}=\overline{\mathcal{D}}_{\theta}\smallsetminus\mathcal{D}_{\theta}$ for the boundary of $\overline{\mathcal{D}}_{\theta}$. The following should be compared with Corollary 4.12 of \cite{ClausenOrsnesJansen}:

\begin{proposition}\label{preliminary decomposition of sheaves on Mg bar}
For any $\theta\in \mathfrak{C}_S$, we have:
\begin{enumerate}
\item an equivalence of categories of sheaves
\begin{align*}
\displaystyle\operatorname{Shv}(\overline{\mathcal{D}}_{\theta})\xrightarrow{\ \sim\ }\operatorname{Shv}(\partial \overline{\mathcal{D}}_{\theta}) \mathop{\times}_{\operatorname{Shv}(\partial(\Gamma_{S/\theta}\backslash \overline{\mathcal{T}}_{S/\theta}))}\operatorname{Shv}(\Gamma_{S/\theta}\backslash \overline{\mathcal{T}}_{S/\theta}),
\end{align*}
and similarly for constructible sheaves;
\item an equivalence of categories of sheaves
\begin{align*}
\displaystyle\operatorname{Shv}(\partial\overline{\mathcal{D}}_{\theta})\xrightarrow{\ \sim\ }\varprojlim_{[\sigma]\in (\Gamma_S\backslash \mathfrak{C}_S)_{\leq[\theta]}^{op}\smallsetminus [\theta]}\operatorname{Shv}(\overline{\mathcal{D}}_{\sigma}),
\end{align*}
and similarly for constructible sheaves.
\end{enumerate}
\end{proposition}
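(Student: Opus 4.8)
Both equivalences will follow from descent properties of $\operatorname{Shv}(-)$ on stratified topological stacks, combined with the geometric input of \Cref{useful proper map Mg bar}; this is the exact analogue of \cite[Corollary 4.12]{ClausenOrsnesJansen}, and the plan is to rely on the permanence and descent tools of \cite{OrsnesJansen23} throughout.

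For part (1), write $Y_\theta:=[\Gamma_{S/\theta}\backslash\overline{\mathcal{T}}_{S/\theta}]$ and let $p\colon Y_\theta\to\overline{\mathcal{D}}_\theta$ be the proper map of \Cref{useful proper map Mg bar}, which restricts to an isomorphism over the open substack $\mathcal{D}_\theta$ with closed complement $\partial\overline{\mathcal{D}}_\theta$. The first thing to record is that
\begin{align*}
p^{-1}(\partial\overline{\mathcal{D}}_\theta)=Y_\theta\smallsetminus p^{-1}(\mathcal{D}_\theta)=[\Gamma_{S/\theta}\backslash\partial\overline{\mathcal{T}}_{S/\theta}]=\partial Y_\theta,
\end{align*}
so that the square with vertices $\partial Y_\theta$, $Y_\theta$, $\partial\overline{\mathcal{D}}_\theta$, $\overline{\mathcal{D}}_\theta$ is cartesian. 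Here $p$ is proper and surjective --- its image is closed (properness) and contains the dense open substack $\mathcal{D}_\theta$ --- and it is an isomorphism over the open complement of the closed substack $\partial\overline{\mathcal{D}}_\theta$. For such ``abstract blow-up'' squares, $\operatorname{Shv}(-)$ takes the square to a pullback of $\infty$-categories: this is the proper-descent statement of \cite{OrsnesJansen23}, and concretely it amounts to combining proper base change with the open--closed recollement, reconstructing a sheaf $F$ on $\overline{\mathcal{D}}_\theta$ from $i^*F$ on $\partial\overline{\mathcal{D}}_\theta$, from $p^*F$ on $Y_\theta$, and from the canonical identification $i_{\partial Y_\theta}^*p^*F\simeq p_\partial^*i^*F$. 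This gives the first equivalence. The constructible version follows because $p$, $i$ and $j$ are stratified for the posets in play --- the stratification of $Y_\theta$ over $\Gamma_{S/\theta}\backslash\mathfrak{C}_{S/\theta}$ is pulled back from, and maps compatibly to, the stratification of $\overline{\mathcal{D}}_\theta$ over $(\mathcal{G}_{\mathbf{G}})_{\leq\mathbf{G}(\theta)}$ --- so $p^*$, $p_*$, $i^*$, $i_*$, $j^*$ all preserve constructibility, and constructibility on $\overline{\mathcal{D}}_\theta$ can be detected after pulling back along $p$ and $i$.

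For part (2), let $Q:=(\mathcal{G}_{\mathbf{G}})_{\leq\mathbf{G}(\theta)}\smallsetminus\{\mathbf{G}(\theta)\}\cong(\Gamma_S\backslash\mathfrak{C}_S)_{\leq[\theta]}\smallsetminus[\theta]$ be the (finite) poset stratifying $\partial\overline{\mathcal{D}}_\theta$, with $[\sigma]$-stratum $\mathcal{D}_\sigma$. The structural input is that closures of strata are unions of strata: translating the set-theoretic decomposition $\overline{X}^S_\sigma=\coprod_{\tau\leq\sigma}X^S_\tau$ through the normal-crossings geometry of the boundary of $\overline{\mathcal{M}}_{\mathbf{G}}^{\operatorname{top}}$, the closed substack $\overline{\mathcal{D}}_\sigma\hookrightarrow\partial\overline{\mathcal{D}}_\theta$ is the union of the strata $\mathcal{D}_\rho$ with $[\rho]\leq[\sigma]$; hence $\overline{\mathcal{D}}_\sigma\subseteq\overline{\mathcal{D}}_{\sigma'}$ iff $[\sigma]\leq[\sigma']$ and $\overline{\mathcal{D}}_\sigma\cap\overline{\mathcal{D}}_{\sigma'}=\bigcup_{[\rho]\leq[\sigma],[\sigma']}\overline{\mathcal{D}}_\rho$, and in particular $\partial\overline{\mathcal{D}}_\theta=\bigcup_{[\sigma]\text{ maximal in }Q}\overline{\mathcal{D}}_\sigma$. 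Applying closed-cover descent for $\operatorname{Shv}(-)$ to this finite closed cover and reorganising the resulting \v{C}ech limit along the poset --- the nonempty iterated intersections are precisely the substacks $\bigcup_{[\rho]\in D}\overline{\mathcal{D}}_\rho$ for down-sets $D\subseteq Q$, and these, together with the closed-inclusion restriction functors, assemble into the $Q^{\operatorname{op}}$-diagram $[\sigma]\mapsto\operatorname{Shv}(\overline{\mathcal{D}}_\sigma)$ --- identifies $\operatorname{Shv}(\partial\overline{\mathcal{D}}_\theta)$ with $\varprojlim_{[\sigma]\in Q^{\operatorname{op}}}\operatorname{Shv}(\overline{\mathcal{D}}_\sigma)$. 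Equivalently, one may induct on $|Q|$: remove a maximal element $[\sigma_0]$ and glue $\overline{\mathcal{D}}_{\sigma_0}$ to $\bigcup_{[\sigma]\neq[\sigma_0]}\overline{\mathcal{D}}_\sigma$ along $\bigcup_{[\sigma]<[\sigma_0]}\overline{\mathcal{D}}_\sigma$ via closed descent. Constructibility is preserved as before, since each inclusion $\overline{\mathcal{D}}_\rho\hookrightarrow\overline{\mathcal{D}}_\sigma$ is stratified for the induced stratifications.

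The main obstacle is the constructible refinement in both parts: one has to pin down all the stratifications at play --- on $Y_\theta$, on $\overline{\mathcal{D}}_\theta$, on $\partial\overline{\mathcal{D}}_\theta$ and on each $\overline{\mathcal{D}}_\sigma$ --- and verify that the pullback, pushforward and restriction functors respect them, so that the sheaf-level equivalences restrict to the constructible subcategories. A secondary technical point is making the ``reorganise the \v{C}ech limit along $Q^{\operatorname{op}}$'' step of part (2) rigorous, i.e. exhibiting the cofinality comparison between the \v{C}ech nerve of the closed cover and the poset $Q^{\operatorname{op}}$; this rests on the normal-crossings intersection formula for the $\overline{\mathcal{D}}_\sigma$ recorded above.
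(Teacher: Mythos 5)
Your proof is correct and follows essentially the same route as the paper: part (1) via proper descent applied to the cartesian square arising from the proper morphism $[\Gamma_{S/\theta}\backslash\overline{\mathcal{T}}_{S/\theta}]\to\overline{\mathcal{D}}_\theta$ of Lemma~\ref{useful proper map Mg bar}, and part (2) via descent for closed covers reorganised as a limit over the stratifying poset, with constructibility handled by noting that all maps are stratified and that the open stratum is seen isomorphically by $p$. One small imprecision: the stratification of $[\Gamma_{S/\theta}\backslash\overline{\mathcal{T}}_{S/\theta}]$ over $\Gamma_{S/\theta}\backslash\mathfrak{C}_{S/\theta}$ is not literally pulled back from $\overline{\mathcal{D}}_\theta$ --- the poset map $\Gamma_{S/\theta}\backslash\mathfrak{C}_{S/\theta}\to(\mathcal{G}_{\mathbf{G}})_{\leq[\mathbf{G}(\theta)]}$ is generally not injective (this is exactly the failure of the closure of a stratum to be a smaller Deligne--Mumford--Knudsen stack, cf.~Remark~\ref{key technical difference RBS versus DMK}) --- but only the compatibility of the two stratification maps is needed, which you also state, so the argument goes through.
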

\begin{proof}
Part 1 for sheaves follows from part 1 of the lemma above and proper descent (\cite[Corollary 2.18]{OrsnesJansen23}). To deduce the claim for constructible sheaves, it suffices to note that a sheaf on $\overline{\mathcal{D}}_{\theta}$ is constructible if and only if its pullback to the other three terms is: this is clear, since the only stratum not contained in the boundary is the open stratum $\mathcal{D}_\theta$ over which the projection map from $\Gamma_{S/\theta}\backslash \overline{\mathcal{T}}_{S/\theta}$ is an isomorphism. Part 2 for sheaves follows directly from descent for closed covers (\cite[Corollary 2.19]{OrsnesJansen23}), and to deduce the claim for constructible sheaves it suffices to note that every stratum is contained in its closure.
\end{proof}

\begin{remark}
Intuitively, this result is telling us that the Deligne--Mumford--Knudsen compactification $\overline{\mathcal{M}}_{g,P}$ can by ``built'' from the Harvey compactification $[\Gamma_S\backslash \overline{\mathcal{T}}_{\!\!S}]$ (for $S$ of type $(g,P)$) and the ``smaller'' $\overline{\mathcal{D}}_\sigma$, $\sigma<\emptyset$ in $\mathfrak{C}_S$. As $\overline{\mathcal{D}}_\sigma$ does not necessarily identify with one of the stacks $\overline{\mathcal{M}}_{\mathbf{G}}$ (but only normalises to one), we need the statement for general $\theta$ and not just $\theta=\emptyset$. See also \Cref{key technical difference RBS versus DMK}.
\end{remark}

\subsection{A diagrammatic analogue}

We now provide diagrammatic analogues of the inductive decompositions of sheaf categories established in the previous section.

Recall the \textit{twisted arrow category} $\operatorname{Tw}(\mathcal{C})$ of a category $\mathcal{C}$: its objects are the maps $x\rightarrow y$ in $\mathcal{C}$ and a morphism $(f\colon x\rightarrow y)\rightarrow (f'\colon x'\rightarrow y')$ is a factorisation of $f'$ through $f$, that is, a pair of maps $a\colon x'\rightarrow x$ and $b\colon y\rightarrow y'$ such that $f'=bfa$.

We still fix a stable nodal surface $(S,P)$ with dual graph $\mathbf{G}$ and a fixed hyperbolic structure. Consider the Charney--Lee category $\operatorname{CL}_S$ of $(S,P)$ and its associated twisted arrow category $\operatorname{Tw}(\operatorname{CL}_S)$. Let us first of all make an observation providing an alternative description of this twisted arrow category that will be useful when identifying subcategories as in the proposition below and also when defining functors out of it (the proof is completely analogous to the similar description in \cite[Lemma 4.17]{ClausenOrsnesJansen}).

\begin{lemma}\label{alternative description of twisted arrow category}
The category $\operatorname{Tw}(\operatorname{CL}_S)$ is equivalent to the category whose objects are pairs of objects $\sigma\leq \tau$ in $\mathfrak{C}_S$, and whose set of maps $(\sigma\leq \tau)\rightarrow (\sigma'\leq \tau')$ is given by
\begin{align*}
\Delta_\tau\backslash \{\gamma\in \Gamma_S\mid \gamma.\sigma'\leq \sigma, \gamma.\tau'\geq \tau\}
\end{align*}
with composition induced by multiplication in $\Gamma_S$.

Concretely, the equivalence is given by the functor that on objects sends $\sigma\leq \tau$ to the map $\sigma\xrightarrow{[\operatorname{id}]}\tau$ in $\operatorname{CL}_S$, and on maps sends $[\gamma]\colon (\sigma\leq \tau)\rightarrow (\sigma'\leq \tau')$ to the pair of maps $\sigma'\xrightarrow{\operatorname{[\gamma]}}\sigma$ and $\tau\xrightarrow{\operatorname{[\gamma^{-1}]}}\tau'$ in $\operatorname{CL}_S$.
\end{lemma}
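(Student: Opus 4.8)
The statement is a direct unwinding of the definition of the twisted arrow category $\operatorname{Tw}(\operatorname{CL}_S)$ together with the explicit description of hom-sets in $\operatorname{CL}_S$ from \Cref{The category}. The plan is to construct the functor $\Phi$ described in the ``concretely'' clause, check that it is well-defined on objects and morphisms, and then verify full faithfulness and essential surjectivity by hand. Essential surjectivity is immediate: an object of $\operatorname{Tw}(\operatorname{CL}_S)$ is a morphism $[\gamma]\colon \rho \to \tau$ in $\operatorname{CL}_S$, and precomposing with the isomorphism $[\gamma^{-1}]\colon \gamma.\rho \xrightarrow{\sim} \rho$ (note $\gamma.\rho \le \tau$ since $[\gamma]$ is a morphism $\rho \to \tau$) identifies it in $\operatorname{Tw}(\operatorname{CL}_S)$ with the object $(\gamma.\rho \le \tau)$ represented by $\gamma.\rho \xrightarrow{[\operatorname{id}]} \tau$; so every object is isomorphic to one in the image of $\Phi$.

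First I would spell out what a morphism in $\operatorname{Tw}(\operatorname{CL}_S)$ between two objects of the form $\sigma \xrightarrow{[\operatorname{id}]} \tau$ and $\sigma' \xrightarrow{[\operatorname{id}]} \tau'$ actually is: a pair of morphisms $a\colon \sigma' \to \sigma$ and $b\colon \tau \to \tau'$ in $\operatorname{CL}_S$ with $b \circ [\operatorname{id}_\tau] \circ a = [\operatorname{id}_{\tau'}]$ viewed as maps $\sigma' \to \tau'$ (using that $\sigma \le \tau$, so $[\operatorname{id}]\colon \sigma\to\tau$ exists; and that the composite requires $\sigma' \le \tau'$, which forces the objects to genuinely be pairs $\sigma' \le \tau'$). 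Writing $a = [\alpha]$ with $\alpha.\sigma' \le \sigma$ (modulo $\Delta_{\sigma'}$ acting on the right) and $b = [\beta]$ with $\beta.\tau \le \tau'$ (modulo $\Delta_\tau$), the composition rule in $\operatorname{CL}_S$ says $b \circ [\operatorname{id}] \circ a = [\beta\alpha]$, and the condition $[\beta\alpha] = [\operatorname{id}]$ in $\operatorname{Hom}_{\operatorname{CL}_S}(\sigma',\tau')$ means $\beta\alpha \in \Delta_{\sigma'}$, i.e.\ $\beta\alpha = \delta$ for some $\delta \in \Delta_{\sigma'}$. Setting $\gamma := \alpha$ (equivalently $\gamma = \beta^{-1}\delta$), I get a single element $\gamma \in \Gamma_S$ with $\gamma.\sigma' \le \sigma$ and $\gamma.\tau' = \beta^{-1}.(\delta.\tau') \ge \beta^{-1}.\tau \ge \tau$ — here using $\delta \in \Delta_{\sigma'} \subset \Delta_{\tau'}$ (since $\sigma' \le \tau'$, property (i) of \Cref{The category}) so $\delta.\tau' = \tau'$, and that $\beta.\tau \le \tau'$ rearranges to $\tau \le \beta^{-1}.\tau'$. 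Conversely any such $\gamma$ gives back the pair $(a,b) = ([\gamma], [\gamma^{-1}])$. The ambiguity in $(a,b)$: $a = [\gamma]$ is well-defined modulo $\Delta_{\sigma'}$ on the right, but replacing $\gamma$ by $\gamma\delta'$ with $\delta' \in \Delta_{\sigma'}$ would change $b = [\gamma^{-1}]$ to $[\delta'^{-1}\gamma^{-1}]$, and since $\delta'^{-1} \in \Delta_{\sigma'} \subseteq \Delta_\tau$ (as $\sigma' \le \tau$, which holds because $\gamma.\sigma' \le \sigma \le \tau$ need not give $\sigma'\le\tau$ directly — care needed here) this is the same class modulo $\Delta_\tau$ on the left only if $\delta'^{-1} \in \Delta_\tau$; the cleanest bookkeeping is to note that the pair $(a,b)$ is determined by $\gamma$ up to right multiplication by $\Delta_{\sigma'}$ and that two elements $\gamma, \gamma'$ give the same morphism in $\operatorname{Tw}(\operatorname{CL}_S)$ iff $\gamma^{-1}\gamma' \in \Delta_{\sigma'}$ AND the induced $b$'s agree, which after conjugating ($\gamma\Delta_{\sigma'}\gamma^{-1} = \Delta_{\gamma.\sigma'} \subseteq \Delta_\tau$ by property (ii) and $\gamma.\sigma'\le\sigma\le\tau$) reduces exactly to $\Delta_\tau$ acting on the left of $\gamma$. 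This matches the claimed hom-set $\Delta_\tau\backslash\{\gamma \mid \gamma.\sigma'\le\sigma,\ \gamma.\tau'\ge\tau\}$.

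Having matched objects and morphisms, I would check compatibility of composition — this is a routine computation: given $\gamma\colon (\sigma'\le\tau')\to(\sigma\le\tau)$ and $\gamma'\colon (\sigma''\le\tau'')\to(\sigma'\le\tau')$, the composite in the twisted arrow category is the pair $([\gamma]\circ[\gamma'], [\gamma'^{-1}]\circ[\gamma^{-1}]) = ([\gamma\gamma'], [(\gamma\gamma')^{-1}])$, which corresponds to $\gamma\gamma' \in \Gamma_S$, i.e.\ composition is multiplication in $\Gamma_S$ as asserted. The conditions propagate: $\gamma\gamma'.\sigma'' \le \gamma.\sigma' \le \sigma$ and $\gamma\gamma'.\tau'' \ge \gamma.\tau' \ge \tau$. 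Identities correspond to $\operatorname{id} \in \Gamma_S$.

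\textbf{Main obstacle.} The only genuinely delicate point — and the one I would write out carefully — is the bookkeeping of the two-sided quotients: showing that the pair of classes $(\gamma\Delta_{\sigma'},\ \Delta_\tau\gamma^{-1})$ carries exactly the information of the single double coset $\Delta_\tau\gamma\Delta_{\sigma'}$, and then that the further relation imposed by ``$\beta\alpha \in \Delta_{\sigma'}$'' collapses this to $\Delta_\tau\backslash\{\gamma\}$ with the stated support conditions. This is where properties (i) ($\Delta_\sigma \subseteq \Delta_\tau$ for $\tau\le\sigma$) and (ii) ($\gamma\Delta_\sigma\gamma^{-1} = \Delta_{\gamma.\sigma}$) of \Cref{The category} are used, exactly as in the analogous \cite[Lemma 4.17]{ClausenOrsnesJansen}, and I would reference that proof for the parallel argument while including the short computation here for completeness. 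Everything else is formal unwinding of definitions.
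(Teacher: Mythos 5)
Your overall structure (unwind the definition, match objects and hom-sets, verify essential surjectivity and composition) is the same as the paper's, and the essential surjectivity argument is fine. But there are two backwards inclusions in exactly the part you flag as ``the only genuinely delicate point,'' and the second one is load-bearing. Property (i) of Construction~\ref{The category} says $\Delta_\sigma\subset\Delta_\tau$ whenever $\tau\leq\sigma$; that is, \emph{smaller} elements of $\mathfrak{C}_S$ (= bigger curve systems) carry \emph{bigger} Dehn-twist subgroups. So from $\sigma'\leq\tau'$ you get $\Delta_{\tau'}\subset\Delta_{\sigma'}$, not ``$\Delta_{\sigma'}\subset\Delta_{\tau'}$'' as you wrote; the conclusion $\delta.\tau'=\tau'$ is still true, but for the reason $\Delta_{\sigma'}\subset\Gamma_S(\tau')$ (Dehn twists about curves of $\sigma'\supseteq\tau'$ fix the subsystem $\tau'$), not the inclusion you cited. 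Similarly, from $\gamma.\sigma'\leq\sigma\leq\tau$ you get $\Delta_\tau\subset\Delta_\sigma\subset\Delta_{\gamma.\sigma'}$, not ``$\Delta_{\gamma.\sigma'}\subseteq\Delta_\tau$.''

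The second reversal matters because it flips the direction of the reduction you want to run. Having set $\gamma:=\alpha=\gamma_a$, you claim the pair ``comes back'' as $(a,b)=([\gamma],[\gamma^{-1}])$; but $b=[\gamma_b]$ with $\gamma_b\in\Delta_{\sigma'}\gamma^{-1}$, and $[\delta\gamma^{-1}]=[\gamma^{-1}]$ in $\operatorname{Hom}(\tau,\tau')$ would require $\gamma\delta\gamma^{-1}\in\Delta_\tau$, i.e.\ precisely the false inclusion $\Delta_{\gamma.\sigma'}\subseteq\Delta_\tau$. The correct bookkeeping is the opposite: normalize $\gamma:=\gamma_b^{-1}$ (using the freedom $\gamma_a\in\gamma_b^{-1}\Delta_{\sigma'}$, which does not change $a$), so that $\gamma$ is defined a priori modulo $\Delta_\tau$ on the \emph{left}; then the true inclusion $\Delta_\tau\gamma\subset\gamma\Delta_{\sigma'}$ (the content of the paper's well-definedness computation) shows that $[\gamma]\in\operatorname{Hom}(\sigma',\sigma)$ is automatically well-defined. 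In other words, agreement of the $b$-components is the \emph{stronger} condition and already forces agreement of the $a$-components, not the other way around. With $\gamma$ normalized this way, the identification with $\Delta_\tau\backslash\{\gamma\mid\gamma.\sigma'\leq\sigma,\ \gamma.\tau'\geq\tau\}$ falls out cleanly and the rest of your argument (composition, identities) goes through as written.
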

\begin{proof}
The functor in question is well-defined since if $[\gamma]\colon (\sigma\leq \tau)\rightarrow (\sigma'\leq \tau')$ as in the statement, then
\begin{align*}
\Delta_{\tau}\subset \Delta_{\sigma}\subset \Delta_{\gamma.\sigma'}=\gamma\Delta_{\sigma'}\gamma^{-1},
\end{align*}
which implies that $\Delta_{\tau}\gamma\subset
\gamma\Delta_{\sigma'}$. To prove that it is an equivalence, we need to show 1) that every object $[\gamma]\colon \sigma\rightarrow \tau$ in $\operatorname{Tw}(\operatorname{CL}_S)$ is isomorphic to one whose defining map is given by $\gamma=\operatorname{id}$, and 2) that maps in $\operatorname{Tw}(\operatorname{CL}_S)$ between objects of this type are given by the set in the statement.

The first claim is easy, since any map $[\gamma]\colon \sigma\rightarrow \tau$ in $\operatorname{CL}_S$ factors as a composite as below where the first map is an isomorphism
\begin{align*}
\sigma\xrightarrow{[\gamma]}\gamma.\sigma\xrightarrow{\operatorname{[id]}}\tau.
\end{align*}
For the second claim, note that maps
\begin{align*}
(\sigma\xrightarrow{\operatorname{[id]}} \tau)\longrightarrow (\sigma'\xrightarrow{\operatorname{[id]}} \tau')
\end{align*}
in $\operatorname{Tw}(\operatorname{CL}_S)$ are by definition given by
\begin{align*}
\{\gamma_a,\gamma_b\in \Gamma_S\mid \gamma_a.\sigma'\leq \sigma, \gamma_b.\tau\leq \tau', \gamma_b\gamma_a\in \Delta_{\sigma'}\}/\sim,
\end{align*}
where $(\gamma_a,\gamma_b)\sim (\rho_a,\rho_b)$ if and only if $\gamma_a\Delta_{\sigma'}=\rho_a\Delta_{\sigma'}$ and $\gamma_b\Delta_{\tau}=\rho_b\Delta_{\tau}$. It follows that we can uniquely specify $\gamma_a$ in terms of $\gamma_b$ by setting $\gamma_a={\gamma_b}^{\!-1}$, and the claim follows.
\end{proof}

From now on, we consider $\operatorname{Tw}(\operatorname{CL}_S)$ as described above. In the same vein, we make the following observation that we will need later on.

\begin{lemma}\label{equivalence of comma categories}
For any $\sigma\in \mathfrak{C}_S$, the natural functor
\begin{align*}
(\mathfrak{C}_S)_{/\sigma}\rightarrow (\operatorname{CL}_S)_{/\sigma}
\end{align*}
induced by $(\tau\leq \sigma)\mapsto (\tau\xrightarrow{\operatorname{[\operatorname{id}]}}\sigma)$ is an equivalence.
\end{lemma}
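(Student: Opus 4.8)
The plan is to verify directly that the functor in question, call it $F\colon(\mathfrak{C}_S)_{/\sigma}\to(\operatorname{CL}_S)_{/\sigma}$, is fully faithful and essentially surjective; since both sides are ordinary $1$-categories (over-categories of $1$-categories), this suffices. Throughout I would use two facts already recorded: composition in $\operatorname{CL}_S$ is induced by multiplication of representatives in $\Gamma_S$, and the subgroup $\Delta_\tau$ fixes $\tau\in\mathfrak{C}_S$, being generated by Dehn twists about the curves making up $\tau$ (this is part of the well-definedness in \Cref{The category}).

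First I would treat essential surjectivity. Given an object $(\tau,[\gamma]\colon\tau\to\sigma)$ of $(\operatorname{CL}_S)_{/\sigma}$, so $\gamma\in\Gamma_S$ with $\gamma.\tau\leq\sigma$, the morphism $[\gamma]\colon\tau\to\gamma.\tau$ is an isomorphism in $\operatorname{CL}_S$ with inverse $[\gamma^{-1}]$, and $[\operatorname{id}_{\gamma.\tau}]\circ[\gamma]=[\gamma]$ because composition is multiplication of representatives. Hence $[\gamma]$ is an isomorphism in $(\operatorname{CL}_S)_{/\sigma}$ from $(\tau,[\gamma])$ to $(\gamma.\tau,[\operatorname{id}])=F(\gamma.\tau\leq\sigma)$. (This is the same trick as in the proof of \Cref{alternative description of twisted arrow category}.)

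Next I would check full faithfulness. Fix $\tau,\tau'\leq\sigma$. The source hom-set $\operatorname{Hom}_{(\mathfrak{C}_S)_{/\sigma}}(\tau,\tau')$ is a single point when $\tau\leq\tau'$ and is empty otherwise. A morphism $F(\tau\leq\sigma)\to F(\tau'\leq\sigma)$ in $(\operatorname{CL}_S)_{/\sigma}$ is a class $[\delta]\colon\tau\to\tau'$ in $\operatorname{CL}_S$ (so $\delta\in\Gamma_S$ with $\delta.\tau\leq\tau'$, modulo right multiplication by $\Delta_\tau$) subject to $[\operatorname{id}_{\tau'}]\circ[\delta]=[\operatorname{id}_\tau]$ in $\operatorname{Hom}_{\operatorname{CL}_S}(\tau,\sigma)$. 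Unwinding the composite, $[\operatorname{id}_{\tau'}]\circ[\delta]=[\delta]$ as a morphism $\tau\to\sigma$, so the condition is exactly $\delta\in\Delta_\tau$. Since $\Delta_\tau$ fixes $\tau$, a representative $\delta\in\Delta_\tau$ yields a well-defined morphism $\tau\to\tau'$ in $\operatorname{CL}_S$ precisely when $\tau=\delta.\tau\leq\tau'$, and in that case $[\delta]=[\operatorname{id}_\tau]$ in $\operatorname{Hom}_{\operatorname{CL}_S}(\tau,\tau')$; moreover $[\operatorname{id}_\tau]$ visibly satisfies the over-category constraint. Therefore $\operatorname{Hom}_{(\operatorname{CL}_S)_{/\sigma}}(F\tau,F\tau')$ is a single point when $\tau\leq\tau'$ and empty otherwise, so $F$ is a bijection on hom-sets.

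There is no genuine obstacle here; the only thing requiring care is the bookkeeping with the right $\Delta_\tau$-cosets defining morphisms both in $\operatorname{CL}_S$ and in its over-category, together with the observation that the over-category constraint forces the representative into $\Delta_\tau$, which is what collapses the whole comma category back onto the poset $(\mathfrak{C}_S)_{\leq\sigma}$.
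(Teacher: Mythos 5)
Your proof is correct and is exactly the ``direct calculation'' that the paper leaves to the reader: you verify essential surjectivity by the same isomorphism $[\gamma]\colon(\tau,[\gamma])\to(\gamma.\tau,[\operatorname{id}])$ used in the proof of Lemma~\ref{alternative description of twisted arrow category}, and full faithfulness by unwinding the over-category constraint to $\delta\in\Delta_\tau$, which forces $\delta.\tau=\tau$ and hence collapses the hom-set to a point precisely when $\tau\leq\tau'$.
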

\begin{proof}
By direct calculation, the functor is essentially surjective and fully faithful.
\end{proof}

For $\theta\in \mathfrak{C}_S$, consider the action of the mapping class group $\Gamma_{S/\theta}$ on the curve poset $\mathfrak{C}_{S/\theta}$ and recall that the action category $\Gamma_{S/\theta}\backslash \backslash \mathfrak{C}_{S/\theta}$ is the category whose objects are the elements $\sigma\in \mathfrak{C}_{S/\theta}$ and whose morphisms $\sigma\rightarrow \tau$ are the $\gamma\in \Gamma_{S/\theta}$ such that $\gamma.\sigma\leq \tau$. \Cref{decomposition of twisted arrow category} below and its corollary allow us to decompose the category $\operatorname{Tw}(\operatorname{CL}_S)$ and functors out of it, providing a diagrammatic analogue of the inductive nature of the Deligne-Mumford compactification established in \Cref{preliminary decomposition of sheaves on Mg bar}. But first of all, we introduce some notation.

\begin{definition}\label{full subcategories of twisted arrow category}
Let $\theta\in \mathfrak{C}_S$ and denote by $\operatorname{Tw}(\operatorname{CL}_S)^\theta\subset \operatorname{Tw}(\operatorname{CL}_S)$ the full subcategory of those $(\sigma\leq \tau)$ satisfying $\tau\leq \theta$. We consider the following full subcategories of $\operatorname{Tw}(\operatorname{CL}_S)^\theta$:
\begin{enumerate}[label=(\roman*)]
\item For $[\nu]\in (\Gamma_S\backslash \mathfrak{C}_S)_{\leq  [\theta]}$, we write
\begin{align*}
\operatorname{Tw}(\operatorname{CL}_S)_{\leq [\nu]}^\theta
\end{align*}
for the full subcategory of $\operatorname{Tw}(\operatorname{CL}_S)^\theta$ on those $(\sigma\leq \tau)$ with $\sigma\leq \nu'$ for some representative $\nu'$ of $[\nu]$;
\item For $[\nu]\in (\Gamma_S\backslash \mathfrak{C}_S)_{\leq  [\theta]}$, we write
\begin{align*}
\operatorname{Tw}(\operatorname{CL}_{S})_{[\nu]}^\theta
\end{align*}
for the further full subcategory of those $(\sigma\leq \tau)$ with $\tau\leq \nu'$ for some representative $\nu'$ of $[\nu]$.
\item We view $(\Gamma_{S/\theta}\backslash \backslash \mathfrak{C}_{S/\theta})^{op}$ as a full subcategory of $\operatorname{Tw}(\operatorname{CL}_S)^\theta$ via the canonical identification $\mathfrak{C}_{S/\theta}\cong (\mathfrak{C}_S)_{\leq \theta}$ and the inclusion $\sigma\mapsto (\sigma\leq \theta)$ for $\sigma\in (\mathfrak{C}_S)_{\leq \theta}$. In particular, we view $B\Gamma_{S/\theta}$ as the full subcategory on the object $(\theta\leq \theta)$. 
\end{enumerate}
When $\theta=\emptyset$, we have $\operatorname{Tw}(\operatorname{CL}_S)^\emptyset=\operatorname{Tw}(\operatorname{CL}_S)$ and will also write
\begin{equation*}
\operatorname{Tw}(\operatorname{CL}_S)_{\leq [\nu]}^\emptyset=\operatorname{Tw}(\operatorname{CL}_S)_{\leq [\nu]}\quad\text{and}\quad
\operatorname{Tw}(\operatorname{CL}_S)_{[\nu]}^\emptyset=\operatorname{Tw}(\operatorname{CL}_S)_{[\nu]}.\qedhere
\end{equation*}
\end{definition}

We now review some properties of these subcategories and the various inclusions in order to establish the desired decompositions

\begin{proposition}\label{decomposition of twisted arrow category}
Let $\theta\in\mathfrak{C}_S$.
\begin{enumerate}
\item The full subcategories $\operatorname{Tw}(\operatorname{CL}_S)^\theta_{\leq [\nu]}$ and $(\Gamma_{S/\theta}\backslash\backslash \mathfrak{C}_{S/\theta})^{op}$ are right closed in $\operatorname{Tw}(\operatorname{CL}_S)^{\theta}$;
\item The union of the subcategories $\operatorname{Tw}(\operatorname{CL}_S)_{\leq[\nu]}^\theta$ for $[\nu]< [\theta]$ is equal to the complement
\begin{align*}
\operatorname{Tw}(\operatorname{CL}_S)^\theta\smallsetminus B\Gamma_{S/\theta};
\end{align*}
\item For any $\theta\in \mathfrak{C}_S$, the inclusion
\begin{align*}
B\Gamma_{S/\theta}\hookrightarrow \Gamma_{S/\theta}\backslash\backslash \mathfrak{C}_{S/\theta}^{op}
\end{align*}
admits a right adjoint.
\item For any $\theta\in \mathfrak{C}_S$ and any $[\nu]\leq [\theta]$, the inclusion
\begin{align*}
\operatorname{Tw}(\operatorname{CL}_{S})_{[\nu]}^\theta\hookrightarrow\operatorname{Tw}(\operatorname{CL}_S)_{\leq[\nu]}^\theta
\end{align*}
is a $\varprojlim$-equivalence.
\item For any $\nu\leq \theta$, the inclusion
\begin{align*}
\operatorname{Tw}(\operatorname{CL}_{S})^\nu\hookrightarrow \operatorname{Tw}(\operatorname{CL}_{S})_{[\nu]}^\theta
\end{align*}
is an equivalence.
\end{enumerate}
\end{proposition}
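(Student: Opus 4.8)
I would work throughout with the action-category model of $\operatorname{Tw}(\operatorname{CL}_S)$ from \Cref{alternative description of twisted arrow category} — objects $\sigma\le\tau$ in $\mathfrak{C}_S$, with morphisms $(\sigma\le\tau)\to(\sigma'\le\tau')$ the $\Delta_\tau$-cosets of $\gamma\in\Gamma_S$ satisfying $\gamma.\sigma'\le\sigma$ and $\gamma.\tau'\ge\tau$ — and lean repeatedly on one structural feature of the curve poset: $\mathfrak{C}_S$ satisfies the ascending chain condition, since an ascending chain in $\mathfrak{C}_S$ is a descending chain of curve systems, hence finite (curve systems are bounded by pants decompositions). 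In particular $\gamma.x\ge x$ forces $\gamma.x=x$ (apply $\gamma$ repeatedly to the chain $x\le\gamma.x\le\gamma^2.x\le\cdots$), so that for $\sigma\le\theta$ one has $[\sigma]=[\theta]$ in $\Gamma_S\backslash\mathfrak{C}_S$ if and only if $\sigma=\theta$.

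\textbf{Parts (1) and (2).} These I would settle by direct unwinding. Along any morphism $(\sigma\le\tau)\to(\sigma'\le\tau')$ the first-entry class only decreases, $[\sigma']\le[\sigma]$, while inside $\operatorname{Tw}(\operatorname{CL}_S)^\theta$ the second entry is automatically $\le\theta$; hence the defining inequality $[\sigma]\le[\nu]$ of $\operatorname{Tw}(\operatorname{CL}_S)^\theta_{\le[\nu]}$ passes to targets. For $(\Gamma_{S/\theta}\backslash\backslash\mathfrak{C}_{S/\theta})^{op}$ the defining condition is $\tau=\theta$; given a morphism $(\sigma\le\theta)\to(\sigma'\le\tau')$ in $\operatorname{Tw}(\operatorname{CL}_S)^\theta$, represented by $\gamma$, the inequalities $\theta\le\gamma.\tau'\le\gamma.\theta$ force $\gamma.\theta=\theta$ and then $\tau'=\theta$ by the ascending chain condition — so both subcategories are right closed, giving (1). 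For (2): $B\Gamma_{S/\theta}$ is the full subcategory on $(\theta\le\theta)$, so (since $\sigma\le\tau\le\theta$) its complement is the full subcategory on the $(\sigma\le\tau)$ with $\sigma<\theta$; and $(\sigma\le\tau)$ lies in some $\operatorname{Tw}(\operatorname{CL}_S)^\theta_{\le[\nu]}$ with $[\nu]<[\theta]$ exactly when $[\sigma]<[\theta]$ (take $[\nu]=[\sigma]$), equivalently $\sigma<\theta$ by the equivalence just noted. As each subcategory is full and the first-entry class decreases along morphisms, the union of these subcategories is precisely that complement.

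\textbf{Parts (3) and (5).} For (3): under $\sigma\mapsto(\sigma\le\theta)$ the object $(\theta\le\theta)$ corresponds to the top element $\emptyset$ of $\mathfrak{C}_{S/\theta}$, which is terminal and $\Gamma_{S/\theta}$-fixed, so $\operatorname{Hom}_{\Gamma_{S/\theta}\backslash\backslash\mathfrak{C}_{S/\theta}}(\sigma,\emptyset)=\Gamma_{S/\theta}$ is a torsor under $\operatorname{Aut}(\emptyset)=\Gamma_{S/\theta}$ for every $\sigma$; dually $(\theta\le\theta)$ is weakly initial in $(\Gamma_{S/\theta}\backslash\backslash\mathfrak{C}_{S/\theta})^{op}$, and I would check that the functor sending every object to $(\theta\le\theta)$ and every morphism to the identity of $\Gamma_{S/\theta}$ is a right adjoint to the inclusion, with tautological counit (the triangle identities being immediate) — this is dual to the corresponding statement for action categories in \cite{ClausenOrsnesJansen}. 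Part (5) is the easiest: both $\operatorname{Tw}(\operatorname{CL}_S)^\nu$ and $\operatorname{Tw}(\operatorname{CL}_S)^\theta_{[\nu]}$ are \emph{full} subcategories of $\operatorname{Tw}(\operatorname{CL}_S)$, so the inclusion is fully faithful, and given $(\sigma'\le\tau')$ in $\operatorname{Tw}(\operatorname{CL}_S)^\theta_{[\nu]}$ — i.e.\ $\gamma.\tau'\le\nu$ for some $\gamma$ — the class $[\gamma]$ defines an isomorphism $(\sigma'\le\tau')\cong(\gamma.\sigma'\le\gamma.\tau')$ with target in $\operatorname{Tw}(\operatorname{CL}_S)^\nu$, so it is essentially surjective.

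\textbf{Part (4): the main obstacle.} By \Cref{full subcategories of twisted arrow category} and the morphism formula, $\operatorname{Tw}(\operatorname{CL}_S)^\theta_{[\nu]}$ is a sieve in $\operatorname{Tw}(\operatorname{CL}_S)^\theta_{\le[\nu]}$ (a morphism $(\sigma'\le\tau')\to(\sigma''\le\tau'')$ forces $[\tau']\le[\tau'']$), so to prove the inclusion is a $\varprojlim$-equivalence it suffices, by the limit-preservation criterion dual to cofinality (\cite[\S 4.1]{LurieHTT}), to show that for each $b=(\sigma\le\tau)$ in $\operatorname{Tw}(\operatorname{CL}_S)^\theta_{\le[\nu]}$ the comma category
\[
\mathcal{K}_b:=\operatorname{Tw}(\operatorname{CL}_S)^\theta_{[\nu]}\mathop{\times}_{\operatorname{Tw}(\operatorname{CL}_S)^\theta_{\le[\nu]}}\big(\operatorname{Tw}(\operatorname{CL}_S)^\theta_{\le[\nu]}\big)_{/b}
\]
is weakly contractible. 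When $[\tau]\le[\nu]$, the object $b$ lies in the sieve and $\operatorname{id}_b$ is terminal in $\mathcal{K}_b$, so the real content is the case $[\tau]\not\le[\nu]$. Here the objects of $\mathcal{K}_b$ are the $(\sigma'\le\tau')\xrightarrow{[\gamma]}b$ with $[\tau']\le[\nu]$ and $\gamma.\sigma\le\sigma'\le\tau'\le\gamma.\tau$, so $\tau'$ runs over the $\Gamma_S$-translates of $\{\rho:\sigma\le\rho\le\tau,\ [\rho]\le[\nu]\}$. The hard part is that this set need not have a greatest element — it may have several maximal elements, interchanged by automorphisms of the dual graph $\mathbf{G}(\tau)$ — so $\mathcal{K}_b$ will not simply have a terminal object. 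The plan is to use \Cref{iso of posets} and the combinatorics of edge-contractions of stable dual graphs to show that the isomorphisms among these maximal elements induced by graph automorphisms organise $\mathcal{K}_b$ into a homotopically cofiltered (hence weakly contractible) category — equivalently, that $b$ admits a reflection into $\operatorname{Tw}(\operatorname{CL}_S)^\theta_{[\nu]}$ that is well defined up to canonical isomorphism — in close parallel with the corresponding step of \cite{ClausenOrsnesJansen}. This last point is where I expect all the real work to lie.
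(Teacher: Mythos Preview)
Your treatment of parts (1), (2), (3), and (5) is correct and essentially matches the paper's argument (the paper simply calls (1) and (2) ``obvious'' and gives the same adjoint for (3) and the same isomorphism for (5)).

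Part (4) has a genuine gap, and your expectation that it runs ``in close parallel with the corresponding step of \cite{ClausenOrsnesJansen}'' is precisely what fails. The paper's \Cref{key technical difference RBS versus DMK} explains that this is \emph{the} place where the Deligne--Mumford--Knudsen case diverges from the reductive Borel--Serre case: in \cite{ClausenOrsnesJansen} the analogous inclusion is a left adjoint (the fibre has a terminal object), whereas here it is not. Your proposed mechanism --- that the fibre is cofiltered, or that $b$ admits a reflection ``well defined up to canonical isomorphism'' --- cannot work, because the comma category $\mathcal{K}_b$ is equivalent to a \emph{poset} (the paper reduces to the full subcategory $F_{/(\sigma\le\tau)}^{\operatorname{id}}$ on objects whose structure map is $[\operatorname{id}]$, and one checks this has at most one morphism between any two objects). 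In a poset there are no nontrivial isomorphisms, so ``reflection up to isomorphism'' is the same as ``terminal object''. And the poset is genuinely not cofiltered: already for $\tau=\theta=\emptyset$, $\sigma=\{a,b\}$ with $a,b$ in the same $\Gamma_S$-orbit and $[\nu]=[\{a\}]$, the objects $(\{a\},\{a\})$ and $(\{b\},\{b\})$ admit no common source.

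What the paper actually does for (4): after the reduction to $F_{/(\sigma\le\tau)}^{\operatorname{id}}$, it covers this poset by the left-closed full subcategories
\[
A_{[\gamma]}=\{(\sigma'\le\tau'):\tau'\le\gamma.\nu\},\qquad [\gamma]\in\Gamma_S/\Gamma_S(\nu).
\]
The combinatorial heart of the argument is that $A_{[\gamma]}$ is nonempty iff $\sigma\le\gamma.\nu$, and in that case has the terminal object $(\sigma\le\tau\cup\gamma.\nu)$ --- this union is a well-defined element of $\mathfrak{C}_S$ precisely because both $\tau$ and $\gamma.\nu$ are sub-curve-systems of $\sigma$. The same reasoning shows that \emph{any} finite intersection $\bigcap_i A_{[\gamma_i]}$ of nonempty pieces is again nonempty with terminal object $(\sigma\le\tau\cup\bigcup_i\gamma_i.\nu)$. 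Hence the poset of nonempty intersections has an initial element, and descent for left-closed covers (\cite[Corollary 2.32]{ClausenOrsnesJansen}) gives $|F_{/(\sigma\le\tau)}|\simeq\ast$. This cover-by-contractibles argument, not a cofilteredness argument, is the missing idea.
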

\begin{proof}
Parts 1 and 2 are obvious. For part 3, the right adjoint is given by sending $(\sigma\leq \theta)$ to the unique object $(\theta\leq \theta)$ in $B\Gamma_{S/\theta}$ and by the inclusion on hom-sets
\begin{align*}
\operatorname{Hom}(\sigma\leq \theta,\sigma'\leq \theta)=\{\gamma\in \Gamma_{S/\theta}\mid \gamma.\sigma'\leq \sigma\}\hookrightarrow \Gamma_{S/\theta}.
\end{align*}
For part 5, simply note that any object in $\operatorname{Tw}(\operatorname{CL}_{S})_{[\nu]}^\theta$ is isomorphic to an object $(\sigma\leq\tau)$ with $\tau\leq \nu$.

To prove part 4, we need to show that for any $(\sigma\leq \tau)$ in $\operatorname{Tw}(\operatorname{CL}_S)_{\leq[\nu]}^\theta$, the left fibre $F_{/(\sigma\leq \tau)}$ is contractible in the sense that $|F_{/(\sigma\leq \tau)}|\simeq \ast$ (Joyal's Theorem A, see \cite[Theorem 2.19]{ClausenOrsnesJansen}).

Fix $(\sigma\leq \tau)$ in $\operatorname{Tw}(\operatorname{CL}_S)_{\leq[\nu]}^\theta$, and note that $F_{/(\sigma\leq \tau)}$ is the usual $1$-categorical comma category, as we are taking the left fibre of a functor of $1$-categories.  First of all, we observe that $F_{/(\sigma\leq \tau)}$ is equivalent to the full subcategory $F_{/(\sigma\leq \tau)}^{\operatorname{id}}$ spanned by the objects
\begin{align*}
((\sigma'\leq \tau'), (\sigma'\leq \tau')\xrightarrow{[\operatorname{id}]}(\sigma\leq \tau))
\end{align*}
whose defining map is induced by the identity. To see this, one needs to check that for any given object $((\sigma'\leq \tau'),(\sigma'\leq \tau')\xrightarrow{[\gamma]}(\sigma\leq \tau))$ in $F_{/(\sigma\leq \tau)}$, the object $(\gamma^{-1}.\sigma'\leq \gamma^{-1}.\tau')$ belongs to $\operatorname{Tw}(\operatorname{CL}_S)_{[\nu]}^\theta$: indeed,
\begin{align*}
\gamma^{-1}.\tau'\leq \tau\leq \theta\quad\text{and}\quad \gamma^{-1}.\tau'\leq \gamma^{-1}\nu' \quad\text{for some representative }\nu'\text{ of }[\nu]\text{ with }\tau'\leq \nu'.
\end{align*}

We now construct a sufficiently contractible cover of $F_{/(\sigma\leq \tau)}^{\operatorname{id}}$. Recall that $\Gamma_S(\nu)\subset \Gamma_S$ denotes the stabiliser of $\nu$. For any $[\gamma]\in \Gamma_S/\Gamma_S(\nu)$, consider the full subcategory $A_{[\gamma]}$ of $F_{/(\sigma\leq \tau)}^{\operatorname{id}}$ spanned by the objects
\begin{align*}
((\sigma'\leq \tau'),(\sigma'\leq \tau')\xrightarrow{[\operatorname{id}]}(\sigma\leq \tau))\quad\text{with}\quad \tau'\leq \gamma.\nu.
\end{align*}

Note that $A_{[\gamma]}$ is left closed and that $A_{[\gamma]}\neq \emptyset$ if and only if $\sigma\leq \gamma.\nu$, in which case $A_{[\gamma]}$ has a terminal object
\begin{align*}
((\sigma\leq \tau\cup \gamma.\nu),(\sigma\leq \tau\cup \gamma.\nu)\xrightarrow{[\operatorname{id}]} (\sigma\leq \tau)).
\end{align*}
The curve system $\tau\cup \gamma.\nu$ is a well-defined element of $\mathfrak{C}_S$ exactly because $\sigma$ contains both $\tau$ and $\gamma.\nu$.

More generally, for any subset $\{[\gamma_i]\}_{i\in I}\subset \Gamma_S/\Gamma_S(\nu)$, the intersection $A_I=\bigcap_{i\in I}A_{[\gamma_i]}$ is non-empty if and only if $\sigma\leq \gamma_i.\nu$ for all $i\in I$, in which case $A_I$ has a terminal object
\begin{align*}
((\sigma\leq \tau\cup\bigcup_{i\in I} \gamma_i.\nu),(\sigma\leq \tau\cup \bigcup_{i\in I} \gamma_i.\nu)\xrightarrow{[\operatorname{id}]} (\sigma\leq \tau)).
\end{align*}
In particular, any collection of non-empty $A_{[\gamma]}$'s have non-empty intersection. In other words, we have constructed a cover of $F_{/(\sigma\leq \tau)}$ by left closed full subcategories whose intersections are all non-empty and have terminal objects. It follows by descent for left closed covers (\cite[Corollary 2.32]{ClausenOrsnesJansen}) that $|F_{/(\sigma\leq \tau)}|\simeq \ast$ as desired: indeed, the collection $\mathcal{P}$ of non-empty intersections $A_I$ viewed as a poset under inclusion has an initial object and thus
\begin{align*}
|F_{/(\sigma\leq \tau)}|\simeq \mathop{\varinjlim}_{I\in \mathcal{P}} |A_I|\simeq  \mathop{\varinjlim}_{\mathcal{P}} \ast \simeq |\mathcal{P}|\simeq \ast
\end{align*}
which is what we set out to prove.
\end{proof}

This proposition allows us to decompose limits as in the corollary below.

\begin{corollary}\label{functors out of twisted arrow category}
Let $\theta\in \mathfrak{C}_S$. For any functor $F\colon \operatorname{Tw}(\operatorname{CL}_S)^\theta\rightarrow \mathcal{C}$ to an arbitrary $\infty$-category $\mathcal{C}$ with all limits, we have
\begin{enumerate}
\item 
$\displaystyle
\varprojlim F\xrightarrow{\ \sim\ } \varprojlim F\vert_{\operatorname{Tw}(\operatorname{CL}_S)^\theta\smallsetminus B\Gamma_{S/\theta}}\mathop{\times}_{\varprojlim F\vert_{(\Gamma_{S/\theta}\backslash\backslash \mathfrak{C}_{S/\theta})^{op}\smallsetminus B\Gamma_{S/\theta}}} \varprojlim F\vert_{(\Gamma_{S/\theta}\backslash\backslash \mathfrak{C}_{S/\theta})^{op}},
$
 
\  
 
and \ 
$
\varprojlim F\vert_{(\Gamma_{S/\theta}\backslash\backslash \mathfrak{C}_{S/\theta})^{op}}\xrightarrow{\ \sim \ } \varprojlim F\vert_{B\Gamma_{S/\theta}}
$.
\item[] 
\item 
$\displaystyle
\varprojlim F\vert_{\operatorname{Tw}(\operatorname{CL}_S)^\theta\smallsetminus B\Gamma_{S/\theta}} \xrightarrow{\ \sim \ } \varprojlim_{[\nu]\in (\Gamma_S\backslash \mathfrak{C}_S)_{\leq [\theta]}^{op}\smallsetminus [\theta]}\varprojlim F\vert_{\operatorname{Tw}(\operatorname{CL}_S)_{\leq [\nu]}^\theta},
$
 
\  
 
and \ 
$
\varprojlim F\vert_{\operatorname{Tw}(\operatorname{CL}_S)^\theta_{\leq [\nu]}}\xrightarrow{\ \sim \ } \varprojlim F\vert_{\operatorname{Tw}(\operatorname{CL}_S)^\nu}
$ for all $\nu\in (\mathfrak{C}_S)_{\leq \theta}$.
\end{enumerate}
\end{corollary}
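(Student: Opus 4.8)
The overall strategy is to deduce each of the four equivalences formally from \Cref{decomposition of twisted arrow category}, using only two general inputs. The first input: a cover of a category by right closed full subcategories (i.e. cosieves) exhibits it as the corresponding colimit in $\operatorname{Cat}_\infty$, indexed by the poset of non-empty intersections --- this is the limit-analogue of the descent statement \cite[Corollary 2.32]{ClausenOrsnesJansen} --- so that, since $\operatorname{Fun}(-,\mathcal{C})$ carries colimits of categories to limits, $\varprojlim F$ decomposes as the matching iterated limit of the restrictions $\varprojlim F\vert_{-}$. The second input: an initial functor (in particular, one admitting a right adjoint, or one all of whose left fibres are weakly contractible in the sense of Joyal's Theorem A, \cite[Theorem 2.19]{ClausenOrsnesJansen}) induces an equivalence on limits upon restriction.

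For part~1, \Cref{decomposition of twisted arrow category}(1)--(2) show that $\operatorname{Tw}(\operatorname{CL}_S)^\theta$ is covered by the two right closed full subcategories $\operatorname{Tw}(\operatorname{CL}_S)^\theta\smallsetminus B\Gamma_{S/\theta}=\bigcup_{[\nu]<[\theta]}\operatorname{Tw}(\operatorname{CL}_S)^\theta_{\leq[\nu]}$ (a union of cosieves, hence a cosieve) and $(\Gamma_{S/\theta}\backslash\backslash\mathfrak{C}_{S/\theta})^{op}$, whose intersection is $(\Gamma_{S/\theta}\backslash\backslash\mathfrak{C}_{S/\theta})^{op}\smallsetminus B\Gamma_{S/\theta}$. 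Since a cosieve is closed under the targets of morphisms, every morphism of $\operatorname{Tw}(\operatorname{CL}_S)^\theta$ lies entirely in one of the two pieces, so $\operatorname{Tw}(\operatorname{CL}_S)^\theta$ is their pushout over the intersection in $\operatorname{Cat}_\infty$; the first input then yields the pullback square for $\varprojlim F$. The second equivalence of part~1 is the second input applied to $B\Gamma_{S/\theta}\hookrightarrow(\Gamma_{S/\theta}\backslash\backslash\mathfrak{C}_{S/\theta})^{op}$, which admits a right adjoint by \Cref{decomposition of twisted arrow category}(3) and is therefore initial.

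For part~2, I would cover $\operatorname{Tw}(\operatorname{CL}_S)^\theta\smallsetminus B\Gamma_{S/\theta}$ by the up-directed family $\{\operatorname{Tw}(\operatorname{CL}_S)^\theta_{\leq[\nu]}\}_{[\nu]<[\theta]}$, a cover by right closed subcategories by \Cref{decomposition of twisted arrow category}(1)--(2). To apply the first input I must check that this realises $\operatorname{Tw}(\operatorname{CL}_S)^\theta\smallsetminus B\Gamma_{S/\theta}$ as $\varinjlim_{[\nu]}\operatorname{Tw}(\operatorname{CL}_S)^\theta_{\leq[\nu]}$ over the poset $(\Gamma_S\backslash\mathfrak{C}_S)_{\leq[\theta]}\smallsetminus[\theta]$; by the standard criterion for colimits of poset-indexed families of full subcategory inclusions it suffices that for every object $(\sigma\leq\tau)$ the witness subposet $\{[\nu]<[\theta]\mid(\sigma\leq\tau)\in\operatorname{Tw}(\operatorname{CL}_S)^\theta_{\leq[\nu]}\}$ be weakly contractible, and similarly for morphisms. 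Unwinding the description of \Cref{alternative description of twisted arrow category}, one computes that this subposet equals $\{[\nu]\mid[\sigma]\leq[\nu]<[\theta]\}$, which has least element $[\sigma]$ --- note $[\sigma]<[\theta]$ precisely because $(\sigma\leq\tau)\notin B\Gamma_{S/\theta}$ --- hence is contractible; the statement for morphisms reduces to the same subposet. This yields the first equivalence of part~2, the limit being over $(\Gamma_S\backslash\mathfrak{C}_S)^{op}_{\leq[\theta]}\smallsetminus[\theta]$. Finally, for a representative $\nu$ of $[\nu]$, the second equivalence of part~2 is the composite of the equivalence of categories $\operatorname{Tw}(\operatorname{CL}_S)^\nu\xrightarrow{\ \sim\ }\operatorname{Tw}(\operatorname{CL}_S)^\theta_{[\nu]}$ from \Cref{decomposition of twisted arrow category}(5) with the $\varprojlim$-equivalence $\operatorname{Tw}(\operatorname{CL}_S)^\theta_{[\nu]}\hookrightarrow\operatorname{Tw}(\operatorname{CL}_S)^\theta_{\leq[\nu]}$ from \Cref{decomposition of twisted arrow category}(4), both of which pass to limits by the second input.

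I expect the only genuine work to be the verification, in part~2, that the up-directed cover computes the colimit of categories --- i.e. the weak contractibility of the witness posets $\{[\nu]\mid[\sigma]\leq[\nu]<[\theta]\}$ and of their counterparts for morphisms. The two-element cover in part~1 is an instance of the same mechanism but with trivial witness posets, and everything else is a formal consequence of \Cref{decomposition of twisted arrow category} together with the initiality of right adjoints and of $\varprojlim$-equivalences.
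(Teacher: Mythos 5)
Your proposal is correct and follows essentially the same route as the paper's proof: decompose $\operatorname{Tw}(\operatorname{CL}_S)^\theta$ (equivalently, its opposite) via covers by right (respectively left) closed full subcategories using Proposition~\ref{decomposition of twisted arrow category}(1)--(2), carry this across $\operatorname{Fun}(-,\mathcal{C})$ to get a limit decomposition, and handle the two ``additional'' equivalences via the right adjoint from part~(3) (part~1) and the $\varprojlim$-equivalence of parts~(4)--(5) (part~2). The paper condenses the colimit-over-a-cover mechanism to citations of \cite[Corollary 2.32 and Proposition 2.1]{ClausenOrsnesJansen}; your version spells out explicitly, via the witness-poset criterion, why the family $\{\operatorname{Tw}(\operatorname{CL}_S)^\theta_{\leq[\nu]}\}_{[\nu]<[\theta]}$ computes the colimit indexed by $(\Gamma_S\backslash\mathfrak{C}_S)_{\leq[\theta]}\smallsetminus[\theta]$ itself rather than by the poset of non-empty intersections --- a small cofinality point the paper leaves to the cited references. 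One minor slip: the family $\{\operatorname{Tw}(\operatorname{CL}_S)^\theta_{\leq[\nu]}\}$ is not literally up-directed (two classes $[\nu_1],[\nu_2]<[\theta]$ need not have a common lower bound in the curve poset that is still $<[\theta]$), but your actual argument never uses directedness, only the contractibility of the witness posets $\{[\nu]\mid[\sigma]\leq[\nu]<[\theta]\}$, which is correct.
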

\begin{proof}
This follows from parts 1 and 2 of \Cref{decomposition of twisted arrow category} above and descent for left closed covers applied to $(\operatorname{Tw}(\operatorname{CL_S})^\theta)^{op}$, and the corresponding decomposition of limits as in \cite[Proposition 2.1]{ClausenOrsnesJansen}. The additional equivalence in part 1 is a consequence of part 3 and the fact that a left adjoint is a $\varprojlim$-equivalence; the additional equivalence in part 2 is a consequence of parts 4 and 5.
\end{proof}

\begin{remark}\label{key technical difference RBS versus DMK}
The proof of part 4 of \Cref{decomposition of twisted arrow category} above is perhaps the key technical difference between the case of the Deligne--Mumford--Knudsen compactification and the reductive Borel--Serre compactification. In \cite[Proposition 4.21 (4)]{ClausenOrsnesJansen}, the analogous statement says that the map in question is a left adjoint. The argument above would in fact go through in that case, but the resulting cover of the fibre has only one element; in other words, the fibre has a terminal object, equivalently the functor is a left adjoint. What we are seeing diagrammatically here is the failure of the closure of a stratum in $\overline{\mathcal{M}}_{\mathbf{G}}$ to really be a ``smaller'' instance of a Deligne--Mumford--Knudsen compactification; instead, one needs to pass to its normalisation (\Cref{DMK compactification as normalisation}). In the case of the reductive Borel--Serre compactification, the closure of a stratum \textit{is} a smaller reductive Borel--Serre compactification, namely the one associated to the Levi quotient of the corresponding parabolic subgroup. In terms of the action of the mapping class group $\Gamma_S$ on the Harvey bordification of Teichmüller space $\overline{\mathcal{T}}_{\!\!S}$, this difference exhibits itself as follows: an element $\gamma\in \Gamma_S$ may act on the closure of the $\sigma$-stratum $\overline{X}_\sigma^S\subset \overline{\mathcal{T}}_{\!\!S}$ even though it does not belong to the stabiliser $\Gamma_S(\sigma)$. On the $\sigma$-stratum $X_\sigma^S$ itself, only the stabiliser will act, but elements not belonging to the stabiliser may identify points on the boundary, so we cannot just consider the action of $\Gamma_S(\sigma)$ on $\overline{X}_\sigma^S$, we must remember the bigger mapping class group. For the Charney--Lee categories, this fact translates into the observation made in \Cref{category of stable curves remark} (3): the Charney--Lee category $\operatorname{CL}_{S/\sigma}$ associated to the stable nodal surface $(S/\sigma, P)$ does \textit{not} agree with the full subcategory of $\operatorname{CL}_S$ spanned by the objects $(\mathfrak{C}_S)_{\leq \sigma}$.
\end{remark}

\section{Determining the stratified homotopy type}

We are now all set to identify the stratified homotopy type of the Deligne--Mumford--Knudsen compactifications. The idea is to build the stack out of pieces that are as \textit{simple} as possible from the point of view of stratified homotopy theory in the following sense: the exit path $\infty$-category of each ``building block'' should identify with its stratifying poset, the analogue of being contractible for stratified spaces. When we here say build, we mean in terms of limits of sheaf categories (cf. \Cref{sheaves on Mg bar as limit of simple pieces}). This will exhibit the exit path $\infty$-category as a colimit of posets which we can directly identify.

\subsection{The exit path category as a colimit of posets}

We refer the reader to \cite{OrsnesJansen23} for the details on exit path $\infty$-categories of stratified topological ($\infty$-)stacks, but let us recall the technical definition here.

\begin{definition}\label{definition exit path category}
Let $P$ be a poset satisfying the ascending chain condition. We say that a stratified étale $\infty$-stack $(X,P)$ \textit{admits an exit path $\infty$-category} if the following conditions hold:
\begin{enumerate}
\item The full subcategory of $P$-constructible $\operatorname{Shv}^{\operatorname{cbl}}_P(X)\subset\operatorname{Shv}(X)$ is closed under all limits and colimits.
\item The $\infty$-category $\operatorname{Shv}^{\operatorname{cbl}}_P(X)$ is generated under colimits by a set of atomic objects (that is, objects $x$ such that the functor $\operatorname{Map}(x,-)$ preserves all colimits).
\item The pullback $\pi^*\colon \operatorname{Fun}(P, \mathcal{S})\rightarrow \operatorname{Shv}^{\operatorname{cbl}}_P(X)$ preserves all limits (and colimits, but that is automatic).
\end{enumerate}
If $(X, P)$ admits an exit path $\infty$-category, we define its \textit{exit path $\infty$-category} to be the opposite category of the full subcategory of atomic constructible sheaves:
\begin{equation*}
\Pi(X,P):=\left(\operatorname{Shv}^{\operatorname{cbl}}_P(X)^{atom}\right)^{\operatorname{op}}.\qedhere
\end{equation*}
\end{definition}

If $(X,P)$ admits an exit path $\infty$-category, then it follows that there is an induced ``exodromy'' equivalence (\cite[Proposition 3.9]{ClausenOrsnesJansen}, cf.~\cite{BarwickGlasmanHaine} for the terminology)
\begin{align*}
\operatorname{Fun}(\Pi(X,P),\mathcal{S})\xrightarrow{\ \sim \ } \operatorname{Shv}^{\operatorname{cbl}}_P(X).
\end{align*}

So essentially, the definition is the following: a stratified topological ($\infty$-)stack $(X,P)$ whose stratifying poset $P$ satisfies the ascending chain condition \textit{admits an exit path $\infty$-category} if there exists an idempotent complete $\infty$-category $\Pi(X,P)$ and an equivalence
\begin{align*}
\operatorname{Shv}^{\operatorname{cbl}}_P(X)\xrightarrow{\sim} \operatorname{Fun}(\Pi(X,P),\mathcal{S})
\end{align*}
between the $\infty$-category of space-valued $P$-constructible sheaves on $X$ and the $\infty$-category of functors $\Pi(X,P)\rightarrow \mathcal{S}$ (with a few additional technical assumptions).

\medskip

Let us first of all identify the stratified homotopy types of the Harvey bordification and compactification.

\begin{proposition}\label{exit path category of Harvey bordification}
For a stable nodal surface $(S,P)$, the Harvey bordification $\overline{\mathcal{T}}_{S}$ admits an exit path $\infty$-category which identifies with its stratifying poset $\mathfrak{C}_{S}$. In particular, for any $\sigma\in \mathfrak{C}_{S}$, the closure of the $\sigma$-stratum $\overline{X}_\sigma^{S}$ admits an exit path $\infty$-category which identifies with its stratifying poset $(\mathfrak{C}_S)_{\leq \sigma}$.
\end{proposition}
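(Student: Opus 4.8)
The plan is to recognise $\overline{\mathcal{T}}_S$ as a second countable smooth manifold with corners --- hence an object of $\mathfrak{T}$ --- whose stratification over $\mathfrak{C}_S$ is the standard stratification by depth, and then to compute its exit path $\infty$-category locally and glue. Note first that $\mathfrak{C}_S$ satisfies the ascending chain condition (an ascending chain in $\mathfrak{C}_S$ is a descending chain of curve systems, and the cardinalities of admissible curve systems are bounded above), so that \Cref{definition exit path category} is applicable.

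For the local model, recall from \Cref{charts on Harvey bordification} that every $\sigma\in\mathfrak{C}_S$ has an open star neighbourhood $U(\sigma)$ stratified-diffeomorphic to $\mathbb{R}^\sigma_{\geq 0}\times\mathbb{R}^{\mu\smallsetminus\sigma}_{>0}\times\mathbb{R}^\mu$, where the stratification over $(\mathfrak{C}_S)_{\geq\sigma}$ is induced by the map $\mathbb{R}^\sigma_{\geq 0}\to(\mathfrak{C}_S)_{\geq\sigma}$ recording the vanishing coordinates. Since every subcollection of an admissible curve system is again admissible, $(\mathfrak{C}_S)_{\geq\sigma}$ is precisely the Boolean lattice of subsystems of $\sigma$, and this is the standard stratification of a manifold with corners. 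I would then invoke the calculational toolkit of \cite{OrsnesJansen23}: $\mathbb{R}_{\geq 0}$ stratified over $\{0<1\}$ (strata $\{0\}$ and $\mathbb{R}_{>0}$) admits the exit path $\infty$-category $\{0<1\}$ --- a constructible sheaf is just a map from the stalk at the origin to the necessarily constant value on $\mathbb{R}_{>0}$ --- this behaviour is compatible with finite products, and an unstratified factor $\mathbb{R}^k$ contributes only $\ast$; hence each $U(\sigma)$ admits an exit path $\infty$-category identifying, compatibly with the stratification maps, with $(\mathfrak{C}_S)_{\geq\sigma}$.

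To globalise, observe that the open stars $U(\mu)$, with $\mu$ ranging over the maximal admissible curve systems (the minimal elements of $\mathfrak{C}_S$), cover $\overline{\mathcal{T}}_S$, since every admissible curve system extends to a maximal one, and that every finite intersection $U(\mu_0)\cap\cdots\cap U(\mu_k)=s_S^{-1}\!\big(\textstyle\bigcap_i(\mathfrak{C}_S)_{\geq\mu_i}\big)$ is again an open sub-manifold-with-corners, hence --- by the local analysis applied to a refining cover of it, or by induction on depth --- admits an exit path $\infty$-category identifying with the up-set $\bigcap_i(\mathfrak{C}_S)_{\geq\mu_i}$. Descent for exit path $\infty$-categories along open covers (\cite{OrsnesJansen23}) then shows that $\overline{\mathcal{T}}_S$ admits an exit path $\infty$-category, realised as the idempotent completion of the colimit of the \v{C}ech nerve of these posets; as the up-sets $(\mathfrak{C}_S)_{\geq\mu}$ cover the poset $\mathfrak{C}_S$, this colimit is $\mathfrak{C}_S$ itself. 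This gluing step is the main obstacle: one needs a sufficiently clean descent statement for exit path $\infty$-categories and must keep the intersections in the \v{C}ech nerve under control. If a ``manifold with corners has exit path $\infty$-category its face poset'' statement is available off the shelf, the argument collapses to the bookkeeping of \Cref{charts on Harvey bordification} identifying the face poset with $\mathfrak{C}_S$.

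For the final assertion, fix $\sigma\in\mathfrak{C}_S$. By \Cref{identifying quotients by additive action as smaller Harvey bordifications} there is a fibre sequence $\mathbb{R}^\sigma\to\overline{X}^S_\sigma\to\overline{\mathcal{T}}_{S/C_\sigma}$ of stratified spaces over $(\mathfrak{C}_S)_{\leq\sigma}\cong\mathfrak{C}_{S/C_\sigma}$; being a torsor under the contractible group $\mathbb{R}^\sigma$ over a paracompact base, it is trivial, so $\overline{X}^S_\sigma\cong\overline{\mathcal{T}}_{S/C_\sigma}\times\mathbb{R}^\sigma$ with the second factor unstratified. Applying the first part of the proposition to the stable nodal surface $(S/C_\sigma,P)$ together with the product compatibility of exit path $\infty$-categories, $\overline{X}^S_\sigma$ admits an exit path $\infty$-category equal to $\mathfrak{C}_{S/C_\sigma}$, which is $(\mathfrak{C}_S)_{\leq\sigma}$ by \Cref{observations about curve posets}. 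Alternatively, one checks directly, exactly as for $\overline{\mathcal{T}}_S$, that $\overline{X}^S_\sigma$ is itself a manifold with corners carrying the standard stratification over $(\mathfrak{C}_S)_{\leq\sigma}$, and the argument above applies verbatim.
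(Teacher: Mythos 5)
The paper's own proof is a one-line citation to \cite[Corollary 3.7]{OrsnesJansen} (with \cite[Corollary 4.6]{ClausenOrsnesJansen} offered as an alternative strategy); that corollary is precisely the ``stratified space with contractible strata and contractible closed faces has exit path $\infty$-category its stratifying poset'' statement you anticipate in your last sentence. You are reproving it from scratch via \v{C}ech descent, which is a genuinely different route. Your local analysis is sound --- each $U(\sigma)$ is a corner chart stratified over the Boolean lattice $(\mathfrak{C}_S)_{\geq\sigma}$, and you need not worry about refining covers or depth induction for the intersections, since $U(\mu_0)\cap\cdots\cap U(\mu_k)=U(\mu_0\cap\cdots\cap\mu_k)$ is again of the same form (any subcollection of an admissible curve system is admissible).

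The gluing step is where the gap sits, and you correctly flag it. There is no ``descent for exit path $\infty$-categories along open covers'' available off the shelf in \cite{OrsnesJansen23}: the toolbox there is built around proper descent, closed covers, and poset-indexed limit decompositions, not \v{C}ech nerves of arbitrary (here infinite) open covers. Even granting descent for $\operatorname{Shv}^{\operatorname{cbl}}$ along your cover --- which is fine, since $\operatorname{Shv}$ satisfies open descent and constructibility is local --- you then assert that $\varinjlim_{\Delta^{\operatorname{op}}}$ of the \v{C}ech nerve of up-sets is $\mathfrak{C}_S$ in $\operatorname{Cat}_\infty$. That is not automatic (colimits in $\operatorname{Cat}_\infty$ do not behave like colimits of topological spaces, and your cover is infinite with infinitely many pairwise intersections); it does hold here, but to see it one should run the same descent for $\operatorname{Fun}(\mathfrak{C}_S,\mathcal{S})=\operatorname{Shv}^{\operatorname{cbl}}(\mathfrak{C}_S)$ and match up the two limits, rather than evaluate the colimit directly. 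The cleanest way to close the argument without inventing a new descent theorem is the one the cited corollary uses: verify directly that $s_S^*\colon\operatorname{Fun}(\mathfrak{C}_S,\mathcal{S})\to\operatorname{Shv}^{\operatorname{cbl}}(\overline{\mathcal{T}}_{\!\!S})$ is an equivalence and check the atomicity conditions of \Cref{definition exit path category}, using contractibility of the strata and their closures. Your treatment of the ``in particular'' claim is correct; of your two arguments, the second (observe that $\overline{X}^S_\sigma$ is itself a second-countable manifold with corners carrying the corner stratification over $(\mathfrak{C}_S)_{\leq\sigma}$, and apply the first statement) is the more economical.
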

\begin{proof}
This is an application of \cite[Corollary 3.7]{OrsnesJansen}. It can also be proved in the spirit of \cite{ClausenOrsnesJansen} by following the strategy of Corollary 4.6 of that paper.
\end{proof}

The following corollary identifies the exit path $\infty$-category of the Harvey compactification (it is the stack analogue of \cite[Theorem 4.2]{OrsnesJansen} or \cite[Corollary 4.9]{ClausenOrsnesJansen} in the reductive Borel--Serre case). 

\begin{corollary}\label{exit path category of Harvey compactification}
For a stable nodal surface $(S,P)$, the Harvey compactification $[\Gamma_{S}\backslash\overline{\mathcal{T}}_{S}]$ admits an exit path $\infty$-category which identifies with the action category $\Gamma_{S}\backslash\backslash\mathfrak{C}_{S}$.
\end{corollary}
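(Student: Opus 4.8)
The plan is to deduce this from \Cref{exit path category of Harvey bordification} together with the behaviour of exit path $\infty$-categories under quotients by properly discontinuous group actions, as developed in \cite{OrsnesJansen23} (the stack-theoretic counterpart of the argument behind \cite[Corollary 4.9]{ClausenOrsnesJansen}, \cite[Theorem 4.2]{OrsnesJansen}). First I would record the relevant structure: by \Cref{stratification of Harvey bordification} and the surrounding discussion, $\Gamma_S$ acts on the stratified topological space $\overline{\mathcal{T}}_{S}\to\mathfrak{C}_S$ compatibly with the evident poset action of $\Gamma_S$ on $\mathfrak{C}_S$, and this action is properly discontinuous. The Harvey compactification is by definition the homotopy quotient $[\Gamma_S\backslash\overline{\mathcal{T}}_{S}]=\varinjlim_{B\Gamma_S}\overline{\mathcal{T}}_{S}$, stratified over $\Gamma_S\backslash\mathfrak{C}_S$; I would note in passing that $\Gamma_S\backslash\mathfrak{C}_S$ satisfies the ascending chain condition, since $\mathfrak{C}_S$ does and the number of curves in an admissible system is a bounded, $\Gamma_S$-invariant quantity, hence descends to a bounded strictly monotone height function on $\Gamma_S\backslash\mathfrak{C}_S$.

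Next I would run the descent argument. Since $\operatorname{Shv}$ carries colimits of $\infty$-stacks to limits of $\infty$-categories, $\operatorname{Shv}([\Gamma_S\backslash\overline{\mathcal{T}}_{S}])\simeq\operatorname{Shv}(\overline{\mathcal{T}}_{S})^{h\Gamma_S}$; the properly discontinuous hypothesis is what guarantees, via \cite{OrsnesJansen23}, that this equivalence matches $\Gamma_S\backslash\mathfrak{C}_S$-constructibility on the left with $\mathfrak{C}_S$-constructibility on the right and that conditions (1)--(3) of \Cref{definition exit path category} descend from $\overline{\mathcal{T}}_{S}$ to the quotient. Now \Cref{exit path category of Harvey bordification} provides an exodromy equivalence $\operatorname{Shv}^{\operatorname{cbl}}_{\mathfrak{C}_S}(\overline{\mathcal{T}}_{S})\simeq\operatorname{Fun}(\mathfrak{C}_S,\mathcal{S})$, and since the exodromy equivalence is natural in the stratified space it is $\Gamma_S$-equivariant, the $\Gamma_S$-action on the target being the one induced from the poset action of $\Gamma_S$ on $\mathfrak{C}_S$. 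Passing to homotopy fixed points and using that $\operatorname{Fun}(-,\mathcal{S})$ carries colimits of $\infty$-categories to limits, one obtains
\begin{align*}
\operatorname{Shv}^{\operatorname{cbl}}_{\Gamma_S\backslash\mathfrak{C}_S}\big([\Gamma_S\backslash\overline{\mathcal{T}}_{S}]\big)\ \simeq\ \operatorname{Fun}(\mathfrak{C}_S,\mathcal{S})^{h\Gamma_S}\ \simeq\ \operatorname{Fun}\big(\varinjlim_{B\Gamma_S}\mathfrak{C}_S,\ \mathcal{S}\big),
\end{align*}
the colimit being taken in $\operatorname{Cat}_\infty$; consequently $\Pi([\Gamma_S\backslash\overline{\mathcal{T}}_{S}],\Gamma_S\backslash\mathfrak{C}_S)$ is the idempotent completion of $\varinjlim_{B\Gamma_S}\mathfrak{C}_S$.

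Finally I would identify $\varinjlim_{B\Gamma_S}\mathfrak{C}_S$ in $\operatorname{Cat}_\infty$ with the action category. This colimit is computed by the Grothendieck construction of the functor $B\Gamma_S\to\operatorname{Cat}_\infty$ classifying the action; for an action of a discrete group on a $1$-category the result is again a $1$-category, with objects those of $\mathfrak{C}_S$ and morphisms $\sigma\to\tau$ given by pairs $(\gamma,\ \gamma.\sigma\to\tau)$, which since $\mathfrak{C}_S$ is a poset amounts precisely to a $\gamma\in\Gamma_S$ with $\gamma.\sigma\leq\tau$ --- that is, to $\Gamma_S\backslash\backslash\mathfrak{C}_S$ as defined in \Cref{full subcategories of twisted arrow category}(iii). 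Moreover $\Gamma_S\backslash\backslash\mathfrak{C}_S$ is already idempotent complete: its hom-sets are literal subsets of $\Gamma_S$ with composition induced by multiplication, so an idempotent endomorphism of an object is a $\gamma$ with $\gamma^2=\gamma$ in $\Gamma_S$, hence $\gamma=e$; the only idempotents are identities and they split trivially. Therefore the exit path $\infty$-category identifies with $\Gamma_S\backslash\backslash\mathfrak{C}_S$, as claimed.

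The step I expect to carry the real content --- the main obstacle --- is the middle one: verifying that proper discontinuity of the $\Gamma_S$-action is exactly the input needed to transport both constructibility and the three defining conditions of an exit path $\infty$-category through the equivalence $\operatorname{Shv}([\Gamma_S\backslash\overline{\mathcal{T}}_{S}])\simeq\operatorname{Shv}(\overline{\mathcal{T}}_{S})^{h\Gamma_S}$, i.e. that the quotient machinery of \cite{OrsnesJansen23} genuinely applies in this stack-theoretic setting. The identification of the homotopy quotient of the poset with the action category, the idempotent-completeness check, and the ascending chain condition are routine by comparison.
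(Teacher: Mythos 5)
Your proposal is correct and follows essentially the same route as the paper: the paper's proof simply cites \Cref{exit path category of Harvey bordification} together with \cite[Corollary 3.16]{OrsnesJansen23}, and what you have written out (proper discontinuity of the action $\Rightarrow$ descent of the exodromy equivalence through $\operatorname{Shv}(\overline{\mathcal{T}}_S)^{h\Gamma_S}$, then $\varinjlim_{B\Gamma_S}\mathfrak{C}_S\simeq\Gamma_S\backslash\backslash\mathfrak{C}_S$ and its idempotent completeness) is precisely the content of that cited corollary. You have also correctly pinpointed that the nontrivial input is the quotient machinery of \cite{OrsnesJansen23}, which is exactly what the paper delegates to the reference.
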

\begin{proof}
This is a direct consequence of \Cref{exit path category of Harvey bordification} above and \cite[Corollary 3.16]{OrsnesJansen23}.
\end{proof}

We now exploit the inductive nature of the Deligne--Mumford--Knudsen compactifications exhibited in the previous section to provide a refined decomposition of the relevant sheaf categories. The identification of the exit path $\infty$-categories will follow directly from this by applying \cite[Proposition 3.13]{OrsnesJansen23} (cf. \Cref{the exit path category of Mg bar} below).

For $(S,P)$ a stable nodal surface with a fixed hyperbolic metric, define a functor
\begin{align*}
\operatorname{Tw}(\operatorname{CL}_S)^{op}\rightarrow \operatorname{Strat}
\end{align*}
into stratified topological spaces (viewed as stratified topological stacks) by sending $(\sigma\leq \tau)$ to
\begin{align*}
\overline{X}_\sigma^{S/\tau}\rightarrow (\mathfrak{C}_{S/\tau})_{\leq \sigma},
\end{align*}
i.e.~the closure of the $\sigma$-stratum of the Harvey bordification associated to $(S/\tau,P)$ stratified over $(\mathfrak{C}_{S/\tau})_{\leq \sigma}\cong (\mathfrak{C}_S)_{\leq \sigma}$. On morphisms, the functor sends $[\gamma]\colon (\sigma\leq \tau)\rightarrow (\sigma'\leq \tau')$ as described in \Cref{alternative description of twisted arrow category} to the induced map
\begin{align*}
\overline{X}_{\sigma'}^{S/\tau'}\rightarrow \overline{X}_\sigma^{S/\tau}
\end{align*}
given by \Cref{maps from twisted arrow category} and \Cref{identifying quotients by additive action as smaller Harvey bordifications}. Taking sheaves (or constructible sheaves) with pullback functoriality, we get a functor
\begin{align*}
\operatorname{Tw}(\operatorname{CL}_S)\rightarrow \operatorname{Cat}_\infty.
\end{align*}

The identification of the exit path $\infty$-category will be a consequence of the following theorem.

\begin{theorem}\label{sheaves on Mg bar as limit of simple pieces}
Let $(S,P)$ be a stable nodal surface with associated dual graph $\mathbf{G}=\mathbf{G}(S,P)$ and a fixed hyperbolic metric. Then
\begin{align*}
\operatorname{Shv}(\overline{\mathcal{M}}_{\mathbf{G}}^{\operatorname{top}})\xrightarrow{\ \sim\ } \mathop{\varprojlim}_{(\sigma\leq \tau)\in \operatorname{Tw}(\operatorname{CL}_{S})} \operatorname{Shv}(\overline{X}^{S/\tau}_\sigma)
\end{align*}
and similarly for constructible sheaves.
\end{theorem}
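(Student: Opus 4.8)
The plan is to deduce the statement from a more general one proved by induction. For $\theta\in\mathfrak{C}_S$ write $\mathbf{G}(\theta)=\mathbf{G}(S/\theta,P)$ and let $F_S\colon\operatorname{Tw}(\operatorname{CL}_S)\to\operatorname{Cat}_\infty$ be the functor defined above. For $\tau\leq\theta$, the inclusion $\overline{X}^{S/\tau}_\sigma\hookrightarrow\overline{\mathcal{T}}_{S/\tau}$ composed with $\overline{\mathcal{T}}_{S/\tau}\to[\Gamma_{S/\tau}\backslash\overline{\mathcal{T}}_{S/\tau}]\to\overline{\mathcal{M}}^{\operatorname{top}}_{\mathbf{G}(\tau)}$ (\Cref{proper map}) and with the inclusion of $\overline{\mathcal{M}}^{\operatorname{top}}_{\mathbf{G}(\tau)}$ as the closure of the $[\tau]$-stratum of $\overline{\mathcal{M}}^{\operatorname{top}}_{\mathbf{G}(\theta)}$ (see \cite[Chapter XII, \S 10]{ArbarelloCornalbaGriffiths} and \Cref{DMK compactification as normalisation}) assemble --- via \Cref{maps from twisted arrow category} and \Cref{identifying quotients by additive action as smaller Harvey bordifications} --- into a natural comparison functor
\begin{align*}
\Phi_\theta\colon\operatorname{Shv}(\overline{\mathcal{M}}^{\operatorname{top}}_{\mathbf{G}(\theta)})\longrightarrow\varprojlim_{(\sigma\leq\tau)\in\operatorname{Tw}(\operatorname{CL}_S)^\theta}\operatorname{Shv}(\overline{X}^{S/\tau}_\sigma),
\end{align*}
and similarly on constructible sheaves. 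I will show $\Phi_\theta$ is an equivalence by induction on $[\theta]$ in the finite poset $\Gamma_S\backslash\mathfrak{C}_S\cong\mathcal{G}_{\mathbf{G}}$; the theorem is the case $\theta=\emptyset$, where $\mathbf{G}(\emptyset)=\mathbf{G}$ and $\operatorname{Tw}(\operatorname{CL}_S)^\emptyset=\operatorname{Tw}(\operatorname{CL}_S)$. When $[\theta]$ is minimal, $\theta$ is a pants decomposition, $S/\theta$ is totally degenerate, and $\overline{\mathcal{T}}_{S/\theta}=\mathcal{T}_{S/\theta}$ is a point, so $\overline{\mathcal{M}}^{\operatorname{top}}_{\mathbf{G}(\theta)}\cong[\Gamma_{S/\theta}\backslash\mathcal{T}_{S/\theta}]=B\Gamma_{S/\theta}$ by \Cref{moduli stack as quotient of Teichmuller}, while $\operatorname{Tw}(\operatorname{CL}_S)^\theta=B\Gamma_{S/\theta}$ (its unique object $(\theta\leq\theta)$ has endomorphisms $\Gamma_S(\theta)/\Delta_\theta=\Gamma_{S/\theta}$ by \Cref{SES of groups quotient by Dehn twists}) and $F_S$ sends its object to $\operatorname{Shv}(\ast)$; both sides are $\operatorname{Shv}(B\Gamma_{S/\theta})$ and $\Phi_\theta$ is the identity.

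For the inductive step, assume $\Phi_\nu$ is an equivalence for all $[\nu]<[\theta]$. Applying \Cref{proper map} to $S/\theta$ and proper descent (\cite[Corollary 2.18]{OrsnesJansen23}) to the proper map $[\Gamma_{S/\theta}\backslash\overline{\mathcal{T}}_{S/\theta}]\to\overline{\mathcal{M}}^{\operatorname{top}}_{\mathbf{G}(\theta)}$, which is an isomorphism over the open stratum, expresses $\operatorname{Shv}(\overline{\mathcal{M}}^{\operatorname{top}}_{\mathbf{G}(\theta)})$ as the pullback of $\operatorname{Shv}(\partial\overline{\mathcal{M}}^{\operatorname{top}}_{\mathbf{G}(\theta)})$ and $\operatorname{Shv}([\Gamma_{S/\theta}\backslash\overline{\mathcal{T}}_{S/\theta}])$ over $\operatorname{Shv}([\Gamma_{S/\theta}\backslash\partial\overline{\mathcal{T}}_{S/\theta}])$ (the constructible variant holding just as in \Cref{preliminary decomposition of sheaves on Mg bar}, since the only stratum off the boundary is the open one). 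On the other side, \Cref{functors out of twisted arrow category}(1) expresses $\varprojlim F_S|_{\operatorname{Tw}(\operatorname{CL}_S)^\theta}$ as the analogous pullback of $\varprojlim F_S|_{\operatorname{Tw}(\operatorname{CL}_S)^\theta\smallsetminus B\Gamma_{S/\theta}}$ and $\varprojlim F_S|_{B\Gamma_{S/\theta}}$ over $\varprojlim F_S|_{(\Gamma_{S/\theta}\backslash\backslash\mathfrak{C}_{S/\theta})^{op}\smallsetminus B\Gamma_{S/\theta}}$, and $\Phi_\theta$ maps the first square to the second. It therefore suffices to check that $\Phi_\theta$ induces an equivalence on each of the three corners.

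On the corner over $B\Gamma_{S/\theta}$: since $F_S$ carries $\operatorname{End}(\theta\leq\theta)=\Gamma_S(\theta)/\Delta_\theta=\Gamma_{S/\theta}$ to the geometric mapping class group action on $\overline{X}^{S/\theta}_\theta/\R^\theta\cong\overline{\mathcal{T}}_{S/\theta}$ (\Cref{maps from twisted arrow category}, \Cref{identifying quotients by additive action as smaller Harvey bordifications}), one has $\varprojlim F_S|_{B\Gamma_{S/\theta}}\simeq\varprojlim_{B\Gamma_{S/\theta}}\operatorname{Shv}(\overline{\mathcal{T}}_{S/\theta})\simeq\operatorname{Shv}([\Gamma_{S/\theta}\backslash\overline{\mathcal{T}}_{S/\theta}])$ compatibly with $\Phi_\theta$. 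On the corner over $(\Gamma_{S/\theta}\backslash\backslash\mathfrak{C}_{S/\theta})^{op}\smallsetminus B\Gamma_{S/\theta}$: here $F_S$ restricts to $\sigma\mapsto\operatorname{Shv}(\overline{X}^{S/\theta}_\sigma)$ with the evident equivariant pullback functoriality, and since $\partial\overline{\mathcal{T}}_{S/\theta}=\bigcup_{\sigma<\theta}\overline{X}^{S/\theta}_\sigma$, descent for closed covers (\cite[Corollary 2.19]{OrsnesJansen23}), \Cref{exit path category of Harvey bordification}, and the descent equivalence $\operatorname{Shv}([\Gamma_{S/\theta}\backslash-])\simeq\varprojlim_{B\Gamma_{S/\theta}}\operatorname{Shv}(-)$ identify this limit with $\operatorname{Shv}([\Gamma_{S/\theta}\backslash\partial\overline{\mathcal{T}}_{S/\theta}])$ (cf.\ \Cref{exit path category of Harvey compactification}). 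On the corner over $\operatorname{Tw}(\operatorname{CL}_S)^\theta\smallsetminus B\Gamma_{S/\theta}$: by \Cref{functors out of twisted arrow category}(2) together with parts (4)--(5) of \Cref{decomposition of twisted arrow category}, this limit is $\varprojlim_{[\nu]\in(\Gamma_S\backslash\mathfrak{C}_S)^{op}_{<[\theta]}}\varprojlim F_S|_{\operatorname{Tw}(\operatorname{CL}_S)^\nu}$, which by the inductive hypothesis equals $\varprojlim_{[\nu]<[\theta]}\operatorname{Shv}(\overline{\mathcal{M}}^{\operatorname{top}}_{\mathbf{G}(\nu)})$; on the other hand $\partial\overline{\mathcal{M}}^{\operatorname{top}}_{\mathbf{G}(\theta)}=\bigcup_{[\nu]<[\theta]}\overline{\mathcal{M}}^{\operatorname{top}}_{\mathbf{G}(\nu)}$ (closure of the $[\nu]$-stratum $\cong\overline{\mathcal{M}}^{\operatorname{top}}_{\mathbf{G}(\nu)}$, with intersections governed by the poset $(\Gamma_S\backslash\mathfrak{C}_S)_{<[\theta]}$), so descent for closed covers gives the same limit, compatibly with $\Phi_\theta$ --- exactly the diagrammatic shadow of \Cref{preliminary decomposition of sheaves on Mg bar}(2). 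Hence $\Phi_\theta$ is an equivalence, and the identical argument proves the constructible version.

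The step I expect to be the main obstacle is the coherence bookkeeping in the inductive step: verifying that the proper-descent pullback square for $[\Gamma_{S/\theta}\backslash\overline{\mathcal{T}}_{S/\theta}]\to\overline{\mathcal{M}}^{\operatorname{top}}_{\mathbf{G}(\theta)}$ and the \Cref{functors out of twisted arrow category} decomposition of $\varprojlim F_S|_{\operatorname{Tw}(\operatorname{CL}_S)^\theta}$ are matched by $\Phi_\theta$ on the nose (rather than merely abstractly equivalent), which forces one to track the explicit model of $\operatorname{Tw}(\operatorname{CL}_S)$ from \Cref{alternative description of twisted arrow category} and the equivariant charts underlying \Cref{maps from twisted arrow category} and \Cref{identifying quotients by additive action as smaller Harvey bordifications}. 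A secondary point requiring care, feeding both into the definition of $\Phi_\theta$ and into the last corner, is the identification of the closure of the $[\nu]$-stratum of $\overline{\mathcal{M}}^{\operatorname{top}}_{\mathbf{G}(\theta)}$ with $\overline{\mathcal{M}}^{\operatorname{top}}_{\mathbf{G}(\nu)}$ together with the combinatorics of the resulting closed cover, for which one appeals to \cite[Chapter XII, \S 10]{ArbarelloCornalbaGriffiths} and \Cref{DMK compactification as normalisation}.
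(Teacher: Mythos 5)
Your inductive scheme has a genuine gap, and it lands precisely on the subtlety that the paper flags as ``the key technical difference'' between this setting and the reductive Borel--Serre case (\Cref{key technical difference RBS versus DMK}). You induct over the stacks $\overline{\mathcal{M}}^{\operatorname{top}}_{\mathbf{G}(\theta)}$ and, in the third corner of your pullback square, assert that $\partial\overline{\mathcal{M}}^{\operatorname{top}}_{\mathbf{G}(\theta)}$ is a closed cover by the strata closures, with the closure of the $[\nu]$-stratum isomorphic to $\overline{\mathcal{M}}^{\operatorname{top}}_{\mathbf{G}(\nu)}$. That identification is false: the closure of the $[\nu]$-stratum is $\overline{\mathcal{D}}_\nu$, and by \Cref{DMK compactification as normalisation} the map $\overline{\mathcal{M}}^{\operatorname{top}}_{\mathbf{G}(\nu)}\to\overline{\mathcal{D}}_\nu$ is the \emph{normalisation}, an isomorphism only over the open stratum $\mathcal{D}_\nu$. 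So your inductive hypothesis (an equivalence for $\operatorname{Shv}(\overline{\mathcal{M}}^{\operatorname{top}}_{\mathbf{G}(\nu)})$) does not directly apply to the pieces $\overline{\mathcal{D}}_\nu$ that actually appear in the closed cover of $\partial\overline{\mathcal{M}}^{\operatorname{top}}_{\mathbf{G}(\theta)}$, and your appeal to ``descent for closed covers gives the same limit'' in that corner does not go through. You describe this point as ``a secondary point requiring care''; it is in fact the central obstruction to inducting on the family $\overline{\mathcal{M}}^{\operatorname{top}}_{\mathbf{G}(\theta)}$ at all.

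The paper circumvents this by formulating the generalised inductive statement (\Cref{sheaves on Mg bar as limit of simple pieces - stratumwise}) for the substacks $\overline{\mathcal{D}}_\theta\subset\overline{\mathcal{M}}^{\operatorname{top}}_{\mathbf{G}}$ rather than for $\overline{\mathcal{M}}^{\operatorname{top}}_{\mathbf{G}(\theta)}$; these \emph{do} satisfy $\partial\overline{\mathcal{D}}_\theta=\bigcup_{[\nu]<[\theta]}\overline{\mathcal{D}}_\nu$ (giving \Cref{preliminary decomposition of sheaves on Mg bar}(2)), and the proper map used for descent is the composite $[\Gamma_{S/\theta}\backslash\overline{\mathcal{T}}_{S/\theta}]\to\overline{\mathcal{M}}^{\operatorname{top}}_{\mathbf{G}(\theta)}\to\overline{\mathcal{D}}_\theta$ (\Cref{useful proper map Mg bar}), which absorbs the normalisation. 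The diagrammatic shadow of this normalisation is precisely the content of \Cref{decomposition of twisted arrow category}(4), where the inclusion $\operatorname{Tw}(\operatorname{CL}_S)^\theta_{[\nu]}\hookrightarrow\operatorname{Tw}(\operatorname{CL}_S)^\theta_{\leq[\nu]}$ is only a $\varprojlim$-equivalence, not a left adjoint. Your broad strategy --- proper descent for the Harvey compactification, closed-cover descent on the boundary, the two-step decomposition of $\operatorname{Tw}(\operatorname{CL}_S)^\theta$ from \Cref{functors out of twisted arrow category} --- matches the paper's, but the induction must be carried out over the $\overline{\mathcal{D}}_\theta$ (equivalently, over depth of strata in the \emph{fixed} $\overline{\mathcal{M}}^{\operatorname{top}}_{\mathbf{G}}$), not over the moduli stacks $\overline{\mathcal{M}}^{\operatorname{top}}_{\mathbf{G}(\theta)}$.
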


This theorem is a special case of the following more general statement for the closure of each stratum in $\overline{\mathcal{M}}^{\operatorname{top}}_{\mathbf{G}}$; the theorem above is then the case of the open stratum, i.e.~$\theta=\emptyset$. We have to prove such a refined statement because of the observations made in \Cref{key technical difference RBS versus DMK}.

\begin{theorem}\label{sheaves on Mg bar as limit of simple pieces - stratumwise}
Let $(S,P)$ be a stable nodal surface with a fixed hyperbolic metric and let $\theta\in \mathfrak{C}_{S}$. Then
\begin{align*}
\Psi_\theta\colon \operatorname{Shv}(\overline{\mathcal{D}}_{\theta})\xrightarrow{\ \sim\ } \mathop{\varprojlim}_{(\sigma\leq \tau)\in \operatorname{Tw}(\operatorname{CL}_{S})^\theta} \operatorname{Shv}(\overline{X}^{S/\tau}_\sigma)
\end{align*}
and similarly for constructible sheaves.
\end{theorem}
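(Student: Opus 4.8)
The plan is to argue by downward Noetherian induction on $[\theta]$ in the finite poset $\Gamma_S\backslash\mathfrak{C}_S$ (admissible curve systems on $(S,P)$ have boundedly many curves, so this poset is finite; cf.\ \Cref{iso of posets}), proving the statement for $[\theta]$ assuming it for all $[\nu]<[\theta]$. The comparison functor $\Psi_\theta$ is induced by pullback: for $(\sigma\le\tau)\in\operatorname{Tw}(\operatorname{CL}_S)^\theta$ there is a natural map $\overline{X}^{S/\tau}_\sigma\hookrightarrow\overline{\mathcal{T}}_{S/\tau}\to[\Gamma_{S/\tau}\backslash\overline{\mathcal{T}}_{S/\tau}]\to\overline{\mathcal{D}}_\tau\hookrightarrow\overline{\mathcal{D}}_\theta$ --- the middle arrow being \Cref{useful proper map Mg bar} and the last the closed inclusion of the closure of the smaller stratum $[\mathbf{G}(\tau)]\le[\mathbf{G}(\theta)]$ --- and these assemble into a cone with vertex $\operatorname{Shv}(\overline{\mathcal{D}}_\theta)$ by \Cref{maps from twisted arrow category} and \Cref{identifying quotients by additive action as smaller Harvey bordifications}, since they all amount to forgetting the marking and the twist data; $\Psi_\theta$ is the resulting functor into the limit.

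First I would decompose the source using \Cref{preliminary decomposition of sheaves on Mg bar} (proper and closed-cover descent, \cite[Corollaries 2.18 and 2.19]{OrsnesJansen23}): $\operatorname{Shv}(\overline{\mathcal{D}}_\theta)$ is the fibre product of $\operatorname{Shv}(\partial\overline{\mathcal{D}}_\theta)\simeq\varprojlim_{[\sigma]<[\theta]}\operatorname{Shv}(\overline{\mathcal{D}}_\sigma)$ and $\operatorname{Shv}([\Gamma_{S/\theta}\backslash\overline{\mathcal{T}}_{S/\theta}])$ over $\operatorname{Shv}(\partial[\Gamma_{S/\theta}\backslash\overline{\mathcal{T}}_{S/\theta}])$. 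On the other side, \Cref{decomposition of twisted arrow category} and \Cref{functors out of twisted arrow category} give the matching fibre-product decomposition of $\varprojlim_{\operatorname{Tw}(\operatorname{CL}_S)^\theta}F$ (write $F=F_\theta$ for the functor of the statement): the first corner is $\varprojlim_{[\nu]<[\theta]}\varprojlim F|_{\operatorname{Tw}(\operatorname{CL}_S)^\theta_{\le[\nu]}}$, the base is $\varprojlim F|_{(\Gamma_{S/\theta}\backslash\backslash\mathfrak{C}_{S/\theta})^{op}\smallsetminus B\Gamma_{S/\theta}}$, and the remaining corner identifies with $\varprojlim F|_{(\Gamma_{S/\theta}\backslash\backslash\mathfrak{C}_{S/\theta})^{op}}\simeq\varprojlim F|_{B\Gamma_{S/\theta}}$.

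Next I would identify the three corners termwise. The subcategory $(\Gamma_{S/\theta}\backslash\backslash\mathfrak{C}_{S/\theta})^{op}\subset\operatorname{Tw}(\operatorname{CL}_S)^\theta$ consists of the objects $(\sigma\le\theta)$, on which $F$ restricts to the functor $\sigma\mapsto\operatorname{Shv}(\overline{X}^{S/\theta}_\sigma)$ carrying precisely the $\Gamma_{S/\theta}$-equivariant structure of the stratified space $\overline{\mathcal{T}}_{S/\theta}\to\mathfrak{C}_{S/\theta}$ (note $\overline{X}^{S/\theta}_\theta=\overline{\mathcal{T}}_{S/\theta}$ since $\theta$ is the empty system on $S/\theta$); combining closed-cover descent for $\overline{\mathcal{T}}_{S/\theta}=\bigcup_\sigma\overline{X}^{S/\theta}_\sigma$ with stack descent --- exactly as in the proof of \Cref{exit path category of Harvey compactification} (via \cite[Corollary 3.16]{OrsnesJansen23}) --- yields $\varprojlim F|_{(\Gamma_{S/\theta}\backslash\backslash\mathfrak{C}_{S/\theta})^{op}}\simeq\operatorname{Shv}([\Gamma_{S/\theta}\backslash\overline{\mathcal{T}}_{S/\theta}])$, and restricting to nonempty curve systems gives $\varprojlim F|_{(\Gamma_{S/\theta}\backslash\backslash\mathfrak{C}_{S/\theta})^{op}\smallsetminus B\Gamma_{S/\theta}}\simeq\operatorname{Shv}(\partial[\Gamma_{S/\theta}\backslash\overline{\mathcal{T}}_{S/\theta}])$. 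For the first corner, \Cref{functors out of twisted arrow category} gives $\varprojlim F|_{\operatorname{Tw}(\operatorname{CL}_S)^\theta_{\le[\nu]}}\simeq\varprojlim F|_{\operatorname{Tw}(\operatorname{CL}_S)^\nu}$, and $F|_{\operatorname{Tw}(\operatorname{CL}_S)^\nu}$ is literally the diagram occurring in the statement of this theorem at $[\nu]$, so the inductive hypothesis ($\Psi_\nu$) identifies it with $\operatorname{Shv}(\overline{\mathcal{D}}_\nu)$, matching the source. Assembling and checking that $\Psi_\theta$ sends the source decomposition to the target decomposition --- which holds because all the identifications above are pullbacks along the geometric maps defining $\Psi_\theta$ --- shows that $\Psi_\theta$ is an equivalence. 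The induction bottoms out when $[\theta]$ is minimal, where $\operatorname{Tw}(\operatorname{CL}_S)^\theta=B\Gamma_{S/\theta}$, $\overline{X}^{S/\theta}_\theta$ is a point, $\overline{\mathcal{D}}_\theta=\mathcal{D}_\theta\simeq B\Gamma_{S/\theta}$, and the claim reduces to $\operatorname{Shv}(B\Gamma_{S/\theta})\simeq\operatorname{Fun}(B\Gamma_{S/\theta},\mathcal{S})$, a special case of the corner identification. The constructible statement is obtained by running the identical argument with the constructible versions of \Cref{preliminary decomposition of sheaves on Mg bar}, of the cited descent results, and of \Cref{exit path category of Harvey bordification}, noting as in the proof of \Cref{preliminary decomposition of sheaves on Mg bar} that constructibility can be detected on the pieces.

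I expect the main obstacle to be the final compatibility verification: that the $\Gamma_{S/\theta}$-equivariant closed-cover-and-stack descent underlying the geometric side of \Cref{preliminary decomposition of sheaves on Mg bar} matches the combinatorial decomposition of $\operatorname{Tw}(\operatorname{CL}_S)^\theta$ from \Cref{decomposition of twisted arrow category} \emph{compatibly with the cone defining} $\Psi_\theta$ --- in particular that the structure maps of $F_\theta$ really are pullbacks along the maps $\overline{X}^{S/\tau}_\sigma\to\overline{\mathcal{D}}_\theta$, which rests on \Cref{maps from twisted arrow category} and \Cref{identifying quotients by additive action as smaller Harvey bordifications}. Once this bookkeeping is in place, everything else is a formal application of the cited descent results, \Cref{functors out of twisted arrow category}, and the inductive hypothesis.
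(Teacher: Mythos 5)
Your proof is correct and takes essentially the same route as the paper: induction on depth in the curve poset, decomposing both sides as fibre products via Proposition \ref{preliminary decomposition of sheaves on Mg bar} and Corollary \ref{functors out of twisted arrow category}, identifying the Harvey-compactification corner using the $\varprojlim$-equivalence from Proposition \ref{decomposition of twisted arrow category} part 3 together with stack descent, and feeding the inductive hypothesis into the boundary corner. The only cosmetic difference is that you phrase the Harvey-compactification corner as closed-cover descent followed by stack descent, whereas the paper invokes the left-adjoint $\varprojlim$-equivalence first and then stack descent; these are the same identification packaged in opposite order.
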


\begin{proof}
We will prove this by induction on the decreasing number of closed simple curves in $\theta$, i.e.~the depth of the strata. In the case of a stratum of maximal depth associated to a maximal curve system $\theta$, the claim follows from the following observations
\begin{itemize}
\item $\overline{\mathcal{D}}_{\theta}=\mathcal{D}_{\theta}\cong [\Gamma_{S/\theta}\backslash \mathcal{T}_{S/\theta}]$ and $\mathcal{T}_{S/\theta}=\overline{X}^{S/\theta}_\theta$;
\item the category $\operatorname{Tw}(\operatorname{CL}_S)^\theta$ identifies with the full subcategory $B\Gamma_{S/\theta}$ on the object $(\theta\leq\theta)$.
\end{itemize}

Now suppose $\theta$ consists of $M$ curves and assume that the claim holds for any isotopy class of admissible curve systems with at least $M+1$ curves. We consider the functor 
\begin{align*}
F\colon \operatorname{Tw}(\operatorname{CL}_S)^\theta\rightarrow \operatorname{Cat}_\infty
\end{align*}
sending $(\sigma\leq \tau)$ to $\operatorname{Shv}(\overline{X}^{S/\tau}_\sigma)$ with pullback functoriality and exploit the decompositions of limits of such functors established in the previous section. 

First of all, we observe that the map $\Psi_\theta$ restricts to an equivalence on the boundary $\partial \overline{\mathcal{D}}_\theta$ by restricting the functor $F$ to $\operatorname{Tw}(\operatorname{CL}_S)^\theta\smallsetminus B\Gamma_{S/ \theta}$: indeed, by comparing part 2 of \Cref{preliminary decomposition of sheaves on Mg bar} (descent for closed covers) with part 2 of \Cref{functors out of twisted arrow category} and exploiting the inductive hypothesis, we get the an equivalence 
\begin{align*}
\operatorname{Shv}(\partial\overline{\mathcal{D}}_{\theta})\xrightarrow{\ \sim\ } \mathop{\varprojlim}_{(\sigma\leq \tau)\in \operatorname{Tw}(\operatorname{CL}_S)^\theta\smallsetminus B\Gamma_{S/\theta}} \operatorname{Shv}(\overline{X}^{S/\tau}_\sigma).
\end{align*}

Now, we show that we get a similar decomposition on the boundary of the Harvey compactification $[\Gamma_{S/\theta}\backslash \overline{\mathcal{T}}_{S/\theta}]$ by restricting the functor $F$ to $(\Gamma_{S/\theta}\backslash\backslash \mathfrak{C}_{S/\theta})^{op}\smallsetminus B\Gamma_{S/\theta}$. By \cite[Corollary 2.34]{ClausenOrsnesJansen}, we have
\begin{align*}
(\Gamma_{S/\theta}\backslash \backslash \mathfrak{C}_{S/\theta})^{op}\smallsetminus B\Gamma_{S/\theta}\simeq \mathop{\varinjlim}_{B\Gamma_{S/\theta}}(\mathfrak{C}_{S/\theta}^{op}\smallsetminus \emptyset).
\end{align*}
Combining the subsequent limit decomposition (\cite[Proposition 2.1]{ClausenOrsnesJansen}) with the identification of the boundary as $\partial[\Gamma_{S/\theta}\backslash \overline{\mathcal{T}}_{S/\theta}]\cong [\Gamma_{S/\theta}\backslash  \partial \overline{\mathcal{T}}_{S/\theta}]$ and descent for the closed cover of the boundary by closures of strata, we get an equivalence
\begin{align*}
\operatorname{Shv}(\partial[\Gamma_{S/\theta}\backslash \overline{\mathcal{T}}_{S/\theta}])\xrightarrow{\ \sim \ }\mathop{\varprojlim}_{\sigma\in (\Gamma_{S/\theta}\backslash \backslash \mathfrak{C}_{S/\theta})^{op}\smallsetminus B\Gamma_{S/\theta}} \operatorname{Shv}(\overline{X}^{S/\theta}_\sigma).
\end{align*}

We also get an equivalence of sheaf categories on the Harvey compactification by restricting the functor $F$ to $(\Gamma_{S/\theta}\backslash\backslash \mathfrak{C}_{S/\theta})^{op}$ since the inclusion $B\Gamma_{S/\theta}\hookrightarrow (\Gamma_{S/\theta}\backslash\backslash \mathfrak{C}_{S/\theta})^{op}$ is a left adjoint and hence a $\varprojlim$-equivalence (\Cref{decomposition of twisted arrow category} part 3):
\begin{align*}
\operatorname{Shv}([\Gamma_{S/\theta}\backslash \overline{\mathcal{T}}_{S/\theta}])\xrightarrow{\ \sim\ } \mathop{\varprojlim}_{B\Gamma_{S/\theta}}\operatorname{Shv}(\overline{\mathcal{T}}_{S/\theta})\xrightarrow{\ \sim\ } \mathop{\varprojlim}_{\sigma\in (\Gamma_{S/\theta}\backslash\backslash \mathfrak{C}_{S/\theta})^{op}}\operatorname{Shv}(\overline{X}^{S/\theta}_\sigma).
\end{align*}

Finally, by comparing part 1 of \Cref{preliminary decomposition of sheaves on Mg bar} with part 1 of \Cref{functors out of twisted arrow category}, we see that the above equivalences assemble to yield the desired equivalence, without the constructibility condition.

To prove the claim for constructible sheaves, consider instead the functor of constructible sheaves
\begin{align*}
\operatorname{Tw}(\operatorname{CL}_S)\rightarrow \operatorname{Cat}_\infty,\quad (\sigma\leq \tau)\mapsto\operatorname{Shv}^{\operatorname{cbl}}(\overline{X}^{S/\tau}_\sigma)
\end{align*}
with pullback functoriality. We see that the preceding inductive argument goes through for this functor as well; the only additional observation to be made is that for an action of a group $G$ on a stratified space $X$, the equivalence $\operatorname{Shv}([G\backslash X])\simeq \varprojlim_{BG}\operatorname{Shv}(X)$ also goes through for constructible sheaves as the restriction of the quotient map $X\rightarrow [G\backslash X]$ to each stratum admits local sections (this was also used implicitly in \Cref{exit path category of Harvey compactification} above, see \cite[Corollary 3.16]{OrsnesJansen23}).
\end{proof}

We now identify the exit path $\infty$-category of the stratified topological Deligne--Mumford--Knudsen compactification (this calculation is completely analogous to the one in \cite[Corollary 4.20]{ClausenOrsnesJansen}). 

\begin{corollary}\label{the exit path category of Mg bar}
Let $\mathbf{G}$ be a stable $P$-pointed dual graph. The stratified topological stack $(\overline{\mathcal{M}}^{\operatorname{top}}_{\mathbf{G}},\mathcal{G}_{\mathbf{G}})$ admits an exit path $\infty$-category and this exit path $\infty$-category identifies with the Charney--Lee category $\operatorname{CL}_S$ for any stable nodal surface $(S,P)$ whose associated dual graph is isomorphic to $\mathbf{G}$.

In particular, if $\mathbf{G}=(g,P)$ is the one vertex graph of genus $g$, then the exit path $\infty$-category identifies with the category $\mathfrak{SC}_{g,P}^{\operatorname{op}}$ of stable curves and (opposite) deformations (\Cref{category of stable curves}).
\end{corollary}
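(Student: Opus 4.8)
The plan is to combine the inductive decomposition of \Cref{sheaves on Mg bar as limit of simple pieces} with the identification of the exit path $\infty$-categories of the building blocks (\Cref{exit path category of Harvey bordification}) through the permanence result \cite[Proposition 3.13]{OrsnesJansen23}, and then to recognise the resulting colimit of posets as $\operatorname{CL}_S$. Fix a stable nodal surface $(S,P)$ with $\mathbf{G}(S,P)\cong\mathbf{G}$ and a hyperbolic metric on $S\smallsetminus P$. By \Cref{sheaves on Mg bar as limit of simple pieces}, $\operatorname{Shv}^{\operatorname{cbl}}(\overline{\mathcal{M}}^{\operatorname{top}}_{\mathbf{G}})$ is the limit over $\operatorname{Tw}(\operatorname{CL}_S)$ of the diagram $(\sigma\leq\tau)\mapsto\operatorname{Shv}^{\operatorname{cbl}}(\overline{X}^{S/\tau}_\sigma)$ with pullback functoriality (and likewise for all sheaves). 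By \Cref{exit path category of Harvey bordification}, each $\overline{X}^{S/\tau}_\sigma$ admits an exit path $\infty$-category, equal to the poset $(\mathfrak{C}_{S/\tau})_{\leq\sigma}\cong(\mathfrak{C}_S)_{\leq\sigma}$; so the exodromy equivalences identify this diagram of constructible sheaf categories with $(\sigma\leq\tau)\mapsto\operatorname{Fun}((\mathfrak{C}_S)_{\leq\sigma},\mathcal{S})$, the pullback maps becoming restriction along the poset maps $(\mathfrak{C}_S)_{\leq\sigma'}\to(\mathfrak{C}_S)_{\leq\sigma}$, $\nu\mapsto\gamma.\nu$, that one reads off from \Cref{maps from twisted arrow category} and \Cref{identifying quotients by additive action as smaller Harvey bordifications} (here $[\gamma]\colon\sigma'\to\sigma$ is the map underlying a morphism $(\sigma\leq\tau)\to(\sigma'\leq\tau')$, cf.~\Cref{alternative description of twisted arrow category}).

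Since the exit path $\infty$-category of $\overline{X}^{S/\tau}_\sigma$ depends only on $\sigma$, the associated diagram of exit path $\infty$-categories, $(\sigma\leq\tau)\mapsto(\mathfrak{C}_S)_{\leq\sigma}$ indexed by $\operatorname{Tw}(\operatorname{CL}_S)^{\operatorname{op}}$, factors through the opposite $s^{\operatorname{op}}\colon\operatorname{Tw}(\operatorname{CL}_S)^{\operatorname{op}}\to\operatorname{CL}_S$, $(\sigma\leq\tau)\mapsto\sigma$, of the source projection $s\colon\operatorname{Tw}(\operatorname{CL}_S)\to\operatorname{CL}_S^{\operatorname{op}}$; and, rewriting $(\mathfrak{C}_S)_{\leq\sigma}=(\mathfrak{C}_S)_{/\sigma}\simeq(\operatorname{CL}_S)_{/\sigma}$ via \Cref{equivalence of comma categories}, the remaining factor $\operatorname{CL}_S\to\operatorname{Cat}_\infty$ is the slice functor $\sigma\mapsto(\operatorname{CL}_S)_{/\sigma}$ with its postcomposition functoriality. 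Then \cite[Proposition 3.13]{OrsnesJansen23} guarantees that $(\overline{\mathcal{M}}^{\operatorname{top}}_{\mathbf{G}},\mathcal{G}_{\mathbf{G}})$ admits an exit path $\infty$-category in the sense of \Cref{definition exit path category}, and computes it as
\[
\Pi(\overline{\mathcal{M}}^{\operatorname{top}}_{\mathbf{G}},\mathcal{G}_{\mathbf{G}})\ \simeq\ \varinjlim_{(\sigma\leq\tau)\in\operatorname{Tw}(\operatorname{CL}_S)^{\operatorname{op}}}(\operatorname{CL}_S)_{/\sigma}.
\]

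Computing this colimit is then formal, exactly as in \cite[Corollary 4.20]{ClausenOrsnesJansen}. The source projection $\operatorname{Tw}(\mathcal{C})\to\mathcal{C}^{\operatorname{op}}$ is a cartesian fibration with fibres $\mathcal{C}_{c/}$, which have initial objects and are hence weakly contractible; so $s^{\operatorname{op}}$ is a cocartesian fibration with weakly contractible fibres and is therefore cofinal, reducing the colimit to $\varinjlim_{\sigma\in\operatorname{CL}_S}(\operatorname{CL}_S)_{/\sigma}$. And for any $\infty$-category $\mathcal{C}$ one has $\varinjlim_{c\in\mathcal{C}}\mathcal{C}_{/c}\simeq\mathcal{C}$: the slice functor $\mathcal{C}_{/(-)}$ is the straightening of the cocartesian fibration $\operatorname{ev}_1\colon\operatorname{Fun}(\Delta^1,\mathcal{C})\to\mathcal{C}$, and its colimit is the localisation of $\operatorname{Fun}(\Delta^1,\mathcal{C})$ at the $\operatorname{ev}_1$-cocartesian edges, which is $\mathcal{C}$. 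Hence $\Pi(\overline{\mathcal{M}}^{\operatorname{top}}_{\mathbf{G}},\mathcal{G}_{\mathbf{G}})\simeq\operatorname{CL}_S$, and no idempotent completion intervenes because $\operatorname{CL}_S$ is already idempotent complete: inside the orbit category $\mathcal{O}(\Gamma_S)$ (\Cref{observations about our 1-categories of stable curves}) the only idempotent endomorphism of an object $\Gamma_S/\Delta_\sigma$ is the identity. Finally, when $\mathbf{G}=(g,P)$ is the one-vertex graph of genus $g$ we may take $S$ non-singular of genus $g$, and then $\operatorname{CL}_S\simeq\mathfrak{SC}_{g,P}^{\operatorname{op}}$ by \Cref{category of stable curves vs Charney-Lee category}.

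The main obstacle is the bookkeeping underlying the first two paragraphs: checking that the limit presentation of \Cref{sheaves on Mg bar as limit of simple pieces} meets the hypotheses of \cite[Proposition 3.13]{OrsnesJansen23}, and keeping careful track of variances through the exodromy equivalences so that the $\operatorname{Tw}(\operatorname{CL}_S)^{\operatorname{op}}$-diagram of exit path $\infty$-categories is \emph{precisely} the slice diagram pulled back along the source projection. Once that is in place, the two colimit identifications above — cofinality of $s^{\operatorname{op}}$ and the slice-colimit formula — are purely formal.
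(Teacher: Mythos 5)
Your proposal follows the paper's proof of Corollary \ref{the exit path category of Mg bar} quite closely: you combine Theorem \ref{sheaves on Mg bar as limit of simple pieces}, Proposition \ref{exit path category of Harvey bordification}, Lemma \ref{equivalence of comma categories} and the permanence result \cite[Proposition 3.13]{OrsnesJansen23} to reduce the identification to the colimit formula
\[
\varinjlim_{(\sigma\leq\tau)\in\operatorname{Tw}(\operatorname{CL}_S)^{\operatorname{op}}}(\operatorname{CL}_S)_{/\sigma}\ \simeq\ \operatorname{CL}_S,
\]
exactly as the paper does. Where the paper cites \cite[Lemma 4.16 and Corollary 2.35]{ClausenOrsnesJansen} for the two colimit facts, you supply self-contained proofs; this is a fine variation, and your remark that $\operatorname{CL}_S$ is idempotent complete (so no completion intervenes) and your straightening/localisation proof of $\varinjlim_c\mathcal{C}_{/c}\simeq\mathcal{C}$ are both correct.

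There is one slip, in the cofinality step. The source projection $s\colon\operatorname{Tw}(\mathcal{C})\to\mathcal{C}^{\operatorname{op}}$ is a \emph{cocartesian} fibration, not a cartesian one: given $f\colon x\to y$ over $x$ and $a\colon z\to x$ in $\mathcal{C}$ (a morphism $x\to z$ in $\mathcal{C}^{\operatorname{op}}$), the cocartesian lift is $(a,\operatorname{id}_y)\colon f\to fa$, whereas a cartesian lift over $a\colon x\to z$ would require producing $g\colon z\to y$ with $f = ga$, which does not exist in general. Consequently $s^{\operatorname{op}}\colon\operatorname{Tw}(\mathcal{C})^{\operatorname{op}}\to\mathcal{C}$ is \emph{cartesian}, with fibres $(\mathcal{C}_{c/})^{\operatorname{op}}$ (terminal objects, hence weakly contractible). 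That is the version that actually yields cofinality: a cartesian fibration with weakly contractible fibres is cofinal, while a cocartesian fibration with weakly contractible fibres is \emph{coinitial}, not cofinal, so the lemma you invoke is false as stated. You reach the right conclusion only because the two transposed labels cancel. Swapping cartesian/cocartesian in that sentence repairs the argument and brings it in line with the paper.
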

\begin{proof}
Let $(S,P)$ be a stable nodal surface with dual graph $\mathbf{G}(S,P)\cong \mathbf{G}$. For any $\sigma\leq \tau$, the exit path $\infty$-category of $\overline{X}^{S/\tau}_\sigma$ identifies with its stratifying poset $(\mathfrak{C}_S)_{/\sigma}$ (\Cref{exit path category of Harvey bordification}). Thus, by \cite[Proposition 3.13]{OrsnesJansen23} and \Cref{sheaves on Mg bar as limit of simple pieces}, it suffices to calculate that in $\operatorname{Cat}_\infty$, we have
\begin{align*}
\varinjlim_{(\sigma\leq \tau)\in \operatorname{Tw}(\operatorname{CL}_S)^{op}}(\mathfrak{C}_S)_{/\sigma}\simeq \operatorname{CL}_S.
\end{align*}
For any $\sigma\in \mathfrak{C}_S$, the natural functor $(\mathfrak{C}_S)_{/\sigma}\rightarrow (\operatorname{CL}_S)_{/\sigma}$ sending $\tau\leq \sigma$ to $[\operatorname{id}]\colon \tau\rightarrow \sigma$ is an equivalence by \Cref{equivalence of comma categories}. The claim then follows from the following two facts:
\begin{enumerate}
\item for any $1$-category $\mathcal{C}$, the projection functor $\operatorname{Tw}(\mathcal{C})^{op}\rightarrow \mathcal{C}$ sending $x\rightarrow y$ to $x$ is a $\varinjlim$-equivalence (\cite[Lemma 4.16]{ClausenOrsnesJansen}),
\item for any small $\infty$-category $\mathcal{C}$, we have $\varinjlim_{c\in \mathcal{C}}\mathcal{C}_{/c}\xrightarrow{\sim}\mathcal{C}$ (\cite[Corollary 2.35]{ClausenOrsnesJansen}).
\end{enumerate}
The final claim for non-singular $S$ follows from the equivalence $\mathfrak{SC}_{g,P}^{\operatorname{op}}\simeq \operatorname{CL}_S$ (\Cref{category of stable curves vs Charney-Lee category}).
\end{proof}

\subsection{Consequences}\label{consequences}

In this section, we review some immediate consequences of our calculation. We also get analogous results for the Harvey bordification and compactification using \Cref{exit path category of Harvey bordification} and \Cref{exit path category of Harvey compactification}, but we leave the details to the reader.

By definition of exit path $\infty$-categories, we get an ``exodromy'' equivalence, that is, a classification of constructible sheaves as representations of the exit path $\infty$-category (\cite[Proposition 3.22]{OrsnesJansen23}).

\begin{theorem}\label{exodromy equivalence}
Let $(S,P)$ be a stable $P$-pointed nodal surface with dual graph $\mathbf{G}$ and let $\mathcal{C}$ be a compactly generated $\infty$-category. There is an equivalence of $\infty$-categories
\begin{align*}
\operatorname{Shv}^{\operatorname{cbl}}(\overline{\mathcal{M}}^{\operatorname{top}}_{\mathbf{G}};\mathcal{C})\xrightarrow{\ \sim\ }  \operatorname{Fun}(\operatorname{CL}_S,\mathcal{C}).
\end{align*}
In particular, for any $g$ such that $2g+2-|P|>0$, we have an equivalence
\begin{align*}
\operatorname{Shv}^{\operatorname{cbl}}(\overline{\mathcal{M}}^{\operatorname{top}}_{g,P};\mathcal{C})\xrightarrow{\ \sim\ }  \operatorname{Fun}(\mathfrak{SC}_{g,P}^{op}, \mathcal{C}).
\end{align*}
\end{theorem}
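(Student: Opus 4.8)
The plan is to deduce the statement from \Cref{the exit path category of Mg bar} together with the formalism of exit path $\infty$-categories recalled in \Cref{definition exit path category}. First I would invoke \Cref{the exit path category of Mg bar}: the stratified topological stack $(\overline{\mathcal{M}}^{\operatorname{top}}_{\mathbf{G}},\mathcal{G}_{\mathbf{G}})$ admits an exit path $\infty$-category, and this category is equivalent to $\operatorname{CL}_S$ for any stable nodal surface $(S,P)$ with $\mathbf{G}(S,P)\cong\mathbf{G}$ (so, in particular, the category $\operatorname{CL}_S$ does not depend on the choice of such a surface, up to equivalence). By the very definition of admitting an exit path $\infty$-category, this already supplies the exodromy equivalence
\begin{align*}
\operatorname{Shv}^{\operatorname{cbl}}(\overline{\mathcal{M}}^{\operatorname{top}}_{\mathbf{G}};\mathcal{S})\xrightarrow{\ \sim\ }\operatorname{Fun}(\operatorname{CL}_S,\mathcal{S})
\end{align*}
for space-valued sheaves that are constructible with respect to the fixed stratification over $\mathcal{G}_{\mathbf{G}}$.

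Next I would upgrade the coefficients from $\mathcal{S}$ to an arbitrary compactly generated $\infty$-category $\mathcal{C}$ by appealing to \cite[Proposition 3.22]{OrsnesJansen23}. The idea is that both sides are obtained from their space-valued counterparts by the tensor product in $\operatorname{Pr}^{\operatorname{L}}$: on the one hand $\operatorname{Fun}(\operatorname{CL}_S,\mathcal{C})\simeq\operatorname{Fun}(\operatorname{CL}_S,\mathcal{S})\otimes\mathcal{C}$ because $\operatorname{CL}_S$ is a small $1$-category; on the other hand the three conditions of \Cref{definition exit path category} --- closure of the constructible subcategory under limits and colimits, generation under colimits by atomic objects, and continuity of $\pi^{\ast}$ --- are exactly what is needed to present $\operatorname{Shv}^{\operatorname{cbl}}(\overline{\mathcal{M}}^{\operatorname{top}}_{\mathbf{G}};\mathcal{S})$ as a compactly generated $\infty$-category whose $\mathcal{C}$-valued analogue is computed by tensoring with $\mathcal{C}$, compatibly with the constructibility condition. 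Applying $-\otimes\mathcal{C}$ to the space-valued equivalence then yields the first displayed equivalence of the theorem.

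Finally, for the case $\mathbf{G}=(g,P)$ I would simply take $(S,P)$ to be a non-singular stable surface of genus $g$ --- which exists precisely under the standing stability hypothesis on $(g,P)$ --- so that $\mathbf{G}(S,P)=(g,P)$, and then substitute the identification $\operatorname{CL}_S\simeq\mathfrak{SC}_{g,P}^{\operatorname{op}}$ of \Cref{category of stable curves vs Charney-Lee category}. This gives the second displayed equivalence
\begin{align*}
\operatorname{Shv}^{\operatorname{cbl}}(\overline{\mathcal{M}}^{\operatorname{top}}_{g,P};\mathcal{C})\xrightarrow{\ \sim\ }\operatorname{Fun}(\mathfrak{SC}_{g,P}^{\operatorname{op}},\mathcal{C}).
\end{align*}

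I do not anticipate a real obstacle: the mathematical content is entirely carried by \Cref{the exit path category of Mg bar}, and what remains is bookkeeping with the definitions. The one step I would write out most carefully --- and the only place where anything could conceivably go wrong --- is the passage to $\mathcal{C}$-coefficients, i.e.\ checking that tensoring with $\mathcal{C}$ in $\operatorname{Pr}^{\operatorname{L}}$ respects the constructibility condition on the sheaf side; but this is exactly the assertion of \cite[Proposition 3.22]{OrsnesJansen23}, whose hypotheses hold by \Cref{the exit path category of Mg bar}.
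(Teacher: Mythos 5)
Your proposal matches the paper's argument exactly: the paper derives the theorem as an immediate consequence of \Cref{the exit path category of Mg bar}, the definition of exit path $\infty$-categories (\Cref{definition exit path category}), and \cite[Proposition 3.22]{OrsnesJansen23} for the passage to $\mathcal{C}$-valued coefficients, with the second displayed equivalence obtained by specialising to a non-singular $(S,P)$ and invoking \Cref{category of stable curves vs Charney-Lee category}. You have simply spelt out the bookkeeping the paper leaves implicit.
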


In the case at hand, the exodromy equivalence above allows us to identify the constructible derived category as a derived functor category as below --- this is a consequence of the fact that the exit path $\infty$-category is in fact a $1$-category. We refer to \cite{OrsnesJansen} and \cite{OrsnesJansen23} for details. For an associative ring $R$, consider the derived $\infty$-category of left $R$-modules $\operatorname{LMod}_R\simeq\mathscr{D}(R)$, the $1$-category of sheaves taking values in the $1$-category, $\operatorname{LMod}_R^1$, of left $R$-modules,
\begin{align*}
\operatorname{Shv}_1(\overline{\mathcal{M}}^{\operatorname{top}}_{\mathbf{G}};R),
\end{align*}
and finally consider the following full subcategories of the derived $\infty$-category of sheaves:
\begin{align*}
\mathscr{D}(\operatorname{Shv}_1(\overline{\mathcal{M}}^{\operatorname{top}}_{\mathbf{G}};R))\supseteq \mathscr{D}_{\operatorname{cbl}}(\operatorname{Shv}_1(\overline{\mathcal{M}}^{\operatorname{top}}_{\mathbf{G}};R))\supseteq \mathscr{D}^b_{\operatorname{cbl}}(\operatorname{Shv}_1(\overline{\mathcal{M}}^{\operatorname{top}}_{\mathbf{G}};R));
\end{align*}
the category in the middle is spanned by complexes whose homology sheaves are constructible (with respect to the fixed stratification over the poset of dual graphs), and on the right hand side, we restrict further to the bounded constructible derived category spanned by those complexes which moreover have perfect stalk complexes.

By the discussion in \cite[\S 3.4]{OrsnesJansen23}, the canonical functor
\begin{align*}
\mathscr{D}(\operatorname{Shv}_1(\overline{\mathcal{M}}^{\operatorname{top}}_{\mathbf{G}};R))\longrightarrow \operatorname{Shv}(\overline{\mathcal{M}}^{\operatorname{top}}_{\mathbf{G}};\operatorname{LMod}_R)
\end{align*}
restricts to equivalences
\begin{align*}
&\mathscr{D}_{\operatorname{cbl}}(\operatorname{Shv}_1(\overline{\mathcal{M}}^{\operatorname{top}}_{\mathbf{G}};R))\xrightarrow{\sim}\operatorname{Shv}^{\operatorname{cbl}}(\overline{\mathcal{M}}^{\operatorname{top}}_{\mathbf{G}};\operatorname{LMod}_R) \\
&\mathscr{D}^b_{\operatorname{cbl}}(\operatorname{Shv}_1(\overline{\mathcal{M}}^{\operatorname{top}}_{\mathbf{G}};R))\xrightarrow{\sim}\operatorname{Shv}^{\operatorname{cbl,cpt}}(\overline{\mathcal{M}}^{\operatorname{top}}_{\mathbf{G}};\operatorname{LMod}_R)
\end{align*}
where the right hand $\infty$-category on the lower line is the full subcategory of constructible compact-valued sheaves, that is, it is spanned by the constructible sheaves whose stalks are compact objects in $\operatorname{LMod}_R$, i.e.~perfect complexes.

\begin{theorem}\label{equivalence of derived infinity-categories}
Let $(S,P)$ be a stable nodal surface with dual graph $\mathbf{G}$ and let $R$ be an associative ring. There is an equivalence
\begin{align*}
\operatorname{Shv}^{\operatorname{cbl}}(\overline{\mathcal{M}}^{\operatorname{top}}_{\mathbf{G}};\operatorname{LMod}_R)\xrightarrow{\sim}\mathscr{D}\big(\operatorname{Fun}(\operatorname{CL}_S,\operatorname{LMod}_R^1)\big),
\end{align*}
which restricts to an equivalence
\begin{align*}
\operatorname{Shv}^{\operatorname{cbl,cpt}}(\overline{\mathcal{M}}^{\operatorname{top}}_{\mathbf{G}};\operatorname{LMod}_R)\xrightarrow{\sim} \mathscr{D}^{\operatorname{cpt}}\big(\operatorname{Fun}(\operatorname{CL}_S,\operatorname{LMod}_R^1)\big),
\end{align*}
where the right hand side denotes the full subcategory of complexes of functors complexes of functors $F_\bullet$ into the $1$-category of left $R$-modules satisfying that $F_\bullet(\sigma)$ is a perfect complex for every object $\sigma$ of $\operatorname{CL}_S$.
\end{theorem}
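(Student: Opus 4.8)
The plan is to deduce the statement formally from the exodromy equivalence of \Cref{exodromy equivalence} together with the fact, recalled just before the statement, that the exit path $\infty$-category $\operatorname{CL}_S$ is a $1$-category. Since $\operatorname{LMod}_R\simeq\mathscr{D}(R)$ is compactly generated (with compact generator $R$), \Cref{exodromy equivalence} applies with $\mathcal{C}=\operatorname{LMod}_R$ and gives an equivalence
\begin{align*}
\operatorname{Shv}^{\operatorname{cbl}}(\overline{\mathcal{M}}^{\operatorname{top}}_{\mathbf{G}};\operatorname{LMod}_R)\xrightarrow{\ \sim\ }\operatorname{Fun}(\operatorname{CL}_S,\operatorname{LMod}_R).
\end{align*}
So it remains to identify the right-hand side with the derived functor $\infty$-category $\mathscr{D}(\operatorname{Fun}(\operatorname{CL}_S,\operatorname{LMod}_R^1))$, and then to match up the compact-valued subcategories.

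The first step is the general fact that for any small $1$-category $I$ and any associative ring $R$ there is an equivalence of presentable stable $\infty$-categories $\operatorname{Fun}(I,\operatorname{LMod}_R)\simeq\mathscr{D}(\operatorname{Fun}(I,\operatorname{LMod}_R^1))$, which is exactly what is recorded in \cite[\S 3.4]{OrsnesJansen23} and which I would cite here with $I=\operatorname{CL}_S$. The point is that the abelian category $\operatorname{Fun}(I,\operatorname{LMod}_R^1)$ — equivalently, the category of (right) modules over the $R$-linear category $R[I]$ freely generated by $I$ — is Grothendieck and admits a set of compact projective generators, namely the $R$-linearised representables $R[\operatorname{Hom}_I(i,-)]$, $i\in I$; its derived $\infty$-category is then the presentable stable $\infty$-category freely generated under colimits by this set of projectives, and this is precisely $\operatorname{Fun}(I,\operatorname{LMod}_R)$, the generators corresponding to the left Kan extensions along $\{i\}\hookrightarrow I$ of $R\in\operatorname{LMod}_R$. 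Under this equivalence the standard $t$-structure on $\mathscr{D}(\operatorname{Fun}(I,\operatorname{LMod}_R^1))$ matches the pointwise $t$-structure on $\operatorname{Fun}(I,\operatorname{LMod}_R)$, with common heart $\operatorname{Fun}(I,\operatorname{LMod}_R^1)$. Combining this with the displayed equivalence above yields the first asserted equivalence.

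For the restriction to the compact-valued subcategories I would trace stalks through the two equivalences. By construction of the exit path $\infty$-category and of the exodromy equivalence, a constructible sheaf $\mathcal{F}$ corresponds to the functor on $\operatorname{CL}_S$ whose value at an object $\sigma$ is the stalk of $\mathcal{F}$ over the stratum indexed by $[\mathbf{G}(\sigma)]$; hence $\mathcal{F}$ has stalks that are compact objects of $\operatorname{LMod}_R$, i.e. perfect complexes, if and only if the corresponding functor $\operatorname{CL}_S\to\operatorname{LMod}_R$ takes perfect values. On the derived side, under $\operatorname{Fun}(\operatorname{CL}_S,\operatorname{LMod}_R)\simeq\mathscr{D}(\operatorname{Fun}(\operatorname{CL}_S,\operatorname{LMod}_R^1))$ an object is represented by a complex $F_\bullet$ of $\operatorname{LMod}_R^1$-valued functors, and since evaluation at $\sigma$ is exact and preserves the generating projectives it intertwines this equivalence with the corresponding equivalence $\operatorname{LMod}_R\simeq\mathscr{D}(\operatorname{LMod}_R^1)$; thus the value of $F_\bullet$ at $\sigma$ in $\operatorname{LMod}_R$ is the complex $F_\bullet(\sigma)$, which is compact precisely when $F_\bullet(\sigma)$ is a perfect complex of $R$-modules. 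Therefore the equivalence restricts to $\operatorname{Shv}^{\operatorname{cbl,cpt}}(\overline{\mathcal{M}}^{\operatorname{top}}_{\mathbf{G}};\operatorname{LMod}_R)\xrightarrow{\ \sim\ }\mathscr{D}^{\operatorname{cpt}}(\operatorname{Fun}(\operatorname{CL}_S,\operatorname{LMod}_R^1))$, and combining with the equivalences recalled immediately before the theorem (and using that $\mathcal{G}_{\mathbf{G}}$ is a finite poset, so that perfect stalks force cohomological boundedness) one also recovers the bounded constructible derived statement of the introduction.

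The step I expect to be the main obstacle — or rather the only real content — is the identification $\operatorname{Fun}(I,\operatorname{LMod}_R)\simeq\mathscr{D}(\operatorname{Fun}(I,\operatorname{LMod}_R^1))$: one must be careful that this is an equivalence of stable $\infty$-categories, not merely of triangulated homotopy categories, and that the $t$-structures and generating projectives are matched, since all the subsequent bookkeeping with stalks relies on it. This is precisely the place where the fact that $\operatorname{CL}_S$ is a $1$-category (the stratified $K(\pi,1)$ property of $\overline{\mathcal{M}}_{\mathbf{G}}$) enters, and it is exactly the statement established in \cite[\S 3.4]{OrsnesJansen23}; modulo that citation the proof is a short diagram chase.
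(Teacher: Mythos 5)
Your proposal is correct and follows essentially the same route as the paper: the paper's proof is a one-line citation to \cite[Proposition 3.23]{OrsnesJansen23}, invoked because $\operatorname{CL}_S$ is a $1$-category, and your argument amounts to unpacking what that cited proposition does — combining the exodromy equivalence (\Cref{exodromy equivalence}) with the identification $\operatorname{Fun}(I,\operatorname{LMod}_R)\simeq\mathscr{D}(\operatorname{Fun}(I,\operatorname{LMod}_R^1))$ for a $1$-category $I$ and tracking compact objects via stalks. The extra detail you supply (compact projective generators, matching $t$-structures, stalk-level bookkeeping) is accurate and is precisely the content the paper delegates to the cited proposition.
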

\begin{proof}
Since the exit path $\infty$-category of $\overline{\mathcal{M}}_\mathbf{G}^{\operatorname{top}}$ is a $1$-category, \cite[Proposition 3.23]{OrsnesJansen23} applies.
\end{proof}

Taking homotopy categories, one gets an analogous $1$-categorical statement (see \cite[Corollary 3.24]{OrsnesJansen23}).

As remarked in the introduction, our calculation is a refinement of the work of Charney--Lee (\cite{CharneyLee84}) and Ebert--Giansiracusa (\cite{EbertGiansiracusa}). This is a consequence of the following corollary (and the succeeding remark).

\begin{corollary}\label{homotopy type}
Let $(S,P)$ be a stable nodal surface with dual graph $\mathbf{G}$. The moduli stack $\overline{\mathcal{M}}^{\operatorname{top}}_{\mathbf{G}}$ admits a fundamental $\infty$-groupoid $\Pi_{\mathbf{G}}$ and the functor
\begin{align*}
\operatorname{CL}_S\rightarrow \Pi_{\mathbf{G}}
\end{align*}
is a weak homotopy equivalence. In particular, for any $g$ such that $2g+2-|P|>0$, there is a weak homotopy equivalence $\mathfrak{SC}_{g,P}\rightarrow \Pi(\overline{\mathcal{M}}^{\operatorname{top}}_{g,P})$.
\end{corollary}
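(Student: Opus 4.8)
The plan is to read off the fundamental $\infty$-groupoid from the exit path $\infty$-category identified in \Cref{the exit path category of Mg bar}, using the general relationship between the two established in \cite[\S 3.3]{OrsnesJansen23}. Recall that, by definition, the fundamental $\infty$-groupoid $\Pi(\overline{\mathcal{M}}^{\operatorname{top}}_{\mathbf{G}})$ — when it exists — is the idempotent complete $\infty$-category classifying locally constant sheaves, i.e.\ the shape of $\operatorname{Shv}(\overline{\mathcal{M}}^{\operatorname{top}}_{\mathbf{G}})$. Since $\mathcal{G}_{\mathbf{G}}$ is a stratification of $\overline{\mathcal{M}}^{\operatorname{top}}_{\mathbf{G}}$, every locally constant sheaf is $\mathcal{G}_{\mathbf{G}}$-constructible, so locally constant sheaves form a full subcategory of $\operatorname{Shv}^{\operatorname{cbl}}(\overline{\mathcal{M}}^{\operatorname{top}}_{\mathbf{G}};\mathcal{S})$. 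By \Cref{the exit path category of Mg bar} and \Cref{exodromy equivalence} this $\infty$-category is equivalent to $\operatorname{Fun}(\operatorname{CL}_S,\mathcal{S})$, and — this is the one input I would take from \cite[\S 3.3]{OrsnesJansen23} — a constructible sheaf is locally constant precisely when the corresponding functor inverts every morphism of $\operatorname{CL}_S$. Hence locally constant sheaves are identified with $\operatorname{Fun}(|\operatorname{CL}_S|,\mathcal{S})$, where $|\operatorname{CL}_S|$ is the $\infty$-groupoid completion, and the comparison functor $\operatorname{CL}_S\to |\operatorname{CL}_S|$ is the localisation at all morphisms.

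It follows that $\overline{\mathcal{M}}^{\operatorname{top}}_{\mathbf{G}}$ admits a fundamental $\infty$-groupoid, namely $\Pi_{\mathbf{G}}:=|\operatorname{CL}_S|$ (an $\infty$-groupoid is automatically idempotent complete), and the functor $\operatorname{CL}_S\to \Pi_{\mathbf{G}}$ of the statement is precisely this localisation functor. Any localisation functor $\mathcal{C}\to|\mathcal{C}|$ is tautologically a weak homotopy equivalence: it induces an equivalence on geometric realisations since $\||\mathcal{C}|\|\simeq|\mathcal{C}|$. This proves the first assertion. (Equivalently, one can simply quote the general statement of \cite[\S 3.3]{OrsnesJansen23}: whenever a stratified $\infty$-stack admits an exit path $\infty$-category, it admits a fundamental $\infty$-groupoid, obtained by localising the former at all its morphisms.)

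For the final assertion, take $\mathbf{G}=(g,P)$ the one-vertex graph of genus $g$ and let $(S,P)$ be a non-singular stable surface of genus $g$, so that $\operatorname{CL}_S\simeq\mathfrak{SC}_{g,P}^{\operatorname{op}}$ by \Cref{category of stable curves vs Charney-Lee category}. Geometric realisation is insensitive to passing to opposite categories, so the localisation functor $\operatorname{CL}_S\to|\operatorname{CL}_S|$ corresponds, after taking opposites, to a functor $\mathfrak{SC}_{g,P}\to|\mathfrak{SC}_{g,P}|\simeq|\mathfrak{SC}_{g,P}^{\operatorname{op}}|\simeq|\operatorname{CL}_S|=\Pi(\overline{\mathcal{M}}^{\operatorname{top}}_{g,P})$ which, being a weak homotopy equivalence of $1$-categories (a property of the induced map on realisations, hence preserved under passing to opposites), gives the claimed weak homotopy equivalence $\mathfrak{SC}_{g,P}\to\Pi(\overline{\mathcal{M}}^{\operatorname{top}}_{g,P})$. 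The only real point of care is the translation ``locally constant constructible sheaf $\leftrightarrow$ functor inverting all morphisms of the exit path category'', which is precisely the content borrowed from \cite{OrsnesJansen23}; everything else is formal manipulation of the already-established exodromy equivalence.
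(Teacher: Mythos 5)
Your proof is correct, but it takes a genuinely different route from the paper's. The paper re-invokes the colimit decomposition $\operatorname{Shv}(\overline{\mathcal{M}}^{\operatorname{top}}_{\mathbf{G}})\simeq\varprojlim\operatorname{Shv}(\overline{X}^{S/\tau}_\sigma)$ of \Cref{sheaves on Mg bar as limit of simple pieces} and applies two general results from \cite{OrsnesJansen23}: Proposition~3.13 to deduce existence of the fundamental $\infty$-groupoid from the existence (and contractibility) of the pieces $\overline{X}^{S/\tau}_\sigma$, and Corollary~3.21 to conclude that the canonical functor from the exit path $\infty$-category to the fundamental $\infty$-groupoid is a weak homotopy equivalence (or alternatively a direct colimit computation $\Pi_{\mathbf{G}}\simeq\varinjlim_{\operatorname{Tw}(\operatorname{CL}_S)^{\operatorname{op}}}\ast\simeq|\operatorname{CL}_S|$). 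You instead take the already-established exodromy equivalence $\operatorname{Shv}^{\operatorname{cbl}}\simeq\operatorname{Fun}(\operatorname{CL}_S,\mathcal{S})$ as the starting point, identify locally constant sheaves as the full subcategory of morphism-inverting functors, and read off $\Pi_{\mathbf{G}}\simeq|\operatorname{CL}_S|$ together with the localisation functor $\operatorname{CL}_S\to|\operatorname{CL}_S|$ in one step. Both arguments hinge on the same underlying input from \cite{OrsnesJansen23} (the correspondence between locally constant constructible sheaves and functors from the exit path category inverting all morphisms is precisely what makes the cited Corollary~3.21 go through), but the paper treats this as a black box while you unpack it; your route avoids re-running the colimit decomposition, which buys a more formal and self-contained argument once the exit path category is in hand, at the cost of having to locate the precise characterisation of locally constant sheaves in \cite{OrsnesJansen23}. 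Your handling of the opposite category in the final claim — that $\operatorname{CL}_S\simeq\mathfrak{SC}_{g,P}^{\operatorname{op}}$ yields a weak homotopy equivalence $\mathfrak{SC}_{g,P}\to\Pi(\overline{\mathcal{M}}^{\operatorname{top}}_{g,P})$ because groupoid completion and weak homotopy type are insensitive to passing to opposites — is also correct and is a point the paper leaves implicit.
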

\begin{proof}
First of all note that the stack admits an fundamental $\infty$-groupoid by combining \Cref{sheaves on Mg bar as limit of simple pieces} and \cite[Proposition 3.13]{OrsnesJansen23} with the fact that each $\overline{X}_\sigma^{S/\tau}$ admits a fundamental $\infty$-groupoid. The functor $\operatorname{CL}_S\rightarrow \Pi_{\mathbf{G}}$ is then a weak homotopy equivalence by \cite[Corollary 3.21]{OrsnesJansen23}. We could also use \cite[Proposition 3.13]{OrsnesJansen23} to directly calculate that 
\begin{align*}
\Pi_\mathbf{G}\simeq |\operatorname{CL}_S|,
\end{align*}
as each $\overline{X}_\sigma^{S/\tau}$ is contractible and $\varinjlim_{\mathcal{C}}\ast\simeq |\mathcal{C}|$ for any $\infty$-category $\mathcal{C}$ (\cite[Corollary 2.10]{ClausenOrsnesJansen}).
\end{proof}

\begin{remark}\label{EG-functoriality of weak equivalence}
By \cite[Proposition 3.13]{OrsnesJansen23}, the weak homotopy equivalence of the above corollary is functorial with respect to inclusions of strata and closures of strata in the sense of \cite[\S 7]{EbertGiansiracusa}.
\end{remark}

\appendix

\section{Dual graphs}\label{dual graphs}

In this appendix, we introduce the notion of a stable dual graph which enters into the story via the natural stratification of the Deligne--Mumford--Knudsen compactification.

By a graph, we mean a finite connected (multi-)graph, that is, we allow multiple edges and we will also allow half-edges. More precisely, a \textit{graph} $G=(V,E,\iota)$ consists of the following data:
\begin{itemize}
\item a set of \textit{vertices} $V$;
\item a set of \textit{half-edges} $E=\coprod_{v\in V}E_v$ equipped with a partition indexed by $V$;
\item an involution $\iota\colon E\rightarrow E$.
\end{itemize}
The pairs of half-edges interchanged by $\iota$ are called \textit{edges}, the set of half-edges fixed by $\iota$ are called \textit{legs}, and a half-edge $e\in E$ is said to be \textit{incident to $v$} if it belongs to $E_v$. We will generally omit $\iota$ from the notation.

\begin{definition}
A \textit{dual graph} is a pair $(G,\omega)$ where:
\begin{enumerate}[label=(\roman*)]
\item $G=(V,E)$ is a graph as above;
\item the \textit{weight function} $\omega\colon V\rightarrow \Z_{\geq 0}$, $v\mapsto g_v$, is any function on the set of vertices.
\end{enumerate}

The \textit{genus} of a dual graph $(G,\omega)$ is
\begin{align*}
g(G,\omega)=1-\chi(G)+\sum_{v\in V} g_v.
\end{align*}

For a finite set $P$, a \textit{$P$-pointed dual graph} is a triple $(G,\omega,m)$ where $(G,\omega)$ is a dual graph as above and the \textit{$P$-marking} $m\colon P\rightarrow E$ is a bijective labelling of the legs of the graph.

For any vertex $v\in V$, we write $n_v=|E_v|$, that is, the number of half-edges which are incident to $v$. A dual graph $(G,\omega,m)$ is said to be \textit{stable} if
\begin{align*}
2g_v-2 + n_v>0\quad \text{for every vertex }v\in V.
\end{align*}

An \textit{isomorphism} of $P$-pointed dual graphs is given by an isomorphism of graphs such that the bijection on vertex sets preserves the additional structure given by the weight functions and $P$-markings. We will also write $\mathbf{G}=(G,\omega,m)$ to simplify notation when we do not need to specify the data of the dual graph explicitly.
\end{definition}

\Cref{fig:stable dual graphs} shows examples of stable $\{a,b,c\}$-pointed dual graphs of genus $6$.

\begin{figure}[H]
\centering
\begin{tikzpicture}[node distance={15mm}, thick, main/.style = {draw, circle}] 
\node[main] (1) {$1$}; 
\node[main] (2) [right of=1] {$1$}; 
\node[main] (3) [below of=1] {$0$}; 
\node[main] (4) [below of=2] {$0$};
\node (5) [above right of=2] {$a$};
\node (6) [left of=3] {$b$};
\node (7) [below right of=4] {$c$};
\draw (1) -- (2); 
\draw (1) -- (3);
\draw (2) -- (4);
\draw (3) -- (4);
\draw (2) -- (5);
\draw (3) -- (6);
\draw (4) -- (7);
\draw (1) to [out=175,in=95,looseness=7] (1);
\draw (3) to [out=230,in=310,looseness=7] (3);
\draw (4) to [out=65,in=345,looseness=7] (4);
\end{tikzpicture}
\begin{tikzpicture}[node distance={15mm}, thick, main/.style = {draw, circle}] 
\node[main] (1) {$1$}; 
\node[main] (2) [right of=1] {$1$}; 
\node[main] (3) [below of=2] {$2$};
\node (4) [above right of=2] {$a$};
\node (5) [below left of=3] {$b$};
\node (6) [below right of=3] {$c$};
\draw (1) -- (2); 
\draw (1) -- (3);
\draw (2) -- (3);
\draw (2) -- (4);
\draw (3) -- (5);
\draw (3) -- (6);
\draw (1) to [out=175,in=95,looseness=7] (1);
\end{tikzpicture}\quad
\begin{tikzpicture}[node distance={15mm}, thick, main/.style = {draw, circle}] 
\node[main] (1) {$5$};
\node (2) [above right of=1] {$a$};
\node (3) [right of=1] {$b$};
\node (4) [below right of=1] {$c$};
\node (5) [below of=1] {$ $};
\draw (1) -- (2); 
\draw (1) -- (3);
\draw (1) -- (4);
\draw (1) to [out=175,in=95,looseness=7] (1);
\end{tikzpicture}\qquad\qquad
\begin{tikzpicture}[node distance={15mm}, thick, main/.style = {draw, circle}] 
\node[main] (1) {$6$};
\node (2) [above right of=1] {$a$};
\node (3) [right of=1] {$b$};
\node (4) [below right of=1] {$c$};
\node (5) [below of=1] {$ $};
\draw (1) -- (2); 
\draw (1) -- (3);
\draw (1) -- (4);
\end{tikzpicture}
\caption{Examples of stable $\{a,b,c\}$-pointed dual graphs of genus $6$. From left to right we refer to these by (i), (ii), (iii), (iv) in future examples.}
\label{fig:stable dual graphs}
\end{figure}
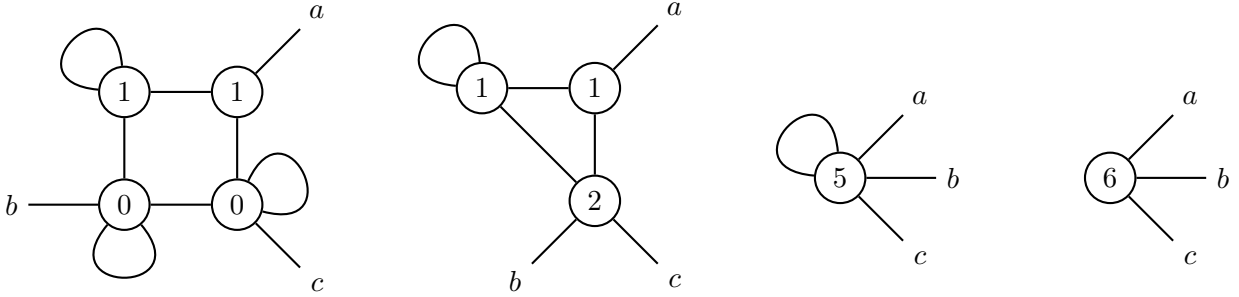

We go on to define a contraction of a dual graph. See also \Cref{contraction of graph intuition} below for intuition and examples.

\begin{construction}\label{contraction of graph}
Let $\mathbf{G}=(G,\omega,m)$ be a $P$-pointed dual graph with $G=(V,E,\iota)$ and let $I=(V,E')$ be a subgraph of $G$ containing all the vertices and no legs; $I$ is completely determined by the subset $E'\subset E$ of edges, that is, $E'$ is a subset of the half-edges containing no legs and such that if $e\in E'$ then also $\iota(e)\in E'$. Denote by $\mathbf{G}_I=(G_I,\omega_I,m_I)$ the dual graph obtained by collapsing the edges in $I$. More precisely $\mathbf{G}_I=(G_I,\omega_I,m_I)$ is defined as follows: let $W$ denote the set of connected components of $I$ and define the graph $G_I=(V_I,E_I)$ with
\begin{align*}
V_I=W,\quad\text{and}\quad E_I=E\smallsetminus E'
\end{align*}
such that the half-edges of $E_I$ that are incident to $w\in W$ are exactly those half-edges of $E\smallsetminus E'$ that are incident to some vertex of the connected component $I_w$ corresponding to $w$:
\begin{align*}
(E_I)_w=\{e\in E\smallsetminus E'\mid e\in E_v \text{ for some } v\in I_w\}.
\end{align*}
The weight function is given by
\begin{align*}
w_I\colon V_I\rightarrow \Z_{\geq 0},\quad w\mapsto g(I_w,\omega\vert_{I_w});
\end{align*}
that is, it sends $w$ to the genus of the component $I_w$ equipped with the restricted weight function of $G$. Finally, the marking $m_I\colon P\rightarrow E_I$ is the factorisation of $m$ through the inclusion $E\smallsetminus E'\subset E$.
\end{construction}

\begin{remark}\label{contraction of graph intuition}
If unfamiliar with the construction above, it may be helpful to think of the contraction in terms of the geometric realisations of the graphs: given a dual graph $\mathbf{G}=(G,\omega,m)$ and a subgraph $I$ of $G$ as above, there is a continuous map $|G|\rightarrow|G_I|$ given by contracting each component $|I_w|\subset |G|$ to the vertex $w$ in $|G_I|$. The labelling of the legs stays fixed since $I$ is assumed to contain no legs. In \Cref{fig:stable dual graphs}, we see that the graph (ii) is a contraction of the graph (i), where the subgraph $I$ is the one given by the lower level of the graph. In fact, each graph in \Cref{fig:stable dual graphs} is a contraction of the graphs to the left of it. See also \cite[Chapter XII, \S 10]{ArbarelloCornalbaGriffiths}.
\end{remark}

\begin{definition}
For $\mathbf{G}$ and $I$ as above, we call $\mathbf{G}_I$ a \textit{contraction} of $\mathbf{G}$. For a pair of $P$-pointed dual graphs $\mathbf{G}$, $\mathbf{G}'$, we say that $\mathbf{G}'$ is a \textit{specialisation} of $\mathbf{G}$ if $\mathbf{G}$ is isomorphic to a contraction of $\mathbf{G}'$. This defines a partial order on the set of isomorphism classes of $P$-pointed genus $g$ stable dual graphs:
\begin{align*}
[\mathbf{G}']\leq [\mathbf{G}]\quad\text{if } \mathbf{G}'\text{ is a specialisation of }\mathbf{G},\text{ i.e.~if }\mathbf{G}\cong \mathbf{G}'_I\text{ for some } I.
\end{align*}

We denote by $\mathcal{G}_{g,P}$ the \textit{poset of stable $P$-pointed genus $g$ dual graphs}.
\end{definition}

\begin{remark}
In \Cref{fig:stable dual graphs}, we have $(i)\leq (ii)\leq (iii)\leq (iv)$. Roughly put, the more edges the graph has, the smaller it is; the maximal element in $\mathcal{G}_{g,P}$ is the one vertex graph $(g,P)$ with $|P|$ labelled legs and whose weight function sends the single vertex to $g$. In the construction below, we see that we may interpret the notion of being smaller as being ``more nodal''.
\end{remark}

We can associate a stable dual graph to a stable nodal curve.

\begin{construction}[Dual graph associated to a stable curve]\label{dual graph associated to stable curve}
Let $(X; p_i)$ be a $P$-pointed nodal curve over $\C$ and let $\widetilde{X}\rightarrow X$ denote the normalisation of $X$. The \textit{dual graph} $\mathbf{G}(X;p_i)$ associated to $(X;p_i)$ is the graph $(G,\omega,m)$ defined as follows
\begin{itemize}
\item the set of vertices $V$ is the set of connected components of $\widetilde{X}$;
\item the set of half-edges is $E=\coprod_{v\in V} P_v$ where $P_v$ is the set of points of the connected component corresponding to $v$ that are mapped to a node or a marked point in $X$;
\item the involution $\iota \colon E\rightarrow E$ is given by
\begin{align*}
e\mapsto \begin{cases}
e & \text{if } e \text{ corresponds to a marked point}, \\
e' & \text{if }e \text{ and } e' \text{ correspond to the same node in } X.
\end{cases}
\end{align*}
Thus the legs of the graph correspond to the marked points and the edges to the nodes.
\item the weight function $\omega\colon V\rightarrow \Z_{\geq 0}$ sends a vertex $v$ to the arithmetic genus $g_v$ of the corresponding component of $\widetilde{X}$;
\item the marking $m\colon P\rightarrow E$ sends $i$ to the leg corresponding the marked point $p_i$.
\end{itemize}
The genus of the graph $\mathbf{G}(X;p_i)$ defined above is equal to the genus of $(X;p_i)$ (\cite[Chapter X, \S 2]{ArbarelloCornalbaGriffiths}). Moreover, by \cite[Chapter X, Lemma 3.1]{ArbarelloCornalbaGriffiths}, $\mathbf{G}(X;p_i)$ is stable if and only if $(X;p_i)$ is stable. 
\end{construction}

\begin{figure}[H]
\centering
\includegraphics[scale=0.2]{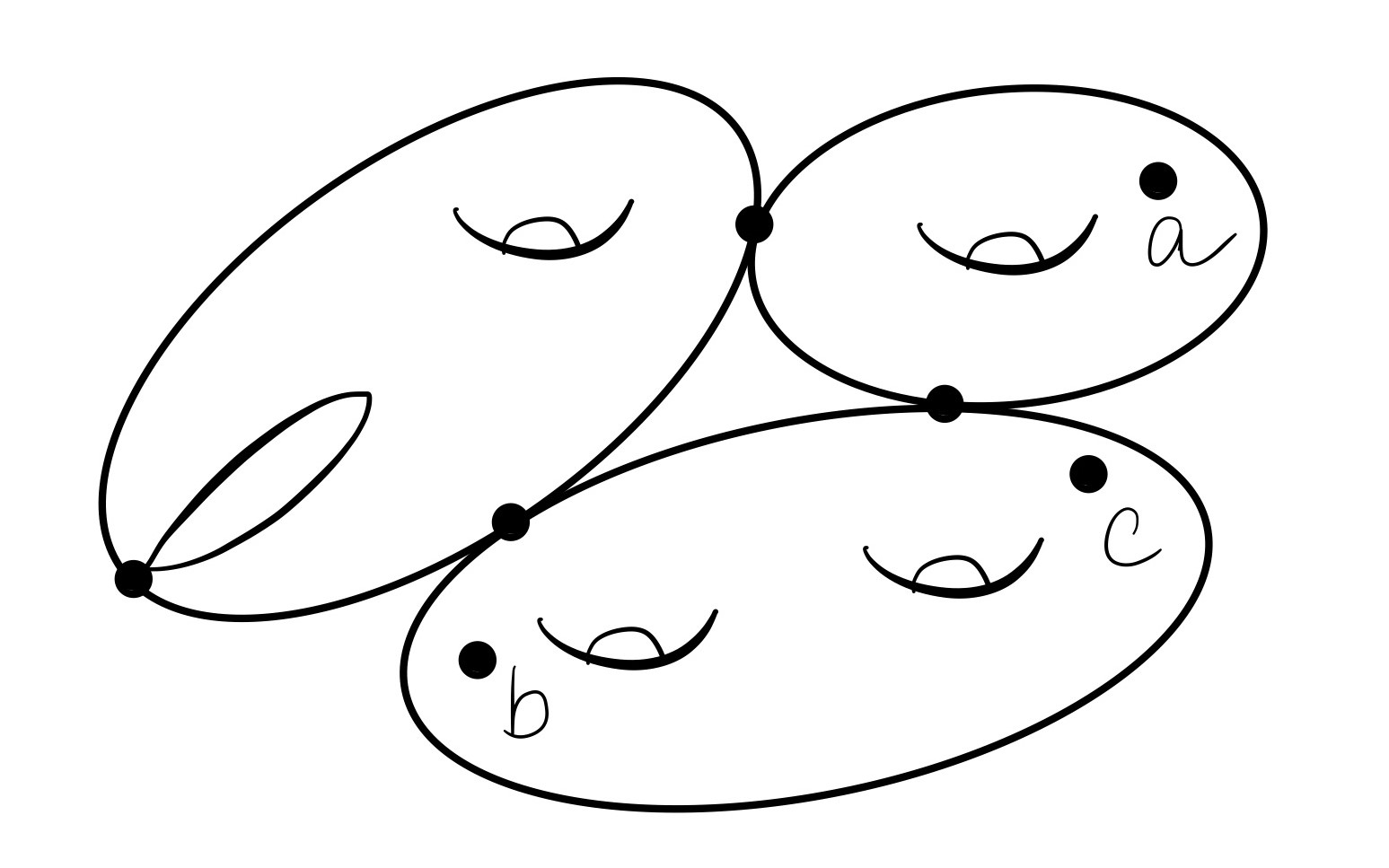}
\includegraphics[scale=0.2]{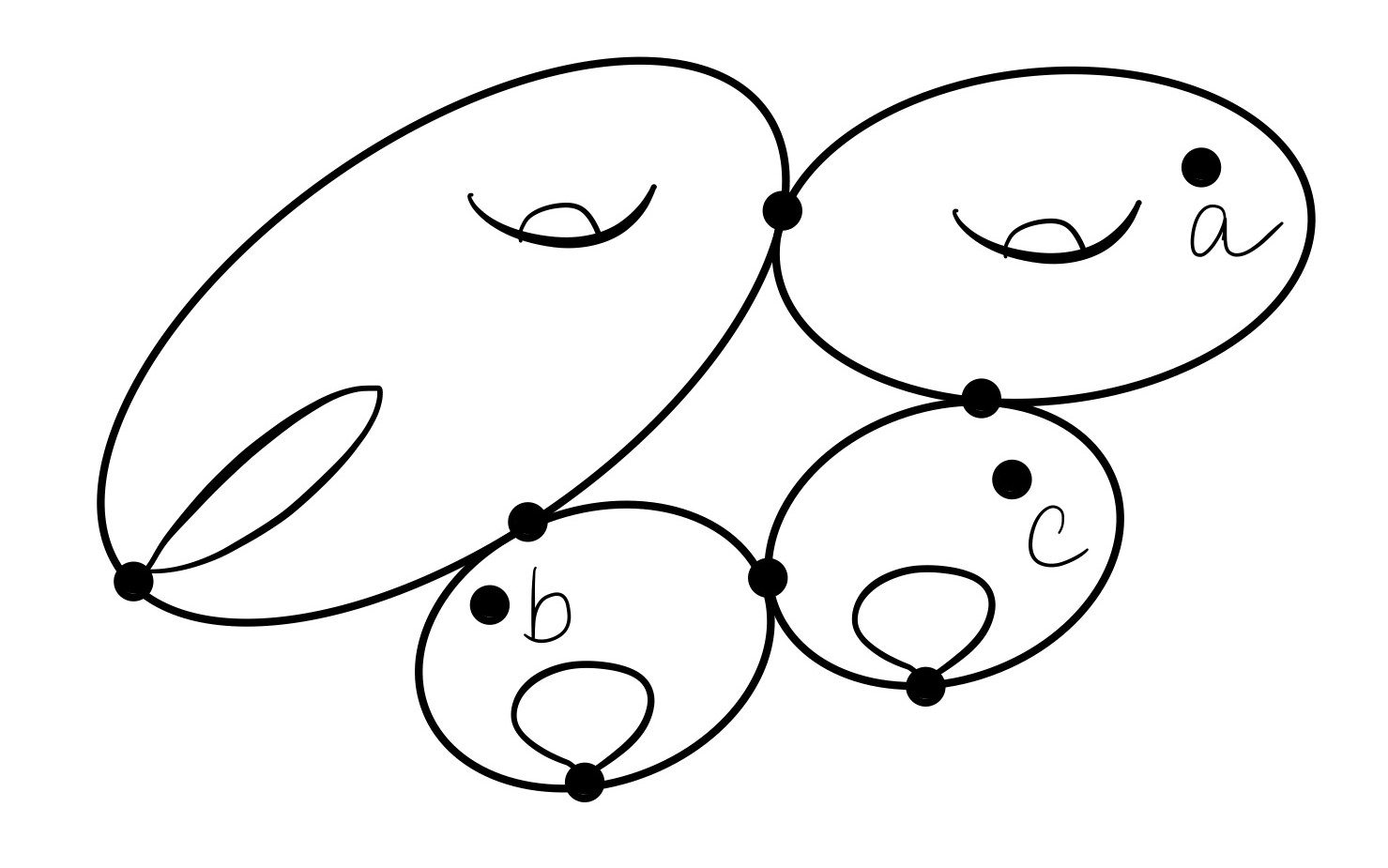}
\caption{Here are two stable $\{a,b,c\}$-pointed nodal curves of genus $6$. The dual graph associated to the curve on the left is graph (ii) of \Cref{fig:stable dual graphs} and the dual graph associated to the curve on the right is the graph (i).}
\label{fig:stable nodal curves}
\end{figure}

\section{The Harvey compactification as a real oriented blow-up}\label{appendix}

In this appendix, we identify the Harvey compactification of $\mathcal{M}_{g,P}$ as constructed by Ivanov \cite{Ivanov} with the real oriented blow-up of the Deligne--Mumford--Knudsen compactification of $\mathcal{M}_{g,P}$. Exhibiting the Harvey compactification as a real oriented blow-up is originally due to Looijenga \cite{Looijenga95} and the details are essentially given in \cite[Chapter XV\S 8]{ArbarelloCornalbaGriffiths}. In both of these sources, however, the Harvey bordification plays an auxiliary role as the bordification of interest is instead the so-called augmented Teichmüller space. For this reason, we choose to make the explicit identification here, taking Ivanov's model as our starting point. We also take care to identify the stacks and not just of the underlying coarse moduli spaces. We mostly follow \cite{ArbarelloCornalbaGriffiths}; more specifically \S 9 of Chapter X on real oriented blow-ups and \S 8 of Chapter 15 on the boundary of Teichmüller space.

We refer to \cite[pp.149-151]{ArbarelloCornalbaGriffiths} for a recap of real oriented blow-ups as needed in our situation (see also \cite{Gillam} for the general case and basic properties). We refrain from explicitly recollecting the construction here as we will in any case be referring to \cite{ArbarelloCornalbaGriffiths} for many of the finer details in this appendix. Given a smooth analytic space $X$ with a normal crossings divisor $D\subseteq X$, we denote the real oriented blow-up of $X$ at $D$ by $\operatorname{Bl}_DX$, or if $D$ is implicitly given (as for example the boundary), then we may write $\operatorname{Bl}_\partial X$. For us it suffices to deal explicitly with real oriented blow-ups of polydisks $B$ with $D$ a collection of coordinate hyperplanes (as spelt out in \cite{ArbarelloCornalbaGriffiths}). Since the real oriented blow-up is a locally defined construction, we can generalise to Deligne--Mumford stacks.

\begin{definition}
Given a smooth analytic Deligne--Mumford stack $\mathcal{M}$ with a normal crossings divisor $D\subseteq \mathcal{M}$ with groupoid presentation $X_1\rightrightarrows X_0$ and subsequent normal crossings divisors $D_1\rightrightarrows D_0$, we define the \textit{real oriented blow-up} $\operatorname{Bl}_D\mathcal{M}$ to be the stack presented by the groupoid
\begin{equation*}
\operatorname{Bl}_{D_1} X_1\rightrightarrows \operatorname{Bl}_{D_0} X_0.\qedhere
\end{equation*}
\end{definition}

Recall that the Harvey compactification of $\mathcal{M}_{g,P}$ is the quotient stack $[\Gamma_{S}\backslash \overline{\mathcal{T}}_{S}]$ of the Harvey bordification $\overline{\mathcal{T}}_{\!\!S}$ of Teichmüller space by the action of the mapping class group $\Gamma_S$. We refer to \Cref{Harvey bordification} and \cite{Ivanov} for details on the Harvey bordification of Teichmüller space and the action of the mapping class group on it, and we refer to \cite[p.286]{ArbarelloCornalbaGriffiths} for details on quotient stacks $[G\backslash X]$. We will consider the real oriented blow-up as a smooth manifold with corners. This is due to the fact that Ivanov's construction of the Harvey bordification of Teichmüller space is not real analytic (see comment on p.1179 of \cite{Ivanov}). The aim of this appendix is to prove the following theorem:

\begin{theorem}\label{Harvey compactification is real oriented blow-up}
Let $(S,P)$ be a non-singular stable surface of genus $g$ and let $\overline{\mathcal{M}}_{g,P}$ denote the algebraic stack of stable genus $g$, $P$-pointed curves. In the category of stacks over smooth manifolds with corners, the Harvey compactification can be identified with the real oriented blow-up of $\overline{\mathcal{M}}_{g,P}^{\operatorname{an}}$:
\begin{align*}
\operatorname{Bl}_\partial(\overline{\mathcal{M}}_{g,P}^{\operatorname{an}}) \xrightarrow{\ \cong \ } \left[\Gamma_{S}\backslash \overline{\mathcal{T}}_{S}\right]
\end{align*}
\end{theorem}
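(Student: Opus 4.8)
\textbf{Proof strategy for \Cref{Harvey compactification is real oriented blow-up}.}

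The plan is to reduce the identification to a local and equivariant statement, working with the universal covers / uniformising spaces on both sides and then descending along the mapping class group action. First I would recall that $\overline{\mathcal{M}}^{\operatorname{an}}_{g,P}$ is presented by a groupoid $X_1\rightrightarrows X_0$ where $X_0$ is a disjoint union of Kuranishi families (\cite[Chapter XI, Theorem 3.17]{ArbarelloCornalbaGriffiths}), whose base spaces are polydisks $B$ in which the boundary divisor $\partial$ is cut out by a union of coordinate hyperplanes $\{z_1\cdots z_k=0\}$ (one coordinate per node). Correspondingly $\operatorname{Bl}_\partial(\overline{\mathcal{M}}^{\operatorname{an}}_{g,P})$ is presented by $\operatorname{Bl}_{D_1}X_1\rightrightarrows \operatorname{Bl}_{D_0}X_0$, and locally $\operatorname{Bl}_D B$ is the manifold with corners $(\R_{\geq 0}\times S^1)^k\times (\text{disk})^{m-k}$, with the point being that the real oriented blow-up simply records, for each node-smoothing coordinate $z_j$, its modulus and argument separately, letting the modulus degenerate to $0$. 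On the other side, $[\Gamma_S\backslash\overline{\mathcal{T}}_{\!\!S}]$ is by definition presented by the action groupoid $\Gamma_S\times \overline{\mathcal{T}}_{\!\!S}\rightrightarrows \overline{\mathcal{T}}_{\!\!S}$.

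The key step is then to produce a $\Gamma_S$-equivariant diffeomorphism (of manifolds with corners) $\overline{\mathcal{T}}_{\!\!S}\xrightarrow{\ \cong\ } \widetilde{\operatorname{Bl}}$, where $\widetilde{\operatorname{Bl}}$ is the pullback of $\operatorname{Bl}_\partial(\overline{\mathcal{M}}^{\operatorname{an}}_{g,P})$ along the uniformisation $\mathcal{T}_S\to \mathcal{M}^{\operatorname{an}}_{g,P}$ — i.e.\ the ``Teichmüller-level real oriented blow-up''. On the interior this is just the classical identification $\mathcal{T}_S\cong$ universal cover of $\mathcal{M}^{\operatorname{an}}_{g,P}$ (\Cref{moduli stack as quotient of Teichmuller}). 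To extend it over the boundary I would compare coordinates: Ivanov's charts on $\overline{\mathcal{T}}_{\!\!S}$ (\Cref{charts on Harvey bordification}) express a neighbourhood of the $\sigma$-stratum, for $\mu\geq\sigma$ a maximal curve system, as $\R^\sigma_{\geq 0}\times \R^{\mu\smallsetminus\sigma}_{>0}\times \R^\mu$, i.e.\ in terms of lengths $\ell_c$ and twists $\tau_c$ of the curves in $\mu$ — a curve in $\sigma$ contributes $(\ell_c=0,\tau_c)\in\R_{\geq 0}\times\R$ (cf.\ \Cref{Remark: Fenchel-Nielsen coordinates on strata} and \Cref{singular hyperbolic structure assumption}). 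On the blow-up side, the plumbing/pinching coordinate associated to a node $c$ has the asymptotic form $z_c\sim \exp(-(\,2\pi^2/\ell_c\,)+\text{i}(\text{twist}))$, so that as $\ell_c\to 0$ the modulus $|z_c|\to 0$ while the argument $\arg z_c$ is (up to normalisation and a $2\pi\Z$-ambiguity absorbed by the Dehn twist) the twist parameter. This is exactly the classical relation between Fenchel--Nielsen coordinates near a pinching curve and the complex plumbing parameter; I would cite \cite[Chapter XV, \S 8]{ArbarelloCornalbaGriffiths} (and \cite{Looijenga95}) for the precise asymptotics, and check that the resulting map is a diffeomorphism of manifolds with corners and matches the stratifications (both being indexed by which subset of curves/nodes has degenerated). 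Smoothness and the corner structure are the content of Ivanov's construction together with the smooth structure on the real oriented blow-up of a polydisk.

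The last step is descent: the equivariant diffeomorphism $\overline{\mathcal{T}}_{\!\!S}\cong\widetilde{\operatorname{Bl}}$ induces an isomorphism of action groupoids $\Gamma_S\times\overline{\mathcal{T}}_{\!\!S}\rightrightarrows\overline{\mathcal{T}}_{\!\!S}$ with $\Gamma_S\times\widetilde{\operatorname{Bl}}\rightrightarrows\widetilde{\operatorname{Bl}}$, and since $\widetilde{\operatorname{Bl}}$ is a $\Gamma_S$-atlas for $\operatorname{Bl}_\partial(\overline{\mathcal{M}}^{\operatorname{an}}_{g,P})$ giving rise to the same \v{C}ech nerve as the Kuranishi presentation (\cite{NikolausSchreiberStevenson}, as used already for quotient stacks in \Cref{DMK section}), we get $[\Gamma_S\backslash\overline{\mathcal{T}}_{\!\!S}]\cong\operatorname{Bl}_\partial(\overline{\mathcal{M}}^{\operatorname{an}}_{g,P})$ as stacks over manifolds with corners. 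I expect the main obstacle to be the boundary coordinate comparison in the second step — making precise the dictionary between Ivanov's singular hyperbolic structures (with their required metric-collar data around pinched curves, which is exactly what supplies the extra circle factor) and the real oriented blow-up's modulus–argument coordinates, and verifying this is a genuine diffeomorphism of manifolds with corners rather than merely a stratified homeomorphism; the group-equivariance and the descent are then formal. Since the paper only uses the weaker \Cref{proper map} (properness plus isomorphism over the open stratum), one could alternatively shortcut the smooth-structure bookkeeping, but for the stated theorem the honest coordinate comparison via \cite[Chapter XV, \S 8]{ArbarelloCornalbaGriffiths} is the way to go.
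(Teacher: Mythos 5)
There is a genuine gap in the definition of your ``Teichmüller-level real oriented blow-up'' $\widetilde{\operatorname{Bl}}$. You define it as the pullback of $\operatorname{Bl}_\partial(\overline{\mathcal{M}}^{\operatorname{an}}_{g,P})$ along the uniformisation $\mathcal{T}_S\to \mathcal{M}^{\operatorname{an}}_{g,P}$, but that map factors through the open substack $\mathcal{M}^{\operatorname{an}}_{g,P}\hookrightarrow\overline{\mathcal{M}}^{\operatorname{an}}_{g,P}$ over which the blow-up is an isomorphism, so the fibre product $\mathcal{T}_S\times_{\overline{\mathcal{M}}^{\operatorname{an}}_{g,P}}\operatorname{Bl}_\partial(\overline{\mathcal{M}}^{\operatorname{an}}_{g,P})$ is just $\mathcal{T}_S$ itself, not a bordification of it. What you would actually need is a $\Gamma_S$-space mapping to the blow-up that is simply connected and extends $\mathcal{T}_S$ over the boundary strata; producing such a thing with the right corner structure and $\Gamma_S$-action is essentially the content of the theorem, so defining $\widetilde{\operatorname{Bl}}$ this way and then ``comparing coordinates'' begs the question. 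Similarly, the final ``the group-equivariance and the descent are then formal'' step hides the real obstruction: one must verify that boundary stabilisers match, i.e.\ that $\operatorname{Stab}_{\Gamma_S}$ of a boundary point in $\overline{\mathcal{T}}_{\!\!S}$ maps onto $\operatorname{Aut}(X_0)$ with kernel precisely the Dehn twists around the pinched curves. This is a nontrivial short exact sequence (\Cref{SES of groups}) and without it the two groupoids need not have isomorphic isotropy.

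The paper's proof instead goes in the opposite direction and works locally: starting from the Kuranishi atlas $X=\coprod X_i$ of $\overline{\mathcal{M}}_{g,P}$, it constructs, for each small Kuranishi base $U$ and each marking $\alpha$ of the central resolved fibre, a smooth map $F_{\pi,\alpha}\colon\widetilde U\to\overline{\mathcal{T}}_{\!\!S}$ from the universal cover of $\operatorname{Bl}_D U$, shows these are open embeddings (via the twist-coordinate computation you alluded to, \Cref{twists of singular structures general case}), shows they cover $\overline{\mathcal{T}}_{\!\!S}$, identifies $\pi_1(\operatorname{Bl}_D U)$ with the Dehn-twist group $\Delta_{\mathbf{L}_\alpha}$ via Picard--Lefschetz (\Cref{identification of Delta as Dehn twists}), and then compares the two groupoid presentations $\operatorname{Bl}_\partial\mathbf{I}\rightrightarrows\operatorname{Bl}_\partial X$ and $\operatorname{Bl}_\partial X\times_{[\Gamma_S\backslash\overline{\mathcal{T}}_{\!\!S}]}\operatorname{Bl}_\partial X\rightrightarrows\operatorname{Bl}_\partial X$ chart by chart using \Cref{local picture} and \Cref{SES of groups}. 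Your instinct about the length/modulus and twist/argument dictionary and about matching the stratifications is correct and is exactly the local ingredient, but the global assembly cannot be shortcut through a single diffeomorphism $\overline{\mathcal{T}}_{\!\!S}\cong\widetilde{\operatorname{Bl}}$: you would first need to construct $\widetilde{\operatorname{Bl}}$, and the only way to do that is chart by chart, which already forces the architecture of the paper's argument.
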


\begin{notation}
We attempt to follow the precise notation of \cite{ArbarelloCornalbaGriffiths} to ease transition between the two texts. However, as we will be referring to different sections of the book that use slightly different notation and we also wish to avoid multiple use of the same notation and too much discrepancy with the notation of the main body of this paper, we have had to make very slight alterations. We hope this will not be a source of confusion.
\end{notation}

\subsection{Going to the boundary of Teichmüller space}\label{going to the boundary}

In this section, we do the technical legwork preparing for our final identification, the important ingredient being a collection of embeddings $F\colon \widetilde{B}\rightarrow \overline{\mathcal{T}}_{\!\!S}$ from the universal cover $\widetilde{B}$ of the real oriented blow-up $\operatorname{Bl}_\partial B$ of the base of a ``small'' standard analytic Kuranishi family $\pi\colon X\rightarrow B$. These maps will depend on the family $\pi$ and on a marking of a concrete ``resolution'' of the central fibre. We define these maps and show that they are injective, smooth and moreover equivariant with respect to a natural action of Dehn twists.

Let $\pi\colon X\rightarrow B$ be a Kuranishi family of stable $P$-pointed curves with sections $\sigma_i\colon B\rightarrow X$, $i\in P$, and let $D\subseteq B$ be the divisor parametrising the singular fibres of $\pi$ (\cite[Chapter XI, \S 4]{ArbarelloCornalbaGriffiths}). We may assume that $\pi$ is \textit{small} in the sense that $B$ is a polydisc centered at $x=0$ with coordinates $t_1,\ldots,t_n$ and that the singular locus $D\subseteq B$ is given by union of coordinate hyperplanes i.e.~cut out by the equation $t_1\ldots t_k=0$ for some $k\leq n$ (\cite[Chapter XI, Theorem 3.17 and Corollary 4.7]{ArbarelloCornalbaGriffiths}). For a given $\epsilon>0$, we write $\Delta_\epsilon=\{t\in \C\mid |t|<\epsilon\}$, so we can denote our chosen polydisc by $B=\Delta_\epsilon^n$ for some $\epsilon>0$.

Consider the real oriented blow-up $\nu\colon \operatorname{Bl}_D(B)\rightarrow B$ and the pullback $\pi'\colon X' \rightarrow \operatorname{Bl}_D(B)$ of $\pi$ along $\nu$; we will also denote the induced sections of this family by $\sigma_i$, $i\in P$. Set $E:=\nu^{-1}(D)$, so that $\pi'$ coincides with $\pi$ over $\operatorname{Bl}_D(B)\smallsetminus E=B\smallsetminus D$. We can construct a real analytic family $\psi\colon Z\rightarrow \operatorname{Bl}_D(B)$ of compact orientable $P$-pointed genus $g$ differentiable surfaces together with a diagram
\begin{center}
\begin{tikzpicture}
\matrix (m) [matrix of math nodes,row sep=2em,column sep=2em]
  {
Z & X' \\
 & \operatorname{Bl}_D(B) \\
  };
  \path[-stealth]
(m-1-1) edge node[above]{$\lambda$} (m-1-2) edge node[below left]{$\psi$} (m-2-2)
(m-1-2) edge node[right]{$\pi'$} (m-2-2)
;
\end{tikzpicture}
\end{center} 
such that $\lambda$ restricts to the identity over $\operatorname{Bl}_D(B)\smallsetminus E$, i.e.~$\psi$ agrees with $\pi'$ away from $E$, and respects the $P$-labelled sections (\cite[Chapter X, Proposition 9.16]{ArbarelloCornalbaGriffiths}). We will need to take a closer look at some of the explicit details of the construction of this family in the proof of \Cref{singular structure} and in \ref{explicit trivialisation} below --- we will not, however, spell out the whole construction as this is very well exposed in \cite[pp.152-157]{ArbarelloCornalbaGriffiths} to which we refer the reader. Intuitively, we are resolving the singular fibres of $\pi$ by blowing up the nodes and thus producing a family of vanishing cycles; this is possible after taking the real oriented blow-up as we keep track of the directions in which we can move out of the singular locus $D$ into the smooth locus $B\smallsetminus D$.

Let $\zeta\colon\widetilde{B}\rightarrow \operatorname{Bl}_D(B)$ denote the universal cover of the real oriented blow-up and consider the pullback $\widetilde{\psi}\colon \widetilde{Z}\rightarrow \widetilde{B}$ of $\psi$ along $\zeta$ with $P$-labelled sections $\sigma_i$. Note that the space $\operatorname{Bl}_D(B)$ contracts to the central fibre of $\nu$, and since this is a torus (\cite[(9.13), p.150]{ArbarelloCornalbaGriffiths}), the universal cover $\widetilde{B}$ is contractible. It follows that we can trivialise the family $\widetilde{\psi}$: fix a stable $P$-pointed genus $g$ smooth surface $(S,P)$ and choose a smooth trivialisation $\alpha\colon S\times \widetilde{B}\xrightarrow{\cong} \widetilde{X}$ of $\widetilde{\psi}$ respecting the $P$-labelled sections.

Consider the following commutative diagram collecting the families just introduced and the maps between them:
\begin{center}
\begin{tikzpicture}
\matrix (m) [matrix of math nodes,row sep=2em,column sep=3em]
  {
S\times \widetilde{B} & \widetilde{Z} & Z & X' & X \\
\widetilde{B} & \widetilde{B} & \operatorname{Bl}_D(B) & \operatorname{Bl}_D(B) & B \\
  };
  \path[-stealth]
(m-1-1.356) edge node[above]{$\alpha$} node[below]{$\cong$} (m-1-2.192)
(m-1-1) edge (m-2-1)
(m-1-2) edge node[left]{$\widetilde{\psi}$} (m-2-2)
(m-1-2.348) edge (m-1-3)
(m-1-3) edge node[right]{$\psi$} (m-2-3) edge node[above]{$\lambda$} (m-1-4)
(m-1-4) edge (m-1-5) edge node[left]{$\pi'$} (m-2-4)
(m-1-5) edge node[right]{$\pi$} (m-2-5)
(m-2-2.345) edge node[below]{$\zeta$} (m-2-3)
(m-2-4) edge node[below]{$\nu$} (m-2-5)
;
\path[-]
(m-2-1.345) edge[double equal sign distance] (m-2-2.195)
(m-2-3) edge[double equal sign distance] (m-2-4)
(m-1-2) edge[white] node[black,near start]{\scalebox{1.5}{$\lrcorner$}} (m-2-3)
(m-1-4) edge[white] node[black,near start]{\scalebox{1.5}{$\lrcorner$}} (m-2-5)
;
\end{tikzpicture}
\end{center}

Denote the upper horizontal composite in the diagram above by $k\colon S\times \widetilde{B}\rightarrow X$. Let $t\in \widetilde{B}$, write $z=\nu\zeta(t)\in B$ and consider the map of fibres
\begin{align*}
k_t\colon (S,P)\rightarrow (X_z,\mathbf{x}_z)
\end{align*}
where $\mathbf{x}_z=\{\sigma_i(z)\}_{i\in P}$ denotes the set of marked points. Let $p_1,\ldots, p_\delta$ be the nodes of $X_z$ and let $L_t=\{L_{1,t},\ldots,L_{\delta,t}\}\subset S$ be the admissible curve system on $S$ given by the preimages of these nodes under $k_t$.

The following lemma allows us to define a map $\widetilde{B}\rightarrow \overline{\mathcal{T}}_{\!\!S}$. We refer to \Cref{Harvey bordification} and \cite[Definition 4.1]{Ivanov} for the definition of singular hyperbolic structures.

\begin{lemma}\label{singular structure}
The pair $(L_t,k_t)$ defines a singular hyperbolic structure on $S$.
\end{lemma}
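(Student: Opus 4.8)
The statement asserts that the pair $(L_t, k_t)$—where $k_t\colon S\to X_z$ is the restriction of the trivialisation-induced map to a fibre over a point $t\in\widetilde B$, and $L_t$ is the preimage of the nodes of $X_z$—satisfies the axioms of a singular hyperbolic structure in the sense of \cite[Definition 4.1]{Ivanov}, equivalently \Cref{singular hyperbolic structure assumption}. There are two things to check: first, that $L_t$ is an admissible curve system and $k_t$ restricts to an orientation-preserving homeomorphism $S\smallsetminus(L_t\cup P)\xrightarrow{\cong}X_z^{\mathrm{reg}}$ onto the smooth locus with its complex (hence hyperbolic) structure; second, the local model condition near each curve of $L_t$, namely the existence of a collar neighbourhood on which $k_t$ is (up to the standard reparametrisation $\phi$) an isometry for the cusp metric $t^2\,d\theta^2+dt^2/t^2$.

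\textbf{First step: $L_t$ is admissible and $k_t$ is a homeomorphism off $L_t$.} The map $\lambda\colon Z\to X'$ from \cite[Chapter X, Proposition 9.16]{ArbarelloCornalbaGriffiths} restricts to the identity over $\operatorname{Bl}_D(B)\smallsetminus E$ and, over a point of $E$, contracts a disjoint union of circles (the vanishing cycles) in the smooth fibre $Z_t$ to the nodes of the singular fibre $X'_t=X_z$; the composite $k_t$ thus sends the curve system $L_t=k_t^{-1}(\{\text{nodes}\})$ homeomorphically (away from $L_t$) onto $X_z\smallsetminus\{\text{nodes}\}$, and is orientation-preserving because $\alpha$, $\lambda$ and $\pi'$ all are. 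Admissibility of $L_t$—that no curve bounds a disk or once-punctured disk, and no two bound a cylinder—follows from the stability of the fibres of $\pi$: collapsing the curves must produce a \emph{stable} $P$-pointed curve, so each component of the complement has negative Euler characteristic, which is precisely the admissibility condition. The genus bookkeeping (that $S$ has the correct genus $g$) is built into the construction of $\psi$.

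\textbf{Second step: the local isometry near each vanishing cycle.} This is the technical heart and the main obstacle. One must unwind the explicit construction of the family $\psi\colon Z\to\operatorname{Bl}_D(B)$ in \cite[pp.152--157]{ArbarelloCornalbaGriffiths}: near a node $p_j$ of $X_z$, the standard Kuranishi model is $\{xy=t_j\}$ in $\C^2\times\Delta$, and after real oriented blow-up of $\{t_j=0\}$ and passing to $Z$, the vanishing cycle $L_{j,t}$ acquires a canonical annular neighbourhood which the construction parametrises explicitly in terms of the coordinates $x,y$ and the argument/modulus of $t_j$. On the other hand, the hyperbolic metric on the smooth fibre $X_z\smallsetminus\{\text{nodes}\}$ near a cusp is, by the standard collar lemma, asymptotically the cusp metric; one checks that the explicit parametrisation coming from the plumbing coordinates $\{xy=t_j\}$ matches the normal form $S^1\times(-1,1)$ with metric $t^2\,d\theta^2+dt^2/t^2$ of \Cref{singular hyperbolic structure assumption}, up to the rotation ambiguity noted there. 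Concretely, I would fix the collar provided by the plumbing construction, pull back the fibrewise hyperbolic metric, and verify that in the coordinates $(\theta,t)$ it takes the required form (this is exactly the computation underlying \cite[Chapter XI]{ArbarelloCornalbaGriffiths} and \cite[\S 4]{Ivanov}); the identification is local around each curve and the curves are pairwise disjoint and disjoint from the marked points, so the conditions can be checked one curve at a time. Combining the two steps, $(L_t,k_t)$ satisfies all the defining conditions, hence is a singular hyperbolic structure on $S$, which completes the proof.
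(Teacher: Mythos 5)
Your overall plan matches the paper's: when $z\notin D$ the claim is immediate, and for $z\in D$ one reduces to the local picture near each node, unwinding the plumbing construction of $\psi\colon Z\to\operatorname{Bl}_D(B)$ to produce the annular neighbourhoods and parametrisations demanded by \Cref{singular hyperbolic structure assumption}. Your Step 1 (admissibility of $L_t$ and that $k_t$ is an orientation-preserving homeomorphism off $L_t$, via stability of the fibres) is correct, though the paper treats it as routine and does not spell it out.

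The issue is with Step 2, which is where the entire content of the lemma lies. You write that one ``checks'' the plumbing parametrisation matches the cusp model, and that the hyperbolic metric near a cusp is ``asymptotically'' the cusp metric. But \Cref{singular hyperbolic structure assumption} requires an \emph{exact} isometry on a collar, not an asymptotic statement, so your hedge indicates the real subtlety is unresolved. The paper's proof resolves it by writing down the annular piece $A_i\subset Z_\tau$ explicitly in the $(x_i,y_i,\xi_i,\eta_i)$ coordinates, exhibiting the concrete real-analytic map
\begin{align*}
\phi_i\colon S^1\times(-1,1)\rightarrow A_i,\qquad (\rho,s)\mapsto
\begin{cases}
(0,-s\epsilon\tau_i\overline{\rho},\rho,\tau_i\overline{\rho}) & s\leq 0,\\
(s\epsilon\rho,0,\rho,\tau_i\overline{\rho}) & s\geq 0,
\end{cases}
\end{align*}
and verifying in polar coordinates that its restriction to $S^1\times(-1,1)\smallsetminus S^1\times\{0\}$ is an isometry onto the two copies of the once-punctured disc $V_i\smallsetminus\{p_i\}$ with their cuspidal hyperbolic metrics; one then sets $U_i=\alpha_t^{-1}(A_i)$ and $d_i=\phi_i^{-1}\circ\alpha_t|_{U_i}$. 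Your proposal identifies exactly this program and cites the right sources, but it does not carry out the computation, which is what actually discharges the hypothesis — so as written it is a correct outline rather than a proof.
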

\begin{proof}
To see this we have to verify \Cref{singular hyperbolic structure assumption} for each $L_{i,t}$, $i=1,\ldots,\delta$. Explicitly, we must find a neighbourhood $U_i$ of $L_{i,t}$ disjoint from the remaining curves of $L_t$ and the marked points of $S$, and a homeomorphism $d_i\colon U_i\rightarrow S^1\times (-1,1)$ such that the restriction
\begin{align*}
d_i\vert_{U_i\smallsetminus L_{i,t}}\colon U_i\smallsetminus L_{i,t}\longrightarrow S^1\times (-1,1)\smallsetminus S^1\times\{0\}
\end{align*}
is an isometry when
\begin{itemize}
\item $S^1\times (-1,1)\smallsetminus S^1\times \{0\}$ is equipped with the metric $\frac{d\theta^2 + ds^2}{s^2}$ where $\theta$ is the standard angular parameter on $S^1$ and $s\in (-1,1)\smallsetminus\{0\}$;
\item and $U_i\smallsetminus L_{i,t}$ is equipped with the hyperbolic metric of $k_t(U_i\smallsetminus L_{i,t})\subset X_z$, identifying $U_i\smallsetminus L_{i,t}$ with its image in $S/L_t$.
\end{itemize}

Of course, if $z\notin D$, then this is clear, since $X_z$ in non-singular and $k_t$ is a homeomorphism. So we assume that $z\in D$ and without loss of generality, we may assume that $t_j(z)=0$ for $j=1,\ldots,\delta$ and $t_j(z)\neq 0$ for $j=\delta+1,\ldots n$; recall that $D$ is cut out by the equation $t_1\cdots t_k=0$ for some $k\leq n$ and that we must have $\delta\leq k$. We take a closer look at the construction of the family $\psi\colon Z\rightarrow \operatorname{Bl}_D(B)$. Write $\tau=\zeta(t)\in \operatorname{Bl}_D(B)$ and factor $k_t$ as a composite:
\begin{align*}
(S,P)\xrightarrow{\alpha_t} (Z_\tau,\mathbf{z}_\tau)\xrightarrow{\lambda_\tau} (X_z,\mathbf{x}_z).
\end{align*}
By definition of the real oriented blow-up, $\tau=(z,\tau_1,\ldots,\tau_k)\in B\times (S^1)^k$ for some tuple of $\tau_j$ with $t_j(z)=|t_j(z)|\tau_j$ for $j=1,\ldots,k$ --- by assumption, $t_j(z)\neq 0$ for $j=\delta +1, \ldots,k$, so for these $j$, the direction $\tau_j$ is completely determined by $z$. For each $i\in \{1,\ldots,\delta\}$, let $x_i$ and $y_i$ be local coordinates on the two branches of $X_z$ at the node $p_i$ and let
\begin{align*}
V_i=\{(x_i,y_i)\in \Delta_\epsilon^2\mid x_iy_i=0\}\subseteq X_z
\end{align*}
be an $\epsilon$-neighbourhood of $p_i$ in these coordinates (we may take these small enough to be pairwise disjoint for all $i=1,\ldots,\delta
$). Consider the complex surface
\begin{align*}
A_i&=\{ (x_i,y_i,\xi_i,\eta_i)\in \Delta_\epsilon^2\times (S^1)^2\mid x_iy_i=0,x_i=|x_i|\xi_i,y_i=|y_i|\eta_i,\xi_i\eta_i=\tau_i \}.
\end{align*}
The surface $Z_\tau$ is obtained from $X_z$ by replacing the cone $V_i$ by the cylinder $A_i$ for all $i$, glueing along the complements of $\{x_i=y_i=0\}$ via the map $A_i\rightarrow V_i$, $(x_i,y_i,\xi_i,\eta_i)\mapsto (x_i,y_i)$ (\cite[Chapter X, Remark 9.17]{ArbarelloCornalbaGriffiths}). This replaces each node $p_i$ by a copy of $S^1$, namely the simple closed curve
\begin{align*}
\{ (0,0,\xi_i,\eta_i)\mid \xi_i\eta_i=\tau_i\}\subseteq A_i.
\end{align*}

There is a real analytic isomorphism
\begin{align*}
\phi_i\colon S^1\times (-1,1)\rightarrow A_i,\quad\quad (\rho,s)\mapsto \begin{cases}
(0,-s\epsilon\tau_i\overline{\rho}, \rho, \tau_i\overline{\rho} ) & s\leq 0 \\
(s\epsilon\rho,0,\rho,\tau_i\overline{\rho}) & s \geq 0
\end{cases}
\end{align*}
Using polar coordinates, we easily verify that this restricts to an isometry
\begin{align*}
S^1\times (-1,1)\smallsetminus S^1\times \{0\} \longrightarrow V_i\smallsetminus \{p_i\}
\end{align*}
when the left hand side is equipped with the metric $\frac{d\theta^2 + ds^2}{s^2}$ and the right hand side with the standard cuspidal hyperbolic metric on (the two copies of) the once-punctured disc. Setting $U_i:=\alpha_t^{-1}(A_i)$ and $d_i:=\phi_i^{-1}\circ\alpha_t\vert_{U_i}$ then verifies \Cref{singular hyperbolic structure assumption}.
\end{proof}

It follows from the above lemma, that we have a map
\begin{align*}
F_{\pi,\alpha}\colon \widetilde{B}\rightarrow \overline{\mathcal{T}}_{\!\!S},\quad t\mapsto s_t=[L_t,k_t]
\end{align*}
from $\widetilde{B}$ into the Harvey bordification of Teichmüller space $\mathcal{T}_S$.

\begin{remark}\label{dependence on single fibre}
Note that $F_{\pi,\alpha}$ depends on the choice of family $\pi\colon X\rightarrow B$ and the choice of trivialisation $\alpha\colon S\times \widetilde{B}\rightarrow \widetilde{Z}$. It turns out, however, that it only depends on $\pi$ and (the isotopy class of) the value of $\alpha$ on a single fibre. More precisely, let $\beta\colon S\times \widetilde{B}\rightarrow \widetilde{Z}$ be another trivialisation and suppose that for some $t_0\in \widetilde{B}$, the composite $\alpha_{t_0}^{-1}\circ \beta_{t_0}\colon S\rightarrow S$ is isotopic to the identity. For any $t\in \widetilde{B}$, choose a path $p\colon [0,1]\rightarrow \widetilde{B}$ from $t_0$ to $t$ and note that the homeomorphisms
\begin{align*}
\alpha_{p(s)}^{-1}\circ \beta_{p(s)}\colon S\rightarrow S,\quad\quad s\in [0,1]
\end{align*}
trace out an isotopy from $\alpha_{t_0}^{-1}\circ \beta_{t_0}$ to $\alpha_t^{-1}\circ \beta_t$. In other words, $\alpha_t^{-1}\circ \beta_t$ is isotopic to the identity for all $t\in \widetilde{B}$, and in particular, the associated singular structures
\begin{align*}
[(\lambda_{\zeta(t)}\circ \alpha_t)^{-1}\big(\{\operatorname{nodes}\}\big), \lambda_{\zeta(t)}\circ \alpha_t]\quad\text{and}\quad [(\lambda_{\zeta(t)}\circ \beta_t)^{-1}\big(\{\operatorname{nodes}\}\big), \lambda_{\zeta(t)}\circ \beta_t]
\end{align*}
are isotopic. In other words, $F_{\pi,\alpha}=F_{\pi,\beta}$.

So in fact, the map $F_{\pi,\alpha}$ depends only on the family $X$ and on a marking of, say, the central fibre $(S,P)\xrightarrow{\cong} (Z_0,\mathbf{z}_0)$ where $0$ abusively denotes the point $(0,1,\ldots,1)\in \operatorname{Bl}_DB\subset B\times (S^1)^k$. We will write out an explicit trivialisation below to better illustrate what is going on (\ref{explicit trivialisation}).

We will see, in fact, that once we mod out by the action of the mapping class group on $\overline{\mathcal{T}}_{\!\!S}$, any two choices of trivialisations will give rise to canonically isomorphic maps into $[\Gamma_S\backslash \overline{\mathcal{T}}_{\!\!S}]$ (\Cref{independence of trivialisation}); mapping further into the coarse moduli space, the dependency on $\alpha$ completely disappears.
\end{remark}

\begin{notation}
In view of the above remark, when we write $F_{\pi,\alpha}$, $\alpha$ may refer to either a trivialisation or simply a marking of the central fibre. Also, when no confusion can occur, we will alternately write $F$, $F_{\pi,\alpha}$ and $F_\alpha$ depending on which dependency we want to stress or ignore.
\end{notation}

For the remainder of this subsection, we work with respect to a fixed trivialisation $\alpha$. We have assumed that $B=\Delta_\epsilon^n$ is a polydisc of dimension $n$ and that the singular locus $D$ is given by the equation $t_1\cdots t_k=0$; in particular, the central fibre of $\pi\colon X\rightarrow B$ has $k$ nodes. For any subset $I\subseteq \{1,\ldots,k\}$, let $D_I\subset B$ denote the subspace of points $x\in B$ with $t_i(x)=0$ for all $i\in I$ and $t_i(x)\neq 0$ for $i\in \{1,\ldots,k\}\smallsetminus I$. This partition defines a natural stratification of $B$ over the power set of $\{1,\ldots,k\}$ (with reverse inclusion). Now, let $0\in \widetilde{B}\subseteq \R^k\times[0,\epsilon]^k\times \C^{n-k}$ denote the point $(0,\ldots,0)$ lying over the origin $0\in B$ and consider the admissible curve system $L_0=k_0^{-1}(\{\operatorname{nodes\ of} X_0\})=\{L_{0,1},\ldots,L_{0,k}\}$. We write $\mathbf{L}=\{L_1,\ldots,L_k\}$ for the corresponding isotopy class $[L_0]=\{[L_{0,1}],\ldots,[L_{0,k}]\}$; we write $\mathbf{L}_\alpha$ if we want to stress the dependence on $\alpha$.

\begin{lemma}\label{isotopy classes of vanishing cycles}
For any $t\in \widetilde{B}$, the isotopy class of the admissible curve system
\begin{align*}
L_t=k_t^{-1}(\{\operatorname{nodes\ of} X_{\nu\zeta(t)}\})
\end{align*}
is contained in $\mathbf{L}$. More precisely, if $\nu\zeta(t)\in D_I$, then $[L_t]=\{L_i\mid i\in I\}\subseteq \mathbf{L}$. In particular, if $\nu\zeta(t)=0$, then $[L_t]=\mathbf{L}$.
\end{lemma}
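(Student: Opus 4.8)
The plan is to follow the vanishing cycles through the resolved family $\psi\colon Z\rightarrow \operatorname{Bl}_D(B)$ and to play connectedness of the strata of $\widetilde{B}$ against the trivialisation $\alpha$. First I would unwind the description of $\psi$ from the proof of \Cref{singular structure}: for $\tau\in\operatorname{Bl}_D(B)$ lying over $z=\nu(\tau)\in D_I$, the fibre $Z_\tau$ is built from $X_z$ by replacing, for each node $p_i$ of $X_z$ (equivalently each $i\in I$), the cone $V_i$ by the cylinder $A_i$, and the contraction $\lambda_\tau\colon Z_\tau\rightarrow X_z$ is the identity off $\bigcup_{i\in I}A_i$ and sends the circle
\begin{align*}
C_{i,\tau}:=\{(0,0,\xi_i,\eta_i)\mid \xi_i\eta_i=\tau_i\}\subset A_i
\end{align*}
to $p_i$. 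Hence $\lambda_\tau^{-1}(\{\operatorname{nodes}\})=\coprod_{i\in I}C_{i,\tau}$, and since $k_t=\lambda_{\zeta(t)}\circ\alpha_t$ (as in the proof of \Cref{singular structure}) and $\alpha$ is disjoint from the sections, $L_t=\coprod_{i\in I}\alpha_t^{-1}(C_{i,\zeta(t)})$ as an admissible curve system in $S\smallsetminus P$. At the basepoint $0\in\widetilde{B}$ all $k$ nodes are present, so with $L_i:=[\alpha_0^{-1}(C_{i,0})]$ we have $\mathbf{L}=\{L_1,\dots,L_k\}$, a system of $k$ distinct classes by \Cref{singular structure}; it therefore suffices to show $[\alpha_t^{-1}(C_{i,\zeta(t)})]=L_i$ for each $i$ in the index set $I$ attached to $t$.

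For this, fix $i$ and let $\widetilde{B}_i:=\{t'\in\widetilde{B}\mid r_i(t')=0\}$, where $r_i$ is the radial coordinate of the $i$-th blown-up factor; in the explicit product description of $\widetilde{B}$ used in \S\ref{going to the boundary} this is cut out by setting one coordinate to zero, so $\widetilde{B}_i$ is connected (indeed contractible). Over $\widetilde{B}_i$ the $i$-th cone is replaced by the $i$-th cylinder in every fibre, so the circles $C_{i,\zeta(t')}$ form a real-analytic circle subbundle of $Z$ over this locus; transported through $\alpha$ it becomes a continuous family of disjoint embedded simple closed curves $\alpha_{t'}^{-1}(C_{i,\zeta(t')})\subset S$, $t'\in\widetilde{B}_i$. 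The isotopy class of a continuously varying embedded $1$-submanifold is locally constant, so by connectedness it is independent of $t'\in\widetilde{B}_i$; since $0\in\widetilde{B}_i$, it equals $L_i$. Applying this for all $i\in I$ (note $t\in\widetilde{B}_i$ for such $i$) and using disjointness of the $C_{i,\tau}$ (the $A_i$ were chosen pairwise disjoint), we get $[L_t]=\{L_i\mid i\in I\}\subseteq\mathbf{L}$; and when $\nu\zeta(t)=0$ we have $I=\{1,\dots,k\}$, giving $[L_t]=\mathbf{L}$.

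I expect the only genuinely delicate point to be the claim that the circles $C_{i,\tau}$ assemble into a continuous subbundle over the whole closed locus $\widetilde{B}_i$ (not merely over a single stratum): this needs a careful reading of the gluing construction of $\psi$ in \cite[Chapter X, \S 9]{ArbarelloCornalbaGriffiths} (cf.\ \cite[Chapter X, Remark 9.17]{ArbarelloCornalbaGriffiths}), checking that the defining data of $C_{i,\tau}$ vary real-analytically with $\tau$ over $\nu^{-1}(\{t_i=0\})$ and that the cylinders $A_i$ can be chosen coherently in the family. Granting this — together with \Cref{singular structure} at $t=0$, which supplies admissibility and distinctness of the $L_i$ — the argument above completes the proof.
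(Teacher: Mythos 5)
Your argument is correct, but it takes a different route from the paper's. The paper works on the \emph{source} side of the resolution: it invokes the retraction $r\colon X\to X_0$ of the Kuranishi family onto its central fibre (\cite[Chapter X, Lemma 9.19]{ArbarelloCornalbaGriffiths}), observes that $r_z^{-1}(p_i)$ is a cycle or a node according to whether $t_i(z)\neq 0$ or $t_i(z)=0$, and then, for any path $p\colon[0,1]\to\widetilde{B}$ from $t$ to $0$, uses the continuously varying composites $r_{\nu\zeta(p(s))}\circ\lambda_{\zeta(p(s))}\circ\alpha_{p(s)}\colon S\to X_0$ as a one-parameter family connecting $r_z\circ k_t$ to $k_0$; the isotopy between $L_{i,t}$ and $L_{0,i}$ then drops out by taking preimages of $p_i$. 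You instead work on the \emph{target} side, tracking the explicit circle $C_{i,\tau}\subset Z_\tau$ as a subbundle of $Z$ over the closed locus $\nu^{-1}(\{t_i=0\})$ and applying local constancy of isotopy classes of continuously varying embedded curves over the connected set $\widetilde{B}_i$. Both arguments are ultimately connectivity arguments; the paper's has the virtue of varying $t$ freely over all of $\widetilde{B}$ (it collapses the vanishing cycles in $X_0$ and so never has to confine itself to a sub-locus), whereas yours must restrict to $\widetilde{B}_i$ for each $i$ but is more concrete about the resolved geometry. As for the ``delicate point'' you flag: the subbundle claim is in fact visible from the explicit model of $Z$ near a node given in \ref{explicit trivialisation} of the appendix --- the locus $\{x=y=0\}\subset W$ projects onto $\nu^{-1}(\{t_1=0\})$ with circle fibres $\{\xi\eta=\tau\}$, so the circles do assemble real-analytically --- hence your argument closes. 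One small cosmetic remark: you should also note that $L_i=[\alpha_0^{-1}(C_{i,0})]$ agrees with the paper's $L_i=[L_{0,i}]$, which follows at once from $k_0=\lambda_{\zeta(0)}\circ\alpha_0$.
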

\begin{proof}
Let $r\colon X\rightarrow X_0$ denote the retraction to the central fibre of the Kuranishi family $\pi\colon X\rightarrow B$ as in \cite[Chapter X, Lemma 9.19]{ArbarelloCornalbaGriffiths} and write $r_z\colon X_z\rightarrow X_0$ for its restriction to the fibre $X_z=\pi^{-1}(z)$. By construction, the preimage of a node $p_i\in X_0$ under $r_z$ is a cycle if $t_i(z)\neq 0$ and a node if $t_i(z)=0$; in other words, the nodes of $X_z$ for $z\in D_I$ are exactly the preimages $r_z^{-1}(p_i)$, $i\in I$. Now, write $z=\nu\zeta(t)$ and suppose $z\in D_I$. For any choice of path $p\colon [0,1]\rightarrow \widetilde{B}$ from $t$ to $0$, the maps
\begin{align*}
r_{\nu\zeta(p(s))}\circ \lambda_{\zeta(p(s))}\circ \alpha_{p(s)}\colon (S,P)\rightarrow (X_0,\mathbf{x}_0),\quad\quad s\in [0,1]
\end{align*}
trace out an isotopy between $r_z\circ k_t$ and $k_0$. In particular, the admissible curve system
\begin{align*}
L_t=k_t^{-1}(\{\operatorname{nodes\ of}X_z\})=\{(r_z\circ k_t)^{-1}(p_i)\mid i\in I\}
\end{align*}
is isotopic to the admissible curve system $\{L_{0,i}\mid i\in I\}\subseteq L_0$.
\end{proof}

\begin{remark}
Note that we have not shown that the singular structures $[L_t,s_t]$ over the origin are independent of the choice of $t\in (\nu\zeta)^{-1}(0)$ since the isotopy in the proof above does not preserve the parametrisation of the annular neighbourhoods of the vanishing cycles. In contrast, the isotopy of \Cref{dependence on single fibre} is defined with respect to a fixed intermediate surface $Z_\tau$, so it does respect these parametrisations of the annular neighbourhoods.
\end{remark}

\begin{remark}
We stratify $\widetilde{B}$ by the preimages of the $D_I\subset B$, i.e.~over the power set of $\{1,\ldots,k\}$ with reverse inclusion. Recall that the Harvey bordification of Teichmüller space is naturally stratified over the curve poset of $S$. The lemma implies that $F$ is a stratum preserving map, sending a subset $I\subseteq \{1,\ldots,k\}$ to the set of vanishing cycles $\mathbf{L}_I=\{L_i\mid i\in I\}\subseteq \mathbf{L}$.
\end{remark}

\subsection{Explicit trivialisation}\label{explicit trivialisation}

In this section we construct an explicit trivialisation of the family $\widetilde{\psi}\colon \widetilde{Z}\rightarrow \widetilde{B}$. This will only be used briefly in what follows (cf. \Cref{injectivity}), but we think it may also shed some light on the behaviour of these maps as it allows us to analyse the generalised Fenchel--Nielsen coordinates on $\overline{\mathcal{T}}_{\!\!S}$ with relative ease (\cite[Definition 5.1]{Ivanov}). The trivialisation is similar to the one defined in \cite[pp.147-149]{ArbarelloCornalbaGriffiths}.

We will have to consider some explicit parts of the construction of the family $\psi\colon Z\rightarrow \operatorname{Bl}_DB$. As said, this is very well-described in \cite[pp.152-157]{ArbarelloCornalbaGriffiths} to which we refer the reader for details, so we only introduce a bare minimum here in order to make the necessary definitions and observations. For simplicity, we assume that we are dealing with a single node; for the general case, one just considers pairwise disjoint neighbourhoods of the nodes and proceeds in the same fashion. Thus we assume that we have a Kuranishi family $\pi\colon X\rightarrow B$ over a polydisc
\begin{align*}
B=\Delta_{\epsilon^2}^n=\{(t_1,\ldots,t_n)\in \C\mid |t_i|<\epsilon^2\},
\end{align*}
such that the central fibre $\pi^{-1}(0)$ has a single node $p$ and that the family is of the form $xy=t_1$ in a neighbourhood of $p$. It follows that the singular locus $D\subset B$ is the hyperplane cut out by $t_1=0$ and the pullback $\pi'\colon X'\rightarrow \operatorname{Bl}_DB$ is locally isomorphic to a neighbourhood of $\{x=y=0\}$ of the form 
\begin{align*}
W'=\{(x,y,t_2,\ldots,t_n,\tau)\in \Delta_\epsilon^2\times \Delta_{\epsilon^2}^n\times S^1\mid xy=|xy|\tau \}.
\end{align*}

The family $\psi\colon Z\rightarrow \operatorname{Bl}_DB$ is constructed from the pullback $\pi'\colon X'\rightarrow \operatorname{Bl}_DB$ by replacing $W'$ by
\begin{align*}
W=\{(x,y,t_2,\ldots,t_n,\xi,\eta)\in \Delta_\epsilon^2\times \Delta_{\epsilon^2}^n\times (S^1)^2\mid x=|x|\xi,y=|y|\eta \}.
\end{align*}
via the map $W\rightarrow W'$, $(x,y,t_2,\ldots,t_n,\xi,\eta)\mapsto (x,y,t_2,\ldots,t_n,\xi\eta)$ which is bijective on the complement of $\{x=y=0\}$. The fibre over a point $(x,y,t_2,\ldots,t_n,\tau)$ with $xy=0$ is a circle, namely the locus of those points $(x,y,t_2,\ldots,t_n,\xi,\eta)$ for which $\xi\eta=\tau$.

The projection $W\rightarrow \operatorname{Bl}_DB$ is given by
\begin{align*}
(x,y,t_2,\ldots,t_n,\xi,\eta) \mapsto (xy,t_2,\ldots,t_n,\xi\eta).
\end{align*}
Recall that the universal cover $\zeta\colon\widetilde{B}\rightarrow \operatorname{Bl}_DB$ is given explicitly in polar coordinates by
\begin{align*}
\widetilde{B}=\R\times [0,\epsilon^2)\times \Delta_{\epsilon^2}^{n-1}\longrightarrow \operatorname{Bl}_DB,\quad\quad (\theta, r, t_2,\ldots,t_n)\mapsto (re^{i\theta},t_2,\ldots,t_n,e^{i\theta}).
\end{align*}
Hence, the pullback $\widetilde{\psi}\colon \widetilde{Z}\rightarrow \widetilde{B}$ is locally isomorphic to a neighbourhood of $\{x=y=0\}$ of the form
\begin{align*}
\widetilde{W}=\{(x,y,t_2,\ldots,t_n,\xi,\eta,\theta)\in \Delta_\epsilon^2\times \Delta_{\epsilon^2}^n\times (S^1)^2\times \R\mid x=|x|\xi,y=|y|\eta, \xi\eta=e^{i\theta} \}
\end{align*} 
with the projection to $\widetilde{B}$ given by
\begin{align*}
(x,y,t_2,\ldots,t_n,\xi,\eta,\theta)\mapsto (\theta, |xy|, t_2,\ldots,t_n).
\end{align*}

For notational simplicity, we omit the $t_i$-coordinates from now on as they are just harmless parameters being carried around. Notice that since $\eta=\overline{\xi}e^{i\theta}$, we could also omit this from the tuple, but we keep this for clarity.

Let $Z_0=\widetilde{\psi}^{-1}(0,0)$ denote the fibre over $(0,0)\in \R\times [0,\epsilon^2)=\widetilde{B}$. We now define a trivialisation
\begin{center}
\begin{tikzpicture}
\matrix (m) [matrix of math nodes,row sep=2em,column sep=2em]
  {
Z_0\times \widetilde{B} & & \widetilde{Z} \\
 & \widetilde{B} & \\
  };
  \path[-stealth]
(m-1-1) edge node[above]{$\operatorname{Tw}$} node[below]{$\cong$} (m-1-3) edge (m-2-2)
(m-1-3) edge node[below right]{$\widetilde{\psi}$} (m-2-2)
;
\end{tikzpicture}
\end{center} 

Note that $Z_0$ is locally isomorphic to
\begin{align*}
U:=Z_0\cap \widetilde{W}=\{(x,y,\xi,\eta,0)\in \Delta_\epsilon^2\times (S^1)^2\times \R\mid x=|x|\xi,y=|y|\eta, \xi\eta=1,xy=0 \}.
\end{align*}
Denote by $W_+$ respectively $W_-$ the open subsets of $\widetilde{W}$ consisting of points with $|x|>|y|$ respectively $|x|<|y|$, and set $U_+:=U\cap W_+$ and $U_-:=U\cap W_-$.

\begin{remark}
Intuitively, the map $\operatorname{Tw}$ is going to twist the annular neighbourhood $U\subseteq Z_0$ of the curve $\{(0,0,\xi,\eta)\mid \xi\eta=1\}\subseteq Z_0$ according to the $\theta$-parameter of the given $(\theta,\delta)\in \widetilde{B}=\R\times [0,\epsilon)$ (see also \Cref{fig:picture of twist trivialisation}). We will in fact define its inverse $\operatorname{Untw}\colon \widetilde{Z}\rightarrow Z_0\times \widetilde{B}$: we ``untwist'' the annular neighbourhood in $\widetilde{Z}_{(\theta, \delta)}$ by $\theta$ while shrinking $\delta$ to $0$.
\end{remark}

We first of all consider a smooth trivialisation
\begin{center}
\begin{tikzpicture}
\matrix (m) [matrix of math nodes,row sep=2em,column sep=2em]
  {
\widetilde{Z}\smallsetminus \{|x|=|y|\} & & (Z_0\smallsetminus \{x=y=0\}) \times \widetilde{B} \\
 & \widetilde{B} & \\
  };
  \path[-stealth]
(m-1-1) edge node[above]{$H$} node[below]{$\cong$} (m-1-3) edge node[below left]{$\widetilde{\psi}$} (m-2-2)
(m-1-3) edge (m-2-2)
;
\end{tikzpicture}
\end{center} 
defined as in \cite[pp.147-148]{ArbarelloCornalbaGriffiths} to which we refer the reader for details. Locally on the subspace $\widetilde{W}\smallsetminus\{|x|=|y|\}=W_+\cup W_-$, the map $H$ is given by
\begin{align*}
W_+\xrightarrow{\ \cong \ } U_+\times \widetilde{B},\quad\quad (x,y,\xi,\eta,\theta)\mapsto \Big((|x|^2-|y|^2)\xi,0,\xi,\eta e^{-i\theta},\theta,|xy|\Big), \\
W_-\xrightarrow{\ \cong \ } U_-\times \widetilde{B},\quad\quad (x,y,\xi,\eta,\theta)\mapsto \Big(0,(|y|^2-|x|^2)\eta,\xi e^{-i\theta},\eta,\theta,|xy|\Big).
\end{align*}
Note that $(|x|^2-|y|^2)\xi=\tfrac{|x|^2-|y|^2}{|x|}x$ for points in $W_+$, and likewise $(|y|^2-|x|^2)\eta=\tfrac{|y|^2-|x|^2}{|y|}y$ for points in $W_-$; this may make it easier to understand what is going on.

We now define a trivialisation
\begin{center}
\begin{tikzpicture}
\matrix (m) [matrix of math nodes,row sep=2em,column sep=2em]
  {
\widetilde{W} & & U \times \widetilde{B} \\
 & \widetilde{B} & \\
  };
  \path[-stealth]
(m-1-1) edge node[above]{$\operatorname{Untw}_{\widetilde{W}}$} node[below]{$\cong$} (m-1-3) edge node[below left]{$\widetilde{\psi}$} (m-2-2)
(m-1-3) edge (m-2-2)
;
\end{tikzpicture}
\end{center} 
which agrees with $H$ on
\begin{align*}
\widetilde{W}\smallsetminus \{\big||x|^2-|y|^2\big|>\tfrac{\epsilon}{2}\}.
\end{align*}

Let $\chi\colon \R\rightarrow \R$ be a smooth decreasing function satisfying that $\chi(t)=1$ for all $t\leq 0$ and $\chi(t)=0$ for all $t\geq \tfrac{\epsilon}{2}$, and define $\operatorname{Untw}_{\widetilde{W}}$ as follows:
\begin{align*}
(x,y,\xi,\eta,\theta)\mapsto \begin{cases}
\Big(0,-s\eta e^{-i(1-\chi(s))\theta},\xi e^{-i\chi(s)\theta},\eta e^{-i(1-\chi(s))\theta},\theta,|xy|\Big) & |x|\leq|y| \\
\Big(s\xi e^{-i\chi(s)\theta},0,\xi e^{-i\chi(s)\theta},\eta e^{-i(1-\chi(s))\theta},\theta,|xy|\Big) & |x|\geq|y| \\
\end{cases}
\end{align*}
where for notational ease, we write $s=|x|^2-|y|^2$. We leave it to the reader to check that this is indeed a smooth trivialisation. Note that by our choice of $\chi$, we have
\begin{align*}
(0,-s\eta e^{-i(1-\chi(s))\theta},\xi e^{-i\chi(s)\theta},\eta e^{-i(1-\chi(s))\theta},\theta,|xy|)=(0,-s\eta,\xi e^{-i\theta},\eta,\theta,|xy|)
\end{align*}
for all points with $|x|\leq |y|$. So in fact, on $W_-$, we just have $\operatorname{Untw}_{\widetilde{W}}\vert_{W_-}=H\vert_{W_-}$, but on $W_+$ we ''untwist'' the $x$-parameter according to the $\theta$ so that the two maps extend smoothly to the locus $\{|x|=|y|\}$ (see \Cref{fig:picture of twist trivialisation} for an illustration). By our choice of $\chi$, we also see that $\operatorname{Untw}_{\widetilde{W}}$ agrees with $H$ on the subspace of $W_+$ given by $|x|^2-|y|^2>\tfrac{\epsilon}{2}$. It follows that these trivialisations glue together to define a smooth trivialisation:
\begin{center}
\begin{tikzpicture}
\matrix (m) [matrix of math nodes,row sep=2em,column sep=2em]
  {
\widetilde{Z} & & Z_0 \times \widetilde{B} \\
 & \widetilde{B} & \\
  };
  \path[-stealth]
(m-1-1) edge node[above]{$\operatorname{Untw}$} node[below]{$\cong$} (m-1-3) edge node[below left]{$\widetilde{\psi}$} (m-2-2)
(m-1-3) edge (m-2-2)
;
\end{tikzpicture}
\end{center}
We denote its inverse by $\operatorname{Tw}\colon Z_0\times \widetilde{B}\rightarrow \widetilde{Z}$. Given a marking of the central fibre $\alpha\colon (S,P)\rightarrow (Z_0,\mathbf{z}_0)$, we set
\begin{align*}
\widetilde{\alpha}:=\operatorname{Tw}\circ (\alpha,\operatorname{id})\colon S\times \widetilde{B}\rightarrow \widetilde{Z}.
\end{align*}

\begin{figure}[h]
\centering
\includegraphics[scale=0.2]{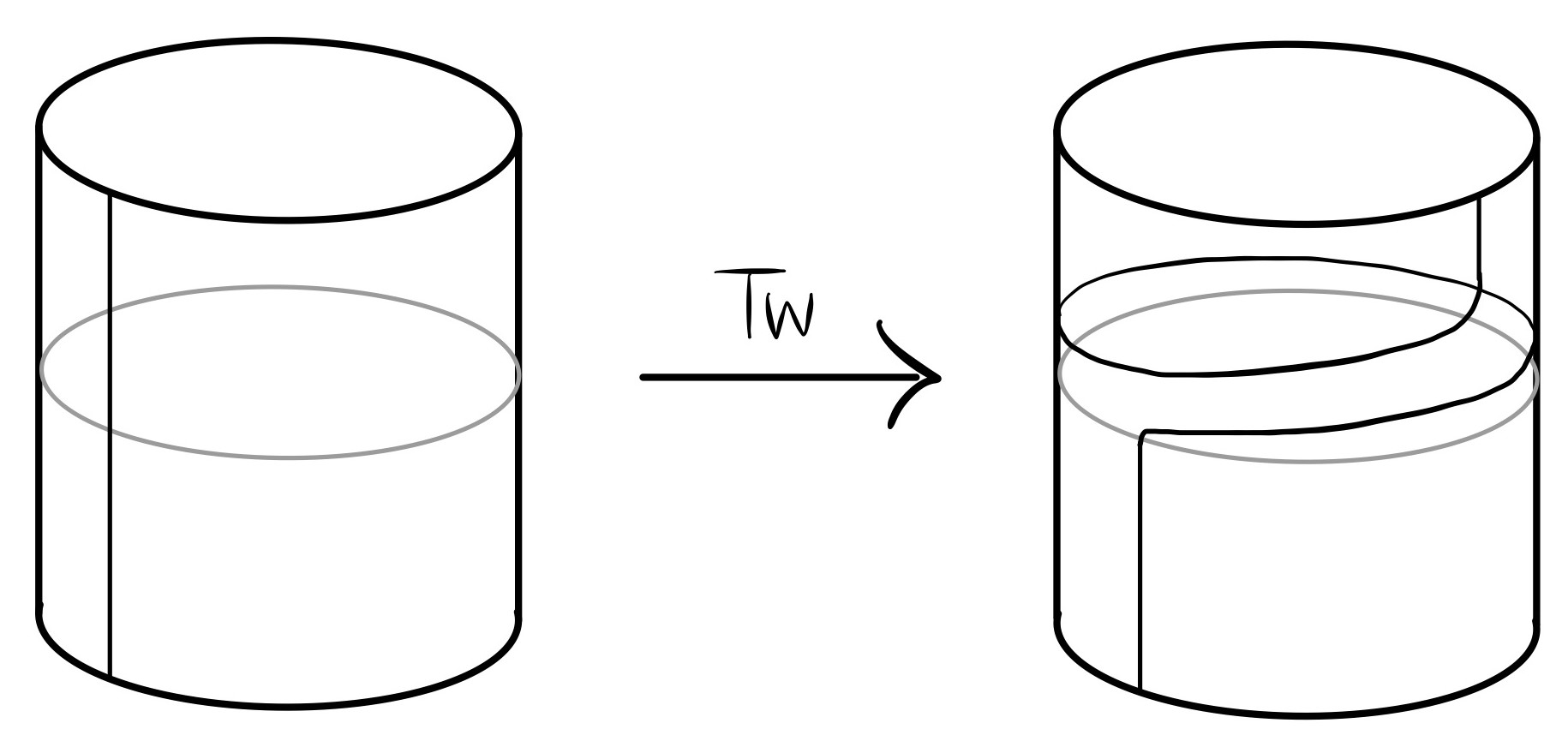}
\caption{
This is a picture of what the trivialisation $\operatorname{Tw}$ does fibrewise on annular neighbourhoods of the vanishing cycles. The vanishing cycles are drawn as faint horizontal cross sections and are given by the points $(x,y,\xi,\eta)$ for which $|x|=|y|$ in the respective annular neighbourhoods. The vertical line on the neighbourhood $U\subseteq Z_0$ on the left is given by the points $(x,y,1,1)$. The corresponding twisted line on the right is its image under $\operatorname{Tw}$. Explicitly, this is an attempt to depict the map $\operatorname{Tw}_{(\theta,0)}\colon Z_0\rightarrow Z_\tau$ over the point
\begin{align*}
(\theta,0)=\left(\tfrac{5\pi}{2},0\right)\in \R\times [0,\epsilon^2)=\widetilde{B}\qquad\qquad\qquad \ 
\end{align*}
with $\tau=\zeta(\theta,0)=(0,\sqrt{2}(1+i))\in \operatorname{Bl}_DB$ denoting the corresponding point in the real oriented blow-up.  
}
\label{fig:picture of twist trivialisation}
\end{figure}

\begin{observation}\label{twists of singular structures single node}
Let $\widetilde{\alpha}$ be a trivialisation as above and consider the associated map
\begin{align*}
F_{\pi,\alpha}\colon \widetilde{B}\rightarrow \overline{\mathcal{T}}_{\!\!S},\quad\quad t\mapsto s_t,
\end{align*}
as defined in the previous section. We continue with our simplified assumptions and notation, i.e.~the central fibre $X_0$ of $\pi$ has a single node and we omit the parameters $t_2,\ldots,t_n$.

For $(\theta,r)\in \R\times [0,\epsilon)= \widetilde{B}$, consider the singular structure $s_{(\theta,r)}=[L_{(\theta,r)},k_{(\theta,r)}]$. Explicitly, we have
\begin{align*}
k_{(\theta,r)}\colon S\xrightarrow{\ \alpha\ } Z_0\xrightarrow{\operatorname{Tw}_{(\theta,r)}} Z_{(re^{i\theta},e^{i\theta})}\xrightarrow{\lambda} X_{re^{i\theta}}.
\end{align*}

By definition of the trivialisation $\operatorname{Tw}$, it is not difficult to check that
\begin{align*}
s_{(\theta,r)}=s_{(0,r)}(\theta),
\end{align*}
where the right hand side is the result of twisting the singular structure $s_{(0,r)}$ by $\theta$ as in \cite[\S 4.3]{Ivanov}.
\end{observation}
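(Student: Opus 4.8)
The plan is to unwind the explicit formulas of \Cref{explicit trivialisation} and match them, fibrewise near the vanishing cycle, against the charts produced in the proof of \Cref{singular structure}, and then to recognise the $\theta$-dependence as exactly the twist action of \cite[\S 4.3]{Ivanov}.

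First I would set things up. Write $c$ for the isotopy class of the single vanishing cycle. By \Cref{isotopy classes of vanishing cycles} the curve systems $L_{(\theta,r)}$ and $L_{(0,r)}$ represent the same element $\sigma\in\mathfrak{C}_{S}$ --- namely $\{c\}$ if $r=0$ and $\emptyset$ if $r>0$ --- so $s_{(\theta,r)}$ and $s_{(0,r)}$ lie in one and the same stratum $X^{S}_{\sigma}\subseteq\overline{\mathcal{T}}_{\!\!S}$, on which the group $\R^{\{c\}}$ acts by twists. I would then recall precisely from \cite[\S 4.3]{Ivanov} that the twist by $\theta$ re-glues an annular neighbourhood of $c$ after a rotation by $\theta$, recorded through the parametrisations of \Cref{singular hyperbolic structure assumption}: concretely, $s_{(0,r)}(\theta)$ is represented by the marking of $s_{(0,r)}$ precomposed with the inverse of a homeomorphism $h_{\theta}$ of $S$ supported near $c$ which, in the coordinate $\phi$ of \Cref{singular hyperbolic structure assumption}, rotates the angular parameter by $\theta$ with a cutoff, while the parametrisation itself gets replaced by $\phi\circ h_{\theta}^{-1}$.

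Then comes the actual computation, split into two parts. Outside the annular neighbourhood $U\subseteq Z_{0}$ of the vanishing cycle the trivialisation $\operatorname{Tw}$ does not see $\theta$ at all: the only $\theta$-dependence in the formula for $\operatorname{Untw}_{\widetilde{W}}$ sits in the factors $e^{\pm i\chi(s)\theta}$ and $e^{\pm i(1-\chi(s))\theta}$, which are constant equal to $1$ there, so $k_{(\theta,r)}$ and $k_{(0,r)}$ agree off $\alpha^{-1}(U)$. Inside $U$, I would compare the explicit formula for $\operatorname{Tw}_{(\theta,r)}=\operatorname{Untw}_{\widetilde{W}}^{-1}$ with the chart $d_{1}=\phi_{1}^{-1}\circ\alpha_{(\theta,r)}$ from the proof of \Cref{singular structure}, using that $\phi_{1}$ there depends on $(\theta,r)$ only through the blow-up direction $\tau_{1}=e^{i\theta}$ and that this dependence amounts exactly to a rotation of the $y$-branch (equivalently of the angular parameter $\eta$) by $\theta$. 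The outcome of this bookkeeping is that $k_{(\theta,r)}|_{\alpha^{-1}(U)}=k_{(0,r)}|_{\alpha^{-1}(U)}\circ h_{\theta}^{-1}$ for a cutoff rotation $h_{\theta}$ of total angle $\theta$ about $c$, and that the associated parametrisation of the neighbourhood of $c$ gets replaced by $\phi\circ h_{\theta}^{-1}$; by the description of the twist action recalled above, this says precisely that $s_{(\theta,r)}=s_{(0,r)}(\theta)$.

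The part demanding the most care --- and the only real obstacle --- is matching conventions between Ivanov's twist and the trivialisation $\operatorname{Tw}$: the cutoff $\chi$ built into $\operatorname{Untw}_{\widetilde{W}}$ need not coincide with the cutoff used to define $h_{\theta}$, and one must check that the orientation and sign normalisations (the role of $\overline{\rho}$ versus $\rho$, which side of $c$ gets rotated, the sign of $\theta$) line up. This is harmless: two cutoff rotations about $c$ of the same total angle $\theta$ differ by a homeomorphism supported near $c$ that is isotopic to the identity through maps respecting the structure of \Cref{singular hyperbolic structure assumption}, hence define the same point of $\overline{\mathcal{T}}_{\!\!S}$, and the sign is read off directly from the formulas (an entirely analogous isotopy argument already appears in \Cref{dependence on single fibre}). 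It is exactly this routine but fiddly verification that is being suppressed in the phrase ``it is not difficult to check''.
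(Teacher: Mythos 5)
Your overall approach --- unwinding the explicit formula for $\operatorname{Untw}_{\widetilde{W}}$ fibrewise and matching the resulting $\theta$-dependence against Ivanov's twist action --- is indeed the intended reading of ``it is not difficult to check'', and your awareness of the convention-matching subtleties at the end is welcome. But two steps do not hold up as written.

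First, the claim that the phase factors $e^{\pm i\chi(s)\theta}$ and $e^{\pm i(1-\chi(s))\theta}$ are ``constant equal to $1$'' off the annulus is false: on $W_-$ one has $s\le 0$, so $\chi(s)=1$ and $e^{-i\chi(s)\theta}=e^{-i\theta}\ne 1$, and the paper explicitly points out that $\operatorname{Untw}_{\widetilde{W}}\vert_{W_-}=H\vert_{W_-}$, which carries exactly this $e^{-i\theta}$ on the $\xi$-coordinate. The maps $\lambda\circ\operatorname{Tw}_{(\theta,r)}\circ\alpha$ and $\lambda\circ\operatorname{Tw}_{(0,r)}\circ\alpha$ land in different Riemann surfaces $X_{re^{i\theta}}$ and $X_{r}$, so they cannot literally ``agree off $\alpha^{-1}(U)$''. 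What is true, and what the argument should say, is that $\operatorname{Untw}$ agrees with $H$ outside the cutoff region, and $H$ is built so that $\lambda\circ H^{-1}_{(\theta,r)}$ intertwines with the retraction $r_z\colon X_z\to X_0$ of \cite[Chapter X, Lemma 9.19]{ArbarelloCornalbaGriffiths} away from the vanishing cycle; it is the $\theta$-independence of the retraction, not of the phase factors, that furnishes the identification of the two markings away from $c$.

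Second, the chart $d_1=\phi_1^{-1}\circ\alpha_{(\theta,r)}$ you propose to compare against exists only when $r=0$: the construction of $\phi_i$ in the proof of \Cref{singular structure} is carried out only for $z\in D$ (the case $z\notin D$ being dismissed as clear), and for $r>0$ the singular structure $s_{(\theta,r)}$ has empty curve system and no parametrisation as in \Cref{singular hyperbolic structure assumption} to keep track of. Your inside-$U$ computation therefore addresses only the stratum $r=0$, whereas the observation asserts the identity on all of $\R\times[0,\epsilon)$. For $r>0$ one must compare directly with the Fenchel--Nielsen flow on $\mathcal{T}_S$: the cutoff factor $e^{i\chi(s)\theta}$ in $\operatorname{Tw}_{(\theta,r)}$ realises the reglueing of $X_r$ along the geodesic isotopic to $k_{(0,r)}(c)$ by angle $\theta$, which by the plumbing description of the Kuranishi family is biholomorphic to $X_{re^{i\theta}}$. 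The mechanism is the same as at $r=0$, but the bookkeeping is genuinely different (a marked non-singular surface versus a singular hyperbolic structure with parametrisation) and should be treated separately rather than absorbed into the $\phi_1$-comparison.
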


In the general case where the central fibre has $k$ nodes, we perform such a twist in a neighbourhood of each node, but the procedure is completely analogous, and we also write $\operatorname{Tw}\colon \widetilde{Z}\rightarrow Z_0\times \widetilde{B}$ for the resulting trivialisation. Again, for a fixed a homeomorphism $\alpha\colon (S,P)\rightarrow (Z_0,\mathbf{z}_0)$, we set
\begin{align*}
\widetilde{\alpha}:=\operatorname{Tw}\circ (\alpha,\operatorname{id})\colon S\times \widetilde{B}\rightarrow \widetilde{Z}.
\end{align*}

The concrete advantage of writing down this explicit trivialisation is the following lemma in which we write
\begin{align*}
(\theta,r,t)=(\theta_1,\ldots,\theta_k,r_1,\ldots,r_k,t_{k+1},\ldots,t_n)\in \R^k\times [0,\epsilon)^k\times \Delta_{\epsilon^2}^{n-k}= \widetilde{B}
\end{align*}
for points of the universal cover $\widetilde{B}$.

\begin{lemma}\label{twists of singular structures general case}
Let $F\colon \widetilde{B}\rightarrow \overline{\mathcal{T}}_{\!\!S}$, $(\theta,r,t)\mapsto s_{(\theta,r,t)}$, be the map defined in \ref{going to the boundary} associated to a Kuranishi family $\pi\colon X\rightarrow B$ and any choice of trivialisation. Then for all $(\theta, r,t)\in \widetilde{B}$, we have $s_{(\theta,r,t)}=s_{(0,r,t)}(\theta)$, where the right hand side is the result of twisting the singular structure $s_{(0,r,t)}$ by $\theta=(\theta_1,\ldots,\theta_k)\in \R^k$ as in \cite[\S 4.3]{Ivanov}. In particular, the twist coordinates of \cite[\S 4.4]{Ivanov} satisfy
\begin{align*}
\Theta(s_{(\theta,r,t)})=\Theta(s_{(0,r,t)})+\theta.
\end{align*}
\end{lemma}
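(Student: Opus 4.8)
The plan is to reduce the general identity to the single-node computation of \Cref{twists of singular structures single node}, exploiting the fact that $F_{\pi,\alpha}$ depends only on the value of the trivialisation on the central fibre.

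First I would invoke \Cref{dependence on single fibre}: the map $F_{\pi,\alpha}$ depends only on the family $\pi\colon X\rightarrow B$ and on the isotopy class of the marking $\alpha\colon (S,P)\xrightarrow{\cong}(Z_0,\mathbf{z}_0)$ of the central fibre. Hence, when computing $s_{(\theta,r,t)}$ for a given Kuranishi family and an arbitrary trivialisation, we may replace the latter by the explicit trivialisation $\widetilde{\alpha}=\operatorname{Tw}\circ(\alpha,\operatorname{id})\colon S\times\widetilde{B}\rightarrow\widetilde{Z}$ of \Cref{explicit trivialisation}, where $\alpha$ is any marking of $Z_0$ representing the prescribed isotopy class.

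Next I would note that the trivialisation $\operatorname{Tw}$ in the general case is, by construction, obtained by running the single-node procedure independently inside pairwise disjoint neighbourhoods $\widetilde{W}^{(1)},\ldots,\widetilde{W}^{(k)}$ of the $k$ nodes of $X_0$, and leaving the family unchanged elsewhere. Consequently the parametrisations $\phi_i$ of \Cref{singular hyperbolic structure assumption} entering the definition of $s_{(\theta,r,t)}$ are supported in these disjoint neighbourhoods, one per vanishing cycle $L_i$, and the $i$-th parametrisation is twisted by exactly $\theta_i$. Since the twist action on singular hyperbolic structures of \cite[\S 4.3]{Ivanov} is itself defined curve-by-curve via precisely these parametrisations, both $s_{(\theta,r,t)}$ and $s_{(0,r,t)}(\theta)$ decompose into contributions over the nodes, with the $i$-th contribution depending only on $(\theta_i,r_i)$ and the harmless parameters $t_{k+1},\ldots,t_n$. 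For each fixed $i$ this is the single-node situation of \Cref{twists of singular structures single node}, which gives the equality of the $i$-th contributions; assembling over $i=1,\ldots,k$ yields $s_{(\theta,r,t)}=s_{(0,r,t)}(\theta)$.

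The final assertion is then immediate from the definitions: the twist coordinate $\Theta$ of \cite[\S 4.4]{Ivanov} is normalised so that twisting a singular structure by $\theta\in\R^k$ adds $\theta$ to its twist coordinates, whence $\Theta(s_{(\theta,r,t)})=\Theta\big(s_{(0,r,t)}(\theta)\big)=\Theta(s_{(0,r,t)})+\theta$. The main obstacle is the middle step: one must check carefully that $\operatorname{Tw}$ genuinely decouples over the nodes and that the local twisting it induces agrees on the nose --- as isotopy classes of singular hyperbolic structures respecting the annular parametrisations --- with Ivanov's normalisation of the twist action, rather than merely up to an isometry or reparametrisation. This is exactly the content already verified in the single-node case, so in the end the general statement is a matter of organising disjoint neighbourhoods.
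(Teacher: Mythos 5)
Your proposal matches the paper's argument: both reduce via \Cref{dependence on single fibre} to the explicit trivialisation $\widetilde{\alpha}=\operatorname{Tw}\circ(\alpha,\operatorname{id})$, apply \Cref{twists of singular structures single node} coordinatewise over the disjoint nodal neighbourhoods, and then read off the twist-coordinate identity. The only cosmetic difference is that the paper attributes the final equality $\Theta(s(\theta))=\Theta(s)+\theta$ to \cite[Lemma 3.7]{Ivanov} rather than leaving it as an unpacking of the normalisation, which is worth keeping as a precise citation.
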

\begin{proof}
Firstly, by \Cref{dependence on single fibre}, we may assume that the trivialisation used to define $F$ is of the form $\widetilde{\alpha}$ above for some $\alpha\colon (S,P)\rightarrow (Z_0,\mathbf{z}_0)$. The claim then follows immediately by noting that \Cref{twists of singular structures single node} applies coordinatewise to the general case of $k$ nodes. The additional claim is a consequence of \cite[Lemma 3.7]{Ivanov}. 
\end{proof}

\subsection{An open cover of the Harvey bordification}\label{open cover}

We continue with the notation of the previous sections. We will show that the maps $F_{\pi,\alpha}\colon \widetilde{B}\rightarrow \overline{\mathcal{T}}_{\!\!S}$ are open embeddings and that they cover the Harvey bordification when varying the Kuranishi family $\pi\colon X\rightarrow B$ and the marking $\alpha\colon (S,P)\rightarrow (Z_0,\mathbf{z}_0)$. In order to do this, we need to compare with the concrete construction of the Harvey bordification. We refer the reader to \cite{Ivanov} for details and simply recall that the coordinate patches are given by length and twist parameters with respect to varying maximal curve systems on $S$:
\begin{align*}
X(\mu)\rightarrow \R_{\geq 0}^\mu\times \R^\mu, \quad s\mapsto (L_\mu^2(s),\Theta_\mu(s));
\end{align*}
here $\mu$ is a maximal curve system on $S$ and $X(\mu)\subseteq \overline{\mathcal{T}}_{\!\!S}$ is the open star neighbourhood of the $\mu$-stratum (\cite[\S 4.4 and Definition 5.1]{Ivanov}). We think of the length and twist coordinates as generalised Fenchel--Nielsen coordinates --- we are incorporating twist coordinates for the nodes, i.e.~the curves of length $0$ (see also \Cref{Remark: Fenchel-Nielsen coordinates on strata}).

In view of \Cref{isotopy classes of vanishing cycles}, the isotopy class of the curve system $L_t=\{L_{1,t},\ldots,L_{k,t}\}$ over the origin in $B$ is independent of the choice of $t\in (\nu\zeta)^{-1}(0)$. We denote this isotopy class by $\mathbf{L}=\{L_1,\ldots,L_k\}$ and choose some maximal curve system $\mu$ containing $\mathbf{L}$.

\begin{lemma}
The map $F_{\pi,\alpha}\colon \widetilde{B}\rightarrow \overline{\mathcal{T}}_{\!\!S}$ is smooth for any choice of $\pi$ and $\alpha$.
\end{lemma}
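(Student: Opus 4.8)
The plan is to reduce the statement to the smoothness of finitely many coordinate functions in a single chart of $\overline{\mathcal{T}}_{\!\!S}$, and then to treat the interior and the boundary of $\widetilde{B}$ separately.

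First, by \Cref{isotopy classes of vanishing cycles} the image of $F_{\pi,\alpha}$ lies in the open star neighbourhood $X(\mu)\subseteq\overline{\mathcal{T}}_{\!\!S}$ of the $\mu$-stratum, where $\mu$ is the chosen maximal curve system containing the isotopy class $\mathbf{L}=\{L_1,\dots,L_k\}$ of vanishing cycles: indeed, every $F_{\pi,\alpha}(t)$ has underlying curve system $\mathbf{L}_I$ for some $I\subseteq\{1,\dots,k\}$, hence a subsystem of $\mu$. Since $X(\mu)$ is a single chart domain, with chart
\[
X(\mu)\xrightarrow{\ (L^2_\mu,\,\Theta_\mu)\ }\R^\mu_{\geq 0}\times\R^\mu ,
\]
smoothness of $F_{\pi,\alpha}$ is detected by its coordinate functions there, so it suffices to show that for every $c\in\mu$ the functions $L^2_c\circ F_{\pi,\alpha}$ and $\Theta_c\circ F_{\pi,\alpha}$ are smooth on $\widetilde{B}$.

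Second, I would use \Cref{twists of singular structures general case} to remove the angular variables. Writing a point of $\widetilde{B}$ as $(\theta,r,t)$ with $\theta=(\theta_1,\dots,\theta_k)$, that lemma gives $s_{(\theta,r,t)}=s_{(0,r,t)}(\theta)$, the result of twisting along $\mathbf{L}$. Since twisting along curves belonging to the maximal system $\mu$ changes only the corresponding Fenchel--Nielsen twist parameters, this yields $\Theta_{L_j}\circ F_{\pi,\alpha}(\theta,r,t)=\theta_j+\bigl(\Theta_{L_j}\circ F_{\pi,\alpha}\bigr)(0,r,t)$, while every other coordinate function of $F_{\pi,\alpha}$ factors through the projection $(\theta,r,t)\mapsto(0,r,t)$. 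Hence it is enough to prove that $F_{\pi,\alpha}\vert_{\{\theta=0\}}$ is smooth, i.e.\ that each $L^2_c\circ F_{\pi,\alpha}$ and $\Theta_c\circ F_{\pi,\alpha}$ is a smooth function of $(r,t)\in[0,\epsilon)^k\times\Delta^{n-k}_{\epsilon^2}$. Over the open locus $\{r_1>0,\dots,r_k>0\}$ this is classical: there $\nu\zeta(0,r,t)$ lies in the smooth locus $B\smallsetminus D$, the map $k_{(0,r,t)}$ is a diffeomorphism onto a smooth Riemann surface, and $F_{\pi,\alpha}$ restricts to the holomorphic map into Teichmüller space $\mathcal{T}_S$ classifying the (holomorphic) Kuranishi family of smooth marked curves; in particular it is real analytic there, and on $\mathcal{T}_S$ the $L^2_c$ and $\Theta_c$ are the usual Fenchel--Nielsen coordinate functions, which are real analytic.

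The remaining point, which I expect to be the main obstacle, is smoothness across the boundary strata $\{r_j=0\}$. Here I would feed the explicit local plumbing model of the family (the model $x_jy_j=s_j$ with $s_j=r_je^{i\theta_j}$, the real-analytic intermediate surface $Z$ and its parametrisations $\phi_j$, from \cite[Chapter X, \S9]{ArbarelloCornalbaGriffiths}) into the explicit description of Ivanov's charts (\cite[Definition 5.1]{Ivanov}), using the explicit trivialisation $\widetilde{\alpha}$ of \Cref{explicit trivialisation} to track the generalised Fenchel--Nielsen coordinates (cf.\ \Cref{twists of singular structures single node}, \Cref{singular hyperbolic structure assumption} and \Cref{Remark: Fenchel-Nielsen coordinates on strata}). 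One then checks by a direct computation that, expressed in the blow-up coordinates $(r,t)$, the generalised length functions $L^2_c$ — in particular $L^2_{L_j}$, which vanishes exactly on $\{r_j=0\}$ — and the twist functions $\Theta_c$ are given by convergent expressions that extend smoothly up to and including $r_j=0$; this is in essence the compatibility, recorded by Ivanov, between the topology and smooth structure on $\overline{\mathcal{T}}_{\!\!S}$ and the plumbing coordinates. Combining this with the interior computation of the previous step gives smoothness of $F_{\pi,\alpha}\vert_{\{\theta=0\}}$, and hence of $F_{\pi,\alpha}$, on all of $\widetilde{B}$.
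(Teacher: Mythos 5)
Your structural reductions are sound and go beyond what the paper writes out: the reduction to the single chart $X(\mu)$ is exactly the paper's starting point, and the further reduction to $\theta=0$ via the twist-equivariance of \Cref{twists of singular structures general case} is a correct observation (once one notes that twisting along curves in $\mu$ affects only the corresponding $\Theta$-coordinates, which is \cite[Lemma 3.7]{Ivanov}). The interior case is also correctly disposed of.

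The gap is at the boundary step, which is where the entire content of the lemma lives, and you leave it as a promissory ``direct computation''. This is not a routine check. If one naively tries to compute the \emph{hyperbolic} length of the geodesic isotopic to $L_j$ in the fibre over a plumbing parameter $s_j=r_je^{i\theta_j}$, one finds $\ell_{L_j}\sim -2\pi^2/\log r_j$, which vanishes at $r_j=0$ but is not smooth there; so a direct comparison of hyperbolic Fenchel--Nielsen data with plumbing coordinates does \emph{not} produce a smooth function, and the claim that ``$L^2_c$ and $\Theta_c$ are given by convergent expressions that extend smoothly'' is precisely the nontrivial point one has to prove, not observe. What makes this tractable is a device you never name: Ivanov's \emph{weak structures} (\cite[\S 4.2]{Ivanov}). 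His $L_\mu$ and $\Theta_\mu$ are not computed from the genuine hyperbolic metric but from an auxiliary smooth family of (generally non-hyperbolic) metrics attached to a smooth family of markings, and \cite[Lemma 1.10 and \S 3.6, \S 5.4]{Ivanov} is the statement that such a family gives smooth length/twist coordinates. The paper's proof is exactly this: since $\alpha$ is a smooth trivialisation, the associated family of weak structures $\{\omega_t\}$ is smooth, hence by Ivanov's lemma the map $t\mapsto(L^2_\mu(s_t),\Theta_\mu(s_t))$ is smooth, hence $F_{\pi,\alpha}$ is smooth by the very definition of the chart (\cite[Definition 5.1]{Ivanov}). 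This is also why no interior/boundary case split is needed. In short: to close your argument you would either need to explicitly construct the weak structures from the plumbing model and cite Ivanov's smoothness lemma --- at which point you are reproducing the paper's proof --- or carry out a substantial estimate showing the generalised Fenchel--Nielsen coordinates depend smoothly on plumbing data, which is exactly the work that Ivanov's machinery is designed to package away.
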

\begin{proof}
First of all note that the family of associated weak structures $\{\omega_t\}_{t\in \widetilde{B}}$ as constructed in \cite[\S 4.2]{Ivanov} is smooth as $\alpha$ is a smooth trivialisation. Consider the map
\begin{align*}
\widetilde{B}\rightarrow \R_{\geq 0}^\mu\times \R^\mu, \quad t\mapsto (L_\mu^2(s_t),\Theta_\mu(s_t))
\end{align*}
given by the length and twist parameters associated to singular structures as defined in \cite[\S 4.4]{Ivanov}. Since the family of weak structures is smooth, so is this map (\cite[Lemma 1.10 and \S 3.6, see also \S 5.4]{Ivanov}). Hence, by construction of $\overline{\mathcal{T}}_{\!\!S}$, $F_{\pi,\alpha}$ is smooth (\cite[Chapter XV, Definition 5.1]{Ivanov}).
\end{proof}

\begin{lemma}\label{injectivity}
The map $F_{\pi,\alpha}\colon \widetilde{B}\rightarrow \overline{\mathcal{T}}_{\!\!S}$ is injective for any choice of $\pi$ and $\alpha$.
\end{lemma}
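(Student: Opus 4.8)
The plan is to reconstruct the point $t\in\widetilde B$ from its image $F_{\pi,\alpha}(t)$, using the generalised Fenchel--Nielsen chart on $\overline{\mathcal T}_{\!\!S}$ together with the twist formula already established in \ref{explicit trivialisation}. Fix a maximal curve system $\mu$ on $S$ containing the isotopy class $\mathbf L$ of vanishing cycles; by \Cref{isotopy classes of vanishing cycles} each $L_t$ is a subsystem of $\mathbf L$, so $F_{\pi,\alpha}(\widetilde B)\subseteq X(\mu)$, and the length--twist map $X(\mu)\to\R_{\ge 0}^\mu\times\R^\mu$, $s\mapsto(L_\mu^2(s),\Theta_\mu(s))$, is injective. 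Hence $F_{\pi,\alpha}(t)=F_{\pi,\alpha}(t')$ is equivalent to the equality of all length and twist parameters with respect to $\mu$. Writing $t=(\theta,r,c)$, $t'=(\theta',r',c')$ in the coordinates $\widetilde B=\R^k\times[0,\epsilon)^k\times\Delta_{\epsilon^2}^{n-k}$, note first that since $F_{\pi,\alpha}$ is stratum preserving (\Cref{isotopy classes of vanishing cycles} and the subsequent remark) we have $r_i=0\iff r_i'=0$.

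The reconstruction proceeds in two stages. Stage one recovers the base point $\nu\zeta(t)\in B$: the marked stable Riemann surface underlying $s_t=[L_t,k_t]$ is $(X_{\nu\zeta(t)};\mathbf x_{\nu\zeta(t)})$ with marking $k_t$, so $s_t=s_{t'}$ produces an isomorphism $\psi\colon X_{\nu\zeta(t)}\xrightarrow{\ \sim\ }X_{\nu\zeta(t')}$ of $P$-pointed stable curves compatible with $k_t,k_{t'}$ up to a diffeomorphism of $S$ isotopic to the identity. By the rigidity of small Kuranishi families (\cite[Chapter XI]{ArbarelloCornalbaGriffiths}) isomorphic fibres differ only by the action of the finite group $\operatorname{Aut}(X_0;\mathbf x_0)$ on $B$, so $\nu\zeta(t')=\sigma\cdot\nu\zeta(t)$ for some $\sigma\in\operatorname{Aut}(X_0;\mathbf x_0)$; compatibility of $\psi$ with the markings — which by \Cref{dependence on single fibre} are all induced from one another by isotopies through the simply connected base $\widetilde B$ — forces $\sigma$ to be trivial as a mapping class, hence $\sigma=\operatorname{id}$. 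Thus $\nu\zeta(t)=\nu\zeta(t')$, so $c=c'$, $r=r'$, and $\theta_i\equiv\theta_i'\ (\mathrm{mod}\ 2\pi)$ when $r_i\neq 0$. Stage two recovers $\theta$: since $r=r'$ and $c=c'$ we have $s_{(0,r,c)}=s_{(0,r',c')}$, so their twist coordinates coincide, and applying the formula $\Theta_i(s_{(\theta,r,c)})=\theta_i+\Theta_i(s_{(0,r,c)})$ of \Cref{twists of singular structures general case} for $i=1,\dots,k$ together with $\Theta_i(s_t)=\Theta_i(s_{t'})$ gives $\theta_i=\theta_i'$ for all $i\leq k$. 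Hence $t=t'$. (This also disposes of the residual $2\pi\Z^k$-ambiguity from the deck group of $\widetilde B\to\operatorname{Bl}_D(B)$: a nonzero deck transformation shifts some $\theta_i$, hence by the twist formula shifts $\Theta_i$, contradicting equality of twist coordinates.)

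The main obstacle is the rigidification in Stage one, i.e.\ the passage from ``isomorphic marked fibres'' to ``equal base points''. This rests on two standard but genuine inputs: the local structure of small Kuranishi families of curves (all isomorphisms of fibres are induced by the $\operatorname{Aut}(X_0)$-action on the base; \cite[Chapter XI]{ArbarelloCornalbaGriffiths}), and the fact that the fixed trivialisation over the contractible space $\widetilde B$ pins the markings down coherently (\Cref{dependence on single fibre}), so that an automorphism of the central fibre which is nontrivial as a mapping class necessarily alters the marking and cannot occur. Everything else is bookkeeping with the chart $X(\mu)$ and the twist formula of \ref{explicit trivialisation}.
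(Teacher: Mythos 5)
Your proof follows the same two-stage plan as the paper's: first recover the base point $z=\nu\zeta(t)\in B$ from $s_t$, then recover the angular coordinates $\theta$ from the twist formula of \Cref{twists of singular structures general case}. Stage two is identical to the paper's argument. The difference is in stage one: the paper simply invokes the proof of \cite[Lemma 8.19]{ArbarelloCornalbaGriffiths} to conclude $z=z'$, whereas you unwind the statement directly from the universality of the (small, standard) Kuranishi family. This is a reasonable substitution for the black-box reference, and the trade-off is that it makes the reliance on Kuranishi rigidity and on the faithfulness of $\operatorname{Aut}(X_0)$ on mapping classes explicit, at the cost of having to keep track of markings at different fibres.

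The one step you should tighten is the sentence ``compatibility of $\psi$ with the markings \ldots forces $\sigma$ to be trivial as a mapping class, hence $\sigma=\operatorname{id}$.'' Two small issues. First, the appeal to \Cref{dependence on single fibre} is slightly misplaced: that remark compares two trivialisations of the \emph{same} family over $\widetilde B$, not markings $k_t$ and $k_{t'}$ of different fibres; what you really want is the observation that simple-connectedness of $\widetilde B$ gives a canonical-up-to-isotopy comparison of $k_t$ and $k_{t'}$ along any path in $\widetilde B$. Second, and more substantively, that canonical comparison is made relative to a choice of lift of $\sigma'\colon\operatorname{Bl}_DB\to\operatorname{Bl}_DB$ to $\widetilde B$, and different lifts differ by a deck transformation, i.e.\ by an element of $\Delta_{\mathbf{L}_\alpha}$. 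So the clean conclusion from the isotopy of markings is that the mapping class of $\sigma$ is trivial modulo $\Delta_{\mathbf{L}_\alpha}$, i.e.\ trivial in $\Gamma_{S/L_0}$. That is still sufficient: the composite $\operatorname{Aut}(X_0)\hookrightarrow\Gamma_{S/L_0}$ is injective for stable curves (nontrivial automorphisms of a hyperbolic surface are never isotopic to the identity), so $\sigma=\operatorname{id}$ follows. You do flag these as the ``genuine inputs'' in your last paragraph, which is fair; I would just be more precise that the mapping class is controlled only in $\Gamma_{S/L_0}$, not $\Gamma_S$, and that this is exactly enough for the faithfulness argument to apply.
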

\begin{proof}
Let $t,t'\in \widetilde{B}$ and assume that $s_t=s_{t'}$. Write $z$, respectively $z'$, for the images of $t$, respectively $t'$, in $B$. By the proof of \cite[Lemma 8.19]{ArbarelloCornalbaGriffiths}, we can conclude that $z=z'$. But then $t$ and $t'$ are of the form
\begin{align*}
t=(\theta_1,\ldots,\theta_k,r_1,\ldots,r_k,t_{k+1},\ldots,t_n),\quad\text{and}\quad t'=(\theta'_1,\ldots,\theta'_k,r_1,\ldots,r_k,t_{k+1},\ldots,t_n)
\end{align*}
and by \Cref{twists of singular structures general case}, we see that
\begin{align*}
\Theta(s_t)=\Theta(s_{t'}) + (\theta_i-\theta_i')=\Theta(s_t) + (\theta_i-\theta_i').
\end{align*}
Thus we conclude that $\theta_i=\theta_i'$ for all $i=1,\ldots,k$ and hence $t=t'$.
\end{proof}

\begin{corollary}
The map $F_{\pi,\alpha}\colon \widetilde{B}\rightarrow \overline{\mathcal{T}}_{\!\!S}$ is an open embedding for any choice of $\pi$ and $\alpha$.
\end{corollary}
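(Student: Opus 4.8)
The plan is to deduce that $F_{\pi,\alpha}$ is an open embedding by combining the three preceding results — smoothness, injectivity, and the stratum-preserving property (\Cref{isotopy classes of vanishing cycles}) — with a dimension count and the invariance of domain for manifolds with corners. First I would observe that both $\widetilde B$ and $\overline{\mathcal{T}}_{\!\!S}$ are smooth manifolds with corners of the same dimension: $\dim \widetilde B = 2n = 2(3g-3+|P|)$ since $B$ is a polydisc of dimension $n=3g-3+|P|$ (the dimension of $\overline{\mathcal{M}}_{g,P}$), and $\dim \overline{\mathcal{T}}_{\!\!S} = \dim \mathcal{T}_S = 2(3g-3+|P|)$ by the Fenchel--Nielsen coordinates recalled in \Cref{Teichmuller space}. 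Moreover the stratification of $\widetilde B$ by preimages of the $D_I$ matches the depth stratification of $\overline{\mathcal{T}}_{\!\!S}$: by \Cref{isotopy classes of vanishing cycles}, a point in the stratum $(\nu\zeta)^{-1}(D_I)$ (which has codimension $2|I|$ in $\widetilde B$, being cut out by $r_i = 0$ for $i \in I$) is sent into the $\mathbf{L}_I$-stratum $X^S_{\mathbf{L}_I}$, which also has codimension $2|I|$ in $\overline{\mathcal{T}}_{\!\!S}$ (one length and one twist parameter are lost per collapsed curve — cf. \Cref{Remark: Fenchel-Nielsen coordinates on strata} and \Cref{charts on Harvey bordification}).

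Next I would show that $F_{\pi,\alpha}$ is an open map. The cleanest route is to work chart by chart: choosing the maximal curve system $\mu \supseteq \mathbf{L}$ as in the discussion preceding \Cref{injectivity}, the image of $\widetilde B$ lands in the open star neighbourhood $X(\mu)\cong \R^\mu_{\geq 0}\times\R^\mu$, and in these coordinates $F_{\pi,\alpha}$ is the smooth map $t\mapsto (L^2_\mu(s_t),\Theta_\mu(s_t))$. Using the explicit trivialisation of \ref{explicit trivialisation} and \Cref{twists of singular structures general case}, the twist coordinates in the directions of $\mathbf{L}$ are precisely the affine functions $\theta_i \mapsto \Theta(s_{(0,r,t)})_i + \theta_i$, so the map is a submersion in those $k$ real directions; the remaining coordinates pull back (via $\nu\zeta$) to honest Fenchel--Nielsen coordinates on the Teichmüller spaces of the resolved fibres, where $F_{\pi,\alpha}$ restricts to the classical local homeomorphism onto an open subset. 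Stratumwise this gives that $F_{\pi,\alpha}$ is a local diffeomorphism onto its image; combined with injectivity (\Cref{injectivity}), it is a homeomorphism onto an open subset, hence an open embedding.

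Alternatively — and this is the argument I would actually write to avoid grinding through the Jacobian — one invokes invariance of domain for manifolds with corners: a continuous injection between manifolds with corners of the same dimension which is stratum-preserving and locally a homeomorphism at a dense open set (the interior, where $F_{\pi,\alpha}$ restricts to the classical embedding $B\setminus D \hookrightarrow \mathcal{T}_S$ coming from the Kuranishi family, which is an open embedding by the universal property of Teichmüller space) is automatically open. One has to check the matching of codimensions of strata, which is exactly the dimension count above, and that $F_{\pi,\alpha}$ is injective and continuous on each closed stratum, which follows from \Cref{injectivity} and the previous lemma. Then openness propagates from the open stratum to all strata by the stratified invariance-of-domain statement.

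The main obstacle is the behaviour at the corners: one must be careful that $F_{\pi,\alpha}$ does not fold a boundary face onto an interior point or collapse directions transverse to a stratum. This is precisely what \Cref{twists of singular structures general case} is designed to rule out — it shows the $k$ ``twist'' directions $\theta_1,\ldots,\theta_k$ are faithfully recorded, so no transverse collapsing occurs — and what the codimension matching from \Cref{isotopy classes of vanishing cycles} guarantees globally. Once these two inputs are in hand, the open embedding statement is formal, and I would keep the write-up to a short paragraph citing \Cref{injectivity}, the smoothness lemma, \Cref{isotopy classes of vanishing cycles}, and stratified invariance of domain.
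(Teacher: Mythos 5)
Your proposal is correct and follows essentially the same route as the paper's proof, which simply records that both $\widetilde B$ and $\overline{\mathcal{T}}_{\!\!S}$ are smooth manifolds with corners of the same real dimension $6g-6+2n$ (using \cite[Chapter~XI, Corollary~4.8]{ArbarelloCornalbaGriffiths}) and then invokes the preceding two lemmas together with invariance of domain. The extra care you take over the corner subtlety — that a continuous injection between same-dimensional manifolds with corners need not be open unless it preserves corner depth, which here is supplied by \Cref{isotopy classes of vanishing cycles} (and, for the transverse twist directions, by \Cref{twists of singular structures general case}) — is a real point that the paper's terse proof leaves implicit, so your more explicit write-up is the safer one.
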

\begin{proof}
By \cite[Chapter XI, Corollary 4.8]{ArbarelloCornalbaGriffiths}, the base $B$ is smooth of complex dimension $3g-3+n$, and thus both $\widetilde{B}$ and $\overline{\mathcal{T}}_{\!\!S}$ are smooth manifolds with corners of real dimension $6g-6+2n$. Hence, the claim follows by the previous two lemmas and invariance of domain.
\end{proof}

\begin{lemma}
For varying (small) Kuranishi families $\pi\colon X\rightarrow B$ and $\alpha$ running through all markings $(S,P)\xrightarrow{\cong} (Z_0,\mathbf{z}_0)$, the collection of open embeddings $F_{\pi,\alpha}\colon \widetilde{B}\hookrightarrow \overline{\mathcal{T}}_{\!\!S}$ cover $\overline{\mathcal{T}}_{\!\!S}$.
\end{lemma}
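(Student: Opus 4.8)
The plan is to show that an arbitrary point $s=[C,X,f]\in\overline{\mathcal{T}}_{\!\!S}$ lies in the image of some $F_{\pi,\alpha}$. The underlying stable nodal Riemann surface $X$ together with its marked points determines a point of $\overline{\mathcal{M}}_{g,P}$, and by \cite[Chapter XI, Theorem 3.17 and Corollary 4.7]{ArbarelloCornalbaGriffiths} I would choose a small Kuranishi family $\pi\colon\mathcal{X}\rightarrow B$ through it: $B=\Delta_\epsilon^n$ is a polydisc, the singular locus is $D=\{t_1\cdots t_k=0\}$ with $k$ the number of nodes of $X$, and there is an isomorphism $\mathcal{X}_0\cong X$ of $P$-pointed stable curves. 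Forming the resolved family $\psi\colon Z\rightarrow\operatorname{Bl}_D B$, the fibre $Z_0$ over the base point $0=(0,1,\ldots,1)\in\operatorname{Bl}_D B$ is obtained from $X_0\cong X$ by replacing each node by the vanishing circle $\{\xi_i\eta_i=1\}$ inside the cylinder $A_i$, as in the proof of \Cref{singular structure}.

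Next I would construct a marking $\alpha\colon(S,P)\xrightarrow{\ \cong\ }(Z_0,\mathbf{z}_0)$ sending $C$ onto the set of vanishing circles and with $\alpha|_{S\smallsetminus(C\cup P)}$ isotopic, through puncture-preserving homeomorphisms, to the composite $\lambda_0^{-1}\circ f$ (using the homeomorphism $\lambda_0\colon Z_0\smallsetminus\{\text{circles}\}\xrightarrow{\cong}X_0\smallsetminus\{\text{nodes}\}\cong X\smallsetminus\{\text{nodes}\}$). Such an $\alpha$ exists because collapsing $C$ in $S$ produces precisely the nodal surface underlying $X$ with its smooth structure on the parts, and $Z_0$ is that same surface with each node re-opened into a circle, i.e.\ again $S$; concretely one glues the ``outer'' homeomorphism $\lambda_0^{-1}\circ f$ on the complement of conical neighbourhoods of the nodes to a model homeomorphism of annular neighbourhoods, matching the parametrisation of \Cref{singular hyperbolic structure assumption} on the $S$-side with the explicit cylinder charts $\phi_i$ of \Cref{singular structure} on the $Z_0$-side, node by node on pairwise disjoint neighbourhoods. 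Since $f$ and $\lambda_0$ are orientation preserving, $\alpha$ can be taken orientation preserving; by \Cref{dependence on single fibre} this marking of the central fibre determines a map $F_{\pi,\alpha}\colon\widetilde{B}\rightarrow\overline{\mathcal{T}}_{\!\!S}$.

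It then remains to locate $s$ in the image. For any $t_0\in(\nu\zeta)^{-1}(0)$ over the origin of $B$ we have $X_{\nu\zeta(t_0)}=X_0\cong X$, and by construction of $\alpha$ together with \Cref{isotopy classes of vanishing cycles} the curve system $L_{t_0}$ is isotopic to $C$ while $k_{t_0}=\lambda_0\circ\alpha_{t_0}$ is isotopic off $C$ to $f$ (after the identification $X_0\cong X$). Hence $[L_{t_0},k_{t_0}]$ and $s$ have the same underlying marked stable curve and the same curve system $C$, and therefore differ only by an element $\theta^\ast\in\R^k$ acting by twisting about the curves of $C$: the fibre over a fixed marked curve of the map $\overline{X}^S_{[C]}\rightarrow\overline{\mathcal{T}}_{S/C}$ is an $\R^{C}$-torsor via twists (\Cref{identifying quotients by additive action as smaller Harvey bordifications}). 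Writing $t_0=(\theta^\ast,0,0)\in\R^k\times[0,\epsilon)^k\times\Delta^{n-k}_{\epsilon^2}=\widetilde{B}$, \Cref{twists of singular structures general case} gives $F_{\pi,\alpha}(t_0)=s_{(\theta^\ast,0,0)}=s_{(0,0,0)}(\theta^\ast)=s$. So $s$ lies in the image of $F_{\pi,\alpha}$, and as $(\pi,\alpha)$ vary these images cover $\overline{\mathcal{T}}_{\!\!S}$.

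The hard part will be the construction of the marking $\alpha$: one must produce an honest orientation-preserving homeomorphism $(S,P)\cong(Z_0,\mathbf{z}_0)$ that is simultaneously compatible with $f$ away from the nodes, carries $C$ onto the vanishing circles, and is arranged near the curves of $C$ so that the only residual discrepancy with $s$ is a twist — which the $\R^k$ of directions over the origin then absorbs via \Cref{twists of singular structures general case}. The cleanest route is to work node by node with the explicit cylinder models $A_i$ and the charts $\phi_i$, glue to $\lambda_0^{-1}\circ f$ on the complement, and observe that the length coordinates are matched automatically because the fibre over the origin is literally $X$, leaving the twists about $C$ as the only remaining degree of freedom.
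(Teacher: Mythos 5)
Your proof is correct but takes a genuinely different route from the paper's. Both approaches start from $s=[C,X,f]$, pick a small Kuranishi family with central fibre $X$, and construct a marking of the resolved fibre — but they handle the twist-matching differently. The paper's proof reads the rotation components $\tau_i\in S^1$ directly off the compatibility between the parametrisations $d_i$ of \Cref{singular hyperbolic structure assumption} and the local cylinder charts of \Cref{singular structure}, sets $\tau=(0,\tau_1,\ldots,\tau_k)\in\nu^{-1}(0)$, lifts $f$ to a homeomorphism $\widehat{f}\colon S\rightarrow Z_\tau$, and defines $\alpha=\operatorname{Untw}_t\circ\widehat{f}$; this gives $F_{\pi,\alpha}(t)=s$ on the nose. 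You instead construct $\alpha\colon(S,P)\rightarrow(Z_0,\mathbf{z}_0)$ only up to isotopy off $C$, then use the $\R^C$-torsor structure of the stratum over $\mathcal{T}_{S/C}$ (\Cref{identifying quotients by additive action as smaller Harvey bordifications}) and the twist formula of \Cref{twists of singular structures general case} to absorb the residual discrepancy by a $\theta^\ast\in\R^k$. The paper's argument is more direct and requires no appeal to the twist lemma or the torsor structure, but needs the precise interaction between the $d_i$ and the blow-up coordinates; your version is structurally cleaner in separating "construct a marking in the right isotopy class" from "find the exact preimage," at the cost of leaning on machinery. You correctly flag that the construction of $\alpha$ is the delicate step — the paper's explicit computation of $f\circ d_i$ is exactly how one pins this down — so a complete write-up would still want that local analysis, at which point the two proofs become quite close.
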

\begin{proof}
Indeed, let $s=[C,f\colon (S,P)\rightarrow (X_0,\mathbf{x}_0)]\in \overline{\mathcal{T}}_{\!\!S}$ be a singular structure and let $\pi\colon X\rightarrow B$ be a small Kuranishi family with central fibre $X_0$. Suppose $X_0$ has $k$ nodes $p_1,\ldots,p_k$. By definition, the local coordinates $x_i$ and $y_i$ on a neighbourhood $V_i$ of the node $p_i\in X_0$ are compatible with the parametrisation $d_i\colon S^1\times (-\epsilon,\epsilon)\rightarrow U_i$ of an annular neighbourhood $U_i\subseteq S$ of the corresponding curve $f^{-1}(p_i)\subseteq C$. It follows that there is a $\tau_i\in S^1$ such that the composite $f\circ d_i$ is given by
\begin{align*}
f\circ d\colon S^1\times (-\epsilon,\epsilon)\rightarrow V_i\subseteq X_0,\quad (\rho,s)\mapsto \begin{cases}
(s\rho,0) & s\geq 0 \\
(0,s\overline{\rho}\tau_i) & s \leq 0
\end{cases}
\end{align*}
in the local coordinates $(x_i,y_i)$ on $V_i$. It follows that for $\tau=(0,\tau_1,\ldots,\tau_k)\in \nu^{-1}(0)\subseteq \operatorname{Bl}_DB$, the marking $f$ lifts to a homeomorphism onto the fibre $(Z_\tau,\mathbf{z}_\tau)$ as in the following diagram
\begin{center}
\begin{tikzpicture}
\matrix (m) [matrix of math nodes,row sep=2em,column sep=2em]
  {
(S,P) & (Z_\tau,\mathbf{z}_\tau) \\
 & (X_0,\mathbf{x}_0) \\
  };
  \path[-stealth]
(m-1-1) edge node[above]{$\widehat{f}$}  (m-1-2) edge node[below left]{$f$} (m-2-2)
;
\path[->>]
(m-1-2) edge (m-2-2)
;
\end{tikzpicture}
\end{center}

Let $t\in \widetilde{B}$ such that $\zeta(t)=\tau$ and set $\alpha:=\operatorname{Untw}_t\circ \widehat{f}\colon (S,P)\rightarrow (Z_0,\mathbf{z}_0)$. The corresponding trivialisation $\widetilde{\alpha}=\operatorname{Tw}\circ (\alpha,\operatorname{id})\colon S\times \widetilde{B}\rightarrow \widetilde{Z}$ gives rise to an embedding $F_{\pi,\alpha}\colon \widetilde{B}\rightarrow \overline{\mathcal{T}}_{\!\!S}$ sending $t$ to $s$.
\end{proof}

Fix a marking $\alpha\colon (S,P)\rightarrow (Z_0,\mathbf{z}_0)$. Consider the mapping class group $\Gamma_S=\Gamma_{(S,P)}$ of $(S,P)$. The Picard--Lefschetz transformation defines a homomorphism (depending on our choice of marking $\alpha$)
\begin{align}\label{PL-transformation}
\pi_1(B\smallsetminus D)\longrightarrow \Gamma_S,
\end{align}
(see \cite[Chapter X, \S 9]{ArbarelloCornalbaGriffiths}). The real oriented blow-up $\operatorname{Bl}_DB$ contracts onto the central fibre $\nu^{-1}(0)\cong \mathbb{T}^k$, a $k$-dimensional torus (\cite[(9.13), p.150]{ArbarelloCornalbaGriffiths}) and we see that the inclusion $B\smallsetminus D=\operatorname{Bl}_DB\smallsetminus E\hookrightarrow \operatorname{Bl}_DB$ is a homotopy equivalence. Moreover, by the analysis in \cite[pp.157-160]{ArbarelloCornalbaGriffiths}, the generator $\gamma_i$ around the $i$'th coordinate circle is mapped to the Dehn twist around the corresponding vanishing cycle $L_i=k_0^{-1}(p_i)\subset S$. Letting $\Delta_{\mathbf{L}_\alpha}\subseteq \Gamma_S$ denote the free abelian subgroup generated by Dehn twists around the simple closed curves of $\mathbf{L}_\alpha$, it follows that we have identifications
\begin{align*}
\Delta_{\mathbf{L}_\alpha}\cong\pi_1(\operatorname{Bl}_D B)\cong\Z^k
\end{align*}
and that moreover, for the corresponding actions of $\Delta_{\mathbf{L}_\alpha}$ on $\widetilde{B}$ and $\overline{\mathcal{T}}_{\!\!S}$, we have
\begin{align*}
k_{\gamma t}=k_t\circ\gamma^{-1},\quad\text{for all }t\in \widetilde{B},\ \gamma\in \Delta_{\mathbf{L}_\alpha}.
\end{align*}
See also \cite[(8.14), p. 493]{ArbarelloCornalbaGriffiths} (they write $\Gamma(\mathbf{L}_\alpha)$ for our $\Delta_{\mathbf{L}_\alpha}$, but this clashes slightly with the notation of the main body of this paper, see also the proof of \Cref{SES of groups} below). Let us record this as a lemma in the following form.

\begin{lemma}\label{identification of Delta as Dehn twists}
The map $F_{\pi,\alpha}\colon \widetilde{B}\rightarrow \overline{\mathcal{T}}_{\!\!S}$ is equivariant with respect to the actions of $\Delta_{\mathbf{L}_\alpha}$.
\end{lemma}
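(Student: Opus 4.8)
The plan is to unwind the three definitions in play — the map $F_{\pi,\alpha}$, which sends $t$ to the class $s_t=[L_t,k_t]$; the action of $\Delta_{\mathbf{L}_\alpha}\cong\pi_1(\operatorname{Bl}_D(B))$ on $\widetilde{B}$ by deck transformations of $\zeta\colon\widetilde{B}\to\operatorname{Bl}_D(B)$; and the action of $\Delta_{\mathbf{L}_\alpha}\subseteq\Gamma_S$ on $\overline{\mathcal{T}}_{\!\!S}$, given by $\overline{\gamma}.[C,X,f]=[\gamma(C),X,f\circ\gamma^{-1}]$ (see \Cref{Harvey bordification}) — and then to observe that the desired identity $F_{\pi,\alpha}(\gamma t)=\overline{\gamma}.F_{\pi,\alpha}(t)$ is a purely formal consequence of the relation $k_{\gamma t}=k_t\circ\gamma^{-1}$ recorded immediately above the statement.

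Concretely, the steps are as follows. First, since $\gamma$ acts on $\widetilde{B}$ as a deck transformation, $\zeta(\gamma t)=\zeta(t)$, hence $z:=\nu\zeta(\gamma t)=\nu\zeta(t)$, so $k_{\gamma t}$ and $k_t$ have the same target fibre $(X_z,\mathbf{x}_z)$. Second, feeding $k_{\gamma t}=k_t\circ\gamma^{-1}$ into the definition of the vanishing curve system gives
\[
L_{\gamma t}=k_{\gamma t}^{-1}(\{\operatorname{nodes\ of} X_z\})=\gamma\bigl(k_t^{-1}(\{\operatorname{nodes\ of} X_z\})\bigr)=\gamma(L_t).
\]
Third, combining the two,
\[
F_{\pi,\alpha}(\gamma t)=[L_{\gamma t},k_{\gamma t}]=[\gamma(L_t),\,k_t\circ\gamma^{-1}]=\overline{\gamma}.[L_t,k_t]=\overline{\gamma}.F_{\pi,\alpha}(t),
\]
where the passage from $[\gamma(L_t),k_t\circ\gamma^{-1}]$ to $\overline{\gamma}.[L_t,k_t]$ is the definition of the $\Gamma_S$-action on $\overline{\mathcal{T}}_{\!\!S}$. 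No check that $(\gamma(L_t),X_z,k_t\circ\gamma^{-1})$ is again a singular hyperbolic structure is required, as the $\Gamma_S$-action on $\overline{\mathcal{T}}_{\!\!S}$ is already known to be well defined.

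So the lemma itself is short; the genuine content has already been isolated in the preceding paragraph, namely the relation $k_{\gamma t}=k_t\circ\gamma^{-1}$. That is where the real obstacle lies: it rests on the Picard--Lefschetz description of the monodromy of $\pi$ — the loop $\gamma_i$ around the $i$-th coordinate circle is sent to the Dehn twist $t_{L_i}$ about the $i$-th vanishing cycle (\cite[Chapter X, \S 9 and pp.157--160]{ArbarelloCornalbaGriffiths}) — propagated from a single fibre to all of $\widetilde{B}$ by transporting the trivialisation along paths, exactly as in \Cref{dependence on single fibre}. If one wanted a fully self-contained argument, it is that propagation, together with matching the deck group of $\widetilde{B}$ with $\Delta_{\mathbf{L}_\alpha}$ compatibly with the marking $\alpha$ (the identifications $\Delta_{\mathbf{L}_\alpha}\cong\pi_1(\operatorname{Bl}_D(B))\cong\Z^k$), that is the only slightly delicate point; the equivariance of $F_{\pi,\alpha}$ then follows at once.
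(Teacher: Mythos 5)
Your proof is correct and takes essentially the same route as the paper: the lemma is stated as a ``record'' of the preceding discussion, whose content is precisely the relation $k_{\gamma t}=k_t\circ\gamma^{-1}$ derived from the Picard--Lefschetz transformation and the identification $\Delta_{\mathbf{L}_\alpha}\cong\pi_1(\operatorname{Bl}_D B)$. You have simply made explicit the short formal computation (that $\zeta(\gamma t)=\zeta(t)$, hence $L_{\gamma t}=\gamma(L_t)$, hence $s_{\gamma t}=\overline{\gamma}.s_t$ by the definition of the $\Gamma_S$-action on $\overline{\mathcal{T}}_{\!\!S}$) that the paper leaves implicit.
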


We will need to know slightly more about the action of $\Gamma_S$ on the open subsets $\widetilde{B}_\alpha:=F_{\pi,\alpha}(\widetilde{B})$. In the following, we write $\operatorname{Aut}(X_0)$ for the automorphism group of the central fibre (omitting the marked points for notational ease). Denote by $\operatorname{Stab}_{\Gamma_S}(\widetilde{B}_\alpha)\subseteq \Gamma_S$ the subgroup preserving $\widetilde{B}_\alpha$ (not necessarily pointwise). We now restrict our attention to so-called standard Kuranishi families: recall that a standard (analytic) Kuranishi family $\pi\colon X\rightarrow B$ is one such that
\begin{enumerate}
\item $B$ is a connected complex manifold;
\item the family $\pi$ is Kuranishi at every point of $B$;
\item the action of $\operatorname{Aut}(X_0)$ on the central fibre extends to compatible actions on $X$ and $B$;
\item any isomorphism between fibres of $\pi$ is induced by an element of $\operatorname{Aut}(X_0)$.
\end{enumerate}
(see \cite[Chapter XI, Definition 6.8]{ArbarelloCornalbaGriffiths}).

\begin{lemma}\label{Stabiliser}
For a standard Kuranishi family $\pi\colon X\rightarrow B$ and any trivialisation $\alpha\colon S\times \widetilde{B}\rightarrow \widetilde{Z}$, we have
\begin{align*}
\operatorname{Stab}_{\Gamma_S}(\widetilde{B}_\alpha)= \{\gamma\in \Gamma_S\mid \gamma.\widetilde{B}_\alpha\cap \widetilde{B}_\alpha\neq \emptyset\}.
\end{align*}
\end{lemma}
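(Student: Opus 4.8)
The containment $\supseteq$ is immediate once we know that $\widetilde{B}_\alpha$ is preserved by any $\gamma$ that moves it to an overlapping copy; the real content is showing that $\gamma.\widetilde{B}_\alpha \cap \widetilde{B}_\alpha \neq \emptyset$ forces $\gamma.\widetilde{B}_\alpha = \widetilde{B}_\alpha$. The plan is to unwind what it means for a point $s \in \widetilde{B}_\alpha$ to lie in $\gamma.\widetilde{B}_\alpha$ as well: we have $s = F_{\pi,\alpha}(t) = F_{\pi,\alpha}(t')\cdot\text{(shifted by }\gamma)$, i.e. $s$ is the singular hyperbolic structure $[L_t, k_t]$ and also, via $\gamma$, the structure obtained from some $t'$ by acting with $\gamma$. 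Passing to the underlying Riemann surface with nodes (forgetting the marking and the curve system), both $t$ and $t'$ map to isomorphic fibres $X_{z} \cong X_{z'}$ of $\pi$. Since $\pi$ is a \emph{standard} Kuranishi family, property (4) in the definition says any isomorphism between fibres is induced by an element of $\operatorname{Aut}(X_0)$; combined with property (3) (the $\operatorname{Aut}(X_0)$-action extends to $B$), this shows $z$ and $z'$ lie in the same $\operatorname{Aut}(X_0)$-orbit on $B$, and in fact the whole family $\pi$ over a neighbourhood of $z$ is identified with the family over a neighbourhood of $z'$ via an automorphism.

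The key step will be to promote this fibrewise statement to a statement about the open sets $\widetilde{B}_\alpha$. First I would observe that $\widetilde{B}_\alpha = F_{\pi,\alpha}(\widetilde{B})$ is, essentially by construction, the preimage in $\overline{\mathcal{T}}_{\!\!S}$ of the locus of singular structures whose underlying Riemann surface with nodes appears as a fibre of $\pi$ — more precisely, using that $F_{\pi,\alpha}$ is an open embedding and that $\widetilde{B} \to \operatorname{Bl}_D B \to B$ has image a polydisc around the origin, $\widetilde{B}_\alpha$ is an $\operatorname{Aut}(X_0)$-invariant open neighbourhood of the fibre over the base point. The map $\gamma$ carries $\widetilde{B}_\alpha$ to $\gamma.\widetilde{B}_\alpha = F_{\pi, \gamma\cdot\alpha}(\widetilde{B})$, i.e. to the analogous open set built from the \emph{same} family $\pi$ but a different marking $\gamma\cdot\alpha$ of the central fibre (here I use $\gamma^{-1}$-composition as in \Cref{dependence on single fibre}). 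By \Cref{dependence on single fibre} (or rather its refinement, once we mod out by $\Gamma_S$), $F_{\pi,\alpha}$ depends only on $\pi$ and the isotopy class of the marking of the central fibre, and two markings that differ by an element of $\operatorname{Aut}(X_0)$ produce singular structures differing by a mapping class; so the question reduces to: if $\gamma.\widetilde{B}_\alpha$ and $\widetilde{B}_\alpha$ share a point, are the corresponding markings of $X_0$ related by $\operatorname{Aut}(X_0)$? This is exactly what property (4) of a standard Kuranishi family gives, once we know the shared point's underlying curve is (isomorphic to) $X_0$ — which it is, since both $\widetilde{B}_\alpha$ and $\gamma.\widetilde{B}_\alpha$ only contain structures whose underlying curves are fibres of $\pi$, and the central fibre is the unique most-degenerate one in a small enough family (the stratification of $B$ by $D_I$ has $\{0\}$ as its deepest stratum). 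Hence $\gamma$ maps $\widetilde{B}_\alpha$ bijectively onto $\widetilde{B}_\alpha$, i.e. $\gamma \in \operatorname{Stab}_{\Gamma_S}(\widetilde{B}_\alpha)$.

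I would organize the write-up as: (i) recall $\widetilde{B}_\alpha$ is an open set of singular structures all of whose underlying stable Riemann surfaces with nodes are fibres of $\pi$, and that the base point fibre $X_0$ is the deepest-stratum fibre; (ii) given $\gamma$ with $\gamma.\widetilde{B}_\alpha \cap \widetilde{B}_\alpha \ni s$, write $s = [L_t, k_t] = \overline{\gamma}.[L_{t'}, k_{t'}]$ for suitable $t, t' \in \widetilde{B}$, deduce $X_{\nu\zeta(t)} \cong X_{\nu\zeta(t')}$, and invoke standardness to find $\varphi \in \operatorname{Aut}(X_0)$ and a local identification of $\pi$ near these two points; (iii) conclude that $\gamma.\widetilde{B}_\alpha = F_{\pi, \varphi\cdot\alpha}(\widetilde{B}) = F_{\pi,\alpha}(\widetilde{B}) = \widetilde{B}_\alpha$, using the $\operatorname{Aut}(X_0)$-invariance of $\widetilde{B}_\alpha$ and \Cref{dependence on single fibre}. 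The main obstacle I anticipate is step (ii): making precise and rigorous the passage from "shares a singular structure" to "the underlying curves are isomorphic fibres, hence related by $\operatorname{Aut}(X_0)$", being careful that forgetting the marking and the curve system of a singular hyperbolic structure genuinely recovers (the isomorphism class of) a stable Riemann surface with nodes, and that the proof of \cite[Lemma 8.19]{ArbarelloCornalbaGriffiths}-type arguments used earlier (in \Cref{injectivity}) apply here to pin down the fibres. Everything else is bookkeeping with the already-established properties of $F_{\pi,\alpha}$ and the definition of a standard Kuranishi family.
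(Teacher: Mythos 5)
Your plan is essentially the paper's argument: use standardness to produce a global automorphism of the family from the isomorphism of fibres implied by $\gamma.F_\alpha(t)=F_\alpha(t')$, and use this (together with the fact that $F_{\pi,\alpha}$ depends only on the isotopy class of the marking of a single fibre, \Cref{dependence on single fibre}) to conclude that $\gamma.\widetilde{B}_\alpha=\widetilde{B}_\alpha$. One point you should tighten in a full write-up: in step (iii) you invoke ``the $\operatorname{Aut}(X_0)$-invariance of $\widetilde{B}_\alpha$'' as if it were an already-available observation, but this is not established before the lemma --- indeed, producing an element of $\Gamma_S$ acting like an element of $\operatorname{Aut}(X_0)$ and preserving $\widetilde{B}_\alpha$ is a portion of what the lemma (together with \Cref{SES of groups}) proves. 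The paper handles this by actually lifting the family automorphism $\sigma$ to compatible automorphisms $\widetilde{\sigma}$ of $\widetilde{B}$ and $\widetilde{Z}$, forming the modified trivialisations $\beta=\alpha\circ(\gamma^{-1}\times\operatorname{id})$ and $\epsilon=\widetilde{\sigma}^{-1}\circ\alpha\circ(\operatorname{id}\times\widetilde{\sigma})$, checking that $F_\beta=\gamma.F_\alpha$ and $F_\epsilon=F_\alpha\circ\widetilde{\sigma}$, and then applying \Cref{dependence on single fibre} to $\beta_t$ and $\epsilon_t$; that explicit lift and trivialisation comparison is exactly the content you would need to supply to make ``invariance'' precise, so you cannot treat it as a given. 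With that repaired, the argument is correct and matches the paper's route.
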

\begin{proof}
The left to right inclusion is obvious, so we prove the reverse. Suppose $\gamma\in \Gamma_S$, $t,t'\in \widetilde{B}$ are such that $\gamma.F_\alpha(t)=F_\alpha(t')$. Then by assumption, there is an automorphism $\sigma$ of the entire family $\pi\colon X\rightarrow B$ such that $k_t\circ \gamma^{-1}$ is isotopic to $\sigma_{\nu\zeta(t)}\circ k_{t'}$.

We consider the induced automorphisms of the families constructed in \ref{going to the boundary}:
\begin{itemize}
\item Let $\sigma'\colon \operatorname{Bl}_DB\rightarrow \operatorname{Bl}_DB$ be the induced map of real oriented blow-ups and denote also by $\sigma'\colon X'\rightarrow X'$ the pullback of $\sigma\colon X\rightarrow X$ along $\operatorname{Bl}_DB\rightarrow B$;
\item Let $\widetilde{\sigma}\colon \widetilde{B}\rightarrow \widetilde{B}$ be the unique lift of $\sigma'$ sending $t$ to $t'$. By construction of the family $\psi\colon Z\rightarrow \operatorname{Bl}_D B$, there is a unique lift $\widehat{\sigma}\colon Z\rightarrow Z$ of $\sigma'\colon X'\rightarrow X'$ along $\lambda\colon Z\rightarrow X'$ (we leave the details to the reader), and we denote by $\widetilde{\sigma}\colon \widetilde{Z}\rightarrow \widetilde{Z}$ the pullback along $\rho\colon \widetilde{B}\rightarrow \operatorname{Bl}_D B$.
\end{itemize}

So we have automorphisms $\sigma'$, respectively $\widetilde{\sigma}$, of $\pi'\colon X'\rightarrow\operatorname{Bl}_DB$, respectively $\widetilde{\psi}\colon \widetilde{Z}\rightarrow \widetilde{B}$.

It is easily verified that the trivialisations
\begin{align*}
\beta:=\alpha\circ (\gamma^{-1}\times \operatorname{id}_{\widetilde{B}}), \quad \text{respectively}\quad \epsilon:=\widetilde{\sigma}^{-1}\circ\alpha\circ (\operatorname{id}_S\times \widetilde{\sigma}),
\end{align*}
evaluate to $\beta_s=\alpha_s\circ \gamma^{-1}$, respectively $\epsilon_s=\widehat{\sigma}_{\zeta(s)}^{-1}\circ\alpha_{\widetilde{\sigma}(s)}$, at $s\in \widetilde{B}$. It follows that $F_\beta=\gamma.F_\alpha$ and $F_\epsilon=F_\alpha\circ \widetilde{\sigma}$; for the latter equality we also use that $\sigma\circ \lambda=\lambda\circ \widehat{\sigma}$ and that the map $\sigma_z\colon X_z\rightarrow X_{\sigma(z)}$ is an isometry for all $z\in B$.

Now, simply note that by assumption, $\beta_t=\alpha_t\circ \gamma^{-1}$ is isotopic to $\epsilon_t=\widehat{\sigma}_{\zeta(t)}\circ\alpha_{t'}$ (given uniqueness of the lift $\widehat{\sigma}$), so these trivialisations define the same map into $\overline{\mathcal{T}}_{\!\!S}$ by \Cref{dependence on single fibre}, i.e.
\begin{align*}
\gamma.F_\alpha=F_\alpha\circ \widetilde{\sigma}.
\end{align*}
That concludes the proof.
\end{proof}

\begin{lemma}\label{SES of groups}
For any standard Kuranishi family $\pi\colon X\rightarrow B$ and any trivialisation $\alpha\colon S\times \widetilde{B}\rightarrow \widetilde{Z}$, we have a short exact sequence of groups
\begin{align*}
0\rightarrow \Delta_{\mathbf{L}_\alpha} \rightarrow \operatorname{Stab}_{\Gamma_S}(\widetilde{B}_\alpha) \rightarrow \operatorname{Aut}(X_0)\rightarrow 0.
\end{align*}
\end{lemma}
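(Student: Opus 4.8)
The strategy is to extract the surjection $\rho\colon\operatorname{Stab}_{\Gamma_S}(\widetilde{B}_\alpha)\to\operatorname{Aut}(X_0)$ from the analysis already carried out in the proof of \Cref{Stabiliser}, and to identify its kernel with $\Delta_{\mathbf{L}_\alpha}$ via the deck-transformation description of $\Delta_{\mathbf{L}_\alpha}$ recorded before \Cref{identification of Delta as Dehn twists}. That lemma says $F_{\pi,\alpha}$ is equivariant for the actions of $\Delta_{\mathbf{L}_\alpha}$ on $\widetilde{B}$ and $\overline{\mathcal{T}}_{\!\!S}$; since under $\Delta_{\mathbf{L}_\alpha}\cong\pi_1(\operatorname{Bl}_DB)$ this is the deck action on the universal cover $\widetilde{B}$, the subgroup $\Delta_{\mathbf{L}_\alpha}$ preserves $\widetilde{B}_\alpha=F_{\pi,\alpha}(\widetilde{B})$ and acts freely on it. Hence $\Delta_{\mathbf{L}_\alpha}\subseteq\operatorname{Stab}_{\Gamma_S}(\widetilde{B}_\alpha)$, with the inclusion a group homomorphism, being a restriction of $\Delta_{\mathbf{L}_\alpha}\subseteq\Gamma_S$.

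Next I would define $\rho$. Given $\gamma\in\operatorname{Stab}_{\Gamma_S}(\widetilde{B}_\alpha)$, the proof of \Cref{Stabiliser} produces, for $t\in\widetilde{B}$ with $\gamma.F_{\pi,\alpha}(t)=F_{\pi,\alpha}(t')$, an automorphism $\sigma\colon X\to X$ of the standard Kuranishi family, covering an automorphism of $B$ (property (3)), with $k_t\circ\gamma^{-1}$ isotopic to $\sigma_{\nu\zeta(t)}\circ k_{t'}$; pulling back along $\nu$ and lifting along $\lambda$ gives $\sigma'\colon X'\to X'$ and $\widehat\sigma\colon Z\to Z$, and $\gamma$ acts on $\widetilde{B}_\alpha\cong\widetilde{B}$ as the lift $\widetilde\sigma$ of $\sigma'$ taking $t$ to $t'$. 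Set $\rho(\gamma):=\sigma|_{X_0}\in\operatorname{Aut}(X_0)$. For surjectivity, run this backwards: start from $\sigma\in\operatorname{Aut}(X_0)$, extend it (property (3)) to $\sigma\colon X\to X$ over $\sigma\colon B\to B$, pull back to $\sigma'\colon X'\to X'$, lift to $\widehat\sigma\colon Z\to Z$ along $\lambda$, choose a lift $\widetilde\sigma\colon\widetilde{B}\to\widetilde{B}$ and the induced $\widetilde\sigma\colon\widetilde{Z}\to\widetilde{Z}$, fix $s_0\in\widetilde{B}$, and put $\gamma:=[\,\alpha_{\widetilde\sigma(s_0)}^{-1}\circ\widehat\sigma_{\zeta(s_0)}\circ\alpha_{s_0}\,]\in\Gamma_S$, an orientation-preserving homeomorphism class fixing the marked points. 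As in the proof of \Cref{Stabiliser}, the trivialisations $\alpha\circ(\gamma^{-1}\times\operatorname{id})$ and $\widetilde\sigma^{-1}\circ\alpha\circ(\operatorname{id}_S\times\widetilde\sigma)$ agree over $s_0$ up to isotopy, so by \Cref{dependence on single fibre} they induce the same map into $\overline{\mathcal{T}}_{\!\!S}$; thus $\gamma.F_{\pi,\alpha}=F_{\pi,\alpha}\circ\widetilde\sigma$, whence $\gamma\in\operatorname{Stab}_{\Gamma_S}(\widetilde{B}_\alpha)$ and $\rho(\gamma)=\sigma$. (Changing $s_0$ or the lift $\widetilde\sigma$ alters $\gamma$ only by an element of $\Delta_{\mathbf{L}_\alpha}$.)

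It remains to check $\ker\rho=\Delta_{\mathbf{L}_\alpha}$. The inclusion $\Delta_{\mathbf{L}_\alpha}\subseteq\ker\rho$ holds by construction, a deck transformation being the identity over $0\in B$. Conversely, if $\rho(\gamma)=\operatorname{id}_{X_0}$, then by the defining properties of a standard Kuranishi family the extension $\sigma$ is the identity on $X$, so $\widehat\sigma=\operatorname{id}_Z$ and $\widetilde\sigma$ is a lift of $\operatorname{id}_{\operatorname{Bl}_DB}$, i.e.\ a deck transformation corresponding to some $\delta\in\Delta_{\mathbf{L}_\alpha}$; by \Cref{identification of Delta as Dehn twists}, $\gamma.F_{\pi,\alpha}=F_{\pi,\alpha}\circ\widetilde\sigma=\delta.F_{\pi,\alpha}$, so $\gamma$ and $\delta$ agree on the nonempty open subset $\widetilde{B}_\alpha$ of $\overline{\mathcal{T}}_{\!\!S}$, which meets the interior $\mathcal{T}_{S}$. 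Since $\Gamma_S$ acts faithfully on $\mathcal{T}_S$ — and in the finitely many low-genus cases where the action has a $\Z/2$ kernel, its nontrivial element has nontrivial image under $\rho$, so it cannot equal $\gamma\delta^{-1}$ — we conclude $\gamma=\delta\in\Delta_{\mathbf{L}_\alpha}$. Together with the first two paragraphs this gives the exact sequence, written additively since $\Delta_{\mathbf{L}_\alpha}\cong\Z^k$.

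The main obstacle is the well-definedness of $\rho$ and the fact that it is a homomorphism. The delicate point is that the isomorphism $X_{\nu\zeta(t)}\cong X_{\nu\zeta(t')}$ matching the markings — and hence the extension $\sigma\in\operatorname{Aut}(X_0)$ — is only pinned down once one uses that $\pi$ is Kuranishi at every point (property (2)) together with property (4), so that an automorphism of $X$ over an automorphism of $B$ restricts injectively to the central fibre and this restriction in turn recovers the automorphism; without this, automorphisms of $X_0$ acting trivially on $B$ (e.g.\ a hyperelliptic involution) create an a priori ambiguity, which is also the source of the non-faithfulness addressed above. One must also verify that $\gamma\mapsto\widehat\sigma\mapsto\widetilde\sigma$ is compatible with composition and independent of the auxiliary choice of $t$, which is where the functoriality of the real oriented blow-up, of the resolution $\lambda$, and of passage to the universal cover is used.
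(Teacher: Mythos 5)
Your proof takes a genuinely different route from the paper's. The paper never defines a map $\rho$ directly on the level of automorphisms of the Kuranishi family; instead it observes that any $\gamma\in\operatorname{Stab}_{\Gamma_S}(\widetilde{B}_\alpha)$ preserves $\mathbf{L}$, hence lies in $\Gamma_S(\mathbf{L})$, pushes down via the retraction $r_{\mathbf L}\colon X_{\mathbf L}\to \mathcal T_{S/L_0}$ to an element of $\Gamma_{S/L_0}$ fixing $\overline s_0:=r_{\mathbf L}(F(0))$, and then invokes the standard identification of $\operatorname{Stab}_{\Gamma_{S/L_0}}(\overline s_0)$ with $\operatorname{Aut}(X_0)$. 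Surjectivity is read off from the length/twist coordinates, and exactness at the middle term is inherited from the already-established sequence $0\to\Delta_{\mathbf L}\to\Gamma_S(\mathbf L)\to\Gamma_{S/L_0}\to 0$ of \Cref{SES of groups quotient by Dehn twists}. That route gives well-definedness and the homomorphism property essentially for free, because everything factors through the known quotient map $\Gamma_S(\mathbf L)\twoheadrightarrow\Gamma_{S/L_0}$.

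Your construction of $\rho(\gamma):=\sigma|_{X_0}$, where $\sigma$ is the automorphism of the Kuranishi family produced inside the proof of \Cref{Stabiliser}, is a reasonable alternative, and your kernel and surjectivity arguments are correct in outline (including the care about the hyperelliptic exceptional cases and the use of $\Delta_{\mathbf L_\alpha}\cong\pi_1(\operatorname{Bl}_DB)$ acting as deck transformations). However, the two issues you flag at the end — that $\rho$ is independent of the auxiliary $t,t'$ and of the (a priori non-unique) choice of $\sigma$ inducing the given isomorphism between fibres, and that $\rho$ is a group homomorphism — are genuine gaps, not just routine verifications to be gestured at. In particular, property (4) of a standard Kuranishi family produces \emph{some} $\sigma\in\operatorname{Aut}(X_0)$ inducing the fibre isomorphism but does not assert uniqueness, and your later step ``$\sigma|_{X_0}=\operatorname{id}\Rightarrow\sigma=\operatorname{id}$'' uses faithfulness of the $\operatorname{Aut}(X_0)$-action on $X$, which you should justify (it follows from versality but is not among the listed axioms). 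These are exactly the points that the paper's factorisation through $\Gamma_{S/L_0}$ is designed to avoid. If you want to pursue your route, you should pin $\sigma$ down (e.g.\ by requiring compatibility of markings and invoking discreteness of $\operatorname{Aut}(X_0)$ plus connectedness of $\widetilde B$) and then check the cocycle-type identity making $\rho$ a homomorphism.
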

\begin{proof}
Write $F=F_\alpha$ and $\mathbf{L}=\mathbf{L}_\alpha=[L_0]$. We shall combine the following two facts:
\begin{enumerate}
\item The stabiliser of a point in Teichmüller space identifies with the automorphism group of the corresponding curve (see e.g.~\cite[p.451]{ArbarelloCornalbaGriffiths});
\item We have a short exact sequence of groups $0\rightarrow \Delta_\mathbf{L}\rightarrow \Gamma_S(\mathbf{L}) \rightarrow \Gamma_{S/L_0}\rightarrow 0$ where $\Gamma_S(\mathbf{L})$ is the subgroup of mapping classes $\gamma$ satisfying $\gamma(\mathbf{L})=\mathbf{L}$ (\Cref{SES of groups quotient by Dehn twists}).
\end{enumerate}

Consider the retraction $r_\mathbf{L}\colon X_\mathbf{L}\rightarrow \mathcal{T}_{S/L_0}$ from the $\mathbf{L}$-stratum in $\overline{\mathcal{T}}_{\!\!S}$ to the ``smaller'' Teichmüller space $\overline{\mathcal{T}}_{S/L_0}$ given by forgetting the twist parameters associated to the curves of $\mathbf{L}$. Concretely, given a singular structure $s=[L,k\colon S\rightarrow C]\in \overline{\mathcal{T}}_{\!\!S}$ with $[L]=\mathbf{L}$, the image of $s$ in $\mathcal{T}_{S/L_0}$ is given by the factorisation $\overline{k}\colon S/L_0\rightarrow C$ of $k$ through the nodal surface $S/L_0$ (potentially composing with the isometry witnessing $L\simeq L_0$). Note that the retraction is equivariant with respect to the action of $\Gamma_S(\mathbf{L})$. Let $s_0=F(0)$ be the singular hyperbolic structure associated to our chosen basepoint $0\in \widetilde{B}$ and consider the composite $\nu\zeta\colon \widetilde{B}\rightarrow B$ from the universal cover of the real oriented blow-up to the base space. Note that 
\begin{align}\label{retraction of central fibre}
r_\mathbf{L}(s_t)=r_\mathbf{L}(s_0)\quad \text{for any } t\in (\nu\zeta)^{-1}(0).
\end{align}
Write $\overline{s}_0:=r_\mathbf{L}(s_0)$.

It is a simple generalisation of item (1) above that the stabiliser $\operatorname{Stab}_{\Gamma_{S/L_0}}(\overline{s}_0)$ identifies with the automorphism group $\operatorname{Aut}(X_0)$. Thus to define the map
\begin{align*}
\operatorname{Stab}_{\Gamma_S}(\widetilde{B}_\alpha)\rightarrow \operatorname{Aut}(X_0),
\end{align*}
it suffices to note that any $\gamma\in \operatorname{Stab}_{\Gamma_S}(\widetilde{B}_\alpha)$ must belong to $\Gamma_S(\mathbf{L})$ (as $[L_t]\subset \mathbf{L}$ for all $t\in \widetilde{B}$) and that the corresponding element $\overline{\gamma}\in \Gamma_{S/L_0}$ must fix $\overline{s}_0$: indeed, as $\gamma\in \Gamma_S(\mathbf{L})$, we have $\gamma.s_0\in F((\nu\zeta)^{-1}(0))$, so this follows by \ref{retraction of central fibre} above.

The map is surjective, since if $\overline{\gamma}\in \Gamma_{S/L_0}$ fixes $\overline{s}_0$, then any lift $\gamma\in \Gamma_S(\mathbf{L})$ must satisfy
\begin{align*}
\gamma.s_0 \in F((\nu\zeta)^{-1}(0))\subseteq \widetilde{B}_\alpha
\end{align*}
(this can be verified by analysing the length and twist coordinates) and hence, by the previous lemma, $\gamma\in \operatorname{Stab}_{\Gamma_S}(\widetilde{B}_\alpha)$. Exactness at the middle term follows from the short exact sequence of item (2) above. That finishes the proof.
\end{proof}

\subsection{Identifying the groupoid presentations}

In this final section, we identify the real oriented blow-up of the Deligne--Mumford--Knudsen compactification with the Harvey compactification as stated in \Cref{Harvey compactification is real oriented blow-up}. Note that the stacks in question, $[\Gamma_S\backslash \overline{\mathcal{T}}_{\!\!S}]$ and $\operatorname{Bl}_\partial(\overline{\mathcal{M}}_{g,P}^{\operatorname{an}})$, are ``ordinary'' Deligne--Mumford stacks, that is, we can consider them as $1$-truncated $\infty$-stacks over smooth manifolds with corners admitting an étale atlas. In order to identify the two stacks it therefore suffices to show that they admit the same groupoid presentation.

Consider the groupoid presentation $\mathbf{I}\rightrightarrows X$ of $\overline{\mathcal{M}}_{g,P}$ constructed in \cite[Chapter XII, Theorem 8.3]{ArbarelloCornalbaGriffiths}. To summarise, $X=\coprod_{i=1}^N X_i$ is the Kuranishi atlas of $\overline{\mathcal{M}}$ defined in \S 3 of \cite[Chapter XII]{ArbarelloCornalbaGriffiths} (more specifically on page 272) where each $X_i$ is the base space of a standard algebraic Kuranishi family in the sense of \cite[Chapter XI, Definition 6.7]{ArbarelloCornalbaGriffiths}. Letting $\xi\colon C\rightarrow X$ denote the family of curves over $X$,
\begin{align*}
\mathbf{I}=\mathbf{Isom}_{X\times X}(p_1^*\xi,p_2^*\xi)
\end{align*}
is the scheme parametrising isomorphisms between fibres of the two families over $X\times X$ given by pulling back $\xi$ along the two projection maps, $p_1$ and $p_2$, to $X$ (see \cite[Chapter X, \S 5]{ArbarelloCornalbaGriffiths}).

For notational simplicity, we will also denote the analytification of this groupoid by $\mathbf{I}\rightrightarrows X$. The real oriented blow of $\overline{\mathcal{M}}_{g,P}^{\operatorname{an}}$ is then presented by the groupoid $\operatorname{Bl}_\partial \mathbf{I} \rightrightarrows \operatorname{Bl}_\partial X$ in the category of real analytic stacks, but we will consider it as a stack over smooth manifolds with corners.

We will define a map $\operatorname{Bl}_\partial X\rightarrow [\Gamma_S\backslash \overline{\mathcal{T}}_{\!\!S}]$ and identify the two groupoids
\begin{align*}
\operatorname{Bl}_\partial X\!\!\!\!\mathop{\times}_{[\Gamma_S\backslash \overline{\mathcal{T}}_{\!\!S}]}\!\!\!\!\operatorname{Bl}_\partial X\rightrightarrows \operatorname{Bl}_\partial X\quad\quad \text{ and }\quad\quad \operatorname{Bl}_\partial \mathbf{I} \rightrightarrows \operatorname{Bl}_\partial X.
\end{align*}
We do this by analysing the real oriented blow-up $\operatorname{Bl}_\partial X$ locally using coordinate patches; much the same way as it is constructed.

As mentioned, $X=\coprod X_i$ is a finite disjoint union of base spaces of standard (algebraic) Kuranishi families $\pi_i\colon C_i\rightarrow X_i$ (\cite[Chapter XI, Definition 6.7]{ArbarelloCornalbaGriffiths}).

\begin{assumption}[Basic coordinate patches]\label{basic coordinate patches}
Given $x\in X_i\subseteq X$, let $G_x=\operatorname{Aut}(C_x)$ denote the automorphism group of the fibre over $x$. There is a connected $G_x$-invariant analytic neighbourhood $U\subseteq X_i$ of $x$ such that any isomorphism of between fibres over $U$ is induced by an element of $G_x$ (part (e) of \cite[Chapter XI, Definition 6.7]{ArbarelloCornalbaGriffiths}). It follows that the restriction $\pi_U\colon C_U\rightarrow U$ is a standard (analytic) Kuranishi family (\cite[Chapter XI, Definition 6.8]{ArbarelloCornalbaGriffiths}). As in the previous sections, we may assume that $\pi_U\colon C_U\rightarrow U$ is \textit{small} in the sense that $U$ is a polydisc centered at $x=0$  with coordinates $t_1,\ldots, t_n$ and that the singular locus $D\subset U$ is given by a union of coordinate hyperplanes cut out by the equation $t_1\cdots t_k=0$ for some $k\leq n$ (\cite[Chapter XI, Theorem 3.17 and Corollary 4.7]{ArbarelloCornalbaGriffiths}).
\end{assumption}

As described in \ref{going to the boundary}, we consider the pullback of $\pi_U\colon C_U\rightarrow U$ to the real oriented blow-up $\operatorname{Bl}_D U$, the consequent ``resolution'' $Z\rightarrow \operatorname{Bl}_D U$, a family of non-singular curves, and the pullback $\widetilde{Z}\rightarrow \widetilde{U}$ of this to the universal cover $\widetilde{U}$ of $\operatorname{Bl}_D U$. For varying trivialisations $\alpha\colon S\times \widetilde{U}\rightarrow \widetilde{Z}$ (equivalently, varying markings of the central fibre $Z_0$), we consider the open embeddings $F_\alpha\colon \widetilde{U}\rightarrow \overline{\mathcal{T}}_{\!\!S}$. Denote by $\mathbf{L}_\alpha$ the isotopy class of the curve system $L_0=\{L_{1,0},\ldots,L_{k,0}\}=k_0^{-1}\{\operatorname{nodes\ of}X_0\})$ over the origin in $B$ and let $\Delta_{\mathbf{L}_\alpha}\subseteq \Gamma_S$ be the free abelian subgroup generated by Dehn twists around the simple closed curves of $\mathbf{L}_\alpha$.

Now, consider the following commutative diagram where the lower horizontal map on the right exists because of \Cref{identification of Delta as Dehn twists} above.

\begin{center}
\begin{tikzpicture}
\matrix (m) [matrix of math nodes,row sep=2em,column sep=3em]
  {
\widetilde{U} & \overline{\mathcal{T}}_{\!\!S} & \\
\operatorname{Bl}_D(U) & \Delta_{\mathbf{L}_\alpha}\backslash \widetilde{U} & \left[\Gamma_S\backslash \overline{\mathcal{T}}_{\!\!S}\right] \\
  };
  \path[-stealth]
(m-1-1) edge (m-2-1) 
(m-1-1) edge (m-2-2)
(m-1-1.355) edge node[above]{$F_\alpha$} (m-1-2) 
(m-2-1) edge node[below]{$\cong$} (m-2-2.182)
(m-2-2.357) edge (m-2-3)
(m-1-2) edge (m-2-3)
;
\end{tikzpicture}
\end{center}

We denote the lower horizontal composite by $\overline{F}=\overline{F}_\alpha=\overline{F}_{\alpha, U} \colon \operatorname{Bl}_DU\rightarrow [\Gamma_S\backslash \overline{\mathcal{T}}_{\!\!S}]$ (according to whether we want to stress the dependencies or not).

Explicitly, the composite $\widetilde{U}\rightarrow \overline{\mathcal{T}}_{\!\!S}\rightarrow [\Gamma_S\backslash \overline{\mathcal{T}}_{\!\!S}]$, which we will by slight abuse of notation also denote by $F_\alpha$, is given by the principal $\Gamma_S$-bundle
\begin{align*}
\pi_\alpha\colon \Gamma_S\times \widetilde{U}\rightarrow \widetilde{U},
\end{align*}
with $\Gamma_S$ acting from the left on the first factor, together with the $\Gamma_S$-equivariant map $\Gamma_S\times \widetilde{U}\rightarrow\overline{\mathcal{T}}_{\!\!S}$, $(\gamma,t)\mapsto \gamma.F_\alpha(t)$. The corresponding map $\overline{F}_\alpha$ is explicitly given by the principal $\Gamma_S$-bundle
\begin{align*}
\overline{\pi}_\alpha\colon \Gamma_S\times_{\Delta_{\mathbf{L}_\alpha}}\widetilde{U}\rightarrow \operatorname{Bl}_DU,
\end{align*}
where we mod out by the diagonal action of $\Delta_{\mathbf{L}_\alpha}$ on $\Gamma_S\times \widetilde{U}$ via the Picard--Lefschetz tranformation $\pi_1(\operatorname{Bl}_DU)\xrightarrow{\cong} \Delta_{\mathbf{L}_\alpha}$ (\ref{PL-transformation} in \ref{open cover}), and with $\Gamma_S$ acting from the left on the first factor, together with the $\Gamma_S$-equivariant map
\begin{align*}
\Gamma_S\times_{\Delta_{\mathbf{L}_\alpha}}\widetilde{U}\rightarrow \overline{\mathcal{T}}_{\!\!S},\quad \overline{(\gamma,t)}\mapsto \gamma.F_\alpha(t).
\end{align*}

\begin{lemma}\label{independence of trivialisation}
For any two choices of trivialisations $\alpha$ and $\beta$, the associated maps
\begin{align*}
F_\alpha,F_\beta\colon\widetilde{U} \rightarrow \left[\Gamma_S\backslash \overline{\mathcal{T}}_{\!\!S}\right]
\end{align*}
are canonically isomorphic. In particular, so are $\overline{F}_\alpha$ and $\overline{F}_\beta$.
\end{lemma}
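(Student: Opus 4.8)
The two maps $F_\alpha$ and $F_\beta$ both arise from principal $\Gamma_S$-bundles over $\widetilde{U}$ (namely the trivial bundle $\Gamma_S\times\widetilde{U}\to\widetilde{U}$ in each case) together with a $\Gamma_S$-equivariant map to $\overline{\mathcal{T}}_{\!\!S}$; an isomorphism between them in $[\Gamma_S\backslash\overline{\mathcal{T}}_{\!\!S}]$ is by definition an isomorphism of principal $\Gamma_S$-bundles over $\widetilde{U}$ compatible with the equivariant maps to $\overline{\mathcal{T}}_{\!\!S}$. Since $\widetilde{U}$ is contractible (it is the universal cover of $\operatorname{Bl}_DU$, which contracts onto a torus, as recalled in \ref{going to the boundary}), the two singular hyperbolic structures $F_\alpha(t)=[L_t^\alpha,k_t^\alpha]$ and $F_\beta(t)=[L_t^\beta,k_t^\beta]$ are obtained by the same construction from trivialisations $\alpha$ and $\beta$ of the same family $\widetilde\psi\colon\widetilde Z\to\widetilde U$. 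The key observation, already exploited in \Cref{dependence on single fibre}, is that for each $t\in\widetilde U$ the composite $\alpha_t^{-1}\circ\beta_t\colon S\to S$ differs from a fixed homeomorphism by an isotopy, so that the mapping class $g_t:=[\,\alpha_t^{-1}\circ\beta_t\,]\in\Gamma_S$ is independent of $t$; call it $g\in\Gamma_S$.

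\textbf{Key steps.} First I would record that the map $t\mapsto g_t$ is locally constant on the connected space $\widetilde U$, exactly as in the path-lifting argument of \Cref{dependence on single fibre}: given a path $p$ from $t_0$ to $t$, the homeomorphisms $\alpha_{p(s)}^{-1}\circ\beta_{p(s)}$ trace an isotopy, hence $g_{t_0}=g_t$. Thus there is a single $g\in\Gamma_S$ with $\beta_t$ isotopic to $\alpha_t\circ g$ for all $t$. Second, I would use this to compare the associated singular structures: since $F_\beta(t)$ is built from $k_t^\beta=\lambda_{\zeta(t)}\circ\beta_t$ and $F_\alpha(t)$ from $k_t^\alpha=\lambda_{\zeta(t)}\circ\alpha_t$, the identity $\beta_t\simeq\alpha_t\circ g$ gives
\begin{align*}
F_\beta(t)=[\,(k_t^\beta)^{-1}(\{\operatorname{nodes}\}),\,k_t^\beta\,]=g^{-1}.[\,(k_t^\alpha)^{-1}(\{\operatorname{nodes}\}),\,k_t^\alpha\,]=g^{-1}.F_\alpha(t)
\end{align*}
in $\overline{\mathcal{T}}_{\!\!S}$, using the description of the $\Gamma_S$-action $\overline\gamma.[C,X,f]=[\gamma(C),X,f\circ\gamma^{-1}]$ from \Cref{stratification of Harvey bordification}. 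Third, I would translate this equality of equivariant maps to the stack: the isomorphism of principal $\Gamma_S$-bundles $\Gamma_S\times\widetilde U\xrightarrow{\ \cong\ }\Gamma_S\times\widetilde U$ given by right multiplication by $g$ (i.e.\ $(\gamma,t)\mapsto(\gamma g,t)$) intertwines the equivariant maps $(\gamma,t)\mapsto\gamma.F_\alpha(t)$ and $(\gamma,t)\mapsto\gamma.F_\beta(t)=\gamma g^{-1}.F_\alpha(t)$, and this is precisely a $2$-morphism $F_\alpha\Rightarrow F_\beta$ in $[\Gamma_S\backslash\overline{\mathcal{T}}_{\!\!S}]$. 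Naturality of this isomorphism — that it is canonical — follows because $g$ is uniquely determined by $\alpha$ and $\beta$. Finally, for the statement about $\overline F_\alpha$ and $\overline F_\beta$, I would observe that these factor through the quotient by $\Delta_{\mathbf L_\alpha}$, respectively $\Delta_{\mathbf L_\beta}$, and that $g$ conjugates $\mathbf L_\alpha$ to $\mathbf L_\beta$ (since $g$ carries the vanishing-cycle system for $\alpha$ to that for $\beta$), hence $g\Delta_{\mathbf L_\beta}g^{-1}=\Delta_{\mathbf L_\alpha}$ by property (ii) of \Cref{The category}; thus right multiplication by $g$ descends to an isomorphism $\Gamma_S\times_{\Delta_{\mathbf L_\alpha}}\widetilde U\xrightarrow{\cong}\Gamma_S\times_{\Delta_{\mathbf L_\beta}}\widetilde U$ of principal $\Gamma_S$-bundles over $\operatorname{Bl}_DU$ compatible with the equivariant maps to $\overline{\mathcal{T}}_{\!\!S}$, giving the required isomorphism $\overline F_\alpha\cong\overline F_\beta$.

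\textbf{Main obstacle.} The only genuine subtlety is verifying carefully that the mapping class $g$ genuinely conjugates $\Delta_{\mathbf L_\alpha}$ onto $\Delta_{\mathbf L_\beta}$ (and not merely some containment), and that the resulting bundle isomorphism is compatible with the Picard–Lefschetz identifications $\pi_1(\operatorname{Bl}_DU)\cong\Delta_{\mathbf L_\alpha}$, $\pi_1(\operatorname{Bl}_DU)\cong\Delta_{\mathbf L_\beta}$ used to form the two associated bundles; this amounts to checking that the two identifications of $\pi_1(\operatorname{Bl}_DU)$ with subgroups of $\Gamma_S$ (each depending on a marking) differ precisely by conjugation by $g$, which follows from the explicit description of the Picard–Lefschetz transformation recalled in \ref{open cover}. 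Everything else is formal unwinding of the definition of morphisms in a quotient stack.
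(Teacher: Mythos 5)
Your argument is correct and matches the paper's proof: both identify the single mapping class $g=\eta_{\alpha\beta}=[\alpha_t^{-1}\circ\beta_t]$ (constant in $t$ by the path-lifting isotopy argument of Remark~\ref{dependence on single fibre}) and use right multiplication by it as the bundle isomorphism intertwining the equivariant maps to $\overline{\mathcal{T}}_{\!\!S}$. You additionally spell out the descent to $\overline F_\alpha\cong\overline F_\beta$, which the paper treats as immediate; the only slip is a direction error in your parenthetical: since $\beta_t\simeq\alpha_t\circ g$ one gets $\mathbf L_\beta=g^{-1}.\mathbf L_\alpha$ (so $g$ carries the $\beta$-system to the $\alpha$-system, not the other way around), which is what actually justifies $g\Delta_{\mathbf L_\beta}g^{-1}=\Delta_{\mathbf L_\alpha}$ — the formula you state and use is correct.
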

\begin{proof}
As in \Cref{dependence on single fibre}, we see that $\alpha_t^{-1}\circ \beta_t$ is isotopic to $\alpha_0^{-1}\circ \beta_0$ for all $t\in \widetilde{U}$ and we let $\eta_{\alpha\beta}$ denote this mapping class. The map
\begin{align*}
\Gamma_S\times\widetilde{U}\rightarrow \Gamma_S\times\widetilde{U}\quad (\gamma,t)\mapsto (\gamma\eta_{\alpha\beta},t)
\end{align*}
defines an isomorphism $\pi_\alpha\cong\pi_\beta$ of fibre bundles commuting with the maps to $\overline{\mathcal{T}}_{\!\!S}$ and as such an isomorphism from $F_\alpha$ to $F_\beta$.  
\end{proof}

\begin{corollary}
For any  $U$ and $V$ as in \Cref{basic coordinate patches} with $V\subseteq U$ and any choice of trivialisations $\alpha\colon S\times \widetilde{U}\rightarrow \widetilde{Z}_U$ and $\beta\colon S\times \widetilde{V}\rightarrow \widetilde{Z}_V$, the maps
\begin{align*}
\overline{F}_{\beta,V}\colon \operatorname{Bl}_DV\rightarrow [\Gamma_S\backslash \overline{\mathcal{T}}_{\!\!S}]\quad \text{and}\quad\overline{F}_{\alpha,U}\vert_{\operatorname{Bl}_DV}\colon  \operatorname{Bl}_DV\hookrightarrow \operatorname{Bl}_DU\rightarrow [\Gamma_S\backslash \overline{\mathcal{T}}_{\!\!S}]
\end{align*}
are canonically isomorphic.
\end{corollary}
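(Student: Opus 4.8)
The plan is to reduce everything to \Cref{independence of trivialisation} by introducing an intermediate trivialisation over $V$, namely the restriction of $\alpha$. The key point is that the whole tower of constructions in \ref{going to the boundary} is natural with respect to restriction to open subpolydiscs.

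First I would record this naturality precisely. For $V\subseteq U$ as in \Cref{basic coordinate patches}, the restricted Kuranishi family $\pi_V=\pi_U|_V$, its pullback to the real oriented blow-up, the resolution $Z|_V\to\operatorname{Bl}_D V$, the universal cover $\widetilde V\to\operatorname{Bl}_D V$, and the trivialised family $\widetilde Z_V\to\widetilde V$ are all obtained from the corresponding objects over $U$ by pulling back along the open embedding $\operatorname{Bl}_D V\hookrightarrow\operatorname{Bl}_D U$ together with a compatible lift $\iota\colon\widetilde V\hookrightarrow\widetilde U$ of universal covers. (When $V$ is concentric with $U$ this $\iota$ is an open embedding with the same deck group; in general $\widetilde U\times_{\operatorname{Bl}_D U}\operatorname{Bl}_D V$ is a disjoint union of copies of $\widetilde V$ indexed by $\Delta_{\mathbf L_\alpha}/\Delta_{\mathbf L_{\alpha'}}$, where $\mathbf L_{\alpha'}\subseteq\mathbf L_\alpha$ is the subsystem of vanishing cycles over the centre of $V$.) Consequently $\alpha':=\iota^{*}\alpha$ is a trivialisation of $\widetilde Z_V$, and since the singular hyperbolic structure $s_t=[L_t,k_t]$ of \Cref{singular structure} depends only on the fibre of $\widetilde Z$ over $t$ and its trivialisation there, one gets $F_{\alpha'}=F_{\alpha,U}\circ\iota\colon\widetilde V\to\overline{\mathcal{T}}_{\!\!S}$; moreover the Picard--Lefschetz identifications $\pi_1(\operatorname{Bl}_D V)\xrightarrow{\cong}\Delta_{\mathbf L_{\alpha'}}$ and $\pi_1(\operatorname{Bl}_D U)\xrightarrow{\cong}\Delta_{\mathbf L_\alpha}$ of \ref{PL-transformation} are compatible with the inclusions.

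Next I would compare the explicit principal-$\Gamma_S$-bundle descriptions from the construction of $\overline F$. By definition $\overline F_{\alpha',V}$ is given by $\Gamma_S\times_{\Delta_{\mathbf L_{\alpha'}}}\widetilde V\to\operatorname{Bl}_D V$ with equivariant map $\overline{(\gamma,t)}\mapsto\gamma.F_{\alpha'}(t)=\gamma.F_{\alpha,U}(\iota(t))$, while $\overline F_{\alpha,U}|_{\operatorname{Bl}_D V}$ is the pullback of $\Gamma_S\times_{\Delta_{\mathbf L_\alpha}}\widetilde U\to\operatorname{Bl}_D U$ along $\operatorname{Bl}_D V\hookrightarrow\operatorname{Bl}_D U$. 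Using the identification $\widetilde U\times_{\operatorname{Bl}_D U}\operatorname{Bl}_D V\cong\coprod_{\Delta_{\mathbf L_\alpha}/\Delta_{\mathbf L_{\alpha'}}}\widetilde V$ and transitivity of the $\Delta_{\mathbf L_\alpha}$-action on the index set with stabiliser $\Delta_{\mathbf L_{\alpha'}}$, the associated-bundle construction gives $\Gamma_S\times_{\Delta_{\mathbf L_\alpha}}\bigl(\coprod\widetilde V\bigr)\cong\Gamma_S\times_{\Delta_{\mathbf L_{\alpha'}}}\widetilde V$, compatibly with the equivariant maps; hence $\overline F_{\alpha',V}=\overline F_{\alpha,U}|_{\operatorname{Bl}_D V}$. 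Finally, since $\alpha'$ and $\beta$ are now both trivialisations of $\widetilde Z_V$, applying \Cref{independence of trivialisation} over the patch $V$ produces a canonical isomorphism $\overline F_{\beta,V}\cong\overline F_{\alpha',V}$ (via the mapping class $\alpha'_0{}^{-1}\circ\beta_0$, as in \Cref{dependence on single fibre}); composing it with the equality of the previous step yields the desired canonical isomorphism $\overline F_{\beta,V}\cong\overline F_{\alpha,U}|_{\operatorname{Bl}_D V}$. I expect the main obstacle to be the first step: carefully verifying the naturality of every construction in \ref{going to the boundary} under restriction, in particular that the resolution $Z$ and the vanishing-cycle lattices $\Delta_{\mathbf L_{\alpha'}}\subseteq\Delta_{\mathbf L_\alpha}$ with their Picard--Lefschetz identifications behave well; once that is in place the rest is formal bookkeeping with associated bundles.
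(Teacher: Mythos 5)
Your proposal is correct and follows essentially the same route as the paper: introduce the restricted trivialisation $\alpha'=\iota^{*}\alpha$ (the paper's $\alpha_V$), observe that $\overline F_{\alpha',V}=\overline F_{\alpha,U}\vert_{\operatorname{Bl}_DV}$, and then conclude by \Cref{independence of trivialisation}. You are somewhat more careful than the paper in spelling out the associated-bundle bookkeeping when $\widetilde U\times_{\operatorname{Bl}_D U}\operatorname{Bl}_D V$ has several components (the paper just picks a lift $\widetilde\iota$ and asserts the equality), which is a worthwhile refinement but not a different argument.
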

\begin{proof}
Consider the sequence of families used to define the map $F_\alpha\colon \widetilde{U}\rightarrow \overline{\mathcal{T}}_{\!\!S}$:
\begin{center}
\begin{tikzpicture}
\matrix (m) [matrix of math nodes,row sep=2em,column sep=3em]
  {
S\times \widetilde{U} & \widetilde{Z}_U & Z_U & C_U' & C_U \\
\widetilde{U} & \widetilde{U} & \operatorname{Bl}_DU & \operatorname{Bl}_DU & U \\
  };
  \path[-stealth]
(m-1-1.355) edge node[above]{$\alpha$} node[below]{$\cong$} (m-1-2.185)
(m-1-1) edge (m-2-1)
(m-1-2) edge node[left]{$\widetilde{\psi}_U$} (m-2-2)
(m-1-2.353) edge (m-1-3.177)
(m-1-3) edge node[right]{$\psi_U$} (m-2-3) edge (m-1-4)
(m-1-4) edge (m-1-5) edge node[left]{$\pi'_U$} (m-2-4)
(m-1-5) edge node[right]{$\pi_U$} (m-2-5)
(m-2-2.345) edge (m-2-3)
(m-2-4) edge (m-2-5)
;
\path[-]
(m-2-1.345) edge[double equal sign distance] (m-2-2.195)
(m-2-3) edge[double equal sign distance] (m-2-4)
(m-1-2) edge[white] node[black,near start]{\scalebox{1.5}{$\lrcorner$}} (m-2-3)
(m-1-4) edge[white] node[black,near start]{\scalebox{1.5}{$\lrcorner$}} (m-2-5)
;
\end{tikzpicture}
\end{center}
and the corresponding diagram for $V$. Let $\iota_{V,U}\colon\operatorname{Bl}_D V\rightarrow \operatorname{Bl}_D U$ denote the induced embedding of real oriented blow-ups (see \cite[p.150]{ArbarelloCornalbaGriffiths}). By construction, the real analytic family $Z_V\rightarrow \operatorname{Bl}_DV$ fits into a pullback diagram:
\begin{center}
\begin{tikzpicture}
\matrix (m) [matrix of math nodes,row sep=2em,column sep=3em]
  {
Z_V & Z_U \\
\operatorname{Bl}_DV & \operatorname{Bl}_DU \\
  };
  \path[-stealth]
(m-1-1) edge (m-1-2) edge node[left]{$\psi_V$} (m-2-1)
(m-2-1) edge node[below]{$\iota_{V,U}$} (m-2-2)
(m-1-2) edge node[right]{$\psi_U$} (m-2-2)
(m-1-1) edge[white] node[black,near start]{\scalebox{1.5}{$\lrcorner$}} (m-2-2)
;
\end{tikzpicture}
\end{center}

It follows that we can define a trivialisation $\alpha_V:=\alpha_U\circ (\operatorname{id}_S\times \widetilde{\iota})\colon S\times \widetilde{V}\rightarrow \widetilde{Z}_V$ of the family $\widetilde{\psi}_V\colon\widetilde{Z}_V\rightarrow \widetilde{V}$ for some lift $\widetilde{\iota}\colon \widetilde{V}\rightarrow \widetilde{U}$ of $\iota_{V,U}\colon \operatorname{Bl}_DV\rightarrow \operatorname{Bl}_DU$ to the universal covers. This trivialisation exactly recovers $\overline{F}_{\alpha_V,V}=\overline{F}_{\alpha,U}\circ \iota_{V,U}$. The claim then follows from \Cref{independence of trivialisation}.
\end{proof}

It follows from the above corollary, that the morphisms $\operatorname{Bl}_D U\rightarrow [\Gamma_S\backslash \overline{\mathcal{T}}_{\!\!S}]$ patch together to define a morphism
\begin{align*}
\operatorname{Bl}_\partial X\rightarrow [\Gamma_S\backslash \overline{\mathcal{T}}_{\!\!S}].
\end{align*}

In order to analyse the fibre product $\operatorname{Bl}_\partial X\mathop{\times}_{[\Gamma_S\backslash \overline{\mathcal{T}}_{\!\!S}]}\operatorname{Bl}_\partial X$, let us first of all consider the ``local'' fibre products
\begin{align*}
\operatorname{Bl}_DU\!\!\!\!\mathop{\times}_{[\Gamma_S\backslash \overline{\mathcal{T}}_{\!\!S}]}\!\!\!\!\operatorname{Bl}_DU.
\end{align*}
The following lemma should be compared with \cite[Chapter XII, Lemma 3.11]{ArbarelloCornalbaGriffiths} --- combining these two lemmas will be the key to our final identification.

\begin{lemma}\label{local picture}
For any $U\subset X$ is as in \Cref{basic coordinate patches}, there is an isomorphism 
\begin{align*}
G_x\times \operatorname{Bl}_DU\xrightarrow{\ \cong \ } \operatorname{Bl}_DU\mathop{\times}_{[\Gamma_S\backslash \overline{\mathcal{T}}_{\!\!S}]}\operatorname{Bl}_DU
\end{align*}
such that the composite with the natural projection to $\operatorname{Bl}_DU\times \operatorname{Bl}_DU$ is given by
\begin{align*}
G_x\times \operatorname{Bl}_DU\rightarrow \operatorname{Bl}_DU\times\operatorname{Bl}_DU,\quad (g,u)\mapsto (gu,u).
\end{align*}
\end{lemma}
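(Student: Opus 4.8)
The statement is a local model for the fibre product over the Harvey compactification, and the strategy is to compute the fibre product of the two copies of $F_\alpha\colon \operatorname{Bl}_DU \to [\Gamma_S\backslash \overline{\mathcal{T}}_{\!\!S}]$ directly from the explicit principal-bundle descriptions of $\overline{F}_\alpha$ given just before the lemma. Recall that $\overline{F}_\alpha$ is presented by the principal $\Gamma_S$-bundle $\overline{\pi}_\alpha\colon \Gamma_S\times_{\Delta_{\mathbf{L}_\alpha}}\widetilde{U}\to \operatorname{Bl}_DU$ together with the $\Gamma_S$-equivariant map $\overline{(\gamma,t)}\mapsto \gamma.F_\alpha(t)$ into $\overline{\mathcal{T}}_{\!\!S}$. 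For a map into a quotient stack $[\Gamma_S\backslash\overline{\mathcal{T}}_{\!\!S}]$ presented this way, the fibre product of two such maps is the space parametrising a point of each base together with an isomorphism of the pulled-back $\Gamma_S$-bundles-with-equivariant-map; since both maps come from the \emph{same} $F_\alpha$, such isomorphisms are governed by the elements of $\Gamma_S$ carrying $F_\alpha(t)$ to $F_\alpha(t')$. So the first step is to write
\begin{align*}
\operatorname{Bl}_DU\mathop{\times}_{[\Gamma_S\backslash\overline{\mathcal{T}}_{\!\!S}]}\operatorname{Bl}_DU \;\cong\; \{(t,t',\gamma)\in \widetilde{U}\times\widetilde{U}\times\Gamma_S \mid \gamma.F_\alpha(t)=F_\alpha(t')\}\big/(\Delta_{\mathbf{L}_\alpha}\times\Delta_{\mathbf{L}_\alpha}),
\end{align*}
where the two copies of $\Delta_{\mathbf{L}_\alpha}$ act via the deck transformations of $\widetilde U\to\operatorname{Bl}_DU$ and the corresponding twists in $\Gamma_S$ (this is legitimate by \Cref{identification of Delta as Dehn twists}, which says $F_\alpha$ is $\Delta_{\mathbf{L}_\alpha}$-equivariant).

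\textbf{Key steps.} First I would identify the set $\{\gamma\in\Gamma_S\mid \gamma.F_\alpha(t)\in \widetilde{B}_\alpha\}$ for $t\in\widetilde U$: by \Cref{Stabiliser}, since we are working with a standard Kuranishi family, any $\gamma$ with $\gamma.\widetilde{B}_\alpha\cap\widetilde{B}_\alpha\neq\emptyset$ actually lies in $\operatorname{Stab}_{\Gamma_S}(\widetilde{B}_\alpha)$; hence the condition $\gamma.F_\alpha(t)=F_\alpha(t')$ forces $\gamma\in\operatorname{Stab}_{\Gamma_S}(\widetilde{B}_\alpha)$, and then $t'$ is determined by $t$ and $\gamma$ via the deck-transformation action (using injectivity of $F_\alpha$, \Cref{injectivity}). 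Second, I would invoke the short exact sequence $0\to\Delta_{\mathbf{L}_\alpha}\to\operatorname{Stab}_{\Gamma_S}(\widetilde{B}_\alpha)\to\operatorname{Aut}(X_0)=G_x\to 0$ of \Cref{SES of groups}: modding out by the left copy of $\Delta_{\mathbf{L}_\alpha}$ collapses the $\gamma$-coordinate to an element $g\in G_x$, and the proof of \Cref{SES of groups} (via the automorphism $\widetilde\sigma$ of $\widetilde{Z}\to\widetilde U$ covering the $G_x$-action on the standard Kuranishi family, extracted from the proof of \Cref{Stabiliser}) shows that the induced action of $G_x$ on $\widetilde U$ descends to the natural $G_x$-action on $\operatorname{Bl}_DU$ coming from property (3) of standard Kuranishi families. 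Third, after quotienting by the remaining $\Delta_{\mathbf{L}_\alpha}$ on the $\widetilde U$-factor, one is left with pairs $(g,u)\in G_x\times\operatorname{Bl}_DU$ with $t'$-coordinate equal to $g.u$; this gives the isomorphism $G_x\times\operatorname{Bl}_DU\xrightarrow{\cong}\operatorname{Bl}_DU\times_{[\Gamma_S\backslash\overline{\mathcal{T}}_{\!\!S}]}\operatorname{Bl}_DU$ with projection $(g,u)\mapsto(gu,u)$ as claimed. I would also note independence of the trivialisation $\alpha$ via \Cref{independence of trivialisation}, so the statement does not depend on auxiliary choices.

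\textbf{Main obstacle.} The delicate point is the bookkeeping of the two $\Delta_{\mathbf{L}_\alpha}$-actions and checking that after passing to the quotient the residual action is exactly the $G_x$-action on $\operatorname{Bl}_DU$ that appears in the Kuranishi groupoid $\mathbf{I}\rightrightarrows X$ of \cite{ArbarelloCornalbaGriffiths} — i.e.\ that the abstract $G_x$ coming from $\operatorname{Aut}(X_0)$ via \Cref{SES of groups} is compatibly identified with the geometric $G_x$-action on the base of the standard Kuranishi family. This is essentially already contained in the proof of \Cref{Stabiliser}, where the automorphism $\sigma$ of the family $\pi\colon X\to B$ and its lift $\widetilde\sigma$ to $\widetilde U$ are constructed; the work here is to observe that $\widetilde\sigma$ descends to the standard $G_x$-action on $\operatorname{Bl}_DU$ and that, under the identification of the fibre product above, the class of $\gamma\in\operatorname{Stab}_{\Gamma_S}(\widetilde{B}_\alpha)$ in $G_x$ acts precisely by this $\widetilde\sigma$. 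Everything else is a routine unwinding of the principal-bundle description of maps to a quotient stack, which I would not spell out in full.
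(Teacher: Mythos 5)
Your proposal is correct and follows essentially the same route as the paper. The paper's proof is a condensed version of exactly what you unwind: it combines \Cref{Stabiliser} (so that the fibre product only "sees" $\operatorname{Stab}_{\Gamma_S}(\widetilde{B}_\alpha)$), the short exact sequence of \Cref{SES of groups} (to pass from the stabiliser modulo $\Delta_{\mathbf{L}_\alpha}$ to $G_x$), and the standard identification of $Y\times_{[G\backslash Y]}Y$ with the action groupoid $G\times Y$, packaging these as $\operatorname{Bl}_DU\times_{[\Gamma_S\backslash\overline{\mathcal{T}}_{\!\!S}]}\operatorname{Bl}_DU\cong\operatorname{Bl}_DU\times_{[G_x\backslash\operatorname{Bl}_DU]}\operatorname{Bl}_DU\cong G_x\times\operatorname{Bl}_DU$. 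Your explicit description via triples $(t,t',\gamma)$ modulo the $\Delta_{\mathbf{L}_\alpha}\times\Delta_{\mathbf{L}_\alpha}$-action, with injectivity of $F_\alpha$ controlling the $t'$-coordinate, is precisely the content of that chain of isomorphisms made pointwise.
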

\begin{proof}
Combining \Cref{Stabiliser} with the identifications $\operatorname{Bl}_\partial U\cong \Delta_{\mathbf{L}_\alpha}\backslash \widetilde{U}$ and $G_x\cong \Delta_{\mathbf{L}_\alpha}\backslash\operatorname{Stab}_{\Gamma_S}(\widetilde{U}_\alpha)$ (\Cref{SES of groups}) and the general fact that $[(H\backslash G)\backslash [H\backslash X]]\cong [G\backslash X]$, we see that we have canonical isomorphisms
\begin{align*}
\operatorname{Bl}_DU\!\!\!\!\mathop{\times}_{[\Gamma_S\backslash \overline{\mathcal{T}}_{\!\!S}]}\!\!\!\!\operatorname{Bl}_DU\xrightarrow{\ \cong\ }\operatorname{Bl}_DU\!\!\!\!\!\!\mathop{\times}_{[G_x\backslash \operatorname{Bl}_DU]}\!\!\!\!\!\!\operatorname{Bl}_DU \xleftarrow{\ \cong\ } G_x\times \operatorname{Bl}_DU
\end{align*}
where the latter sends $(g,u)$ to $(gu,u,g^{-1})$.
\end{proof}

We are now all set to identify the groupoid presentations and make the identification that we set out to in this appendix.

\begin{proof}[Proof of \Cref{Harvey compactification is real oriented blow-up}]
Consider a point $\phi$ in $\mathbf{I}=\mathbf{Isom}_{X\times X}(p_1^*\xi,p_2^*\xi)$ corresponding to an isomorphism $\phi\colon C_x\rightarrow C_y$ for $x,y\in X$. Let $U$, respectively $W$, be neighbourhoods of $x$, respectively $y$, as specified in \Cref{basic coordinate patches}. Consider the resulting analytic neighbourhood of $\phi\in \mathbf{I}$,
\begin{align*}
\mathbf{I}_{U\times W}:=\mathbf{Isom}_{U\times W}(p_1^*(\xi\vert_U),p_2^*(\xi\vert_W))\subseteq \mathbf{I},
\end{align*}
and the corresponding open $\operatorname{Bl}_\partial\mathbf{I}_{U\times W}\subset \operatorname{Bl}_\partial\mathbf{I}$. The projections to $\operatorname{Bl}_\partial X$ factor through the pullback $\operatorname{Bl}_\partial U\mathop{\times}_{[\Gamma_S\backslash \overline{\mathcal{T}}_{\!\!S}]} \operatorname{Bl}_\partial W$ as in the diagram below
\begin{center}
\begin{tikzpicture}
\matrix (m) [matrix of math nodes,row sep=2em,column sep=3em]
  {
\operatorname{Bl}_\partial\mathbf{I}_{U\times W} & \operatorname{Bl}_\partial X \\
\operatorname{Bl}_\partial U\mathop{\times}_{[\Gamma_S\backslash \overline{\mathcal{T}}_{\!\!S}]} \operatorname{Bl}_\partial W & {[\Gamma_S\backslash \overline{\mathcal{T}}_{\!\!S}]} \\
  };
  \path[-stealth]
(m-1-1) edge node[left]{$\Phi_{U\times W}$} (m-2-1)
(m-1-2) edge (m-2-2)
(m-1-1.354) edge (m-1-2.190)
(m-2-1.359) edge (m-2-2.188)
(m-1-1.8) edge (m-1-2.170)
(m-2-1.6) edge (m-2-2.171)
;
\end{tikzpicture}
\end{center}

We claim that the left vertical map $\Phi_{U\times W}$ is a homeomorphism. To see this, note first of all that since 
\begin{align*}
\xi_U\colon C_U\rightarrow U\quad\text{and}\quad\xi_W\colon C_W\rightarrow W
\end{align*}
are standard Kuranishi families with isomorphic central fibres, we may assume that there is an isomorphism $\gamma\colon U\rightarrow W$ such that $\gamma^*(\xi_W)=\xi_U$. As such, it suffices to consider the case when $U=W$, and the claim follows by combining \Cref{local picture} with the analogous \cite[Chapter XII, Lemma 3.11]{ArbarelloCornalbaGriffiths} identifying both the upper and lower line with the groupoid presentation
\begin{align*}
G_x\times \operatorname{Bl}_\partial U\rightrightarrows\operatorname{Bl}_\partial U
\end{align*}
of the quotient stack $[G_x\backslash \operatorname{Bl}_\partial U]$ with projection maps $s\colon (g,u)\mapsto gu$ and $t\colon(g,u)\mapsto u$. Moreover, we see that if $U'\subseteq U$ and $W'\subseteq W$, then $\Phi_{U'\times W'}=\Phi_{U\times W}\vert_{\operatorname{Bl}_\partial \mathbf{I}_{U'\times W'}}$. It follows that the $\Phi_{U\times W}$ patch together to define a surjective local homeomorphism
\begin{align*}
\Phi\colon \operatorname{Bl}_\partial \mathbf{I}\longrightarrow \operatorname{Bl}_\partial X
\mathop{\times}_{[\Gamma_S\backslash \overline{\mathcal{T}}_{\!\!S}]} \operatorname{Bl}_\partial X.
\end{align*}
It also follows from the local identification that this map commutes with the structure maps of the two groupoid presentations.

To see that it is in fact a homeomorphism, we note that it is also injective: indeed, if $a,b\in \operatorname{Bl}_\partial \mathbf{I}$ map to the same $s\in \operatorname{Bl}_\partial X
\mathop{\times}_{[\Gamma_S\backslash \overline{\mathcal{T}}_{\!\!S}]} \operatorname{Bl}_\partial X$, then we may choose opens $U,W\subset X$ such that $\nu p_1(s)\in U$ and $\nu p_2(s)\in W$ where $\nu\colon \operatorname{Bl}_\partial X\rightarrow X$ is the canonical map and $p_1,p_2$ are the two projection maps out of the fibre product. It follows that $a$ and $b$ belong to the same $\operatorname{Bl}_\partial \mathbf{I}_{U\times W}$ and thus we must have $a=b$ as $\Phi_{U\times W}$ is injective.

This finishes the proof: having identified the groupoid presentations, we conclude that
\begin{align*}
\operatorname{Bl}_\partial \overline{\mathcal{M}}_{g,P}\xrightarrow{\ \cong \ }[\Gamma_S\backslash \overline{\mathcal{T}}_{\!\!S}];
\end{align*}
the real oriented blow-up of the Deligne--Mumford--Knudsen compactification of the moduli stack $\mathcal{M}_{g,P}$ of smooth stable $P$-pointed genus $g$ curves identifies with the Harvey compactification of $\mathcal{M}_{g,P}$ as constructed by Ivanov.
\end{proof}

\bibliographystyle{alpha}
\bibliography{biblio}

\begin{thebibliography}{ACG11}

\bibitem[Abi80]{Abikoff80}
William Abikoff.
\newblock {\em The real analytic theory of {T}eichm\"{u}ller space}, volume 820
  of {\em Lecture Notes in Mathematics}.
\newblock Springer, Berlin, 1980.

\bibitem[ACG11]{ArbarelloCornalbaGriffiths}
Enrico Arbarello, Maurizio Cornalba, and Phillip~A. Griffiths.
\newblock {\em Geometry of algebraic curves. {V}olume {II}}, volume 268 of {\em
  Grundlehren der mathematischen Wissenschaften [Fundamental Principles of
  Mathematical Sciences]}.
\newblock Springer, Heidelberg, 2011.
\newblock With a contribution by Joseph Daniel Harris.

\bibitem[AFR19]{AyalaFrancisRozenblyum}
David Ayala, John Francis, and Nick Rozenblyum.
\newblock A stratified homotopy hypothesis.
\newblock {\em J. Eur. Math. Soc. (JEMS)}, 21(4):1071--1178, 2019.

\bibitem[AFT17]{AyalaFrancisTanaka}
David Ayala, John Francis, and Hiro~Lee Tanaka.
\newblock Local structures on stratified spaces.
\newblock {\em Adv. Math.}, 307:903--1028, 2017.

\bibitem[BGH20]{BarwickGlasmanHaine}
Clark Barwick, Saul Glasman, and Peter Haine.
\newblock Exodromy.
\newblock Preprint: \href{https://arxiv.org/abs/1807.03281}{arXiv:1807.03281},
  2020.

\bibitem[BH19]{BarwickHaine19a}
Clark Barwick and Peter Haine.
\newblock Exodromy for stacks.
\newblock Preprint: \href{https://arxiv.org/abs/1901.09414}{arXiv:1901.09414},
  2019.

\bibitem[Cep19]{Cepek}
Anna Cepek.
\newblock Higher-categorical combinatorics of configuration spaces of euclidean
  space.
\newblock Preprint: \href{https://arxiv.org/abs/1910.11980}{arXiv:1910.11980},
  2019.

\bibitem[CJ21]{ClausenOrsnesJansen}
Dustin Clausen and Mikala~{\O}rsnes Jansen.
\newblock The reductive {B}orel--{S}erre compactification as a model for
  unstable algebraic {K}-theory.
\newblock Preprint:
  \href{https://arxiv.org/pdf/2108.01924.pdf}{arXiv:2108.01924}, 2021.

\bibitem[CL84]{CharneyLee84}
Ruth Charney and Ronnie Lee.
\newblock Moduli space of stable curves from a homotopy viewpoint.
\newblock {\em J. Differential Geom.}, 20(1):185--235, 1984.

\bibitem[CL21]{ChenLooijenga}
Jiaming Chen and Eduard Looijenga.
\newblock The homotopy type of the {B}aily-{B}orel and allied
  compactifications.
\newblock {\em Homology Homotopy Appl.}, 23(2):95--119, 2021.

\bibitem[Dou21]{Douteau}
Sylvain Douteau.
\newblock A simplicial approach to stratified homotopy theory.
\newblock {\em Trans. Amer. Math. Soc.}, 374(2):955--1006, 2021.

\bibitem[EG08]{EbertGiansiracusa}
Johannes Ebert and Jeffrey Giansiracusa.
\newblock On the homotopy type of the {D}eligne-{M}umford compactification.
\newblock {\em Algebr. Geom. Topol.}, 8(4):2049--2062, 2008.

\bibitem[FM12]{FarbMargalit}
Benson Farb and Dan Margalit.
\newblock {\em A primer on mapping class groups}, volume~49 of {\em Princeton
  Mathematical Series}.
\newblock Princeton University Press, Princeton, NJ, 2012.

\bibitem[Gil11]{Gillam}
William Gillam.
\newblock Real oriented blowup.
\newblock Unpublished notes available at:
  \href{https://oztekin.net/~wdgillam/Papers/orb.pdf}{oztekin.net/~wdgillam/Papers/orb.pdf},
  2011.

\bibitem[Hai21]{Haine}
Peter~J. Haine.
\newblock {\em On the {H}omotopy {T}heory of {S}tratified {S}paces}.
\newblock ProQuest LLC, Ann Arbor, MI, 2021.
\newblock Thesis (Ph.D.)--Massachusetts Institute of Technology.

\bibitem[Har81]{Harvey81}
W.~J. Harvey.
\newblock Boundary structure of the modular group.
\newblock In {\em Riemann surfaces and related topics: {P}roceedings of the
  1978 {S}tony {B}rook {C}onference ({S}tate {U}niv. {N}ew {Y}ork, {S}tony
  {B}rook, {N}.{Y}., 1978)}, volume~97 of {\em Ann. of Math. Stud.}, pages
  245--251. Princeton Univ. Press, Princeton, N.J., 1981.

\bibitem[Har88]{Harer}
John~L. Harer.
\newblock The cohomology of the moduli space of curves.
\newblock In {\em Theory of moduli ({M}ontecatini {T}erme, 1985)}, volume 1337
  of {\em Lecture Notes in Math.}, pages 138--221. Springer, Berlin, 1988.

\bibitem[HV10]{HinichVaintrob}
Vladimir Hinich and Arkady Vaintrob.
\newblock Augmented {T}eichm\"{u}ller spaces and orbifolds.
\newblock {\em Selecta Math. (N.S.)}, 16(3):533--629, 2010.

\bibitem[Iva89]{Ivanov}
N.~V. Ivanov.
\newblock Attaching corners to {T}eichm\"{u}ller space.
\newblock {\em Algebra i Analiz}, 1(5):115--143, 1989.

\bibitem[Jan22]{OrsnesJansen}
Mikala~{\O}rsnes Jansen.
\newblock {The Stratified Homotopy Type of the Reductive Borel–Serre
  Compactification}.
\newblock {\em International Mathematics Research Notices}, 10 2022.
\newblock rnac289.

\bibitem[Jan23]{OrsnesJansen23}
Mikala~{\O}rsnes Jansen.
\newblock Stratified homotopy theory of topological {$\infty$}-stacks: A
  toolbox.
\newblock Preprint, 2023.

\bibitem[Loo95]{Looijenga95}
Eduard Looijenga.
\newblock Cellular decompositions of compactified moduli spaces of pointed
  curves.
\newblock In {\em The moduli space of curves ({T}exel {I}sland, 1994)}, volume
  129 of {\em Progr. Math.}, pages 369--400. Birkh\"{a}user Boston, Boston, MA,
  1995.

\bibitem[Lur09]{LurieHTT}
Jacob Lurie.
\newblock {\em Higher topos theory}, volume 170 of {\em Annals of Mathematics
  Studies}.
\newblock Princeton University Press, Princeton, NJ, 2009.

\bibitem[Lur17]{LurieHA}
Jacob Lurie.
\newblock Higher algebra.
\newblock
  \href{https://www.math.ias.edu/~lurie/papers/HA.pdf}{www.math.ias.edu/~lurie/papers/HA.pdf},
  2017.

\bibitem[NL19]{Nand-Lal}
Stephen~James Nand-Lal.
\newblock {\em A {S}implicial {A}pproach to {S}tratified {H}omotopy {T}heory}.
\newblock ProQuest LLC, Ann Arbor, MI, 2019.
\newblock Thesis (Ph.D.)--The University of Liverpool (United Kingdom).

\bibitem[NSS15]{NikolausSchreiberStevenson}
Thomas Nikolaus, Urs Schreiber, and Danny Stevenson.
\newblock Principal {$\infty$}-bundles: general theory.
\newblock {\em J. Homotopy Relat. Struct.}, 10(4):749--801, 2015.

\bibitem[Tre09]{Treumann}
David Treumann.
\newblock Exit paths and constructible stacks.
\newblock {\em Compos. Math.}, 145(6):1504--1532, 2009.

\end{thebibliography}

\end{document}